\newcommand\reallywidehat[1]{%
\savestack{\tmpbox}{\stretchto{%
 \scaleto{%
 \scalerel*[\widthof{\ensuremath{#1}}]{\kern-.6pt\bigwedge\kern-.6pt}%
 {\rule[-\textheight/2]{1ex}{\textheight}}%WIDTH-LIMITED BIG WEDGE
 }{\textheight}% 
}{0.5ex}}%
\stackon[1pt]{#1}{\tmpbox}%
}
\newcommand{\downsquigarrow}{\mathbin{\rotatebox[origin=c]{-90}{$\rightsquigarrow$}}}
\title{Rigid cohomology of locally noetherian formal schemes. Part 1: Geometry}
\author{Bernard Le Stum}
\date{Version of \today}
\newtheorem{thm}{Theorem}[section]
\newtheorem{prop}[thm] {Proposition}
\newtheorem{cor}[thm] {Corollary}
\newtheorem{lem}[thm] {Lemma}
\theoremstyle{definition}
\newtheorem{dfn}[thm] {Definition}
\newenvironment{xmp}[1][Example]{\begin{trivlist} \item[\hskip \labelsep {\bfseries #1}]}{\end{trivlist}}
\newcommand{\Addresses}{{% additional braces for segregating \footnotesize
 \bigskip
 \footnotesize

Bernard Le Stum, \textsc{IRMAR, Université de Rennes,
Campus de Beaulieu, 35042 Rennes cedex, France}\par\nopagebreak
\texttt{bernard.le-stum@univ-rennes1.fr}

}}
\begin{document}

\maketitle

\bigskip

\begin{abstract}
We set up the geometric background necessary to extend rigid cohomology from the case of algebraic varieties to the case of general locally noetherian formal schemes.
In particular, we generalize Berthelot's strong fibration theorem to adic spaces: we show that if we are given a morphism of locally noetherian formal schemes which is partially proper and formally smooth around a formal subscheme, and we pull back along a morphism from an analytic space which is locally of noetherian type, then we obtain locally a fibration on strict neighborhoods. 
\end{abstract}

\tableofcontents

\addcontentsline{toc}{section}{Introduction}
\section*{Introduction}
\subsection*{State of the art}
Pierre Berthelot developed rigid cohomology in the early 80's in \cite{Berthelot96c*} and \cite{Berthelot83*} (see also \cite{LeStum07}) as a $p$-adic cohomology theory for algebraic varieties $X$ defined over a field $k$.
It takes its values over a complete non archimedean field $K$ of characteristic zero whose residue field is $k$.
The main idea consists in showing that the geometry of $X$ is reflected into the geometry of some subspace of a rigid analytic variety over $K$.
More precisely, one embeds $X$ into some formal scheme $P$ and builds the tube $\,]X[$ of $X$ in $P$ as a subspace of the generic fiber of $P$.
When $P$ is proper and smooth around $X$, then the de Rham cohomology of a small neighborhood $V$ of this tube is essentially independent of the choices.
This is a consequence of the strong fibration theorem of Berthelot: if a morphism $Q \to P$ is proper and smooth around $X$, then it induces locally a fibration in open polydiscs in the neighborhood of the tubes.

There exists an alternative description of rigid cohomology that consists in putting together all the embeddings of $X$ into some $P$ and all the neighborhoods $V$ of $\,]X[$.
As explained in \cite{LeStum11}, this gives rise to the overconvergent site whose cohomology is exactly rigid cohomology.

When the base field is not perfect, Christopher Lazda and Ambrus P\`al showed in \cite{LazdaPal16} that the theory can be refined.
More precisely, in order to study algebraic varieties over $k((t))$, where $k$ is a perfect field, they replace the Amice ring $\mathcal E$, which is the natural field of coefficients in Berthelot's theory, with the bounded Robba ring $\mathcal E^\dagger$.
They also replace Tate's rigid analytic geometry with Huber theory of adic spaces: this is necessary because the bounded robba ring lives on an adic space whose only closed point is a valuation of height 2.

In order to obtain constructible coefficients for rigid cohomology, Berthelot also developed in \cite{Berthelot02} a theory of arithmetic $\mathcal D$-modules.
First of all, on any smooth formal scheme $P$, he builds a ring $\mathcal D^\dagger_{P}$ of overconvergent differential operators on $P$.
Then, when $X$ is a subvariety of $P$, he considers a category of coherent $F-\mathcal D^\dagger_{P \mathbb Q}$-modules with support on $X$.
In the thesis of David Pigeon (\cite{Pigeon14}), and thereafter in the work of Daniel Caro and David Vauclair (\cite{CaroVauclair15*}), the theory is extended to formal schemes that are only differentially $p$-smooth.
More recently, Richard Crew showed in \cite{Crew17*} that Berthelot's theory of arithmetic $\mathcal D$-modules also extends to the case of formally smooth formal schemes that are not necessarily of finite type, but only ``universally'' noetherian (formally of finite type for example).
Surprisingly, these two extensions of Berthelot's original theory are rather orthogonal: anticipating on forthcoming notations, we can say that, when Berthelot considers $\mathbb A := \mathrm{Spec}(k[t])$, then Caro and Vauclair will look at $\mathbb A^\mathrm b := \mathrm{Spec}(k[[t]])$ (which is not universally noetherian) and Crew will be interested in $\mathbb A^- := \mathrm{Spf}(k[[t]])$ (which is not a $p$-adic formal scheme).

\subsection*{The main result}

In this article, we set up the geometric background necessary to extend rigid cohomology from the case of an algebraic variety over a field to any morphism of locally noetherian formal schemes.
More precisely, we extend Berthelot's strong fibration theorem to this setting.
In order to state the theorem, it is convenient to extend the method and the vocabulary of \cite{LeStum11}.
The idea is to use formal schemes as a bridge between schemes and analytic spaces.
We assume here that all formal schemes are locally noetherian and all adic spaces are locally of noetherian type (see below).

An overconvergent adic space is a pair made of a locally closed embedding of formal schemes $X \hookrightarrow P$ and a morphism of adic spaces $P^\mathrm{ad} \leftarrow V$ (or equivalently a morphism $P \leftarrow V^+ $).
We shall write $(X \hookrightarrow P \leftarrow V)$ for short.
A formal morphism of overconvergent adic spaces is a pair of commutative diagrams
\[
\xymatrix{ Y \ar@{^{(}->}[r] \ar[d]^f & Q \ar[d]^v \\ X \ar@{^{(}->}[r] & P}, \quad \xymatrix{Q^{\mathrm{ad}} \ar[d]^{v^{\mathrm{ad}}}& W \ar[l] \ar[d]^u \\ P^{\mathrm{ad}} & V \ar[l] }
\]
The formal morphism is said to be right cartesian if $W$ is a neighborhood of the inverse image of of the tube (see below) of $X$ along $v^{\mathrm{ad}}$.
It is called formally smooth (resp.\ partially proper) around $Y$ if there exists an open subset $U$ (resp.\ a closed subset $Z$) of $Q$ that contains $Y$ and such that the restriction of $v$ to $U$ (resp.\ to $Z$) is formally smooth (resp.\ partially proper).

We need to introduce the notion of a tube $\,]X[_{V}$ of $X$ in $V$: this is obtained by Boolean combination from the closed embedding case where we take the inverse image of the adic space associated to the completion of $P$ along $X$ (this is different from the inverse image under specialization).
We call a formal morphism a strict neighbourhood if $f$ is an isomorphism, $v$ is locally noetherian, $u$ is an open immersion and the map induced on the tubes is surjective (or equivalently a homeomorphism).
The overconvergent adic site is obtained by turning strict neighborhoods into isomorphisms and using the topology inherited from $V$.
We will simply denote by $(X,V)$ the corresponding overconvergent adic space viewed as an object of the overconvergent adic site (and forget about $P$ in the notations).

The strong fibration theorem states that if we are given a right cartesian morphism as above which is formally smooth and partially proper around $Y$ and such that $f$ is an isomorphism, then it induces locally an isomorphism $(Y, W) \simeq (X, \mathbb D^{n-}_{V})$ in the overconvergent site.
Together with the overconvergent Poincar\'e lemma, the classic analog of this result is the fundamental tool to prove that rigid cohomology is well defined and functorial.

\subsection*{Content}

In the first section, we briefly review the theory of adic formal schemes. The main purpose is to set up the vocabulary and notations and give elementary but fundamental examples.
It will also be useful as a guideline for the theory of adic spaces that will be discussed thereafter.
We also introduce the important notion of being partially proper for a morphism of formal schemes.

Section two is devoted to adic spaces. We mostly recall the main definitions and give some examples as in the previous section. This may be useful for the reader because the theory is not fully documented.

In section three, we recall how one can associate an adic space to a formal scheme and show that most classical properties of adic morphisms of formal schemes are reflected in the corresponding morphism of adic spaces.
In particular, we check that this holds for formal smoothness or (faithful) flatness.
More interesting, we also introduce the notion of a property being analytic and show that if a morphism of formal schemes is partially proper then the corresponding morphism of adic spaces is analytically partially proper.

In section four, we first discuss the notion of a formal embedding and prove the formal fibration theorem.
Then, we extend the notion of an overconvergent space (that was initially developed using Berkovich analytic spaces) to the adic world.
This is essential for the development of a site-theoretic approach to rigid cohomology.
Nevertheless, it should be noticed that one might also have followed Berthelot's original approach and completely avoided the notion of overconvergent space.
Then, we introduce the notion of a tube in the adic world which is sensibly different form the classical one which we call the naive tube. We show that in the analytic situation, the tube of closed (resp.\ open) subset is equal to the closure (resp.\ interior) of the naive tube and we also introduce the notion of a tube of finite radii.

In the last section, we define the overconvergent site as the localization of the category of overconvergent adic spaces (and formal morphisms) with respect to strict neighborhoods.
We show that, even if the topology comes from the analytic side, the category is, in some sense, also local on the (formal) scheme side.
Finally, we use all the material obtained so far in order to prove the strong fibration theorem.

\subsection*{Many thanks}

Several parts of this paper were influenced by some conversations that I had with many mathematicians and I want to thank in particular Ahmed Abbes, Tomoyuki Abe, Richard Crew,  Veronika Ertl, Kazuhiro Fujiwara, Michel Gros, Fumiharu Kato, Christopher Lazda, Vincent Mineo-Kleiner, Laurent Moret-Bailly, Matthieu Romagny and Alberto Vezzani.

\subsection*{Notations/conventions}

\begin{enumerate}
\item
A \emph{monoid} $G$ is a set endowed with an action on itself which is associative with unit.
Unless otherwise specified, all monoids and groups are assumed to be \emph{commutative}, but we may insist for emphasis and call them \emph{abelian} when the law is additive.
A \emph{ring} $A$ is an abelian group endowed with a linear action on itself which is associative with unit.
Unless otherwise specified, all rings and fields are assumed to be \emph{commutative}, but again, we may insist for emphasis.
\item If $X$ is a $G$-set, $E \subset G$ and $F \subset X$, we will write
\[
EF := \{gx \colon g \in E, x \in F\}.
\]
By induction, if $E \subset G$, we will have
\[
E^n = \{g_{1}\cdots g_{n} \colon g_{i} \in E\}.
\]
If $M$ is an $A$-module, $E \subset A$ and $F \subset M$, we will denote by
\[
E \cdot F := \left\{\sum f_{i}x_{i} \colon f_{i} \in E, x_{i} \in F\right\}
\]
the additive subgroup generated by $EF$.
If $E \subset A$, we will denote by
\[
E^{\cdot n} := \left\{\sum f_{i_{1}}\cdots f_{i_{n}} \colon f_{i_{j}} \in E \right\}
\]
the additive subgroup generated by $E^n$.
We shall make an exception and use the standard notation $IJ$ or $I^n$ instead of $I\cdot J$ or $I^{\cdot n}$ when $I$ and $J$ are ideals in $A$.
\item A \emph{(topologically) ringed space} is a topological space $X$ endowed with a sheaf of (topological\footnote{It might be clever to impose $\mathbb Z$-linearity on the topology in order to handle colimits.}) rings $\mathcal O_X$.
A morphism is made of a continuous map $f : Y \to X$ and a (continuous) morphism of (topological) rings $\mathcal O_X \to f_*\mathcal O_Y$.
There exists an obvious forgetful functor from topologically ringed spaces to ringed spaces that commutes with all limits and colimits.
Conversely, any ringed space may be seen as a topologically ringed space by using the fully faithful adjoint to the forgetful functor (discrete topology and sheaffifying).
Note that a (topologically) ringed space is a sheaf on the big site of (topologically) ringed spaces: in other words, the site is subcanonical.
\item A \emph{(topologically) locally ringed space} is a (topologically) ringed space $X$ whose stalks are all local rings.
A morphism of (topologically) locally ringed spaces is a morphism of (topologically) ringed spaces that induces a local homomorphism on the stalks.
The above forgetful functor and its adjoint are compatible with these new conditions.
And again, the sites are subcanonical.
If $x \in X$, we will denote the residue field of the local ring $\mathcal O_{X,x}$ by $\kappa(x)$.
If $f$ is a section of $\mathcal O_{X}$ in a neighborhood of $x$, we will denote by $f(x)$ the image of $f$ in $\kappa(x)$.
A morphism of (topologically) locally ringed spaces $f \colon Y \to X$ is said to be \emph{flat} (resp.\ \emph{faithfully flat}) if $f^*$ is exact (resp.\ and $f$ is surjective).
\item \label{valsp} A \emph{(topologically) valued ringed space} is a (topologically) locally ringed space $X$ whose stalks $\mathcal O_{X,x}$ are endowed with a valuation $v_{x}$ defined up to equivalence whose support is exactly $\mathfrak m_{X,x}$.
This is actually equivalent to choosing a valuation ring $\mathcal V(x)$ of $\kappa(x)$.
By definition, the \emph{height} of $x$ is the height of $v_{x}$ and $x$ is called \emph{trivial} when $v_{x}$ is trivial (height = $0$).
A morphism of (topologically) valued ringed spaces is a morphism of (topologically) locally ringed spaces such that the induced morphism on the stalks is compatible up to equivalence with the valuations.
This is equivalent to require that it induces a morphism between the valuation rings.
There exists an obvious functor that forgets the valuation.
It possesses a fully faithful adjoint which is simply obtained by endowing $\mathcal O_{X,x}$ with the valuation coming from the trivial valuation on $\kappa(x)$.
In particular, any (topologically) locally ringed space may be seen as a (topologically) valued ringed space.
\item \label{plus} A \emph{doubly (topologically) (locally) ringed space} is a triple $(X, \mathcal O_{X}, \mathcal O_{X}^+)$ were both $(X, \mathcal O_{X})$ and $(X, \mathcal O_{X}^+)$ are (topologically) (locally) ringed spaces and $\mathcal O_{X}^+ \subset \mathcal O_{X}$.
Morphisms are defined in the obvious way.
We will still denote by $X$ the (topologically) (locally) ringed space $(X, \mathcal O_{X})$ and then write $X^+ := (X, \mathcal O_{X}^+)$.
This provides and adjoint and a coadjoint to the obvious functor $(X, \mathcal O_{X}) \mapsto (X, \mathcal O_{X}, \mathcal O_{X})$.
\item
If $X$ is a (topologically) valued ringed space, then we shall denote by $\mathcal O_{X}^+$ the subsheaf locally defined by the conditions $v_{x}(f) \geq 0$.
This defines a functor from (topologically) valued ringed spaces to doubly (topologically) locally ringed spaces which happens to be fully faithful.
\item Let $X$ be a topological space and $x,y \in X$.
We write $y \rightsquigarrow x$, and we say that $x$ is a \emph{specialization} of $y$ or that $y$ is a \emph{generalization} of $x$, if any neighborhood of $x$ is also a neighborhood of $y$.
Alternatively, it means that $x \in \overline {\{y\}}$ - the topological closure.
This defines a partial order ($y \geq x$) on the points of $X$.
\item A topological space is said to be \emph{coherent} if it is quasi-compact, quasi-separated - diagonal map is quasi-compact - and admits a basis of quasi-compact open subsets.
It is said to be \emph{sober} if any irreducible subset has a unique maximal point.
It is called \emph{spectral} if it is coherent and sober.
\item A subset of a topological space $X$ is \emph{constructible} (resp.\ \emph{ind-constructible}, resp.\ \emph{pro-constructible}) if it is a (resp.\ a union of, resp.\ an intersection of) boolean combination(s) of retrocompact - inclusion is quasi-compact - open subspaces.
If $X$ is coherent, a locally constructible (resp.\ locally ind-constructible, resp.\ locally pro-constructible) subset is automatically constructible (resp.\ ind-constructible, resp.\ pro-constructible).
The \emph{constructible topology} on a topological space $X$ is the coarsest topology for which the locally constructible subsets are open.
If $Y$ is a pro-constructible (resp.\ ind-constructible) subset of a spectral space $X$, then the closure (resp.\ the interior) of $Y$ in $X$ has the following description
\[
\overline Y = \{x \in X \colon \exists y \in Y, y \rightsquigarrow x\} \quad \mathrm{(resp.}\ \mathring Y = \{x \in X \colon \forall y \rightsquigarrow x, y \in Y\}).
\]

\item If $X$ is a locally spectral space, then a subset is open (resp.\ closed, resp.\ open and closed) in the constructible topology if and only if it is locally ind-constructible (resp.\ locally pro-constructible, resp.\ locally constructible).
A subset of a locally spectral space is open (resp.\ closed) if and only if it is locally ind-constructible (resp.\ locally pro-constructible) and stable under generalization (resp.\ specialization).
Any quasi-compact quasi-separated continuous map of locally spectral spaces is continuous and closed for the constructible topology.
\item
Assume that $g : \mathcal C' \to \mathcal C$ is a given functor. 
Then,
\begin{enumerate}
\item the topology \emph{induced} on $\mathcal C'$ by a topology on $\mathcal C$ is the finest topology making $g$ continuous,
\item the topology \emph{inherited} (or \emph{coinduced}) by $\mathcal C'$ from a topology on $\mathcal C$ is the coarsest topology making $g$ cocontinuous,
\item the \emph{image} topology on $\mathcal C$ of a topology on $\mathcal C'$ is the coarsest topology making $g$ continuous,
\item the \emph{coimage} topology on $\mathcal C$ of a topology on $\mathcal C'$ is the finest topology making $g$ cocontinuous.
\end{enumerate}

\item We will use the term \emph{embedding} as an equivalent to a \emph{locally closed immersion} or to a \emph{fully faithful functor}, depending on the context.

\item
When $Y \subset X$ is a locally closed subset of a topological space, we denote by $\infty_Y$ and call \emph{locus at infinity} of $Y$ in $X$, the complement of $Y$ in its closure $\overline Y$ in $X$.

\end{enumerate}

%%%%%%%%%%%%%%%%%%%%%
\section{Adic formal schemes}

In this section, we briefly review the basics of adic formal schemes, give some examples and generalize some standard notions to the case of non-adic morphisms.
We send the reader to chapter 2 of \cite{Abbes10} and chapter 1 of \cite{FujiwaraKato18} for a recent presentation of the subject (see also chapter 10 of \cite{EGAI}).
Note that the terminology may vary from one reference to another and we tend to prefer a vocabulary which is compatible with the usual conventions in the theory of adic spaces of Huber.

At some point, we will impose noetherian conditions but we can make some general statements first.

%%%%%%%%%%%%%
\subsection{Adic rings}

We do not assume adic rings to be complete but we require them to have a finitely generated ideal of definition (this will be automatic when we stick to the noetherian world).
More precisely:

%%%%%%%%%%%%%%%
\begin{dfn}
A topological ring $A$ is called \emph{adic} if there exists a \emph{finitely generated} ideal $I$ whose powers $I^n$ form a cofinal system of neighborhoods of $0$.
Such an ideal is called an \emph{ideal of definition} for $A$.
A homomorphism $A \to B$ of adic rings is called \emph{adic} when $IB$ is an ideal of definition of $B$.
\end{dfn}

Following Huber, we do \emph{not} assume the topology to be complete or Hausdorff in general and will denote by $\hat A := \varprojlim A/I^{n+1}$ the (Hausdorff) completion of $A$.
On the other hand, we always require $I$ to be \emph{finitely generated}.
Abbes calls such a topological ring \emph{strongly} adic and Fujiwara and Kato say: \emph{of finite ideal type}.
Under this assumption, the ring $\widehat A$ will be a complete adic ring with ideal of definition $I\widehat A$ and we will have $A/I = \widehat A/I\widehat A$ (and we may in practice replace $A$ with $\widehat A$).

If we were ready to always assume completeness, we could (but we will not) consider the more general notion of \emph{admissible} ring.
On the other side, we will always see a usual ring as a complete adic ring for the discrete topology: the discrete topology is adic (and complete) with respect to the zero (or any nilpotent) ideal.

A morphism of adic rings $A \to B$ is simply a continuous homomorphism of rings.
Equivalently, it means that it sends any ideal of definition into some ideal of definition.
We will always make it clear if we assume that the morphism is actually adic (which is a much stronger condition).

%%%%%%%%%%%%%%%
\begin{dfn}
If $A$ is a topological ring, then the \emph{formal spectrum} of $A$ is the set $P := \mathrm{Spf}(A)$ of open prime ideals of $A$.
\end{dfn}

If $A$ is a topological ring, $f \in A$ and $\mathfrak p \in P := \mathrm{Spf}(A)$, then we will denote by $f(\mathfrak p)$ the image of $f$ in $\kappa(\mathfrak p) := \mathrm{Frac}(A/\mathfrak p)$.
The set $P$ is endowed with the topology for which the subsets
\[
D(f) = \{\mathfrak p \in P \colon f(\mathfrak p) \neq 0\}
\]
form a basis of open subsets.
This is a spectral space.

When $A$ is an adic ring, there exists a unique sheaf of topological rings $\mathcal O_{P}$ such that (up to a canonical isomorphism)
\[
\Gamma(D(f), \mathcal O_{P}) = \widehat {A[1/f]}
\]
(this is the completion for the $I$-adic topology if $I$ denotes an ideal of definition of $A$).
This turns $P$ into a topologically locally ringed space.

If $A$ is an adic ring, then there exists a natural isomorphism of topologically locally ringed spaces $\mathrm{Spf}(\widehat A) \simeq \mathrm{Spf}(A)$ and this is why we may usually only consider complete adic rings.
If one is ready to work only with complete rings, one can define more generally the topologically ringed space associated to an admissible ring $A$.

Note that we always have $\Gamma(P, \mathcal O_{P}) = \widehat A$ but Cartan's theorem A and B do not hold without further hypothesis on $A$.
We have:

%%%%%%%%%%%%%
\begin{prop}
If $A$ is a \emph{noetherian} adic ring and $P := \mathrm{Spf}(A)$, then the functors
\[
M \mapsto \mathcal O_{P} \otimes^{\mathrm L}_{\widehat A} M \quad \mathrm{and} \quad \mathcal F \mapsto \mathrm R\Gamma(P, \mathcal F)
\]
induce an equivalence between finite $\widehat A$-modules and coherent $\mathcal O_{V}$-modules.
\end{prop}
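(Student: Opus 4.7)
The plan is to follow the classical pattern of Theorems~A and B for coherent sheaves on affine formal schemes (EGA~I, chapter~10, or Abbes, chapter~2) and to keep careful track of what is needed for the derived formulation.

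\emph{Step 1 (reduction and flatness).} First I would reduce to the case where $A$ is complete, via $\mathrm{Spf}(\widehat A)\simeq \mathrm{Spf}(A)$, and then verify that $\mathcal O_{P}$ is a sheaf of \emph{flat} $A$-algebras. Indeed, on a basic open $D(f)$ one has $\Gamma(D(f),\mathcal O_{P})=\widehat{A[1/f]}$; the localization $A[1/f]$ is $A$-flat, and for a noetherian ring the completion along a finitely generated ideal is flat. Consequently, for any finite $A$-module $M$ the derived tensor $\mathcal O_{P}\otimes^{\mathbb L}_{A}M$ coincides with the ordinary tensor, which I denote by $M^{\triangle}$.

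\emph{Step 2 (the functor $M\mapsto M^{\triangle}$ and Theorem~A).} Next I would check that for finite $M$, the sheaf $M^{\triangle}$ is coherent, since it is locally modelled on $M\otimes_{A}\widehat{A[1/f]}$, a finite module over the noetherian ring $\Gamma(D(f),\mathcal O_{P})$. The unit morphism $M\to \Gamma(P,M^{\triangle})$ will be an isomorphism because both sides identify with the $I$-adic completion of $M$, and $M$ is already complete as a finite module over a complete noetherian ring.

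\emph{Step 3 (Theorem~B).} The heart of the argument is to show $H^{i}(P,\mathcal F)=0$ for $i>0$ whenever $\mathcal F$ is coherent. The strategy is to express $\mathcal F$ as the inverse limit of $\mathcal F_{n}:=\mathcal F/I^{n+1}\mathcal F$, each of which is a quasi-coherent sheaf on the ordinary affine noetherian scheme $P_{n}:=\mathrm{Spec}(A/I^{n+1})$ (which shares its underlying space with $P$). For each $n$, Serre's vanishing theorem on affines yields $H^{i}(P_{n},\mathcal F_{n})=0$ for $i>0$. Combining this with the short exact sequences
\[
0\to I^{n+1}\mathcal F\to \mathcal F\to \mathcal F_{n}\to 0,
\]
and with the Mittag--Leffler property of the inverse system $\{\Gamma(P,\mathcal F_{n})\}$ (which holds by the Artin--Rees lemma applied to $\Gamma(P,\mathcal F)$), one obtains both $\Gamma(P,\mathcal F)=\varprojlim_{n}\Gamma(P,\mathcal F_{n})$ and the vanishing of $H^{i}(P,\mathcal F)$ for $i>0$.

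\emph{Step 4 (essential surjectivity and conclusion).} Finally, given a coherent $\mathcal F$, set $M:=\Gamma(P,\mathcal F)$; Step~3 yields the finiteness of $M$ as well as the identity $\mathbb R\Gamma(P,\mathcal F)=M$, while the natural map $M^{\triangle}\to \mathcal F$ is checked to be an isomorphism on a finite affine cover of $P$ by basic opens $D(f_{i})$'s, where both sides are determined by finite modules over $\widehat{A[1/f_{i}]}$. Combined with the isomorphism $M\simeq \Gamma(P,M^{\triangle})$ of Step~2, this yields the claimed derived equivalence. The main obstacle, as anticipated, is Step~3: the interchange of $\Gamma$ with the inverse limit is where the noetherian hypothesis is put to serious work through Artin--Rees.
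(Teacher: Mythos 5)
Your proof is correct and is, in substance, exactly the argument the paper is appealing to: the paper's entire proof is a citation to EGA I (chapter 10), and your four steps (flatness of $\mathcal O_{P}$ over $A$ so that the derived tensor degenerates, Theorem A via $I$-adic completeness of finite modules, Theorem B via the thickenings $P_{n}=\mathrm{Spec}(A/I^{n+1})$ with Serre vanishing and Mittag--Leffler, then essential surjectivity on a finite cover by basic opens) reconstruct that classical proof. The only point I would make explicit is that writing $\mathcal F=\varprojlim \mathcal F/I^{n+1}\mathcal F$ for an arbitrary coherent $\mathcal F$ itself uses the local structure theory (coherent sheaves on a noetherian formal spectrum are $I$-adically complete), which is part of the same EGA development.
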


\begin{proof}
This is shown in \cite{EGAI} for example.
\end{proof}

The result also holds without the noetherian hypothesis if we require the topology on $A$ to be discrete (in which case it reduces to the analogous result on usual spectra).

%%%%%%%%%%%%%%%%%%
\subsection{Formal schemes}

Working with formal schemes will give us a lot of flexibility to travel between the algebraic and the analytic world.

%%%%%%%%%%%%%
\begin{dfn}
An \emph{adic formal scheme} is a topologically locally ringed space $P$ which is locally isomorphic to $\mathrm{Spf}(A)$ where $A$ is an adic ring.
It is said to be \emph{affine} if it is actually isomorphic to some $\mathrm{Spf}(A)$.
\end{dfn}

An adic formal scheme is a locally spectral space.
Note that one can define the more general notion of \emph{formal scheme} by using \emph{admissible} rings.

If $X$ is a (usual) scheme, then it is a locally ringed space and the topologically locally ringed space associated to $X$ is an adic formal scheme.
This gives rise to a fully faithful functor.
Note that we recover $X$ simply by forgetting the topology on the sheaf of rings.
In practice, we will \emph{identify} $X$ with the corresponding adic formal scheme.
It means that all statements involving formal schemes may also be applied to usual schemes.
Note also that, with this identification, if $A$ is an adic ring with ideal of definition $I$, we have
\[
\mathrm{Spf}(A) = \varinjlim \mathrm{Spec}(A/I^n).
\]

The functor
\[
A \mapsto \mathrm{Spf}(A)
\]
is fully faithful on complete adic rings.
Better, there exists an adjunction
\[
\mathrm{Hom}(X, \mathrm{Spf}(A)) \simeq \mathrm{Hom}(A, \Gamma(X, \mathcal O_{X}))
\]
(continuous homomorphisms on the right hand side).
As a consequence, the category of adic formal schemes has finite limits (but this will not be the case anymore when we make noetherian hypothesis) and we always have
\[
\mathrm{Spf}(A) \times_{\mathrm{Spf}(R)} \mathrm{Spf}(B) = \mathrm{Spf}(A \otimes_{R} B).
\]

%%%%%%%%%%%%%%
\begin{xmp}
If $S$ is any adic formal scheme, then we may consider:
\begin{enumerate}
\item the relative \emph{affine space}
\[
\mathbb A^n_{S} = \underbrace{\mathbb A \times \cdots \times \mathbb A}_{n\ \mathrm{times}} \times S
\]
in which $\mathbb A := \mathrm{Spec}(\mathbb Z[T])$ (seen as a formal scheme).
This is the same thing as the usual formal affine space which is usually denoted with an extra hat.
If $A$ is an adic ring with ideal of definition $I$, $S = \mathrm{Spf}(A)$ and $A[T_{1}, \ldots, T_{n}]$ has the $I$-adic topology, then we have
\[
\mathbb A^n_{S} = \mathrm{Spf}\left(A[T_{1}, \ldots, T_{n}]\right)
\]
and we will sometimes write $\widehat{\mathbb A}^n_{A}$ in this case (the hat is meant to insist on the fact that we take the topology of $A$ into account).
Note that we could as well use the $I$-adic completion $A\{T_{1}, \ldots, T_{n}\}$ of $A[T_{1}, \ldots, T_{n}]$ (sometimes also written $\widehat A\langle T_{1}, \ldots, T_{n} \rangle$).
This gives the same adic formal scheme.
\item the relative \emph{projective space}
\[
\mathbb P^n_{S} := \mathbb P^n \times S
\]
(which is also usually denoted with an extra hat)
in which
\[
\mathbb P^n = \mathrm{Proj}(\mathbb Z[T_{0}, \ldots, T_{n}])
\]
(scheme seen as a formal scheme).
We will also write $\widehat{\mathbb P}^n_{A}$ when $S = \mathrm{Spf}(A)$.
We may drop the exponent $n$ when $n=1$ and simply denote by $\mathbb P$ the projective line.
\item the relative \emph{open affine space}
\[
\mathbb A^{n,-}_{S} := \underbrace{\mathbb A^- \times \cdots \times \mathbb A^-}_{n\ \mathrm{times}} \times S
\]
in which $\mathbb A^- := \mathrm{Spf}(\mathbb Z[T])$ where $\mathbb Z[T]$ has the $T$-adic topology.
If $A$ is an adic ring with ideal of definition $I$, $S = \mathrm{Spf}(A)$ and $A[T_{1}, \ldots, T_{n}]$ has the $I[T_{1}, \ldots, T_{n}] + (T_{1}, \ldots, T_{n})$-adic topology, then
\[
\mathbb A^{n,-}_{S} = \mathrm{Spf}\left(A[T_{1}, \ldots, T_{n}]\right) = \mathrm{Spf}\left(\widehat A[[T_{1}, \ldots, T_{n}]]\right)
\]
and we may also write $\widehat{\mathbb A}^{n,-}_{A}$.
\item the relative \emph{bounded affine space}
\[
\mathbb A^{n,\mathrm{b}}_{S} = \mathrm{Spf}\left(A[[T_{1}, \ldots, T_{n}]]\right),
\]
(also denoted by $\widehat{\mathbb A}^{n,\mathrm b}_{A}$)
in which $S = \mathrm{Spf}(A)$ and $A[[T_{1}, \ldots, T_{n}]]$ has the $I$-adic topology.
Be careful that $B \widehat \otimes_{A} A[[T_{1}, \ldots, T_{n}]] \not\simeq B[[T_{1}, \ldots, T_{n}]]$ if completion is meant with respect to the topology of $B$ (unless $B$ is finite over $A$ or $n = 0$).
It follows that the bounded affine space is not of local nature (we cannot glue).
\end{enumerate}

There exists a sequence of ``inclusions''
\[
\mathbb A^{n,-}_{S} \hookrightarrow \mathbb A^{n,\mathrm{b}}_{S} \hookrightarrow \mathbb A^n_{S} \hookrightarrow \mathbb P^n_{S}.
\]
We may also notice that $\mathbb A$ represents the sheaf of rings $P \mapsto \Gamma(P, \mathcal O_{P})$ and that $\mathbb A^-$ represents the ideal of topologically nilpotent elements.
\end{xmp}

%%%%%%%%%%%%%%%%%%%%%%%%%
\subsection{Morphisms of adic formal schemes}

We briefly review the basic properties of morphisms of adic formal schemes.

By definition, a \emph{morphism} of adic formal schemes $Q \to P$ is simply a morphism of topologically locally ringed spaces.
It is said to be \emph{adic} if it comes locally from an adic morphism $A \to B$ of adic rings.
In the affine case, it will then come \emph{globally} from an adic morphism of adic rings.
A morphism of adic formal schemes $Q \to P$ is said to be \emph{affine} if it comes locally \emph{on $P$} from a (not necessarily adic) morphism of adic rings $A \to B$.
Then, in the case $P$ is affine, $Q$ will be affine also.

An \emph{open immersion} of adic formal schemes is simply an open immersion of topologically locally ringed spaces.
A morphism of adic formal schemes $Q \to P$ is called a \emph{closed immersion} (resp.\ \emph{finite}) if it comes locally \emph{on $P$} from an \emph{adic} surjective (resp.\ finite) map $A \to B$.
Again, in the affine case, it will come globally from an adic surjective (resp.\ finite) map.
A \emph{locally closed immersion} is the composition of a closed immersion with an open immersion (in that order).
This gives in particular rise to the notion of a \emph{locally closed formal subscheme}.

A morphism $Q \to P$ of adic formal schemes is said to be \emph{locally of finite type} if $Q$ is locally isomorphic to a closed formal subscheme of some relative affine space $\mathbb A^n_{P}$.
This is always an \emph{adic} morphism.
If moreover, the morphism is quasi-compact, then it is said to be \emph{of finite type}.
A morphism of affine formal schemes is of finite type if and only if it is \emph{globally} a closed subscheme of some affine space.
There also exists a stronger notion of morphism \emph{(locally) finitely presented} but this will not matter to us when we enter the noetherian world.

A morphism $Q \to P$ of adic formal schemes is \emph{locally quasi-finite} if it is locally of finite type with discrete fibers (locally on $P$ \emph{and} $Q$, this is the composition of an open immersion and a finite map).
It is said to be \emph{quasi-finite} if moreover it is quasi-compact.

If $u : Q \to P$ is any morphism of adic formal schemes, then the diagonal map $\Delta : Q \to Q \times_{P} Q$ is a locally closed immersion.
The morphism $u$ is said to be \emph{quasi-separated} (resp.\ \emph{separated}) if $\Delta$ is quasi-compact (resp.\ closed).
The morphism $u$ is said to be \emph{proper} if it is separated of finite type and universally closed.

A morphism of adic formal schemes $v \colon Q \to P$ is said to be \emph{flat} (resp.\ \emph{faithfully flat}) if it is flat (resp.\ faithfully flat) as a morphism of locally ringed spaces.
Thus, it means that $v^*$ is exact (resp.\ and $v$ is surjective).
Flatness is a local notion that can be checked on stalks: it means that $\mathcal O_{P,v(y)} \to \mathcal O_{Q,y}$ is flat at all $y \in Q$.
This is only under noetherian hypothesis that there exists a global interpretation.
More precisely, if $A \to B$ is a morphism of \emph{complete noetherian} adic rings and $\mathrm{Spf}(B) \to \mathrm{Spf}(A)$ is (faithfully) flat then the morphism of rings $A \to B$ is (faithfully) flat.
Conversely, if $A \to B$ is flat, then $\mathrm{Spf}(B) \to \mathrm{Spf}(A)$ is flat.
The same holds for faithful flatness but only when $A \to B$ is \emph{adic}.

\begin{xmp}
\begin{enumerate}
\item If both $\mathbb Z$ and $\mathbb F_p$ are endowed with the $1$-adic topology, then the adic morphism $\mathbb Z \twoheadrightarrow \mathbb F_p$ is not flat but the corresponding morphism of formal schemes $\emptyset \simeq \emptyset$ is faithfully flat.
\item The identity $\mathbb Z_p \to \mathbb Z_p$ is obviously faithfully flat, but the morphism $\mathrm{Spf}(\mathbb Z_p) \to \mathrm{Spec}(\mathbb Z_p)$ is not surjective and therefore not faithfully flat.
\end{enumerate}
\end{xmp}

A morphism $u : Q \to P$ is said to be \emph{formally unramified} (resp.\ \emph{formally smooth}, resp.\ \emph{formally \'etale}) if any commutative diagram
\[
\xymatrix{Q \ar[r] & P \\ \mathrm{Spec}(R/\mathfrak a) \ar@{^{(}->}[r] \ar[u] & \mathrm{Spec}(R) \ar[u] \ar@{-->}[ul] }
\]
with $\mathfrak a$ nilpotent may be completed by the diagonal arrow in at most (resp.\ at least, resp.\ exactly) one way.
When $P = \mathrm{Spf}(A)$ and $Q = \mathrm{Spf}(B)$, it means that the morphism of rings $A \to B$ is formally unramified (resp.\ formally smooth, resp.\ formally \'etale).
Be careful that being formally smooth is \emph{not} a local property in general (but formally unramified and formally \'etale are).
The morphism $u$ is called \emph{unramified} (resp \emph{smooth}, resp.\ \emph{\'etale}) if it is formally unramified (resp.\ formally smooth, resp.\ formally \'etale) and locally finitely presented (which is equivalent to locally of finite type in our noetherian situation below).
There exists an intermediate notion of smoothness: a morphism $u : Q \to P$ of adic formal schemes is \emph{differentially smooth} if there exists, locally on $Q$, a formally \'etale morphism $Q \to \mathbb A^{n}_{P}$.

If $Q \hookrightarrow P$ is a locally closed immersion defined (on some open subset of $P$) by an ideal $\mathcal I_{Q}$, we may consider the \emph{first infinitesimal neighborhood} $Q^{(1)}$ of $Q$ in $P$ defined by $\mathcal I_{Q}^2$ (which is an adic formal scheme with the same underlying space as $Q$) and the corresponding short exact sequence
\[
0 \to \check{\mathcal N}_{Q/P} \to \mathcal O_{Q_{(1)}} \to \mathcal O_{Q} \to 0
\]
where $\check{\mathcal N}_{Q/P}$ is by definition the \emph{conormal sheaf}.
In the case of the diagonal immersion $Q \hookrightarrow Q \times_{P} Q$ associated to a morphism $Q \to P$, we obtain the \emph{sheaf of differential forms} $\Omega^1_{Q/P}$.
When $P = \mathrm{Spf}(A)$ and $Q = \mathrm{Spf}(B)$ are affine, we have $\Gamma(Q, \Omega^1_{Q/P}) = \widehat \Omega^1_{B/A}$.

%%%%%%%%%%%%%%%%%%%%%%%%%%%%
\subsection{Locally noetherian formal schemes}

From now on and unless otherwise specified, \textbf{all \emph{adic formal schemes} will be assumed to be \emph{locally noetherian}} (according to the forthcoming definition) and simply called \emph{formal schemes}.

%%%%%%%%%%%%%%%%
\begin{dfn}
An adic formal scheme $P$ is said to be \emph{locally noetherian} if it is locally isomorphic to $\mathrm{Spf}(A)$ where $A$ is a noetherian adic ring.
It is said to be \emph{noetherian} if, moreover, it is quasi-compact.
\end{dfn}

In this situation, it is shown in proposition 1.1.3 of \cite{Crew17*} that a formally smooth morphism of formal schemes if always flat.

Unfortunately, the product of two locally noetherian adic formal schemes is not necessarily locally noetherian (see example below).
In order to get around this problem, the following notion was introduced by Richard Crew in section 1.2 of \cite{Crew17*} (he says (locally) \emph{universally} noetherian):

%%%%%%%%%%%%%
\begin{dfn}
A morphism of adic formal schemes $u \colon P' \to P$ is said to be \emph{locally noetherian} if, whenever $Q \to P$ is a morphism of adic formal schemes with $Q$ locally noetherian, then $u^{-1}(Q) := P' \times_{P} Q$ is locally noetherian.
In the case $u$ is quasi-compact, the morphism is said to be \emph{noetherian}.
\end{dfn}

The notion of locally noetherian morphism is stable under product and composition.

It is also shown in section 2 of \cite{Crew17*} that if $u : Q \to P$ is a locally noetherian morphism of formal schemes, then $\Omega^1_{Q/P}$ is a coherent sheaf and we have $\Omega^1_{Q/P} = 0$ if and only if $u$ is unramified.
This is also the correct condition in order to have a Jacobian criterion:

\begin{thm}[Crew]
Assume that $P \to S$ and $Q \to S$ are two locally noetherian morphisms of (locally noetherian) formal schemes.
\begin{enumerate}
\item If $u : Q \to P$ is an $S$-morphism, then there exists a right exact sequence
\[
u^*\Omega^1_{P/S} \to \Omega^1_{Q/S} \to \Omega^1_{Q/P} \to 0.
\]
Assume moreover that $Q$ is formally smooth over $S$.
Then $u$ is formally smooth (resp.\ \'etale) if and only if $u^*\Omega^1_{P/S}$ is locally a direct summand in (resp.\ is isomorphic to) $\Omega^1_{Q/S}$.
\item If $i : Q \hookrightarrow P$ is an $S$-immersion, then there exists a right exact sequence
\[
\check{\mathcal N}_{Q/P} \to i^*\Omega^1_{P/S} \to \Omega^1_{Q/S} \to 0.
\]
Assume moreover that $P$ is formally smooth over $S$.
Then $Q$ is formally smooth (resp.\ \'etale) over $S$ if and only if $\check{\mathcal N}_{Q/S}$ is is locally a direct summand in (resp.\ is isomorphic to) $i^*\Omega^1_{P/S}$.
\end{enumerate}
\end{thm}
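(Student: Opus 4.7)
The plan is to reduce both parts to the classical statements for Kähler differentials of noetherian rings, then extract the smoothness/\'etaleness criteria from the infinitesimal lifting property via a torsor computation.

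For the right exact sequences, both assertions are local on $Q$, so I reduce to the affine case $S = \mathrm{Spf}(R)$, $P = \mathrm{Spf}(A)$, $Q = \mathrm{Spf}(B)$ with $R, A, B$ complete noetherian adic rings and continuous maps $R \to A \to B$. The classical sequences
$$\Omega^1_{A/R} \otimes_A B \to \Omega^1_{B/R} \to \Omega^1_{B/A} \to 0 \quad\text{and}\quad I/I^{\cdot 2} \to \Omega^1_{A/R} \otimes_A B \to \Omega^1_{B/R} \to 0$$
(the latter when $B = A/I$) are right exact. Because the source modules are finitely generated in the universally locally noetherian setting and completion is exact on finitely generated modules over a noetherian ring, the sequences survive completion. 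The identification of $\widehat\Omega^1_{B/A}$ with $\Gamma(Q,\Omega^1_{Q/P})$, and its functoriality under non-adic maps, is exactly the content of Crew's construction already invoked in the theorem statement.

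For the smoothness criterion in (1), I use the standard torsor description of lifts across a square-zero thickening $\mathrm{Spec}(R_0/\mathfrak a) \hookrightarrow \mathrm{Spec}(R_0)$. Given the lifting diagram defining formal smoothness of $u$, formal smoothness of $Q \to S$ already produces an $S$-morphism $g : \mathrm{Spec}(R_0) \to Q$ extending the reduction. The set of such $S$-lifts is a torsor under $\mathrm{Hom}_{\mathcal O_Q}(\Omega^1_{Q/S}, \mathfrak a)$, the set of $S$-lifts of the prescribed map $\mathrm{Spec}(R_0) \to P$ is a torsor under $\mathrm{Hom}_{\mathcal O_Q}(u^*\Omega^1_{P/S}, \mathfrak a)$, and post-composition with $u$ is equivariant for the pullback map between these $\mathrm{Hom}$-groups. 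Hence $g$ can be corrected to make the whole diagram commute precisely when
$$\mathrm{Hom}(\Omega^1_{Q/S}, \mathfrak a) \twoheadrightarrow \mathrm{Hom}(u^*\Omega^1_{P/S}, \mathfrak a)$$
is surjective, which is equivalent to $u^*\Omega^1_{P/S}$ being locally a direct summand of $\Omega^1_{Q/S}$. Conversely, if $u$ is formally smooth, I apply the lifting property to the first infinitesimal neighborhood of the diagonal of $u$ to produce, locally, a section of $\Omega^1_{Q/S} \twoheadrightarrow \Omega^1_{Q/P}$, and combine it with the injection $u^*\Omega^1_{P/S} \hookrightarrow \Omega^1_{Q/S}$ provided by the right exact sequence (which is injective precisely because of formal smoothness of $u$) to obtain the splitting. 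The \'etale refinement follows immediately: unramifiedness forces $\Omega^1_{Q/P} = 0$, so ``locally a direct summand'' upgrades to ``isomorphism''.

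Part (2) proceeds by the same torsor argument with roles reversed: given a thickening and a map of $\mathrm{Spec}(R_0)$ to $P$ obtained from formal smoothness of $P/S$, the obstruction to factoring through $Q$ is a class in $\mathrm{Hom}(\check{\mathcal N}_{Q/P}, \mathfrak a)$ measuring the failure to send the ideal sheaf of $Q$ into $\mathfrak a$; the hypothesis that $\check{\mathcal N}_{Q/P}$ is a local direct summand of $i^*\Omega^1_{P/S}$ makes the restriction map $\mathrm{Hom}(i^*\Omega^1_{P/S}, \mathfrak a) \to \mathrm{Hom}(\check{\mathcal N}_{Q/P}, \mathfrak a)$ surjective, killing the obstruction. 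Conversely, formal smoothness of $Q/S$ gives, locally, a retraction of the first infinitesimal neighborhood $Q^{(1)}$ onto $Q$, which concretely realises the splitting of the conormal sequence.

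The main technical obstacle is the first paragraph, not the second: verifying that the completed Kähler module $\widehat\Omega^1_{B/A}$ genuinely represents $\Omega^1_{Q/P}$ as a coherent $\mathcal O_Q$-module and that the exact sequences of differentials remain right exact after completion when the maps $R \to A \to B$ are not necessarily adic. This is exactly where the universally locally noetherian hypothesis is crucial and is the heart of Crew's contribution being cited; granted this, the infinitesimal lifting arguments are formally identical to the classical scheme-theoretic ones.
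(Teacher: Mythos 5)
The paper does not actually prove this theorem: its ``proof'' is the single line ``This is proved in \cite{Crew17*}'', so there is no argument in the text to compare yours against. Judged on its own terms, the second half of your proposal (the torsor computation for lifts across a square-zero thickening, the equivalence between surjectivity of the induced maps on $\mathrm{Hom}$-groups and the locally-split-injection condition for coherent sheaves, and the dualized version for the conormal sequence) is the standard EGA~$0_{\mathrm{IV}}$ argument and is essentially sound. Two small cautions there: the paper's test objects for formal smoothness are $\mathrm{Spec}(R)$ with $R$ \emph{discrete}, whereas your converse step applies the lifting property to the first infinitesimal neighborhood of the diagonal, which is an adic formal scheme; you need the intermediate (standard, but unstated) reduction that lifting against discrete nilpotent thickenings implies lifting against adic ones by passing to the limit over $R/I^n$. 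Also, the injectivity of $u^*\Omega^1_{P/S}\to\Omega^1_{Q/S}$ is not ``provided by the right exact sequence''; it is proved simultaneously with the splitting, not prior to it.

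The genuine gap is in your first paragraph, and the justification you give there is not just incomplete but incorrect as stated. You argue that the classical sequences of K\"ahler differentials ``survive completion'' because ``the source modules are finitely generated \dots and completion is exact on finitely generated modules over a noetherian ring.'' But the \emph{uncompleted} modules $\Omega^1_{B/R}$, $\Omega^1_{B/A}$ are not finitely generated in this setting (already $\Omega^1_{A[[T]]/A}$ and $\Omega^1_{\mathbb Z_p/\mathbb Z}$ are enormous); what is finitely generated is the completed/separated module $\widehat\Omega^1$, and that coherence statement is itself the nontrivial content of Crew's theorem, not something you may assume in order to derive it. Completion of an arbitrary right exact sequence of non-finitely-generated modules need not remain exact in the middle (the kernel of $\widehat N\to\widehat P$ is the closure of the image of $M$, which can strictly contain the image of $\widehat M$), so the passage from the algebraic Jacobi--Zariski sequence to the completed one is exactly the point that requires proof. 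Note also that the paper defines $\Omega^1_{Q/P}$ and $\check{\mathcal N}_{Q/P}$ directly as conormal sheaves of first infinitesimal neighborhoods of immersions of locally noetherian formal schemes; the exact sequences should be established at that level (where the universally noetherian hypothesis guarantees that the ambient products are locally noetherian and the conormal sheaves are coherent), rather than by completing the uncompleted algebraic sequences. You do flag this paragraph as the ``main technical obstacle'' and defer it to Crew, which is honest, but as written the argument offered for it would not compile into a proof.
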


\begin{proof}
This is proved in proposition 2.2.5 of \cite{Crew17*}.
\end{proof}

It is also shown in corollary 2.2.10 of \cite{Crew17*} that a locally noetherian morphism $u$ is formally smooth if and only if it is differentially smooth.

%%%%%%%%%%%%%%%%
\begin{xmp}
We let $S$ be a formal scheme (that we assume to be affine when we consider a bounded affine space over $S$).
\begin{enumerate}
\item All the above examples of formal schemes are locally noetherian (as long as the base $S$ is so).
\item
The formal scheme $\mathbb A^b_{S} \times_{S} \mathbb A^b_{S}$ is \emph{not} locally noetherian in general: the ring $A[[T]] \widehat \otimes_{A} A[[T]]$, where completion is meant with respect to the topology of $A$, is not noetherian in general (e.g. $A$ a big field with the discrete topology).
As a consequence, the morphism of (noetherian) formal schemes $\mathbb A^b_{S} \to S$ is not locally noetherian in general.
\item
Assume $S = \mathrm{Spf}(\mathcal V)$ where $\mathcal V$ is a discrete valuation ring with perfect residue field $k$ of characteristic $p > 0$.
Then the formal scheme $\mathbb A^b_{S}$ is \emph{not} smooth over $S$ but it is differentially smooth.
The same thing happens more generally in the case $S = \mathrm{Spf}(A)$ where $A$ has the $p$-adic topology and the absolute Frobenius of $A/pA$ is finite.
\item 
The formal scheme $\mathbb A^-_{S}$ is differentially smooth over $S$.
In particular, it is formally smooth.
Note however that this is not a smooth formal scheme over $S$ because it is not even adic over $S$.
\item
Of course, the formal scheme $\mathbb A_{S}$ is smooth over $S$.
\end{enumerate}
\end{xmp}

%%%%%%%%%%%%%%%%%%%%%
\subsection{Partially proper maps}

We introduce here the notion of a partially proper map of formal schemes and more generally discuss some properties that are stable under deformation.
Recall that all adic formal schemes are now assumed to be locally noetherian and simply called formal schemes.

If $P$ is a (locally noetherian) formal scheme, then there exists a unique reduced closed \emph{subscheme} $P_{\mathrm{red}} \subset P$ having the same underlying space as $P$ (in other words, there exists a biggest ideal of definition).
This provides an adjoint to the embedding of the category of locally noetherian (usual) schemes into the category of locally noetherian formal schemes.

When we consider a morphism of formal schemes $u \colon Q \to P$, we may wonder how some properties of the morphism of schemes $u_{\mathrm{red}}\colon Q_{\mathrm{red}} \to P_{\mathrm{red}}$ might impact the morphism $u$.
For example, $u$ is \emph{separated} or \emph{affine} if and only if $u_{\mathrm{red}}$ is separated or affine.
Also, $u$ is locally noetherian if and only if $u_{\mathrm{red}}$ is (and this last condition may be tested with usual noetherian schemes).

%%%%%%%%%%%%%%%%
\begin{dfn}
A morphism of formal schemes $u \colon Q \to P$ is said to be
\begin{enumerate}
\item
\emph{formally (locally) of finite type} (resp.\ \emph{formally (locally) quasi-finite}) if $u_{\mathrm{red}}$ is (locally) of finite type (resp.\ (locally) quasi-finite).
\item
\emph{partially proper} (resp.\ \emph{partially finite}) if $u$ is formally locally of finite type and induces a proper (resp.\ finite) morphism of schemes $Z_{\mathrm{red}} \to T_{\mathrm{red}}$ whenever $Z$ and $T$ are irreducible components of $Q$ and $P$ such that $u(Z) \subset T$.
\end{enumerate}
\end{dfn}

One may also call a locally closed immersion $u$ a \emph{formal thickening} when $u_{\mathrm{red}}$ is an isomorphism, in which case all the above properties are satisfied.

For an \emph{adic} morphism $u$, being formally (locally) of finite type or quasi-finite, is equivalent to being (locally) of finite type or quasi-finite, but this is not the case in general (see examples below).
When $u$ is quasi-compact, being partially proper or finite is equivalent to $u_{\mathrm{red}}$ being proper or finite.
If moreover, $u$ is adic, this is equivalent to $u$ itself being proper or finite.
In other words, these notions are relevant only in the case of non-adic maps.

It is important to notice that a morphism which is locally formally of finite type is automatically locally noetherian.
In particular, if the morphism is also formally smooth, then it is automatically differentially smooth.

%%%%%%%%%
\begin{xmp}
\begin{enumerate}
\item
If $S$ is any formal scheme, then the structural map $\mathbb{A}^-_{S} \to S$ is a formal thickening.
This provides an example of a morphism which is partially finite, and therefore partially proper and formally quasi-finite, and in particular formally of finite type.
However, this morphism is \emph{not} locally of finite type (and not locally quasi-finite, not proper, not finite either) because it is not even adic.
\item
Classically, the formal model of $\mathbb G_{m}$ whose reduction is an infinite union of projective lines, each meeting one another in exactly one point, is a partially proper formal scheme.
However, it is not quasi-compact.
\end{enumerate}
\end{xmp}

When we write $\mathbb A^{\pm,N}$, we mean a product of $N$ copies of $\mathbb A$ or $\mathbb A^-$.
Of course, they may be reordered so that
\[
\mathbb A^{\pm,N} = \mathbb A^{n} \times \mathbb A^{-,m}
\]
with $N = n + m$.
In other words, if $S = \mathrm{Spf}(A)$, we have
\[
\mathbb A^{\pm,N}_{S} = \mathrm{Spf}(A[T_{1}, \ldots, T_{n}, U_{1}, \ldots, U_{m}])
\]
with the $I[T_{1}, \ldots, T_{n}] + (U_{1}, \ldots, U_{m})$-topology.
Alternatively, we can write
\begin{align*}
\mathbb A^{\pm,N}_{S} & = \mathrm{Spf}(A\{T_{1}, \ldots, T_{n}\}[[U_{1}, \ldots, U_{m}]])
\\
&= \mathrm{Spf}(A[[U_{1}, \ldots, U_{n}]]\{T_{1}, \ldots, T_{n}\}).
\end{align*}

%%%%%%%%%%%
\begin{lem}
A morphism of formal schemes $u \colon Q \to P$ is formally locally of finite type if and only if it factors, locally on $P$ and $Q$, through a closed immersion $Q \hookrightarrow \mathbb A^{\pm,N}_{P}$.
\end{lem}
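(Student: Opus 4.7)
The ``if'' direction is the easy one. A closed immersion $Q \hookrightarrow \mathbb A^{\pm,N}_P$ induces a closed immersion $Q_{\mathrm{red}} \hookrightarrow (\mathbb A^{\pm,N}_P)_{\mathrm{red}}$. Since the $S$-variables are topologically nilpotent, they become nilpotent in the reduction, so $(\mathbb A^{\pm,N}_P)_{\mathrm{red}} = (\mathbb A^n_{P_{\mathrm{red}}})_{\mathrm{red}}$, which is of finite type over $P_{\mathrm{red}}$. Hence $Q_{\mathrm{red}} \to P_{\mathrm{red}}$ is locally of finite type.

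For the ``only if'' direction, I would work locally and assume $P = \mathrm{Spf}(A)$ and $Q = \mathrm{Spf}(B)$ with $A,B$ complete noetherian adic rings, ideals of definition $I \subset A$ and $J \subset B$, and $A \to B$ continuous. The hypothesis says $B/\sqrt J$ is a finitely generated $(A/\sqrt I)$-algebra. The first key step is a purely algebraic noetherian lifting argument: since $B/J$ is noetherian, its nilradical $\sqrt J / J$ is finitely generated and nilpotent, and lifting any set of $(A/I)$-algebra generators of its quotient $B/\sqrt J$ together with ideal generators of $\sqrt J/J$ produces finitely many $\bar t_1,\ldots,\bar t_n \in B/J$ which generate $B/J$ as an $(A/I)$-algebra (by a standard iteration $B/J = C + \sqrt J/J = C + (\sqrt J/J)^2 = \cdots$). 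Lift these to elements $t_1,\ldots,t_n \in B$, and choose finitely many generators $s_1,\ldots,s_m$ of the ideal $J$.

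Now define $\varphi \colon \widehat A\{T_1,\ldots,T_n\}[[S_1,\ldots,S_m]] \to B$ by $T_i \mapsto t_i$, $S_j \mapsto s_j$. Continuity follows from the fact that $(IB)^k \subset J$ for $k$ large, a consequence of continuity of $A \to B$. Adicity reduces to checking that $IB + (s_1,\ldots,s_m) = IB + J$ is an ideal of definition of $B$, which is clear since $J \subset IB + J$ and $(IB+J)^N \subset J$ for $N$ large. For surjectivity, given $b \in B$, use that the $\bar t_i$ generate $B/J$ over $A/I$ to write $b = \varphi(x_0) + j_0$ with $x_0 \in A[T]$ and $j_0 \in J$. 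Writing $j_0 = \sum s_k b_k$ and iterating, I would inductively construct polynomials $y_N \in A[T,S]$ of degree $\leq N$ in $S$ with $y_{N+1} - y_N \in (S)^N$ and $b - \varphi(y_N) \in J^N$. Since $(S)^N$ is contained in the $N$-th power of the ideal of definition of the source, $(y_N)$ is Cauchy; by completeness it converges to some $y$, and $\varphi(y) = b$. Hence $\varphi$ is an adic surjection, giving the required closed immersion.

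The main obstacle is the surjectivity step. One must engineer the successive approximation so that the error at stage $N$ contributes only in powers of $(S)$, not in arbitrary combinations of $(I,S)$: convergence in the $T$-directions has to flow from the $(S)$-side. This is precisely why the ``bounded'' variables $S_j$ cannot be absorbed into the $T_i$, and it is what forces the target to be $\mathbb A^{\pm,N}$ rather than $\mathbb A^N$.
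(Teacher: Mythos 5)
Your proof is correct and follows essentially the same route as the paper: lift algebra generators of the reduction to get the $T_i$, take generators of an ideal of definition to get the $S_j$, map $\widehat A\{T\}[[S]] \to B$, and deduce surjectivity from the fact that an adic map of complete rings surjective modulo the ideal of definition is surjective (which you spell out as a successive approximation, while the paper just invokes it). The only cosmetic difference is that the paper works with the \emph{biggest} ideal of definition $J$, so the hypothesis directly says $A \to B/J$ is of finite type and your extra lifting step through $\sqrt J/J$ is not needed (and note $IB \not\subset J$ in general, so one should speak of $A$-algebra rather than $A/I$-algebra generation of $B/J$, or replace $I$ by a suitable power).
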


\begin{proof}
The condition is clearly sufficient and, in order to prove that it is necessary, we may assume that $u$ comes from a continuous morphism of noetherian adic rings $A \to B$ with $B$ complete.
If $J$ is the biggest ideal of definition of $B$, then our hypothesis tells us that the composite map $A \to B \to B/J$ is of finite type and extends therefore to a surjective map $A' := A[T_{1}, \ldots, T_{n}] \twoheadrightarrow B/J$.
Assume that $J$ is generated by $g_{1}, \ldots, g_{m}$.
Then, since $B$ is complete, there exists a unique continuous map $A'' := A'[[U_{1}, \ldots, U_{m}]] \to B$ that sends $U_{i}$ to $g_{i}$ for all $i = 1, \ldots, m$.
After completion, this map is surjective because it is adic and surjective onto $B/I''B$ where $I''$ is the biggest ideal of definition of $A''$.
\end{proof}

As a particular case of the former lemma, we see that a formal scheme is formally locally of finite type over a discrete valuation ring if and only if it is \emph{special} in the sense of Berkovich (\cite{Berkovich96b}).

\begin{dfn}
A morphism of formal schemes $u \colon Q \to P$ is said to satisfy the \emph{valuative criterion} for properness if, given a field $F$ and a valuation ring $R$ of $F$, then any commutative diagram
\[
\xymatrix{Q \ar[r] & P \\ \mathrm{Spec}(F) \ar[r] \ar[u] & \mathrm{Spec}(R) \ar[u] \ar@{-->}[ul]}
\]
may be uniquely completed by the dotted arrow.
\end{dfn}

When $Q = \mathrm{Spf}(B)$ is affine, the lifting property simply means that the image of $B \to F$ is contained in $R$.

%%%%%%%%%%%%
\begin{prop}
Let $u : Q \to P$ be morphism of formal schemes.
Then $u$ is partially proper if and only if it is formally locally of finite type and satisfies the valuative criterion for properness.
\end{prop}

\begin{proof}
This follows directly from the valuative criterion of properness for usual schemes because properness is not sensitive to nilpotent immersions (and $F$ and $R$ have the discrete topology).
\end{proof}

Let us finish by recalling the notion of a blowing-up in the setting of formal schemes.
If $i : X \hookrightarrow P$ is a closed immersion of formal schemes, then the \emph{ideal of $X$ in $P$} is the kernel $\mathcal I_{X}$ of the canonical map $\mathcal O_{P} \to i_{*}\mathcal O_{X}$.
When $X$ is a \emph{usual} scheme, then there exists an \emph{adic} morphism $u : P' \to P$ which is universal for making $\mathcal I_{X}\mathcal O_{P'}$ an invertible ideal of $\mathcal O_{P'}$.
This is the \emph{blowing up} of $P$ centered at $X$.
A blowing-up is always a proper map and it induces an isomorphism between the open complement of $X' := u^{-1}(X)$ and the open complement of $X$.

%%%%%%%%%%%%%%%%%%
\begin{xmp}
Assume $X := \{0\}$ is the origin in the special fiber of $P := \widehat {\mathbb A}_{\mathbb Z_{p}}$ (with the $p$-adic topology).
In other words, $P = \mathrm{Spf}(\mathbb Z_{p}\{T\})$ and $X = \mathrm V(T,p)$.
Then
\[
P' := \widehat{\mathrm{Proj}}(\mathbb Z_{p}[T][U, V]/TU-pV) \quad \mathrm{and} \quad X' := \mathrm{Proj}(\mathbb F_{p}[U,V]) \simeq \mathbb P_{\mathbb F_{p}}.
\]
The formal scheme $P'$ has an affine covering $P' = P'_{1} \cup P'_{2}$ with
\[
P'_{1} := \mathrm{Spf}(\mathbb Z_{p}\{T,U\}/TU-p) \quad \mathrm{and} \quad P'_{2} := \mathrm{Spf}(\mathbb Z_{p}\{T,V\}/T-pV)
\]
and $X'= X'_{1} \cup X'_{2}$ where $X'_{1} = \mathrm{Spec}(\mathbb F_{p}[U]) \simeq \mathbb A_{\mathbb F_{p}}$ and $X'_{2} := \mathrm{Spec}(\mathbb F_{p}[V]) \simeq \mathbb A_{\mathbb F_{p}}$.
\end{xmp}

%%%%%%%%%%%%%%%%
\section{Adic spaces}

In this section, we give a brief description of the theory of adic spaces and some standard examples.
We mostly follow the notes of Wedhorn in \cite{Wedhorn19} but the vocabulary is also influenced by Scholze's lectures in Berkeley (\cite{Scholze*}) and Conrad's lectures in Stanford (\cite{Conrad*}) (see also the more recent notes by Sophie Morel (\cite{Morel*}) and Judith Ludwig (\cite{Ludwig*}).
Anyway, almost everything can be found in the original papers \cite{Huber96} and \cite{Huber93}.
We do not follow the fashion and stick to the additive notation from Huber's thesis \cite{Huber93b}.

%%%%%%%%%%%%%%
\subsection{Valuations}

The main idea in the theory of adic spaces consists in using valuations instead of prime ideals in order to describe the physical points.

%%%%%%%%%%%%%%%%
\begin{dfn}
If $A$ is any ring and $G$ is a totally ordered \emph{additive} group, a \emph{valuation}
\[
v \colon A \to G \cup \{+\infty\}
\]
is a map that satisfies
\begin{enumerate}
\item $v(1) = 0,\quad v(0) = +\infty$,
\item $\forall f, g \in A, \quad v(fg) = v(f) + v(g)$,
\item $\forall f, g \in A, \quad v(f+g) \geq \min(v(f), v(g))$.
\end{enumerate}
\end{dfn}

The \emph{height} of $v$ is the height of the subgroup $G_{v}$ generated by the image of $v$ (supremum length of a chain of convex subgroups).
We call $v$ \emph{trivial} if $G_{v} = \{0\}$ and \emph{discrete} if $G_{v} \simeq \mathbb Z$.

Two valuations $v$ and $v'$ on the same ring $A$ are said to be \emph{equivalent} if they define the same preorder on $A$:
\[
\forall f, g \in A, \quad v(f) \leq v(g) \Leftrightarrow v'(f) \leq v'(g)
\]
(or equivalently, if they both factor through the same valuation).

%%%%%%%%%%%%%%
\begin{xmp}
\begin{enumerate}
\item
Up to equivalence, a \emph{trivial} valuation corresponds to a unique prime $\mathfrak p \subset A$:
\[
v_{\mathfrak p}(f) = 0 \Leftrightarrow f(\mathfrak p) \neq 0 \quad \mathrm{and} \quad v_{\mathfrak p}(f) = + \infty \Leftrightarrow f(\mathfrak p) = 0.
\]
Recall from our conventions that $f(\mathfrak p)$ denotes the image of $f$ in $\kappa(\mathfrak p) := \mathrm{Frac}(A/\mathfrak p)$ so that the conditions also read $f \not \in \mathfrak p$ and $f \in \mathfrak p$ respectively.
\item
If $v$ is a valuation on a ring $A$, then the \emph{Gauss valuation} (associated to $v$) on $A[T]$ is defined for $F := \sum_{i=0}^d f_{i} T^i \in A[T]$ by
\[
v(F) = \min_{i=0}^d v(f_{i}) \in G \cup \{+ \infty\}.
\]
It has the same height as the original valuation (geometrically, it corresponds to the maximal point of the unit disk).
\item
With the same notations, one may always define valuations of higher height by
\[
v^-(F) = \min_{i=0}^d (v(f_i), i) = \left(v(F), \min_{v(F)=v(f_{i})}{i}\right) \in (G \times \mathbb Z) \cup \{+ \infty\}
\]
or
\[
v^+(F) = \min_{i=0}^d (v(f_i),-i) = \left(v(F), -\max_{v(F)=v(f_{i})}{i}\right) \in (G \times \mathbb Z) \cup \{+ \infty\},
\]
where $G \times \mathbb Z$ has the lexicographical order (geometrically, they correspond to the specializations of the maximal point inside and outside the unit disk).
When we start from a trivial valuation on an integral domain, then $v^- = \mathrm{val}$ (standard valuation on polynomials) and $v^+ = - \deg$ (where $\deg$ denotes the degree map on polynomials).
\end{enumerate}
\end{xmp}

The \emph{support} of a valuation $v$ on $A$ is the prime ideal 
\[
\mathrm{supp}\,(v) := \{f \in A \colon v(f) = + \infty\}.
\]
The residue field of $v$ is $\kappa(v) := \kappa(\mathrm{supp}\,v)$ and the image of $f \in A$ in $\kappa(v)$ will be denoted by $f(v)$ ($= f(\mathrm{supp}\, v)$).

Some authors favor the multiplicative notation for valuations.
We will do that only for fields (but this shouldn't create any confusion).
In general, one can move back and forth between the additive and multiplicative notation by writing formally
\[
\Gamma := \exp (-G) = \{\exp(-g) \colon g \in G\} \quad \mathrm{and} \quad G := -\ln(\Gamma) = \{-\ln \gamma \colon \gamma \in \Gamma\}.
\]

%%%%%%%%%%%%%%
\begin{dfn}
If $K$ is a field and $\Gamma$ is a totally ordered (commutative) \emph{multiplicative} group (of any height), then an \emph{(ultrametric) absolute value\footnote{Also called \emph{multiplicative semi-norm}.}}
\[
|-| \colon K \to \{0\} \cup \Gamma,
\]
is a map that satisfies
\begin{enumerate}
\item $\forall a \in K, \quad |a| = 0 \Leftrightarrow a= 0$,
\item $\forall a,b \in K, \quad |ab| = |a||b|$,
\item $\forall a,b \in K, \quad |a+b| \leq \max(|a|, |b|)$.
\end{enumerate}
\end{dfn}

Besides the multiplicative notation and reversing order, this is nothing but a valuation on $K$ and we will apply systematically the vocabulary of valuations to ultrametric absolute values.
Note that we do \emph{not} require an absolute value to have height at most one (this is \emph{not} standard).
This will not matter much later when $K$ is a Huber field because there will then exist an absolute value of height at most one on $K$ that induces the same topology.

When $K$ is endowed with an ultrametric absolute value (of any height), it is called a \emph{valued field} and
\[
K^+ := \{a \in K \colon |a| \leq 1\}
\]
is its \emph{valuation ring}.
Up to equivalence, $|-|$ is uniquely determined by $K^+$, the value group $\Gamma_{|-|}$ is isomorphic to $K^\times/K^{+\times}$ and the height of $|-|$ is the same thing as the (Krull) dimension of $K^+$.
The ultrametric absolute value induces a topology on $K$ with basis of open subsets given by the open ``disks''
\[
D(c,\gamma) = \{a \in K \colon \quad |a -c| < \gamma\}
\]
(this is the topology inherited from $\Gamma$ - see below).

Any valuation $v$ on a ring $A$ will induce an ultrametric absolute value of the same height $| - |$ on $\kappa(v)$ (and conversely).
The formulas are simply given by
\[
|f(v)| = \exp(-v(f)) \quad \mathrm{and} \quad v(f) = -\ln(|f(v)|).
\]

%%%%%%%%%%%%%%
\begin{xmp}
\begin{enumerate}
\item
The equivalence between prime ideals and trivial valuations sounds more natural in term of absolute value:
\[
|f(v_{\mathfrak p})|= 0 \Leftrightarrow f(\mathfrak p) = 0 \quad \mathrm{and} \quad |f(v_{\mathfrak p})| \neq 0 \Leftrightarrow f(\mathfrak p) \neq 0.
\]
\item
The formula for the Gauss valuation also sounds more familiar: if $F := \sum_{i=0}^d f_{i} T^i \in A[T]$, then
\[
|F(v)| = \max_{i=0}^d |f_{i}(v)|.
\]
\item
Finally the inner and outer valuations read
\[
|F(v^-)| = \max_{i=0}^d |f_i(v)|\rho_-^i \quad \mathrm{and} \quad |F(v^-)| = \max_{i=0}^d |f_i(v)|\rho_+^i 
\]
with the convention $\epsilon < \rho_- < 1 < \rho_+ < \lambda$ whenever $\epsilon < 1 < \lambda$ in $\mathbb R$.
\end{enumerate}
\end{xmp}

%%%%%%%%%%%%%%%%
\subsection{Huber rings}

The notion of an $f$-adic ring was introduced by Huber and will be called here a Huber ring.
This is a generalization of the notion of an affinoid algebra and a Huber ring is also sometimes called an affinoid ring.
More precisely:

%%%%%%%%%
\begin{dfn}
A topological ring $A$ is called a \emph{Huber ring} (an $f$-adic ring in the sense of Huber) if there exists an open adic subring $A_{0} \subset A$.
We then call $A_{0}$ a \emph{ring of definition} for $A$ and any ideal of definition $I_{0}$ of $A_{0}$ will be also called an \emph{ideal of definition} for $A$.
A ring homomorphism $A \to B$ between two Huber rings is called \emph{adic} if it induces an adic map between some rings of definition.
\end{dfn}

When $A$ is a Huber ring, then the completion $\widehat A$ of $A$ is also a Huber ring.
Moreover $\widehat A_{0}$ is a ring of definition for $\widehat A$ and we have $\widehat A = A \otimes_{A_{0}} \widehat A_{0}$.
In particular, there is no harm in assuming that a Huber ring is complete but there is a loss in flexibility.

By definition, a morphism of Huber rings $A \to B$ is a continuous homomorphism and it will always send a ring (resp.\ an ideal) of definition into some ring (resp.\ some ideal) of definition.
We will make it clear when we assume that the morphism is actually adic (which is, as this was the case for adic rings, a much stronger condition).

It may be noticed that $A$ has the $I_{0}$-adic topology as an $A_{0}$-module.
This may be confusing in the important case when $I_{0} = (\pi)$ is a principal ideal in $A_0$ and we just say ``$\pi$-adic'' where we mean $\pi A_{0}$-adic and not $\pi A$-adic.
But this is a classical issue since we talk about the ``$p$-adic'' topology of $\mathbb Q_p$ although $\mathbb Q_p$ is not an adic ring.

%%%%%%%%%%
\begin{dfn}
Let $A$ be a Huber ring.
Then,
\begin{enumerate}
\item $A$ is said to be of \emph{of noetherian type} if $A$ is finitely generated over some noetherian ring of definition.
\item $A$ is called a \emph{Tate ring} if there exists a topologically nilpotent unit $\pi \in A$.
\end{enumerate}
\end{dfn}

Note that if $A$ is of noetherian type or if $A$ is a Tate ring, so is $\widehat A$.

%%%%%%%%%%%%%%%%%
\begin{xmp}
\begin{enumerate}
\item An adic ring $A$ is always a Huber ring which is its own ring of definition.
It is of noetherian type if and only if $A$ is noetherian.
It cannot be a Tate ring unless $\widehat A$ is the zero ring.
As a particular case, a usual ring $A$ (endowed with the discrete topology) is a Huber ring.
\item If $A_{0}$ is a $\pi$-adic ring for some regular $\pi \in A_{0}$, then $A := A_{0}[1/\pi]$ is a Tate ring with ring of definition $A_{0}$.
And conversely, if $A$ is a Tate ring with ring of definition $A_{0}$, then there exists $\pi \in A_{0}$ such that $A_{0}$ is a $\pi$-adic ring and $A = A_{0}[1/\pi]$.
The Tate ring $A$ is of noetherian type if (but not only if) $A_{0}$ is noetherian.
\item Let $(K, |-|)$ be a valued field (of some height).
When $K$ is a Huber ring (for the topology induced by $|-|$), we call $(K,|-|)$ a \emph{Huber valued field}.
Then, either $|-|$ is trivial or $K$ is a Tate ring in which case, we say \emph{Tate valued field}.
Now, a topological field $K$ is said to be \emph{non archimedean} if its topology \emph{may be} defined by some absolute value of height one.
Then, $(K, |-|)$ is a Tate valued field if and only if $K$ is non-archimedean (but the original absolute value $ |-|$ maybe of higher rank).
A non archimedean field is of noetherian type if and only if the topology may be defined by a \emph{discrete} valuation.
\item
Let $A$ be a Huber ring with ring of definition $A_{0}$ and ideal of definition $I_{0}$.
If $(f_{1}, \ldots, f_{r})$ is an \emph{open} ideal in $A$ and $k \in \mathbb N$, then the polynomial ring $A[T]$ is a Huber ring for the topology defined by the subring $A_{0}[f_{1}T^k, \ldots, f_{r}T^k]$ endowed with the ideal of definition $I_{0}[f_{1}T^k, \ldots, f_{r}T^k]$.
Openness condition is necessary for $A[T]$ to be a topological ring (for this topology).
Geometrically, this corresponds to a closed polydisc.
This construction preserves the noetherian type and the Tate properties.
\end{enumerate}
\end{xmp}

If $A$ is a Huber ring, we will denote by $A^\circ$ the subring of power bounded elements of $A$ (union of all rings of definition) and by $A^{\circ\circ}$ the ideal of topologically nilpotent elements in $A^{\circ}$ (union of all ideals of definition).

%%%%%%%%%%%%%%%%%%
\subsection{Huber pairs}

A tricky ingredient in Huber theory is that it is necessary to work with pairs of rings.
Actually, a positivity condition will be necessary in order to get rid of unwanted points.

%%%%%%%%%%%%%%
\begin{dfn}
A \emph{Huber pair} (an affinoid ring in the sense of Huber) is a pair $(A, A^+)$ where $A$ is a Huber ring and $A^+$ is \emph{any} subset $A^{\circ}$.
The integral closure $\overline A^+$ of the subring generated by $A^+$ and $A^{\circ\circ}$ is called the \emph{ring of integral elements}. 
\end{dfn}

As we will see below, there is no harm in replacing $A^+$ with $\overline A^+$ (as Huber systematically does).
There is however more flexibility in not making such an assumption.
We will call the Huber pair $(A, A^+)$ \emph{complete} if $A$ is complete and $A^+$ is the full ring of integral elements ($A^+ = \overline A^+$).
We will also call the Huber pair \emph{rigid} when $\overline A^+ = A^\circ$ is the whole subring of power-bounded elements.
A Huber pair $(A, A^+)$ is said to be of \emph{noetherian type} (resp.\ is called a \emph{Tate pair}) when $A$ is of noetherian type (resp.\ is a Tate ring).
Note that $A^+$ plays no role in this last definition.

\begin{xmp}
\begin{enumerate}
\item If $A$ is any Huber ring, then both $(A,A^\circ)$ and $(A, \emptyset)$ are Huber pairs.
The ring of integral elements is $A^\circ$ in the first case and this is the integral closure of the subring $\mathbb Z \cdot 1_{A} + A^{\circ\circ} \subset A$ in the second case.
\item If $A$ is an adic ring, and this includes usual rings (with the discrete topology), then $(A,A)$ is a rigid Huber pair.
\item If $A_{0}$ is a $\pi$-adic ring for some regular $\pi \in A_{0}$ and $A := A_{0}[1/\pi]$, then $(A, A_{0})$ is a Tate pair.
\item Let $(K, |-|)$ be a non trivially valued field with valuation ring $K^+$.
Then $(K, K^+)$ is a Huber pair if and only if $(K, |-|)$ is a Tate value field.
This happens exactly when $K^+$ has a prime ideal of height one ($|-|$ is \emph{microbial} in Huber's words).
Then, $K^+$ is exactly the ring of integral elements but $K^+ \neq K^\circ$ in general and $(K, K^+)$ is not rigid.
Actually, $K^\circ$ is a valuation ring that induces an absolute value of height one which defines the original topology of $K$.
The ring $K^+$ is not noetherian unless the absolute value $|-|$ is discrete, but $(K, K^+)$ will be of noetherian type whenever the topology of $K$ \emph{may} be defined by a discrete valuation, or equivalently when $K^\circ$ is a discrete valuation ring.
\item 
In the case $A = \mathbb Z[1/p]$ (with the discrete topology), there exists essentially two Huber pairs, with rings of integral elements $\mathbb Z[1/p]$ and $\mathbb Z$ respectively.
They both play a role in the theory.
\item Assume that $(A, A^+)$ is a Huber pair, that $A[T]$ has the topology induced by $A_{0}[f_{1}T^k, \ldots, f_{r}T^k]$ for some $f_{1}, \ldots, f_{r}$ generating an open ideal in $A$ and ring of definition $A_{0}$.
Then, we will usually choose $A[T]^+ = A^+ \cup \{f_{1}T^k, \ldots, f_{r}T^k\}$.
\end{enumerate}
\end{xmp}

%%%%%%%%%%%%%%%
\subsection{Adic spectra}

Alike the wave-particle duality in physics, there exists a valuation-ideal alternative when we wish to describe the objects of arithmetic geometry.
The former gives rise to the adic spectrum and the latter to the prime spectrum.

If $G$ is a totally ordered additive group, then $G \cup \{+\infty\}$ is endowed with the topology whose open subsets $U$ are defined by the conditions
\[
+ \infty \not\in U\quad \mathrm{or}\quad \exists M \in G, ]M, +\infty[ \subset U.
\]
Note that we have the alternative $ng \to +\infty$ or $ng$ is bounded but the condition $g >0$ does not imply that $ng \to +\infty$ in general (it is possible that $ng$ is bounded even if $g >0$).
Note also that the topology introduced above on a valued field $K$ is the topology inherited by $G$ (coarsest topology making the absolute value continuous).
As a consequence, we see that a valuation $v$ on a topological ring $A$ is continuous if and only if the evaluation map $A \to \kappa(v)$ is continuous.

%%%%%%%%%%%%%
\begin{dfn}
The \emph{adic spectrum} of a Huber pair $(A, A^+)$ is the set $V := \mathrm{Spa}(A, A^+)$ of equivalence classes of continuous valuations on $A$ that are non-negative on $A^+$.
\end{dfn}

We always have
\[
\overline A^+ = \{f \in A \colon \forall v \in V, v(f) \geq 0\} \quad \mathrm{and}\quad V = \mathrm{Spa}(A, \overline A^+).
\]
This is why we may usually replace $A^+$ with $\overline A^+$ and assume that $A^+$ is the full ring of integral elements.
We also have
\[
 \mathrm{Spa}(A, A^+) = \mathrm{Spa}(\widehat A, \mathrm{image}\ \mathrm{of}\ A^+\ \mathrm{in}\ \widehat A)
\]
and we may therefore replace in practice $A$ with $\widehat A$ and assume that $A$ is complete.

If $A$ is a Huber ring, then there exists a smallest and a biggest adic spectrum associated to $A$ which are given by
\[
\mathrm{Spa}(A) := \mathrm{Spa}(A,A^{\circ}) \quad \mathrm{and} \quad \mathrm{Cont}(A) := \mathrm{Spa}(A, \emptyset).
\]
This notation will also be used when $A$ has the \emph{discrete} topology in which case $\mathrm{Spa}(A)$ denotes the set of all non-negative valuations on $A$, but we would then rather write $\mathrm{Spv}(A)$ instead of $\mathrm{Cont}(A)$ for the set of all valuations (which are automatically continuous in this situation).

In general, the adic spectrum $V := \mathrm{Spa}(A, A^+)$ is endowed with the topology for which a basis of open subsets is given by the \emph{rational} subsets
\[
R(f_{1}/f_{0}, \ldots, f_{r}/f_{0}) = 
\{v \in V \colon \forall i = 1, \ldots, n, v(f_{i}) \geq v(f_{0}) \neq + \infty \}
\]
in which $(f_{0}, \ldots, f_{r})$ is an open ideal.
Note that the condition that $(f_{0}, \ldots, f_{r})$ is \emph{open} is not necessary in order to obtain an open subset of $V$ (although it would not be called \emph{rational} anymore).
As a consequence, the topology of $V$ only depends on the ring $A$ (and not the topology of $A$ or the choice of $A^+$) although the points do.

%%%%%%%%%%%%%%%
\begin{xmp}
\begin{enumerate}
\item
If $K$ is any field (endowed with the discrete topology) and $k$ is any subring of $K$, then $\mathrm{Spa}(K, k)$ is the \emph{Riemann-Zariski space} of $K$ over $k$.
As a standard example, note that $\mathrm{Spa}(\mathbb C(X), \mathbb C) \simeq \mathbb P_{\mathbb C}$ (more about this later).
\item
For $n \in \mathbb Z$, set
\[
0(n) = \left\{\begin{array}{ll} 0 & if\ n \neq 0 \\ + \infty & if\ n = 0 \end{array}\right.;
\]
If $p$ is a prime, denote by $v_{p}$ the usual $p$-adic valuation, and set
\[
p(n) = \left\{\begin{array}{ll} 0 & if\ p \nmid n \\ + \infty & if\ p \mid n\end{array}\right..
\]
Then, we have
\[
\mathrm{Spv}(\mathbb Z) = \{0, v_{p} \ \mathrm{for}\ p\ \mathrm{prime}, p\ \mathrm{for}\ p\ \mathrm{prime}\}.
\]
Any proper closed subset of $\mathrm{Spv}(\mathbb Z)$ is a finite union of subsets of the form $\{p\}$ or $\{v_{p}, p\}$.
\end{enumerate}
\end{xmp}

We say that the Huber pair $(A, A^+)$, or that $V := \mathrm{Spa}(A, A^+)$, is \emph{sheaffy} if there exists a (necessarily unique) structural sheaf $\mathcal O_{V}$ for which (up to a canonical isomorphism)
\[
\Gamma(A(f_{1}/f_{0}, \ldots, f_{r}/f_{0}) , \mathcal O_{V}) = \widehat {A[1/f_{0}]},
\]
where $A[1/f_{0}]$ is the Huber ring whose definition ring is $A_{0}[f_{1}/f_{0}, \ldots, f_{r}/f_{0}]$ (and completion is meant relatively to this structure) for some ring of definition $A_{0}$ of $A$.
When this is the case, $V := \mathrm{Spa}(A, A^+)$ is a topologically valued ringed space (see item \eqref{valsp} in notations/conventions).

If $V = \mathrm{Spa}(A,A^+)$ is a sheaffy adic spectrum, then the inclusion of valued fields
\[
\mathrm{Frac}(A/\mathrm{supp}\, v) \hookrightarrow \mathcal O_{V,v}/\mathfrak m_{V,v}
\]
(which is not an equality in general) induces an isomorphism on their \emph{completions} and we hope that our common notation $\kappa(v)$ for both fields will not create any confusion.
Following Berkovich, we will denote by $\mathcal H(v)$ their common completion.

%%%%%%%%%%%%%%%
\begin{xmp}
\begin{enumerate}
\item 
If $V = \mathrm{Spv}(\mathbb Q)$, then we have an isomorphism of locally ringed spaces $V^+ \simeq \mathrm{Spec}(\mathbb Z)$ (see item \eqref{plus} in notations/conventions for the definition of $V^+$).
\item
If $V = \mathrm{Spa}(\mathbb C(X), \mathbb C)$, then we have an isomorphism of locally ringed spaces $V^+ \simeq \mathbb P_{\mathbb C}$.
\item
There exists isomorphisms (of topologically valued ringed spaces)
\[
\mathrm{Spv}(\mathbb Z) \setminus \{p\} \simeq \mathrm{Spv}(\mathbb Z[1/p])\quad \mathrm{and}
\quad
\mathrm{Spv}(\mathbb Z) \setminus \{v_{p}, p\} \simeq \mathrm{Spa}(\mathbb Z[1/p]).
\]

\end{enumerate}
\end{xmp}

It is important to mention that, when $A$ is of noetherian type, then $V$ is automatically sheaffy.
There are actually many various other conditions that would insure $V$ to be sheaffy.
This is the case for example when $A$ has the discrete topology.

When $A$ is of noetherian type, one can also prove Cartan's theorems A and B as in the case of formal schemes:
%%%%%%%%%%%%%%%%%
\begin{thm}[Huber]
If $A$ is of noetherian type and $V = \mathrm{Spa}(A, A^+)$, then the functors
\[
M \mapsto \mathcal O_{V} \otimes^{\mathrm L}_{\widehat A} M \quad \mathrm{and} \quad \mathcal F \mapsto \mathrm R\Gamma(V, \mathcal F)
\]
induce an equivalence between finite $\widehat A$-modules and coherent $\mathcal O_{V}$-modules.
\end{thm}

\begin{proof}
This is shown in \cite{Huber93}.
\end{proof}

Again, the same result also holds when $A$ has the discrete topology.
%%%%%%%%%%%%%%%%%%%%%
\subsection{Specialization}

An important theorem of Huber states that an adic spectrum $V$ is a spectral space (coherent and sober).
Let us say a few words about specialization/generalization on adic spectra (we send the reader back to the section notations/conventions after the introduction for a review of the basics).

%%%%%%%%%%%%%%%%%
\begin{lem}
Let $(A, A^+)$ be a Huber pair and $v, w \in V = \mathrm{Spa}(A, A^+)$.
Then $w \rightsquigarrow v$ if and only the valuations induce a morphism of ordered groups $G_{v} \to G_{w}$ such that 
\begin{equation} \label{infco}
\forall f, g \in A, \quad v(g) \leq v(f) = +\infty \Rightarrow w(g) \leq w(f).
\end{equation}
\end{lem}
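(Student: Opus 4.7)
The plan is to work with the basis of rational subsets of the topology on $V$. Unpacking, $w \rightsquigarrow v$ is equivalent to the assertion that whenever $(f_0, \ldots, f_r)$ generates an open ideal of $A$ and $v(f_i) \geq v(f_0) \neq +\infty$ for all $i$, one has $w(f_i) \geq w(f_0) \neq +\infty$ for all $i$.

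For the $(\Rightarrow)$ direction, the technical heart is to strip away the openness hypothesis on the defining family and derive the unconditional preorder statement: for all $f, g \in A$ with $v(g) \neq +\infty$ and $v(f) \geq v(g)$, one has $w(f) \geq w(g) \neq +\infty$. I would do this by padding: enlarge the family to $(g, f, h_1, \ldots, h_s)$ where the $h_j$ are monomials of sufficiently large degree $N$ in generators of an ideal of definition $I$ of $A$. Continuity of $v$ gives $v(h_j) \to +\infty$ as $N \to \infty$, so for $N$ large every such $h_j$ satisfies $v(h_j) \geq v(g)$; since the enlarged ideal contains $I^N$, it is open. The resulting rational subset contains $v$, hence $w$, which yields the desired inequality. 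From this preorder preservation the assignment $\phi(v(f)) := w(f)$ is well-defined on the finite values of $v$ (applying the preservation in both directions to $f, f'$ with $v(f) = v(f')$ forces $w(f) = w(f')$), order-preserving, and respects the group law; extending by differences to $G_v$ yields the required morphism $\phi \colon G_v \to G_w$, and (infco) is just the same preorder statement in the remaining case $v(f) = +\infty$.

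For $(\Leftarrow)$, given $\phi$ and (infco), take a rational subset $R(f_1/f_0, \ldots, f_r/f_0)$ containing $v$: so $v(f_0) \neq +\infty$ and $v(f_i) \geq v(f_0)$ for each $i$. If $v(f_i) \neq +\infty$, order-preservation of $\phi$ together with $\phi(v(f_j)) = w(f_j)$ gives $w(f_i) \geq w(f_0)$; if $v(f_i) = +\infty$, condition (infco) applied with $g = f_0$ and $f = f_i$ gives the same inequality. The inducedness $\phi(v(f_0)) = w(f_0) \in G_w$ forces $w(f_0) \neq +\infty$, so $w$ belongs to the same rational subset and $w \rightsquigarrow v$ follows. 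The main obstacle is the padding step in $(\Rightarrow)$, where one couples the combinatorial openness requirement in the definition of a rational subset with the topological continuity of the valuations; once unrestricted preorder preservation is in hand, the construction of $\phi$ and the verification of (infco) are formal manipulations.
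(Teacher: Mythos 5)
Your proof is correct and follows essentially the same route as the paper: translate $w \rightsquigarrow v$ into the unconditional preorder statement $v(f) \geq v(g) \neq +\infty \Rightarrow w(f) \geq w(g) \neq +\infty$ via rational subsets, build the morphism $G_v \to G_w$ from it, and reverse the translation for the converse. The only difference is that you actually justify that translation (the padding of $(g,f)$ by high powers of an ideal of definition, using continuity of $v$ to keep the enlarged family inside the rational-subset condition), a step the paper simply asserts with ``It means that''; your version is the more complete one.
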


\begin{proof}
Recall that, by definition, $w \rightsquigarrow v$ if and only if any neighborhood of $v$ is also a neighborhood of $w$.
It means that
\begin{equation} \label{consp}
\forall f, g \in A, \quad v(f) \geq v(g) \neq +\infty \Rightarrow w(f) \geq w(g) \neq +\infty.
\end{equation}
Thus, we see that (by symmetry) if $v(f) = v(g) \neq +\infty$, then necessarily $w(f) = w(g) \neq +\infty$ and there exists therefore a well defined map
\[
G_{v} \cap v(A) \to G_{w} \cap w(A), \quad v(f) \mapsto w(f).
\]
The condition also shows that this map preserves the order.
Moreover, since we always have $v(fg) = v(f) + v(g)$ and $w(fg) = w(f) + w(g)$, this map extends uniquely to a group homomorphism which is automatically order preserving because the group laws are compatible with the orders.
Conversely, if the valuations induce a morphism of ordered groups, then condition \eqref{consp} is automatically satisfied whenever $v(f) \neq + \infty$ and only condition \eqref{infco} had to be checked.
\end{proof}

Note that the support map is compatible with specialization: we have
\[
w \rightsquigarrow v \Rightarrow \mathrm{supp}\, w \rightsquigarrow \mathrm{supp}\, v
\]
(which means that $\mathrm{supp}\, w \subset \mathrm{supp}\, v$).
There are two special kinds of specializations:

%%%%%%%%%%%%%%%
\begin{dfn}
Let $V = \mathrm{Spa}(A,A^+)$ be an adic spectrum and $v,w \in V$.
Then,
\begin{enumerate}
\item $v$ is a \emph{horizontal} (or primary) specialization of $w$ if the valuations induce an injective morphism of ordered groups $G_{v} \hookrightarrow G_{w}$ and
\[
\forall f,g \in A, \quad v(g) < v(f) = +\infty \Rightarrow w(g) < w(f).
\]
\item $v$ is a \emph{vertical} (or secondary) specialization of $w$ if the valuations induce a surjective morphism of ordered groups $G_{v} \twoheadrightarrow G_{w}$ and
\[
\forall f \in A, \quad v(f) = +\infty \Rightarrow w(f) = + \infty.
\]
\end{enumerate}
\end{dfn}

Note that a specialization $w \rightsquigarrow v$ is \emph{vertical} if and only if $\mathrm{supp}\, w = \mathrm{supp}\, v$ (it lives in a fiber of the support map).
On the other hand, given an inclusion $\mathrm{supp}\, w \hookrightarrow \mathfrak p$ where $\mathfrak p$ is some prime ideal in $A$, there will exist at most one horizontal specialization $w \rightsquigarrow v$ with $\mathrm{supp}\, v = \mathfrak p$.
A basic (but not trivial) theorem of the theory states that any specialization is the composition of a vertical one and a horizontal one (in that order).

%%%%%%%%%%
\begin{xmp}
\begin{enumerate}
\item
In $\mathrm{Spv}(\mathbb Z)$ we have the following (non trivial) vertical and horizontal specializations (for any prime $p$)
\[
\begin{array}{ccc} 0 \\ \downsquigarrow \\ v_{p} & \rightsquigarrow & p. \end{array}
\]
\item
If $A$ is a Tate ring, then all specializations are vertical and any valuation $v$ has a unique (vertical) generalization $w$ of height 1.
\item
Recall that if $K$ is a field, then there exists a bijection between $\mathrm{Spv}(K)$ and the set of valuation rings of $K$.
If $v$ corresponds to $\mathcal V$ (so that $v(a) \geq 0 \Leftrightarrow a \in \mathcal V$), then the (automatically vertical) generalizations of $v$ correspond bijectively to the subrings of $K$ containing $\mathcal V$.
There also exists a recursive way to describe the specializations of $v$: if $k$ denotes the residue field of $\mathcal V$, then pulling back along $\mathcal V \to k$ induces a bijection between valuation rings of $k$ and valuation rings of $K$ corresponding to specializations of $v$.
\end{enumerate}
\end{xmp}

For further use, we need to show some elementary properties of specialization:

%%%%%%%%%%%
\begin{lem} \label{prlem}
Let $V = \mathrm{Spa}(A,A^+)$ be an adic spectrum, $v, w \in V$ and $f \in A$.
Assume that $w \rightsquigarrow v$ in $V$.
Then, we have
\begin{enumerate}
\item $w(f) = + \infty \Rightarrow v(f) = + \infty$, and conversely if the specialization is vertical,
\item $w(f^n) \to + \infty \Rightarrow v(f^n) \to + \infty$,
\item $w(f) > 0 \Rightarrow v(f) > 0$ and conversely if the specialization is horizontal.
\end{enumerate}
\end{lem}

\begin{proof}
The first assertion follows from the existence of the map $G_{v} \to G_{w}$ and the very definition of a vertical specialization.
For the second one, let us assume that the set $\{v(f^n)\}_{n \in \mathbb N}$ is bounded.
It means that there exists $\gamma_{v} \in G_{v}$ such that for all $n \in \mathbb N$ we have $v(f^n) \leq \gamma_{v}$.
Then, if we denote by $\gamma_{w} \in G_{w}$ the image of $\gamma_{v}$, we will have for all $n \in \mathbb N$, $w(f^n) \leq \gamma_{w}$ which means that the set $\{w(f^n)\}_{n \in \mathbb N}$ is also bounded.
The last implication follows from the fact that the map $G_{v} \to G_{w}$ always preserves the order and that it even strictly preserves the order when it is injective.
\end{proof}

We may also notice that we always have $v(f) > 0 \Leftrightarrow v(f^n) \to + \infty$ when $v$ has height as most one but it is important to notice that this is \emph{not} true anymore for higher height.

%%%%%%%%%%%%%%%%%
\subsection{Adic spaces}

Glueing sheaffy adic spectra will provide adic spaces:

%%%%%%%%%%%%%%
\begin{dfn}
An \emph{adic space} (or Huber space) is a topologically valued ringed space $V$ which is locally isomorphic to some (sheaffy) adic spectrum $\mathrm{Spa}(A, A^+)$.
It is said to be \emph{affinoid} if it is actually isomorphic to some $\mathrm{Spa}(A, A^+)$.
\end{dfn}

%We may call $V$ \emph{Tate} if there exists a topologically nilpotent unit $\pi \in \Gamma(V, \mathcal O_V)$.
%This happens of course if $V$ is the adic spectrum of a Tate pair.
%This notion however is not local in nature and will have to be soon refined into the notion of an analytic space.

Note that an adic space $V$ is a locally spectral space.

Alternatively, we could have defined an adic space as a doubly topologically locally ringed space $(V, \mathcal O_{V}, \mathcal O_{V}^+)$ locally isomorphic to an adic spectrum (see notations/conventions after the introduction).

The functor
\[
(A, A^+) \mapsto \mathrm{Spa}(A, A^+)
\]
is fully faithful on complete sheaffy pairs.
Better, there exists an adjunction
\[
\mathrm{Hom}(X, \mathrm{Spa}(A, A^+)) \simeq \mathrm{Hom}((A, A^+), (\Gamma(X, \mathcal O_{X}), \Gamma(X, \mathcal O_{X}^+))
\]
(compatible pairs of continuous homomorphisms on the right hand side).

It is important to notice that there is no fibered product of adic spaces in general and that, even when it exists, a (fibered) product of affinoid spaces is not necessarily affinoid ($\mathrm{Spv}(\mathbb Z[T]) \times \mathrm{Spa}(\mathbb Q_{p})$ for example).
Whenever, we write down a product of adic spaces, we implicitly assume that it is representable by an adic space.
This is the case in the examples below when $O$ is locally of noetherian type (definition \ref{noethtyp} below).

%%%%%%%%%%%%%%
\begin{xmp}
If $O$ is an adic space, then we may consider:
\begin{enumerate}
\item the \emph{closed unit polydisc}:
\[
\mathbb D^n_{O} = \underbrace{\mathbb D \times \cdots \times \mathbb D}_{n\ \mathrm{times}} \times O
\]
in which $\mathbb D := \mathrm{Spa}(\mathbb Z[T])$ (with the discrete topology).
In the case $O = \mathrm{Spa}(A, A^+)$, we have
\[
\mathbb D^n_{O} = \mathrm{Spa}\left(A[T_{1}, \ldots, T_{n}], A^+[T_{1}, \ldots, T_{n}]\right)
\]
(with the topology coming from $A$).
\item the \emph{affine space}:
\[
\mathbb A^n_{O} = \underbrace{\mathbb A^{\mathrm{val}} \times \cdots \times \mathbb A^{\mathrm{val}}}_{n\ \mathrm{times}} \times O
\]
in which $\mathbb A^{\mathrm{val}} := \mathrm{Spv}(\mathbb Z[T])$.
Note that $O$ being affinoid will \emph{not} imply that $\mathbb A^n_{O}$ is affinoid.
\item the \emph{open unit polydisc}:
\[
\mathbb D^{n,-}_{O} := \underbrace{\mathbb D^- \times \cdots \times \mathbb D^-}_{n\ \mathrm{times}} \times O
\]
in which $\mathbb D^- := \mathrm{Spa}(\mathbb Z[T]) = \mathrm{Spa}(\mathbb Z[[T]])$ with the $T$-adic topology.
Again, the fact that $O$ is affinoid will \emph{not} imply that the open unit polydisc is affinoid.
\item the \emph{proper unit polydisc}:
\[
\overline{\mathbb D}^{n}_{O} := \underbrace{\overline {\mathbb D}_{O} \times_{O} \cdots \times_{O} \overline {\mathbb D}_{O}}_{n\ \mathrm{times}}
\]
in which $O = \mathrm{Spa}(A,A^+)$ is supposed to be affinoid and $\overline {\mathbb D}_{O} := \mathrm{Spa}(A[T], A^+)$ (with the topology coming from $A$).
Note that, when $O = \mathrm{Spv}(A)$ (with $A$ discrete), then $\overline{\mathbb D}^{n}_{O} = \mathbb A^n_O$.
\item the \emph{closed polydisc of radii $\{f_{i,j}^{-\frac 1k}\}$}:
\[
\mathbb D^n_{O}(0, \{f_{i,j}^{-\frac 1k}\}) = \mathbb D_{O}(0, \{f_{1,j}^{-\frac 1k}\}) \times_{O} \cdots \times_{O} \mathbb D_{O}(0, \{f_{n,j}^{-\frac 1k}\})
\]
where $\mathbb D_{O}(0, \{f_{j}^{-\frac 1k}\})$ denotes the closed disk of radius $\{f_{j}^{-\frac 1k}\}$ defined as
\[
\mathbb D_{O}(0, f_{1}^{-\frac 1k}, \ldots, f_{r}^{-\frac 1k}) = \left\{v \in \mathbb A_{O} \colon v(f_{1}T^k), \ldots, v(f_{r}T^k) \geq 0\right\}.
\]
In this definition, we assume that $O$ lives over some affinoid space $\mathrm{Spa}(A,A^+)$ and that $f_{1}, \ldots, f_{r} \in A$ generate an open ideal of $A$. 
In the case $O = \mathrm{Spa}(A, A^+)$, then a closed polydisc of some radii is always affinoid and
\[
\mathbb D_{O}(0, f_{1}^{-\frac 1k}, \ldots, f_{r}^{-\frac 1k}) = \mathrm{Spa}(A[T], A^+ \cup \{f_{1}T^k, \ldots, f_{r}T^k\})
\]
(with the topology induced by $A_{0}[f_{1}T^k, \ldots, f_{r}T^k]$ if $A_{0}$ denotes a ring of definition of $A$).

\end{enumerate}

The affine space is the union of all closed polydiscs of the same dimension.
Actually, if $O = \mathrm{Spa}(A, A^+)$ and $f_{1}, \ldots, f_{r}$ are some generators of an ideal of definition, then it is easily checked that
\[
\mathbb A_{O} := \bigcup_{n} \mathbb D_{O}\left(0, \{f_{\underline i}^{-1}\}_{|\underline i|=n}\right).
\]
Here, we use the usual multiindex notation $f_{\underline i} := f_{1}^{i_{1}}\cdots f_{r}^{i_{k}}$ and $|\underline i| := i_{1} + \cdots + i_{r}$ if $\underline i = (i_{1}, \ldots, i_{r}) \in \mathbb N^n$.

There exists no analogous result for $\mathbb D^{-}_{O}$ in general because $\mathbb D^{-}_{O}$ is not even open in $\mathbb D_{O}$.
However, if $O$ is an \emph{analytic} space (see below), then the open (resp.\ proper) unit polydisc is a union (resp.\ intersection) of closed polydiscs.
Actually, if $O$ is Tate with topologically nilpotent unit $\pi$, then we have
\[
\mathbb D^{-}_{O} := \bigcup_{k} \mathbb D_{O}(0, \pi^{\frac 1k}) \quad \left(\mathrm{resp.} \overline {\mathbb D}_{O} := \bigcap_{k} \mathbb D_{O}(0, \pi^{-\frac 1k})\right).
\]
Be careful however that $\mathbb D^{-}_{O}$ is usually strictly smaller than $\{v \in \mathbb D_{O} \colon v(T) > 0\}$ which is actually \emph{not} a disk even if the condition translates multiplicatively in $|T| < 1$.
Also, $\overline {\mathbb D}_{O}$ is usually strictly bigger than ${\mathbb D}_{O}$.
\end{xmp}

%%%%%%%%%%%%%%%
\subsection{Morphisms of adic spaces}

We review here some basic properties of morphisms of adic spaces.

By definition, a \emph{morphism of adic spaces} $W \to V$ is simply a morphism of topologically valued ring spaces between adic spaces.
It is said to be \emph{adic} if it comes locally from an adic morphism of Huber rings $A \to B$ (equivalently, it sends any analytic point to an analytic point - see below).

An \emph{open immersion} of adic spaces is an open immersion of topologically valued spaces between adic spaces.
A \emph{closed immersion} of adic spaces $W \hookrightarrow V$ is a morphism that comes locally on $V$ from a surjective \emph{adic} morphism $A \twoheadrightarrow B$ sending $A^+$ \emph{onto} $B^+$ (but not necessarily $\overline A^+$ onto $\overline B^+$).
A \emph{locally closed immersion} is the composite of a closed immersion and an open immersion.
We can also talk about \emph{locally closed adic subspaces}.

A morphism of adic spaces $W \to V$ is \emph{affinoid} if it comes locally on $V$ from a morphism of Huber pairs. 
One also defines a \emph{finite} morphism of adic spaces as a morphism $W \to V$ that comes locally on $V$ from a finite \emph{adic} morphism $A \to B$ sending $A^+$ onto $B^+$ (or equivalently such that $\overline A^+ \to \overline B^+$ is integral).
And a \emph{locally quasi-finite} morphism is a morphism which is locally of finite type with discrete fibers (\emph{quasi-finite} when it is quasi-compact).

We call a morphism of adic spaces $u \colon W \to V$ \emph{flat} (resp.\ \emph{faithfully flat}) when the underlying morphism of locally ringed spaces is flat (resp.\ faithfully flat).
It means that $u^*$ is exact (resp.\ and $u$ is surjective).
Flatness can be checked on stalks: $u$ is flat if and only if $\mathcal O_{W,u(v)} \to \mathcal O_{V,v}$ is flat at all $v \in W$.
As this was the case with formal schemes, flatness globalizes under noetherian hypothesis:
if $A \to B$ is a morphism of \emph{complete noetherian} Huber rings and $\mathrm{Spa}(B, B^+) \to \mathrm{Spa}(A, A^+)$ is (faithfully) flat, then the morphism of rings $A \to B$ is (faithfully) flat.
The converse holds for flatness but \emph{not} for faithful flatness in general.

%%%%%%%%%%%%
\begin{xmp}
\begin{enumerate}
\item If $K \subset K'$ is a field extension, then the morphism $\mathrm{Spv}(K') \to \mathrm{Spv}(K)$ is faithfully flat: any valuation on $K$ will extend to $K'$.
\item The identity $\mathbb Z_p \to \mathbb Z_p$ is obviously faithfully flat, but the morphisms $\mathrm{Spa}(\mathbb Z_p) \to \mathrm{Spv}(\mathbb Z_p^\mathrm{triv})$ or $\mathrm{Spa}(\mathbb Z_p, \mathbb Z_p) \to \mathrm{Spa}(\mathbb Z_p, \mathbb Z)$ are not surjective, and therefore not faithfully flat.
\end{enumerate}
\end{xmp}

An adic space $V$ is said to be \emph{locally of finite type} over $O$ if it is locally isomorphic to a locally closed adic subspace of some $\mathbb A^n_{O}$.
It is said to be \emph{of finite type} if, moreover, it is quasi-compact.
Equivalently, $V$ is locally of finite type over $O$ if it is locally isomorphic to a closed adic subspace of a closed polydisc $\mathbb D^n_{O}(0, \{f_{i,j}^{-1}\})$.
In particular, we recover the original definition of Huber.
One could also define the notion of a morphism \emph{locally finitely presented} but this would be of no use to us when we enter the noetherian world.

\begin{xmp}
Let $O$ be an adic space.
Then,
\begin{enumerate}
\item the closed unit polydisc $\mathbb D^n_{O} $, the affine space $\mathbb A^n_{O}$ as well as the closed polydisc $\mathbb D^n_{O}(0, \{f_{i,j}^{-\frac 1k}\})$ of finite radii are all locally of finite type.
\item the open unit polydisc $\mathbb D^{n,-}_{O}$ is locally of finite type when $O$ is analytic (see below).
\item The proper unit disk $\overline {\mathbb D}_{O}$ is not (locally) of finite type over $O$ in general (although it is \emph{weakly of finite type} in the sense of Huber).
\end{enumerate}
\end{xmp}

Although fibered products of adic spaces do not exist in general, on can always pull back an adic space which is locally of noetherian type (definition \ref{noethtyp} below) along a morphism $V \to O$ which is locally of finite type.
This is a particular instance of proposition 1.1.2 in \cite{Huber96} (or theorem 8.56 in \cite{Wedhorn19}).

\begin{xmp}
\begin{enumerate}
\item 
If $v \in O = \mathrm{Spa}(A,A^+)$, we can then consider the Gauss point that we still denote by $v \in \mathbb D_{O}$ as well as the inner Gauss point $v^- \in \mathbb D_{O}$.
Then, there exists a unique point that we may still denote by $(v,v) \in \mathbb D^2_{O}$ (resp.\ $(v,v^-) \in \mathbb D^2_{O}$) over $(v,v)$ (resp.\ $(v,v^-)$).
There exists however two different points (of height three) over $(v^-,v^-)$ (depending on the order chosen to go inwards).
\item Using the previous example, one can show that the number of points in the fibers of a finite map is not stable under specialization/generalization.
Consider the finite flat morphism of degree $2$
\[
\mathbb D^2 \to \mathbb D^2, \quad (X, Y) \mapsto (XY, X+Y)
\]
Only the point $(v,v)$ is sent to $(v,v)$ but both points $(v,v^-)$ and $(v^-,v)$ are sent to $(v,v^-)$.
\end{enumerate}
\end{xmp}

At some point, we will also encounter pseudo-adic spaces (\cite{Huber96}, section 1.10).
First of all, a \emph{prepseudo-adic space} is simply a couple $(V,T)$ where $V$ is an adic space and $T$ is a subset of $V$.
By definition, its structural sheaf is $i^{-1}\mathcal O_V$ where $i : T \hookrightarrow V$ denotes the inclusion map.
$(V,T)$ is called a \emph{pseudo-adic space} when $T$ is convex under specialization and locally pro-constructible.
A \emph{morphism} $(V',T') \to (V,T)$ of prepseudo-adic spaces is a morphism of adic spaces from $V'$ to $V$ that sends $T'$ inside $T$.
We call it a \emph{strict neighborhood} if $V' \hookrightarrow V$ is an open immersion inducing a surjection $T' \twoheadrightarrow T$ (automatically a homeomorphism).
Strict neighborhoods form a right multiplicative system.
The category of prepseudo-adic spaces localized with respect to strict neighborhoods is the category of \emph{germs of adic spaces}.
We will usually denote by $T^\dagger$ the germ of $(V,T)$ and by $\mathcal O_T^\dagger$ its structural sheaf because $V$ plays now a secondary role.
More precisely, in the category of germs, we can always replace $V$ with any other neighborhood of $T$.
In particular, when $T$ is locally closed in $V$, we can always assume that $T$ is actually closed in $V$ (we recover the definition 2.2.1 of Abe and Lazda in \cite{AbeLazda20}).

%%%%%%%%%%%%%%%%
\subsection{Adic space associated to a scheme}

In section \ref{adform}, we will recall how one can associate an adic space to a (locally noetherian) formal scheme.
This should not be confused with the following construction that only applies to usual schemes and provides a different object.

We used above the notation $\mathbb A^{\mathrm{val}}$.
Actually, to any locally ringed space $X$, one can associate a (topologically) valued ringed space $X^{\mathrm{val}}$ as follows.
One sets
\[
X^{\mathrm{val}} := \{(x,v) \colon x \in X, v \in \mathrm{Spv}(\kappa(x))\}
\]
(which is also sometimes written $\mathrm{Spv}(X)$).
It is made into a topological space by choosing as basis of open subsets, the subsets
\[
\{(x,v) \colon x \in U, v(f(x)) \geq v(g(x)) \neq 0\} \subset X^{\mathrm{val}},
\]
in which $U$ is an open subset of $X$ and $f,g \in \Gamma(U, \mathcal O_{X})$.
There exists an obvious continuous map
\[
\mathrm{supp}\, : X^{\mathrm{val}} \to X, \quad (x,v) \mapsto x,
\]
and $X^{\mathrm{val}}$ is endowed with the sheaf of (topological) rings $\mathcal O_{X^{\mathrm{val}}} := \mathrm{supp}\,^{-1}(\mathcal O_{X})$.
In particular, we have $\mathcal O_{X^{\mathrm{val}},(x,v)} = \mathcal O_{X,x}$ for all $(x,v) \in X^{\mathrm{val}}$ and this local ring is endowed with the valuation induced by $v$.

%%%%%%%%%%%%%%%%%%%%
\begin{prop} \label{adjalg}
If $X$ is a scheme, then $X^{\mathrm{val}}$ is an adic space and the functor $X \mapsto X^{\mathrm{val}}$ is fully faithful (on schemes).
Moreover, for any adic space $V$, there exists a natural bijection
\[
\mathrm{Hom}(V, X) \simeq \mathrm{Hom}(V, X^{\mathrm{val}})
\]
(morphisms of locally ringed spaces on one side and morphisms of adic spaces on the other).
Finally, if $X = \mathrm{Spec}(A)$, then there exists a natural isomorphism $X^{\mathrm{val}} \simeq \mathrm{Spv}(A)$.
\end{prop}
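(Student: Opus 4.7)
The plan is to treat the four assertions in the order (4), (1), (3), (2). First I would identify the affine model $X^{\mathrm{val}} \simeq \mathrm{Spv}(A)$ for $X = \mathrm{Spec}(A)$ by direct comparison; then globalize to obtain (1); then build the adjunction of (3) using the support morphism $s \colon X^{\mathrm{val}} \to X$; and finally deduce full faithfulness from the adjunction.

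When $X = \mathrm{Spec}(A)$, a point of $X^{\mathrm{val}}$ is a pair $(\mathfrak p, v)$ with $v$ a valuation on $\kappa(\mathfrak p)$, which pulls back through $A \to \kappa(\mathfrak p)$ to a valuation on $A$ with support $\mathfrak p$; this gives a natural bijection of underlying sets with $\mathrm{Spv}(A)$. The topologies agree because a basic open of $X^{\mathrm{val}}$ may be taken with base $D(h) \subset X$ and sections $f, g \in A[1/h]$, which after clearing denominators becomes a rational condition in $\mathrm{Spv}(A)$, and conversely every rational domain arises in this way. The structural sheaves coincide on rational domains since both $\mathrm{supp}^{-1}\mathcal O_X$ and $\mathcal O_{\mathrm{Spv}(A)}$ evaluate to $A[1/f_0]$ (no completion is needed, as $A$ is discrete), and the valuations on stalks match because $\mathcal O_{X^{\mathrm{val}}, (\mathfrak p, v)} = A_{\mathfrak p}$ carries the valuation induced by $v$ on its residue field. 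Covering a general scheme $X$ by affines then transfers this identification and yields (1).

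For the adjunction (3), the support morphism $s \colon X^{\mathrm{val}} \to X$ is a morphism of locally ringed spaces, and in fact an open continuous surjection, since each $x \in X$ is hit by the trivial valuation at $x$ and basic opens of $X^{\mathrm{val}}$ project to Zariski opens of $X$; thus $s$ is a topological quotient. Precomposition with $s$ sends an adic morphism $\varphi \colon V \to X^{\mathrm{val}}$ to the LRS morphism $s \circ \varphi$. Conversely, an LRS morphism $f \colon V \to X$ produces $\tilde f \colon V \to X^{\mathrm{val}}$ by $v \mapsto (f(v), \bar v)$, where $\bar v$ is the valuation on $\kappa(f(v))$ pulled back from the valuation at $v$ through the local stalk map $\mathcal O_{X, f(v)} \to \mathcal O_{V, v}$; continuity of $\tilde f$ is read off from the definition of the basis of $X^{\mathrm{val}}$, and the valuation compatibility on stalks is tautological by construction. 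The two constructions are mutually inverse (in one direction using the compatibility of $\varphi$ with stalk valuations), giving the natural bijection.

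Finally, (2) is obtained by combining (3) with $\mathrm{Hom}_{\mathrm{Sch}}(X, Y) = \mathrm{Hom}_{\mathrm{LRS}}(X, Y)$: it remains to show that pullback along $s$ induces a bijection $\mathrm{Hom}_{\mathrm{LRS}}(X, Y) \simeq \mathrm{Hom}_{\mathrm{LRS}}(X^{\mathrm{val}}, Y)$, and here lies the main obstacle. Since $s$ is an open continuous surjection, the unit $\mathcal F \to s_* s^{-1}\mathcal F$ is an isomorphism for every sheaf on $X$, and in particular $s_* \mathcal O_{X^{\mathrm{val}}} = \mathcal O_X$ as sheaves of rings on $X$. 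Working Zariski-locally on an affine target $Y = \mathrm{Spec}(B)$, an LRS morphism $X^{\mathrm{val}} \to Y$ is the same as a ring map $B \to \Gamma(X^{\mathrm{val}}, \mathcal O_{X^{\mathrm{val}}}) = \Gamma(X, \mathcal O_X)$, i.e., a scheme morphism $X \to Y$ which, composed with $s$, reproduces the given morphism; gluing over an affine cover of $Y$ completes the factorization and establishes full faithfulness.
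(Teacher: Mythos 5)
Your overall skeleton is the same as the paper's: establish $\mathrm{Spec}(A)^{\mathrm{val}} \simeq \mathrm{Spv}(A)$ first, deduce that $X^{\mathrm{val}}$ is an adic space by glueing, then prove the fake adjunction, then full faithfulness. The two places where you diverge are packaging rather than substance: for the adjunction the paper reduces to the affine case and identifies both Hom-sets with $\mathrm{Hom}(A,B) = \mathrm{Hom}((A,\emptyset),(B,B^+))$, whereas you construct the unit and counit pointwise via the support map $s$ and the pulled-back stalk valuations — both work, and yours has the small advantage of not needing to say anything about glueing over an affine cover of $X$; for full faithfulness the paper invokes $\Gamma(X^{\mathrm{val}}, \mathcal O_{X^{\mathrm{val}}}) = A$ together with the global-sections adjunction for adic spaces, while you invoke $s_*\mathcal O_{X^{\mathrm{val}}} = \mathcal O_X$ together with the global-sections adjunction for locally ringed spaces — the same computation in two guises.

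One justification in your last step is wrong as stated: an open continuous surjection does \emph{not} in general satisfy $\mathcal F \xrightarrow{\sim} s_*s^{-1}\mathcal F$ (a two-sheeted covering space, or $X \sqcup X \to X$, is an open continuous surjection for which the unit fails to be injective on global sections). What saves you here is specific to $s \colon X^{\mathrm{val}} \to X$: either argue that each fiber $\mathrm{Spv}(\kappa(x))$ is irreducible with the trivial valuation as generic point, so that sections of $s^{-1}\mathcal F$ are controlled by their germs along the section of trivial valuations, or — more in the spirit of the paper — just compute $\Gamma(s^{-1}(D(f)), \mathcal O_{X^{\mathrm{val}}}) = \Gamma(\mathrm{Spv}(A[1/f]), \mathcal O) = A[1/f]$ on a basis, which is exactly the sheaffiness statement for discrete rings that you already used in your first paragraph. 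With that repair the argument goes through.
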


\begin{proof}
We start with the last assertion:
if $A$ is any ring (endowed with the discrete topology), then there exists an obvious map
\[
\mathrm{Spv}(A) \to \mathrm{Spec}(A)^{\mathrm{val}}, \quad v \mapsto (\mathrm{supp}\, v, \overline v)
\]
in which $\overline v$ denotes the valuation induced by $v$ on $\kappa(\mathrm{supp}\, v)$.
This is easily seen to be an isomorphism of (topologically) valued ringed spaces.
Actually, by definition, we will always have
\[
\Gamma(R(f_{1}/f_{0}, \ldots, f_{r}/f_{0}) , \mathcal O_{V}) = A[1/f_{0}]
\]
(with the discrete topology).
It follows that if $X$ is a scheme, then $X^{\mathrm{val}}$ is an adic space.
Now, both other questions are local.
More precisely, for the fake adjunction, we may assume that $X = \mathrm{Spec}(A)$ and $V = \mathrm{Spa}(B, B^+)$ with $B$ complete.
Then the result follows from the equality
\[
\mathrm{Hom}(A, B) = \mathrm{Hom}((A, \emptyset), (B, B^+))
\]
((automatically continuous) homomorphisms of rings on one side and compatible pairs of (automatically continuous) homomorphisms of rings on the other).
Full faithfulness then results from the fact that $\Gamma(X^{\mathrm{val}}, \mathcal O_{X^{\mathrm{val}}}) = A$ when $X = \mathrm{Spec}(A)$.
\end{proof}

%%%%%%%%%%
\begin{cor}
The functor $X \mapsto X^{\mathrm{val}}$ commutes with finite limits of schemes.
\end{cor}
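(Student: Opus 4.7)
The plan is to reduce the corollary to the two generating cases of finite limits---the terminal object and fibered products---and to handle each using the natural bijection
$$
\mathrm{Hom}(V, X^{\mathrm{val}}) \simeq \mathrm{Hom}(V, X)
$$
of Proposition \ref{adjalg} (adic morphisms on the left, locally ringed space morphisms on the right), valid for $V$ any adic space and $X$ any scheme.

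I would first handle the terminal object. Since $\mathrm{Spec}(\mathbb Z)$ is terminal in locally ringed spaces, the bijection shows that $\mathrm{Hom}(V, \mathrm{Spec}(\mathbb Z)^{\mathrm{val}})$ is a singleton for every adic space $V$, so $\mathrm{Spec}(\mathbb Z)^{\mathrm{val}}$ is terminal in adic spaces. Next, given schemes $X \to S \leftarrow Y$, I would apply the bijection to each hom-set appearing in the universal property of the fibered product; this reduces the claim to showing that, for every adic space $V$,
$$
\mathrm{Hom}_{\mathrm{LRS}}(V, X \times_S Y) = \mathrm{Hom}_{\mathrm{LRS}}(V, X) \times_{\mathrm{Hom}_{\mathrm{LRS}}(V, S)} \mathrm{Hom}_{\mathrm{LRS}}(V, Y),
$$
i.e.\ that the scheme-theoretic fibered product represents the fibered product functor on locally ringed spaces. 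Since all finite limits are built from the terminal object and fibered products, this is enough.

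The hard part is this last step, which carries essentially all of the content. I would prove it by reducing to the affine case: for $X = \mathrm{Spec}(A)$, $Y = \mathrm{Spec}(B)$, $S = \mathrm{Spec}(R)$, the fundamental identification $\mathrm{Hom}_{\mathrm{LRS}}(V, \mathrm{Spec}(A)) = \mathrm{Hom}(A, \Gamma(V, \mathcal O_V))$ unfolds both sides of the desired equality, and they agree by the universal property of $A \otimes_R B$. The general case is then handled by gluing: $X \times_S Y$ is covered by affine pieces of the form $\mathrm{Spec}(A \otimes_R B)$ indexed by compatible affine opens of $X$, $Y$, $S$, and LRS morphisms from $V$ into such a scheme are determined by their restrictions over such a cover. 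The subtle point to verify is that the locally constructed maps to these affine pieces are compatible on overlaps, which follows from the uniqueness clause of the universal property and uses nothing specific to adic spaces beyond the adjunction of Proposition \ref{adjalg}.
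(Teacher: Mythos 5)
Your proof is correct and takes essentially the same route as the paper: the paper's entire argument is the one-line observation that finite limits of schemes are also finite limits in the whole category of locally ringed spaces, combined (implicitly) with the bijection of Proposition \ref{adjalg}, exactly the two ingredients you isolate. The only difference is that you actually prove the standard fact that scheme-theoretic fibered products represent the fibered product functor on locally ringed spaces (affine case via $\mathrm{Hom}_{\mathrm{LRS}}(V,\mathrm{Spec}(A)) \simeq \mathrm{Hom}(A,\Gamma(V,\mathcal O_{V}))$, then gluing), whereas the paper simply cites it.
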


\begin{proof}
This is simply because finite limits of schemes are also finite limits in the whole category of locally ringed spaces.
\end{proof}

In general, $X^{\mathrm{val}}$ is way too big and it is more convenient to rely on a relative version:

%%%%%%%%%%%%%%%%%
\begin{cor}[Huber] \label{relsch}
Let $X \to S$ be a morphism of schemes which is locally of finite type, $V$ an adic space locally of noetherian type and $V \to S$ a morphism of locally ringed spaces.
Then, the functor
\[
W \mapsto \mathrm{Hom}(W, X) \times_{\mathrm{Hom}(W, S) } \mathrm{Hom}(W,V)
\]
is representable by the adic space
\[
X_{V} := X \times_{S} V := X^{\mathrm{val}} \times_{S^{\mathrm{val}}} V.
\]
\end{cor}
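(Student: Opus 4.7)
The plan is to translate the hom-sets via the adjunction of Proposition~\ref{adjalg}, reducing the problem to the existence of a fibered product in the category of adic spaces, and then invoke Huber's pullback theorem for morphisms locally of finite type.

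First I would apply the adjunction $\mathrm{Hom}(W, X) \simeq \mathrm{Hom}(W, X^{\mathrm{val}})$ (and analogously for $S$) for any adic space $W$, which rewrites the functor under consideration as
$$
W \mapsto \mathrm{Hom}(W, X^{\mathrm{val}}) \times_{\mathrm{Hom}(W, S^{\mathrm{val}})} \mathrm{Hom}(W, V),
$$
where the morphism $V \to S^{\mathrm{val}}$ on the second factor is the one obtained from the given $V \to S$ by the same adjunction (applied to $V$ itself). By the very definition of a fibered product, this functor will be representable by $X^{\mathrm{val}} \times_{S^{\mathrm{val}}} V$ in the category of adic spaces, provided such a fibered product exists.

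Next I would verify that $X^{\mathrm{val}} \to S^{\mathrm{val}}$ is locally of finite type in the sense of adic spaces. Since $X \to S$ is locally of finite type, we may work locally on $S$ and on $X$ to reduce to the case where $X$ is a closed subscheme of some $\mathbb{A}^n_S$. The previous corollary says that $(-)^{\mathrm{val}}$ commutes with finite limits of schemes; combined with the observation $\mathbb{A}^{\mathrm{val}} = \mathrm{Spv}(\mathbb{Z}[T])$, this yields a natural identification $(\mathbb{A}^n_S)^{\mathrm{val}} \simeq \mathbb{A}^n_{S^{\mathrm{val}}}$. The closed immersion $X \hookrightarrow \mathbb{A}^n_S$ comes from a surjection of discrete rings, which is trivially an adic surjection compatible with the relevant $A^+$ (the integral closure of $\mathbb{Z}\cdot 1 + A^{\circ\circ}$, which maps onto the corresponding subring in the quotient), and therefore induces a closed immersion $X^{\mathrm{val}} \hookrightarrow \mathbb{A}^n_{S^{\mathrm{val}}}$ of adic spaces in the sense of the earlier definition; hence $X^{\mathrm{val}} \to S^{\mathrm{val}}$ is indeed locally of finite type.

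Finally, Huber's pullback theorem (quoted immediately after the definition of locally of finite type for adic spaces) guarantees that $X^{\mathrm{val}} \times_{S^{\mathrm{val}}} V$ exists as an adic space; by construction it represents the functor in question, and the claimed isomorphism $X \times_S V \simeq X^{\mathrm{val}} \times_{S^{\mathrm{val}}} V$ is simply the definition of the notation $X \times_S V$ as the representing object. The main obstacle is the verification in the middle step: one must carefully match the scheme-theoretic notion of closed immersion with the adic-space-theoretic one under the functor $(-)^{\mathrm{val}}$, in particular ensuring that the condition on the $A^+$-part of the Huber pair is preserved, which requires unpacking the structure sheaf of $X^{\mathrm{val}}$ on rational subsets and comparing it with the one inherited from $(\mathbb{A}^n_S)^{\mathrm{val}}$.
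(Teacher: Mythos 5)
Your proposal is correct and follows essentially the same route as the paper: translate the hom-sets via the adjunction of Proposition \ref{adjalg}, observe that everything reduces to representability of $X^{\mathrm{val}} \times_{S^{\mathrm{val}}} V$, and deduce that from $X^{\mathrm{val}} \to S^{\mathrm{val}}$ being locally of finite type together with Huber's pullback theorem. The paper's proof is just a terser version of yours, taking the locally-of-finite-type verification for granted rather than spelling out the reduction to a closed subscheme of $\mathbb{A}^n_S$.
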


\begin{proof}
The only thing to check is that the fibered product on the right hand side is representable and this follows from the fact that the morphism $X^{\mathrm{val}} \to S^{\mathrm{val}}$ is locally of finite type (because $X \to S$ is).
\end{proof}

%%%%%%%%%%%%%%%%%%%%
\begin{xmp}
\begin{enumerate}
\item
If $V$ is an adic space, then we can consider
\[
\mathbb A^n_{V} := \mathbb A^{n,\mathrm{val}} \times V
\quad \mathrm{and} \quad
\mathbb P^n_{V} := \mathbb P^{n,\mathrm{val}} \times V.
\]
\item More generally, if $X$ is a affine (resp.\ quasi-projective) scheme over $S$, $V$ is an adic space and $V \to S$ a morphism of locally ringed spaces, then one may call $X_V$ an \emph{affine space} (resp.\ a \emph{quasi-projective space} over $V$.
\end{enumerate}
\end{xmp}

%%%%%%%%%%%%%%%%%%%%%%%
\subsection{Analytic points}

We are ultimately interested in analytic spaces that we shall shortly define.
General adic spaces will mostly serve as a bridge between (formal) schemes and analytic spaces.

%%%%%%%%%%%
\begin{dfn}
A point $v$ of an adic space $V$ is \emph{analytic} if it has an open neighborhood which is Tate.
The space $V$ is said to be \emph{analytic} if all its points are analytic.
\end{dfn}

Be careful that an affinoid space which is analytic is not necessarily the adic spectrum of a Tate ring (it's only true locally).

When $V:= \mathrm{Spa}(A, A^+)$ is affinoid, then a point $v \in V$ is non-analytic if and only if $\mathrm{supp}(v)$ is open in $A$ (or equivalently $\forall f \in I_0, v(f) = +\infty$ if $I_0$ is some ideal of definition for $A$).
In general, a point is non-analytic if and only if it has a trivial (vertical) generalization.
We will denote by $V^{\mathrm{an}}$ (instead of Huber's $V_{a}$) the open subset of analytic points of $V$.
More generally, if $T \subset V$ is any subset, we will simply write $T^{\mathrm{an}}$ instead of $T \cap V^{\mathrm{an}}$ for the set of analytic points of $T$.

This is the non-analytic locus which is functorial.
Actually, a morphism is adic if and only if it sends analytic points to analytic points.
As a consequence, if we are given a morphism $u : W \to V$ with $V$ analytic, then $W$ is automatically analytic and $u$ is automatically an adic morphism.
We may say that an adic space $V$ is \emph{Tate} if there exists a morphism $V \to \mathrm{Spa}(A,A^+)$ where $A$ is Tate. Then, $V$ is automatically analytic and there exists a ``topologically nilpotent unit'' $\pi$ on $V$.

%%%%%%%%
\begin{dfn}
Let $(K, |-|)$ be a Huber valued field with valuation ring $K^+$.
Then, $\mathrm{Spa}(K^{\mathrm{triv}}, K^+)$ is called a \emph{non-analytic} Huber point.
If $|-|$ is not trivial, then $\mathrm{Spa}(K, K^+)$ is called an \emph{analytic Huber point}.
\end{dfn}

The topological space of a Huber point is totally ordered by generalization with one maximal point of height $0$ (non-analytic) or $1 $ (analytic) and one minimal point which is (the valuation defined by) $|-|$.

Let $V$ be an adic space and $v \in V$ a non-analytic point (resp.\ an analytic point).
Then, there exists a canonical morphism
\[
\mathrm{Spa}(\kappa(v)^\mathrm{triv}, \kappa(v)^+) \hookrightarrow V\quad \mathrm{(resp.\ }\ \mathrm{Spa}(\kappa(v), \kappa(v)^+) \hookrightarrow V\mathrm{)}
\]
that identifies the Huber point with the set of vertical generalizations of $v$.
This is important because the inverse image of a usual point under a morphism of adic spaces (i.e. its fiber) is not an adic space in general and we need to pull all the generalizations back together.

When $V$ is an \emph{analytic} space, then the maximal points for generalization in $V$ are exactly the points of height $1$ (the Berkovich points of $V$).
We shall denote their set by $[V]$.
There exists a retraction map $\mathrm{sep} \colon V \to [V]$ sending any point $v$ to its maximal generalization $[v]$ and $[V]$ is endowed with the \emph{quotient} topology (and \emph{not} the induced topology).
Note that the canonical map $\kappa(v) \hookrightarrow \kappa([v])$ on the residue fields is dense and induces therefore an isomorphism $\mathcal H(v) \simeq \mathcal H([v])$ on the completed residue fields.
Also, since we always have $\mathrm{sep}^{-1}(v) = \overline {\{v\}}$ for $v \in [V]$, we see that the maps $\mathrm{sep}$ and $\mathrm{sep}^{-1}$ induce a bijection between the (locally closed, open, closed) subsets of $[V]$ and the (locally closed, open, closed) subsets of $V$ that are stable under both specialization and generalization.
Moreover, $[V]$ is a \emph{Fr\'echet} topological space (i.e. $T_{1}$) and the map $\mathrm{sep}$ is the adjunction map for a fake adjunction
\[
\mathrm{Hom}([V], T) \simeq \mathrm{Hom}(V, T)
\]
whenever $T$ is a Fr\'echet topological space.
Finally, when $V$ is quasi-compact and quasi-separated, then the space $[V]$ is compact (Hausdorff) and the map $\mathrm{sep}$ is proper.

%%%%%%%%%%%%%%%%%%%
\subsection{Properness and smoothness}

We will review here the important notions of proper and smooth morphisms of adic spaces and some related properties.

Unless otherwise specified, we will always assume that separated, proper or partially proper morphisms of adic spaces are locally of finite type as in \cite{Huber93b}.
In his latter work, Huber makes some more sophisticated finiteness assumptions that are not relevant to us at the moment.
More precisely:

\begin{dfn}
A morphism of adic spaces $u \colon W \to V$ is said to be
\begin{enumerate}
\item \emph{separated} if $u$ is locally of finite type and the diagonal map $\Delta \colon W \hookrightarrow W \times_{V} W$ is closed.
\item \emph{partially proper} if $u$ is separated and universally specializing with respect to \emph{adic} pullback.
\item \emph{proper} if $u$ is partially proper and quasi-compact.
\end{enumerate}
\end{dfn}

You may visit section 1.3 of \cite{Huber96} for the details (even if some of his finiteness conditions there are slightly weaker than ours).
We may however recall that \emph{specializing} means that if $w \mapsto v$ and $v \rightsquigarrow v'$, then there exits $w \rightsquigarrow w'$ such that $w' \mapsto v'$.
For example, when $u$ is quasi-compact, (universally) specializing is equivalent to (universally) closed.
Universally here is always meant with respect to \emph{adic} pull-back.

%%%%%%%%%
\begin{xmp}
\begin{enumerate}
\item If $V$ is any adic space, then $\mathbb P^n_{V}$ is proper over $V$.
\item
It is not difficult to see that the open disk $\mathbb D^-_{V}$ is partially proper over $V$ when $V$ is analytic.
Since $\mathbb P_{V}$ is proper over $V$, it is sufficient to show that the open immersion $\mathbb D^-_{V} \hookrightarrow \mathbb P_{V}$ is specializing.
And we may also assume that $V$ is Tate with topologically nilpotent unit $\pi$.
But then, we have
\[
\mathbb D^-_{V} = \bigcup_{n \in \mathbb N} \mathbb D_{V}(0, \pi^{\frac 1n}) \quad \mathrm{with} \quad \overline {\mathbb D_{V}(0, \pi^{\frac 1n})} \subset \mathbb D_{V}(0, \pi^{\frac 1{n+1}}).
\]
\end{enumerate}
\end{xmp}

\begin{dfn}
A morphism $W \to V$ of adic spaces is said to satisfy the \emph{analytic valuative criterion} for properness if, given a non-archimedean field $F$ and valuation rings $R \subset F^+ \subsetneq F$, then any commutative diagram
\[
\xymatrix{W \ar[r] & V \\ \mathrm{Spa}(F, F^+) \ar[r] \ar[u] & \mathrm{Spa}(F,R) \ar[u] \ar@{-->}[ul]}
\]
may be uniquely completed by the dotted arrow.
\end{dfn}

When $W = \mathrm{Spa}(B, B^+)$ is affine, the lifting property means that the image of the map $B^+ \to \widehat F^+$ is contained in $\widehat R$.

\begin{prop}
Let $u : W \to V$ be a morphism of \emph{analytic} spaces which is locally of finite type and quasi-separated.
Then $u$ satisfies the analytic valuative criterion for properness if and only if $u$ is partially proper.
\end{prop}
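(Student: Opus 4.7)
The plan is to follow the standard valuative-criterion reduction, arriving at Huber's corresponding statement in \cite{Huber96}, Section 1.3.

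\textbf{Direction} $(\Rightarrow)$: Assume $u$ is partially proper. Given a test diagram with $R \subset F^+ \subsetneq F$, a map $\alpha : \mathrm{Spa}(F, F^+) \to W$ and a map $\beta : \mathrm{Spa}(F, R) \to V$ making the square commute, I seek a unique lift $\mathrm{Spa}(F, R) \to W$. First, since $u$ is locally of finite type, the pullback $W' := W \times_V \mathrm{Spa}(F, R)$ exists as an adic space (by the corollary recalled in the excerpt) and the base change $u' : W' \to \mathrm{Spa}(F, R)$ is partially proper; this reduces the problem to the case $V = \mathrm{Spa}(F, R)$. The target then has a generic analytic point $|-|$ and a vertical specialization $v_R$ corresponding to $R$, while $\alpha$ selects a point $w \in W$ over $|-|$. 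Applying the universally specializing property of $u$ (with respect to the identity base change, which is adic) to the specialization $|-| \rightsquigarrow v_R$ in $V$ produces a specialization $w \rightsquigarrow \tilde w$ in $W$ with $u(\tilde w) = v_R$. Since this specialization is vertical (we are in a Tate setting), $\kappa(\tilde w) = \kappa(w) = F$, and the compatibility of the valuation at $\tilde w$ with that at $v_R$ forces its valuation ring to be $R$; this is exactly the data of a morphism $\mathrm{Spa}(F, R) \to W$ extending $\alpha$. Uniqueness comes from the separatedness of $u$: two lifts would give two sections of the diagonal agreeing on the dense analytic point, hence would coincide because $\Delta$ is closed.

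\textbf{Direction} $(\Leftarrow)$: Assume $u$ satisfies the analytic valuative criterion. Separatedness follows by the standard translation: if two morphisms from a test space to $W$ over $V$ agree on the analytic point $|-|$ of $\mathrm{Spa}(F, F^+)$, then they agree as maps $\mathrm{Spa}(F, R) \to W$ by uniqueness, which rules out any non-trivial generization of the diagonal and forces $\Delta$ closed. For the universally specializing property with respect to adic base change, note that the analytic valuative criterion is preserved by adic base change (any test diagram for the pullback descends to one for $u$), so I argue directly on $u$. Take $w \in W$ and a specialization $v \rightsquigarrow v'$ in $V$ with $v = u(w)$; since $V$ is analytic, this specialization is vertical by the Tate remark in the excerpt, hence corresponds to a valuation ring $R$ contained in $\kappa(v)^+$ inside $F := \kappa(v)$. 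The pair $(v, v')$ thus gives a map $\mathrm{Spa}(F, R) \to V$, the point $w$ gives a map $\mathrm{Spa}(F, F^+) \to W$ (using the canonical embedding of Huber points recalled in the excerpt), and the commuting square admits a unique lift $\mathrm{Spa}(F, R) \to W$ by hypothesis; the image of the specialized point is the desired $\tilde w \in W$ with $w \rightsquigarrow \tilde w$ and $u(\tilde w) = v'$.

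The main obstacle will be the careful bookkeeping of valuation and residue-field data when translating between adic-spectrum specializations and valued-field test diagrams, especially checking that the lifted point carries exactly the valuation ring $R$. Since the excerpt explicitly directs the reader to \cite{Huber96}, Section 1.3, for the details, the actual writeup would only sketch the reduction above and defer the finer verifications to that reference.
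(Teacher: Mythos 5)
Your sketch is essentially correct in outline, but it is worth knowing that the paper does not prove this statement at all: its entire proof is the single line ``This is exactly corollary 1.3.9 of \cite{Huber96}.'' What you have written is, in effect, a reconstruction of Huber's own argument (reduction by base change to $V = \mathrm{Spa}(F,R)$, lifting the vertical specialization from the $F^+$-point to the $R$-point via the specializing property, and the converse translation of specializations in an analytic space into valued-field test diagrams). The two nontrivial points you wave at are exactly the ones Huber's lemmas carry: first, the image $w$ of the closed point of $\mathrm{Spa}(F,F^+)$ does not in general satisfy $\kappa(w)=F$ — one only gets a dense embedding $\kappa(w)\hookrightarrow F$ with $\kappa(w)^+=F^+\cap\kappa(w)$, as the paper itself is careful about in its Lemma \ref{valcri} — so the identification of the valuation ring of $\tilde w$ with $R$ needs the completed-residue-field comparison rather than an equality of residue fields; second, in the converse direction the reduction of ``universally specializing for adic base change'' to the criterion on $u$ itself requires the lift for $u$ to be promoted to a lift for $u'=u\times_V V'$ via the universal property of the fibered product, which you state correctly. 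Since both you and the paper ultimately defer these verifications to \cite{Huber96}, Section 1.3, the proposal matches the intended proof; the only caution is that a self-contained writeup would have to repair the claim $\kappa(\tilde w)=\kappa(w)=F$ along the lines just indicated.
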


\begin{proof}
This is exactly corollary 1.3.9 of \cite{Huber96}).
\end{proof}

There also exists a more subtle valuative criterion when $V$ is \emph{not} assumed to be analytic (proposition 3.12.2 of \cite{Huber93b}).

We now turn to smoothness.
A morphism $u : W \to V$ is said to be \emph{formally unramified} (resp.\ \emph{formally smooth}, resp.\ \emph{formally \'etale}) if any commutative diagram
\[
\xymatrix{W \ar[r] & V \\ \mathrm{Spa}(R/\mathfrak a, (R/\mathfrak a)^+) \ar@{^{(}->}[r] \ar[u] & \mathrm{Spa}(R, R^+) \ar[u] \ar@{-->}[ul] }
\]
with $\mathfrak a$ nilpotent may be completed by the diagonal arrow in at most (resp.\ at least, resp.\ exactly) one way.
The morphism $u$ is called \emph{unramified} (resp \emph{smooth}, resp.\ \emph{\'etale}) if it is formally unramified (resp.\ smooth, resp.\ \'etale) and locally finitely presented.

If $W \hookrightarrow V$ is an immersion defined (on some open subset of $V$) by an ideal $\mathcal I_{W}$, then we may consider the \emph{first infinitesimal neighborhood} $W^{(1)}$ of $W$ in $V$ defined by $\mathcal I_{W}^2$ (which is always an adic space with the same underlying subspace as $W$) and the corresponding short exact sequence
\[
0 \to \check{\mathcal N}_{W/V} \to \mathcal O_{W_{(1)}} \to \mathcal O_{W} \to 0
\]
where $\check{\mathcal N}_{W/V}$ is by definition the \emph{conormal sheaf}.
In the case of the diagonal immersion $W \hookrightarrow W \times_{V} W$ associated to a morphism $W \to V$, we obtain the \emph{sheaf of differential forms} $\Omega^1_{W/V}$.
One can show that a morphism of finite type $u : W \to V$ is unramified if and only if $\Omega^1_{W/V} = 0$.

At some point, the adic spaces will only serve as a bridge between formal schemes and analytic spaces.
The following concept will therefore be quite useful:

%%%%%%%%%%%%%%
\begin{dfn} \label{propan}
Let $\mathcal P$ be a property of morphisms of analytic spaces which is local on the base, stable under pullback and stable under composition.
A morphism $W \to V$ of adic spaces is said to be \emph{analytically $\mathcal P$} if for any analytic space $V'$ over $V$, the pull-back $W' \to V'$ is $\mathcal P$.
\end{dfn}

We may choose for $\mathcal P$ the property of being (locally) of finite type (resp.\ separated, resp.\ (partially) proper, resp.\ unramified, resp.\ smooth, resp.\ \'etale, resp.\ an open or a (locally) closed immersion).

Note that the fact that $W'$ is representable is part of the definition (and not automatic).
We also insist on the fact that representability condition is also implicit when we require $\mathcal P$ to be stable under pullback.

Actually, it is sufficient to check the condition when $V'$ is Tate affinoid because the condition is local.
Note also that, when $V$ is analytic, the definition is equivalent to $u$ itself satisfying $\mathcal P$ because the property is assumed to be stable under pull-back.

Be careful however that the image of $V'$ into $V$ is \emph{not} necessarily contained inside the analytic locus $V^{\mathrm{an}}$ of $V$, simply because the analytic locus is not functorial (this is the non-analytic locus which is functorial).
In particular, it is not sufficient to consider the case $V' = V^{\mathrm{an}}$ in the above definition.

%%%%%%%%%
\begin{xmp}
The absolute open unit disk (we use the $T$-adic topology here)
\[
\mathbb{D}^- : = \mathrm{Spa}(\mathbb Z[[T]])
\]
is analytically partially proper and analytically smooth over $\mathrm{Spv}(\mathbb Z)$.
Note however that $\mathbb{D}^-$ is \emph{not} partially proper and \emph{not} smooth over $\mathrm{Spv}(\mathbb Z)$ because the structural map is not even an adic morphism.
\end{xmp}

%%%%%%%%%%%%%%%%%%%
\subsection{Adic spaces locally of noetherian type}

In order to deal with coherent sheaves on adic spaces, il will be necessary to make some noetherian assumptions.

%%%%%%%%%%%
\begin{dfn} \label{noethtyp}
An adic space $V$ is said to be \emph{locally of noetherian type} if it is locally isomorphic to some $\mathrm{Spa}(A, A^+)$ with $A$ of noetherian type.
If moreover, $V$ is quasi-compact, it is said to be of \emph{noetherian type}. 
\end{dfn}

If $u : W \to V$ is a morphism of adic spaces which is locally of finite type and $V$ is locally of noetherian type, then $W$ is also locally of noetherian type.
Moreover, in this situation, $\Omega^1_{W/V}$ is a coherent sheaf.
On can also show that when $u$ is smooth, it is necessarily flat.
Moreover, there exists a Jacobian criterion:

\begin{thm}[Huber]
Let $O$ be an adic space locally of noetherian type.
\begin{enumerate}
\item Let $u : W \to V$ be a morphism between adic spaces that are locally of finite type over $O$.
Then, there exists a right exact sequence
\[
u^*\Omega^1_{V/O} \to \Omega^1_{W/O} \to \Omega^1_{W/V} \to 0.
\]
Assume moreover that $W$ is smooth over $O$.
Then $u$ is smooth (resp.\ \'etale) if and only if $u^*\Omega^1_{V/O}$ is locally a direct summand in (resp.\ is isomorphic to) $\Omega^1_{W/O}$
\item Let $i : W \hookrightarrow V$ be an immersion of adic spaces locally of finite type over $O$.
Then, there exists a right exact sequence
\[
\check{\mathcal N}_{W/V} \to i^*\Omega^1_{V/O} \to \Omega^1_{W/O} \to 0.
\]
Assume moreover that $V$ is smooth over $O$.
Then $W$ is smooth (resp.\ \'etale) over $O$ if and only if $\check{\mathcal N}_{W/O}$ is is locally a direct summand in (resp.\ is isomorphic to) $i^*\Omega^1_{V/O}$.
\end{enumerate}
\end{thm}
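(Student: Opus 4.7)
The plan is to mirror the strategy Crew uses for locally noetherian formal schemes (quoted earlier in this section), transposing each step to the adic setting. Throughout I would work locally: for $u : W \to V$ locally of finite type between adic spaces of noetherian type, the sheaf $\Omega^1_{W/V}$ is coherent by Huber's theorem B, and on affinoids $V = \mathrm{Spa}(A, A^+)$, $W = \mathrm{Spa}(B, B^+)$ it is associated to the completed module of continuous K\"ahler differentials $\widehat \Omega^1_{B/A}$.

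First I would establish the two right exact sequences, which are formal consequences of the universal property of $\widehat \Omega^1$. For part (1) the sequence is the standard first fundamental sequence for $O$-algebras $A \to B$ applied to the completed modules. For part (2), using the identification $\check{\mathcal N}_{W/V} = \mathcal I_W/\mathcal I_W^{\cdot 2}$ where $\mathcal I_W$ is a locally finitely generated defining ideal of $i$, the sequence arises by applying $d$ to $\mathcal I_W$ and composing with $i^*$; right exactness at $\Omega^1_{W/O}$ follows from the fact that sections of $\mathcal I_W$ pull back to zero on $W$. (Parenthetically, the $\check{\mathcal N}_{W/O}$ written in the last line of part (2) must be read as $\check{\mathcal N}_{W/V}$, since no immersion from $W$ to $O$ has been introduced.)

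Next come the smoothness and \'etaleness equivalences, which carry the substantive content. For the forward direction of part (1), assuming $u$ is formally smooth, I would apply the infinitesimal lifting property to the trivial square-zero extension $\mathcal O_W \oplus \epsilon \Omega^1_{W/O}$ of $\mathcal O_W$: the existence of an appropriate lift produces a retraction of the cotangent sequence, which splits $u^*\Omega^1_{V/O} \to \Omega^1_{W/O}$ and exhibits its image as a local direct summand; when $u$ is \'etale, uniqueness of the lift upgrades the splitting to an isomorphism. Conversely, given such a splitting, I would use smoothness of $W$ over $O$ to obtain an infinitesimal lift ignoring $V$; the obstruction to compatibility with $V$ is then an $O$-derivation of $\mathcal O_W$ into the nilpotent ideal whose restriction to $u^*\Omega^1_{V/O}$ vanishes, and the splitting hypothesis provides a correction killing this obstruction. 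The \'etale case adds uniqueness from the isomorphism hypothesis. Part (2) is handled analogously, using the first infinitesimal neighborhood of $W$ inside $V$ in place of the diagonal, with smoothness of $V$ over $O$ playing the role that smoothness of $W$ over $O$ did above.

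The main obstacle is that the infinitesimal lifting problems must be formulated in terms of continuous ring maps so as to match the adic setting (rather than the formal-scheme setting where Crew's proof natively takes place). This relies on the adjunction between adic spectra of Huber pairs and complete Huber pairs recalled just before the theorem, together with Huber's sheafiness results for Huber pairs of noetherian type, in order to identify morphisms $\mathrm{Spa}(R, R^+) \to V$ with compatible pairs of continuous ring maps from sections of $(\mathcal O_V, \mathcal O_V^+)$ into $(R, R^+)$. Once this representability is in hand, the commutative-algebra manipulations in Crew's proof transpose essentially verbatim, and both the cotangent sequences and the Jacobian criteria follow.
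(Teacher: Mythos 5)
Your proposal is correct in outline, but it takes a genuinely different route from the paper: the paper does not prove this theorem at all, it simply defers to Huber, citing Proposition 1.6.3 of \cite{Huber96} for the two right exact sequences and Proposition 1.6.9 of \cite{Huber96} for the splitting/isomorphism criteria. What you have written is essentially a reconstruction of Huber's own argument (which is itself the classical EGA-style proof, the same one Crew transposes to formal schemes), so nothing is wasted --- but be aware that the substantive content you are re-deriving is exactly what the citation is carrying. Your approach buys self-containedness and makes visible the parallel with the formal-scheme statement quoted earlier in the section; the paper's approach buys brevity and avoids re-verifying the adic-specific technicalities. Two such technicalities deserve more care than your sketch gives them. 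First, the trivial square-zero extension $\mathcal O_W \oplus \epsilon\,\Omega^1_{W/O}$ must be exhibited as a legitimate test object, i.e.\ locally as the adic spectrum of a complete Huber pair of noetherian type with nilpotent kernel; this works because $\Omega^1_{W/O}$ is coherent (a fact stated in the paper just before the theorem for morphisms locally of finite type, not a direct consequence of ``theorem B''), but it has to be said. Second, in the converse direction your description of the obstruction is slightly garbled: the obstruction to making an $O$-lift compatible with $V$ is a derivation \emph{out of} $u^*\Omega^1_{V/O}$ into the square-zero ideal, and the role of the splitting is to \emph{extend} that derivation to $\Omega^1_{W/O}$ so that it can be subtracted from the lift --- not that its restriction vanishes. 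With those two points repaired, your argument matches what Huber's Proposition 1.6.9 actually does.
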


\begin{proof}
The existence of the right exact sequences follow from proposition 1.6.3 in \cite{Huber96} and the other assertions are proved in proposition 1.6.9 of \cite{Huber96}.
\end{proof}

If one has to deal with formal schemes that are \emph{not} locally noetherian, then it would be necessary to replace Huber adic spaces with the \emph{generalized adic spaces}\footnote{Another solution is to rely on condensed mathematics but this would be another story.} of Scholze and Weinstein (section 2.1 of \cite{ScholzeWeinstein13} - see also Peter Scholze's lectures \cite{Scholze*}).
The main point is that $\mathrm{Spa}(A, A^+)$ need not be sheaffy in general.
In Scholze-Weinstein theory, the \emph{affinoid space} $\mathrm{Spa}(A, A^+)$ is redefined to be the sheaf associated to the presheaf of sets
\[
(B, B^+) \mapsto \mathrm{Hom}((A, A^+), (B, B^+)) 
\]
on the category $\mathcal C$ opposite to the category of complete Huber pairs.
Here, we endow $\mathcal C$ with the topology generated by rational coverings (be careful that this is not a pretopology in the usual sense: the inverse image of a rational open immersion is only a filtered colimit of rational open immersions).
A \emph{generalized adic space} is a sheaf $V$ on $\mathcal C$ which is locally ind-representable by rational open immersions.

There exists a functor $V \mapsto |V|$ from generalized adic spaces to topological spaces which may be defined by glueing from the affinoid case.
If $V = \mathrm{Spa}(A, A^+)$, then
\[
|V| := \{\mathrm{continuous}\ \mathrm{valuations}\ \mathrm{on}\ A\ \mathrm{non-negative}\ \mathrm{on}\ A^+\}/\sim
\]
is simply the underlying space of the \emph{former} $\mathrm{Spa}(A, A^+)$ (in practice, one still writes $V$ instead of $|V|$).
The sheaf $\mathcal O$ (resp.\ $\mathcal O^+$) on $\mathcal C$ is defined as the sheaf \emph{associated to} the presheaf
\[
(A, A^+) \mapsto A \quad (\mathrm{resp.\ }\ A^+).
\]
By restriction, they both induce a sheaf of topological rings on $|V|$ that we may denote by $\mathcal O_{V}$ and $\mathcal O_{V}^+$ respectively
(be careful however that we may have $\Gamma(V, \mathcal O_{V}) \neq \widehat A$ when $V = \mathrm{Spa}(A, A^+)$ is not \emph{sheaffy}).

We can identify the category of Huber adic spaces (Scholze and Weinstein call them \emph{honest} adic spaces) with the full subcategory of generalized adic spaces $V$ that satisfy for all Huber pairs $(A, A^+)$,
\[
\mathrm{Hom}(V, \mathrm{Spa}(A, A^+)) \simeq \mathrm{Hom} ((A, A^+), (\Gamma(V, \mathcal O_{V}), \Gamma(V, \mathcal O_{V}^+))).
\]

We will however stick to adic spaces that are locally of noetherian type for various reasons.

%%%%%%%%%%%%%%%%%%%%%
\section{Adic spaces and formal schemes} \label{adform}

From now on and unless otherwise specified, \textbf{all formal schemes (resp.\ adic spaces) are locally noetherian (resp.\ locally of noetherian type)}.

In this section, we recall how a formal scheme may be seen as an adic space in such a way that most geometric properties translate directly into the new world.

%%%%%%%%%%%%%%
\subsection{Trivial points}

We start with the reverse construction that associates a formal scheme to an adic space.

If $I_{0}$ is an ideal of definition in a Huber ring $A$, then $A^{/}$ will denote the ring $A$ endowed with the $I_{0}A$-adic topology (and $\widehat {A^{/}}$ will denote the completion of $A$ for the $I_{0}A$-adic topology).
The topology of $A^/$ is coarser than the topology of $A$ but an ideal $\mathfrak a$ of $A$ is open for one topology if and only if it is open for the other.
Concerning completions, we have
\[
\widehat A^{/} = \varprojlim A/(I_{0}A)^n \neq \widehat A = \varprojlim A/I_{0}^n
\]
in general.
\begin{xmp}
\begin{enumerate}
\item
If $A$ is an adic ring, then $A^{/} = A$ as topological rings.
\item
If $A$ is a Tate ring, then $A^{/}$ has the coarse topology and $\widehat {A^{/}} = \{0\}$.
\end{enumerate}
\end{xmp}

A point of an adic space is said to be \emph{trivial} if the corresponding valuation is trivial (has height zero).

%%%%%%%%%%%
\begin{prop}
Let $V$ be an adic space, $V_{0}$ the subset of trivial points and $i = V_{0} \hookrightarrow V$ the inclusion map.
Then, we have $i^{-1}\mathcal O^+_{V} = i^{-1}\mathcal O_{V}$ and $(V_{0}, i^{-1}\mathcal O_{V})$ is a formal scheme.
If $V = \mathrm{Spa}(A, A^+)$, then there exists a natural isomorphism $V_{0} \simeq \mathrm{Spf}(A^{/})$.
\end{prop}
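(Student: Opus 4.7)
The assertions are local on $V$, so I reduce to the affine case $V = \mathrm{Spa}(A, A^+)$ and aim to exhibit a natural isomorphism $(V_0, i^{-1}\mathcal O_V) \simeq \mathrm{Spf}(A^{/})$ of topologically locally ringed spaces. For the underlying set, a trivial continuous valuation on $A$ has value set $\{0, +\infty\}$, so it is $v_{\mathfrak p}$ for a unique prime $\mathfrak p = \mathrm{supp}(v_{\mathfrak p})$; continuity amounts to $\mathfrak p$ being open (since $A \to \kappa(v_{\mathfrak p})$ with its discrete topology must be continuous), and the condition $v_{\mathfrak p}(a) \geq 0$ for $a \in A^+$ is automatic. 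Since the open primes of $A$ and of $A^{/}$ both coincide with the primes containing some (equivalently any) ideal of definition, one obtains the bijection $V_0 \leftrightarrow \mathrm{Spf}(A^{/})$.

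For the topology, a rational subset $R(f_1/f_0, \ldots, f_r/f_0)$ meets $V_0$ in $\{v_{\mathfrak p} : v_{\mathfrak p}(f_0) = 0\} = D(f_0)$, the inequalities $v_{\mathfrak p}(f_i) \geq 0$ being vacuous; conversely $D(f) = V_0 \cap R(1/f)$ because the ideal $(f,1) = A$ is open. Hence the subspace topology on $V_0$ coincides with the formal-spectrum topology of $\mathrm{Spf}(A^{/})$. The equality $i^{-1}\mathcal O^+_V = i^{-1}\mathcal O_V$ then follows on stalks: at a trivial $v \in V_0$, any germ $f$ satisfies $v(f) \in \{0, +\infty\} \geq 0$, so is automatically a germ of $\mathcal O^+_V$.

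For the structure sheaves, I compute $i^{-1}\mathcal O_V$ on the basis of $D(f_0)$'s. By definition, $i^{-1}\mathcal O_V(D(f_0))$ is the sheafification of $\varinjlim_{W \supseteq D(f_0)} \mathcal O_V(W)$, and a cofinal system is provided by $W = R(1/f_0, g_1/f_0, \ldots, g_r/f_0)$ with $\{g_j\}$ running over finite subsets of $A$; each such $W$ contains $D(f_0)$ since the extra constraints $v_{\mathfrak p}(g_j) \geq v_{\mathfrak p}(f_0) = 0$ are vacuous on $V_0$. As $\{g_j\}$ exhausts $A$, Huber's ring of definition $A_0[1/f_0, g_1/f_0, \ldots, g_r/f_0]$ grows to fill $A[1/f_0]$, and the associated Huber topology on $A[1/f_0]$ coarsens to the $I_0 A[1/f_0]$-adic topology. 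The colimit of the Huber completions $\mathcal O_V(W) = \widehat{A[1/f_0]}$ is therefore the $I_0 A[1/f_0]$-adic completion $\widehat{A[1/f_0]^{/}} = \mathcal O_{\mathrm{Spf}(A^{/})}(D(f_0))$, and assembling these identifications produces the required natural isomorphism of sheaves.

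The main obstacle is precisely this last step. A single Huber completion $\mathcal O_V(W)$ is not equal to $\widehat{A[1/f_0]^{/}}$ in general, since Huber's topology on $A[1/f_0]$ (with a fixed ring of definition) is strictly finer than the $I_0 A[1/f_0]$-adic topology; the identification only appears in the colimit defining $i^{-1}$. Making this rigorous requires the cofinality of the system of $W$'s described above together with the noetherian-type hypothesis on $A$, which ensures that the interchange of $I_0$-adic completion and the direct limit over growing rings of definition behaves as expected.
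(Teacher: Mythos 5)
Your proof is correct and follows essentially the same route as the paper: the same identification of $V_0$ with the open primes, the same computation $R(f_1/f_0,\ldots,f_r/f_0)\cap V_0 = D(f_0)$ for the topology, and the same evaluation of $i^{-1}\mathcal O_V$ as a colimit over rational neighborhoods whose rings of definition grow to fill $A[1/f_0]$. The ``interchange'' you flag as the delicate point is resolved exactly as you suggest: since $A$ is finitely generated over $A_0$ (the noetherian-type hypothesis), the system of rings of definition stabilizes at $A[1/f_0]$ itself, after which every term of the colimit equals $\widehat{A[1/f_0]^{/}}$ --- the paper makes this explicit by first localizing to $f_0=1$ and then adjoining enough numerators so that $A = A_0[f_0,\ldots,f_r]$.
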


\begin{proof}
Clearly, $\mathcal O^+_{V,v} = \mathcal O_{V,v}$ when $v$ is trivial.
Now, the second statement is local and it is therefore sufficient to prove the last assertion.
We first notice that the support map induces a bijection
\begin{equation} \label{kermap}
\mathrm{supp}\, : V_{0} \simeq \mathrm{Spf}(A^{/}).
\end{equation}
More precisely, the inverse map sends an open prime $\mathfrak p$ of $A$ to $v_{\mathfrak p}$ defined as follows:
\[
v_{\mathfrak p}(f) = \left\{ \begin{array}{cl} 0 & \mathrm{if}\ f(\mathfrak p) \neq 0 \\ + \infty & \mathrm{otherwise}. \end{array} \right.
\]
Now, let $(f_{0}, \ldots, f_{r})$ be an open ideal in $A$.
Since any $v \in V_{0}$ will only take the values $0$ and $+ \infty$, we see that
\[
v(f_{1}), \ldots, v(f_{r}) \geq v(f_{0}) \neq + \infty \Leftrightarrow f_{0}(v) \neq 0.
\]
In other words, we have
\[
R(f_{1}/f_{0}, \ldots, f_{r}/f_{0}) \cap V_{0} = D(f_{0}) \subset V_{0}
\]
and it immediately follows that the support map \eqref{kermap} is a homeomorphism.

The sheaf $i^{-1}\mathcal O_{V}$ is the sheaf associated to
\[
D(f_{0}) \mapsto \varinjlim \Gamma(R(f_{1}/f_{0}, \ldots, f_{r}/f_{0}) , \mathcal O_{V}).
\]
In order to see that the support map \eqref{kermap} is an isomorphism, we will compute this limit.
After localizing, we may assume that $f_{0} = 1$ and then, after adding some functions, that $A = A_{0}[f_{0}, \ldots, f_{r}]$ where $A_{0}$ is some ring of definition.
Then by definition, we have
\[
\Gamma\left(R(f_{1}/f_{0}, \ldots, f_{r}/f_{0}) , \mathcal O_{V}\right) = \widehat {A^/}. \qedhere
\]
\end{proof}

Note that we used our general assumption that $A$ is finitely generated over $A_{0}$ in order to finish this proof.

One may also remark that $V_{0} \cap V^{\mathrm{an}} = \emptyset$.
Actually, the closure $\overline V_{0}$ of $V_{0}$ in $V$ is exactly the set of non-analytic points.

\subsection{Adic space associated to a formal scheme}

It is shown in \cite{Huber94}, section 4 (or \cite{Wedhorn19}, chapter 9.2) that one can associate functorially an adic space to any (locally noetherian) formal scheme $P$.
We will denote it by $P^{\mathrm{ad}}$ (and not $t(P)$ as Huber does).
More precisely, we have the following:

%%%%%%%%%%%%
\begin{prop}\label{adzer}

The functor $V \mapsto V_{0}$ has an adjoint $P \mapsto P^{\mathrm{ad}}$ which is fully faithful.
When $P = \mathrm{Spf}(A)$, we have $P^{\mathrm{ad}} = \mathrm{Spa}(A)$.
\end{prop}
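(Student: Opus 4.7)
The strategy is to construct $P^{\mathrm{ad}}$ locally by setting $\mathrm{Spf}(A)^{\mathrm{ad}} := \mathrm{Spa}(A)$, to verify the desired adjunction in the affine-affinoid case using the preceding proposition and Lemma~\ref{addoub}, then to globalize by gluing, and finally to deduce fully faithfulness from the fact that the unit of the adjunction is an isomorphism. Since $A$ is a noetherian adic ring, it is of noetherian type as a Huber ring (with $A$ itself as a noetherian ring of definition), so $\mathrm{Spa}(A)$ is sheaffy by Huber's theorem and thus is an honest adic space.

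The key observation for the affine adjunction is that every element of an adic ring $A$ is power-bounded: if $I$ is an ideal of definition and $f \in A$, then $I^k f^n \subseteq I^k$ for all $n$ since $I^k$ is an ideal, so $\{f^n\}$ is bounded. Hence $A = A^\circ$ and $\mathrm{Spa}(A) = \mathrm{Spa}(A, A)$. Given any Huber pair $(B, B^+)$ with $V = \mathrm{Spa}(B, B^+)$, we then obtain a chain of natural bijections
\begin{align*}
\mathrm{Hom}(P^{\mathrm{ad}}, V) &= \mathrm{Hom}((B, B^+), (A, A^\circ)) = \mathrm{Hom}((B, B^+), (A, A)) \\
&= \mathrm{Hom}_{\text{cont}}(B^/, A) = \mathrm{Hom}(\mathrm{Spf}(A), \mathrm{Spf}(B^/)) = \mathrm{Hom}(P, V_0),
\end{align*}
using successively the adjunction for adic spectra, the identity $A = A^\circ$, Lemma~\ref{addoub}(2), the adjunction for formal spectra, and the description $V_0 \simeq \mathrm{Spf}(B^/)$ from the preceding proposition.

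To globalize, I would verify that $\mathrm{Spf}(A) \mapsto \mathrm{Spa}(A)$ is compatible with basic open immersions: for $f_0 \in A$, the basic open $D(f_0) = \mathrm{Spf}(\widehat{A[1/f_0]}) \subset \mathrm{Spf}(A)$ corresponds to the rational subset $R(1, f_0/f_0) = \{v : v(f_0) \neq +\infty\} \subset \mathrm{Spa}(A)$, and the structure sheaves agree (both evaluate to $\widehat{A[1/f_0]}$ on the corresponding opens by formulas \eqref{top} and \eqref{shea}). Cocycle compatibility over triple intersections is straightforward, and this allows one to glue the affine pieces to obtain $P^{\mathrm{ad}}$ for any formal scheme $P$. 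The local affinoid adjunction then extends to a global adjunction by gluing morphisms along an affine cover of $P$ (for the domain) and an affinoid cover of $V$ (for the target), using that $V_0$ is functorial.

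Fully faithfulness of $P \mapsto P^{\mathrm{ad}}$ amounts to showing that the unit $\eta_P \colon P \to (P^{\mathrm{ad}})_0$ is an isomorphism. Locally, this identifies $\mathrm{Spf}(A)$ with $(\mathrm{Spa}(A))_0 = \mathrm{Spf}(A^/) = \mathrm{Spf}(A)$, the last equality holding because, $A$ being already adic, the construction $A \mapsto A^/$ leaves the topology unchanged. The genuine technical obstacle is the globalization step: one must check carefully the functoriality and open-immersion compatibility of the affine construction so that gluing is well-defined, and then verify that morphisms between globally-defined $P^{\mathrm{ad}}$ are faithfully captured by the local adjunctions --- once that is in place, the rest is formal manipulation of adjunctions.
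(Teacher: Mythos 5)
Your proof follows the same route as the paper's: reduce to the affine/affinoid case, derive the adjunction there from Lemma \ref{addoub} (your explicit use of $A = A^{\circ}$ is a point the paper leaves implicit), and obtain full faithfulness from $A^{/} = A$ for an adic ring. The paper simply declares the question local; you attempt to spell out the gluing, and that is where the one slip occurs.

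The open subset of $\mathrm{Spa}(A)$ corresponding to $D(f_{0}) = \mathrm{Spf}(\widehat{A[1/f_{0}]})$ is \emph{not} $\{v : v(f_{0}) \neq +\infty\}$ but rather $\{v : v(f_{0}) = 0\}$, i.e.\ the rational subset $R(g_{1}/f_{0}, \ldots, g_{s}/f_{0})$ where the $g_{j}$ generate $A$ together with $f_{0}$ (so the relevant ideal is $(f_{0}, 1) = A$, which is open, whereas $(f_{0})$ alone need not be). Indeed, a continuous valuation $v \geq 0$ on $A$ extends to a continuous valuation $\geq 0$ on $A[1/f_{0}]$ with its $I$-adic topology exactly when $v(1/f_{0}) \geq 0$ as well, forcing $v(f_{0}) = 0$. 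The example $A = \mathbb Z_{p}$ with the $p$-adic topology shows the difference: $D(p) = \emptyset$ and $\widehat{\mathbb Z_{p}[1/p]} = 0$, while $\{v : v(p) \neq +\infty\}$ contains the $p$-adic point of $\mathrm{Spa}(\mathbb Z_{p})$. With your identification the structure sheaves do not match (your subset is not even rational in general) and the glued space would acquire spurious points; with the corrected rational subset, formulas \eqref{top} and \eqref{shea} both give $\widehat{A[1/f_{0}]}$ with the $I$-adic completion, and the rest of your argument goes through as written.
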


\begin{proof}
We have to show that, if $P$ a formal scheme, then there exists a unique adic space $P^{\mathrm{ad}}$ such that, given an adic space $V$, there exists a natural bijection
\[
\mathrm{Hom}(P^{\mathrm{ad}}, V) \simeq \mathrm{Hom}(P, V_{0}).
\]
The question is local on $P$ and $V$ and we may therefore assume that $P = \mathrm{Spf}(A)$ is affine and $V := \mathrm{Spa}(B, B^+)$ is affinoid.
We may also assume that $A$ is complete.
Then, it follows from lemma \ref{addoub} below that if we set $P^{\mathrm{ad}} = \mathrm{Spa}(A)$, we have
\[
\mathrm{Hom}(P^{\mathrm{ad}}, V) \simeq \mathrm{Hom}(P, V_{0}).
\]
It only remains to show that the functor is fully faithful, or equivalently, that the adjunction map is an isomorphism $P^{\mathrm{ad}}_{0} \simeq P$.
Again this is a local question.
But if $A$ is an adic ring, then we have $A^/ = A$.
\end{proof}

We used in the above proof the following straightforward lemma:

%%%%%%%%%%%%%%%
\begin{lem} \label{addoub}
\begin{enumerate}
\item
The functor $A \mapsto A^{/}$ is adjoint to the forgetful functor from adic rings to Huber rings.
\item
The functor $(A, A^+) \mapsto A^{/}$ is adjoint to the functor $A \mapsto (A, A)$ from adic rings to Huber pairs. $\Box$
\end{enumerate}
\end{lem}

As a consequence of the proposition, the functor $P \mapsto P^{\mathrm{ad}}$ commutes with all colimits, the functor $V \mapsto V_{0}$ commutes with all limits and there exists a natural inclusion $P \hookrightarrow P^{\mathrm{ad}}$ (identification of the points of $P$ with the trivial points of $P^{\mathrm{ad}}$).
We may also notice that
\[
(V_{0})^{\mathrm{ad}} = \{v \in V \colon \forall f \in \mathcal O_{V,v}, v(f) \geq 0\} \subset V
\]
is the set of all points having a trivial horizontal specialization.
Finally, if $X$ is a usual scheme, then there exists a canonical inclusion $X^{\mathrm{ad}} \hookrightarrow X^{\mathrm{val}}$ which is locally given by the inclusion $ \mathrm{Spa}(A) \subset \mathrm{Spv}(A)$.

%%%%%%%%%%%%%%%
\begin{prop} \label{eqco}
If $P$ is a formal scheme, and $i : P \hookrightarrow P^{\mathrm{ad}}$ denotes the inclusion map, then $i^{-1}$ induces an equivalence between coherent modules on both sides.
\end{prop}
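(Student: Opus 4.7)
The question is local on $P$, so one reduces to $P = \mathrm{Spf}(A)$ with $A$ a noetherian complete adic ring. Then by Proposition \ref{adzer}, $P^{\mathrm{ad}} = \mathrm{Spa}(A)$, and the preceding proposition, applied with $V = P^{\mathrm{ad}}$ (where $A^{/} = A$), shows that the inclusion $i : P \hookrightarrow P^{\mathrm{ad}}$ satisfies $i^{-1}\mathcal O_{P^{\mathrm{ad}}} = \mathcal O_P$. So ``restriction along $i$'' is just the functor $i^{-1}$, and in particular it preserves $\mathcal O$-linearity automatically.

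The plan is to bridge the two sides through finite $A$-modules. The Hilbert-type proposition of Section~1 gives an equivalence between finite $A$-modules and coherent $\mathcal O_P$-modules with quasi-inverse $M \mapsto \mathcal O_P \otimes^{\mathbb L}_A M$; Huber's analogous theorem recalled in Section~2 gives a parallel equivalence on the adic side with quasi-inverse $M \mapsto \mathcal O_{P^{\mathrm{ad}}} \otimes^{\mathbb L}_A M$. It therefore suffices to produce, functorially in a finite $A$-module $M$, a natural isomorphism $i^{-1}(\mathcal O_{P^{\mathrm{ad}}} \otimes_A M) \simeq \mathcal O_P \otimes_A M$, and to verify that the resulting triangle of functors commutes; essential surjectivity, full faithfulness and preservation of coherence for $i^{-1}$ all follow formally.

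This comparison follows by exactly the same colimit computation used in the previous proposition to establish $i^{-1}\mathcal O_{P^{\mathrm{ad}}} = \mathcal O_P$. For a rational open $R(f_1/f_0, \ldots, f_r/f_0) \subset P^{\mathrm{ad}}$ with $(f_0, \ldots, f_r)$ an open ideal, the sections of $\mathcal O_{P^{\mathrm{ad}}} \otimes_A M$ are $\widehat{A[1/f_0]} \otimes_A M$; taking the colimit over a neighborhood basis of $D(f_0) \subset P$ and reducing as in the earlier proof (using $A = A_0[f_0, \ldots, f_r]$ over a ring of definition $A_0$) returns $\widehat{A[1/f_0]} \otimes_A M$, which is exactly $\Gamma(D(f_0), \mathcal O_P \otimes_A M)$. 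The one subtlety, which I expect to be the main technical obstacle, is the interchange of $(-)\otimes_A M$ with $I$-adic completion; but this is automatic because $A$ is noetherian and $M$ is finitely generated, so no completed tensor is needed and the two constructions of $\widetilde M$ literally agree on corresponding basic opens. Naturality in $M$ is then obvious, and $i^{-1}$ transports one Hilbert equivalence to the other, yielding the desired equivalence of coherent categories.
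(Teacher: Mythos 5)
Your proof is correct and follows essentially the same route as the paper: reduce to the affine case $P = \mathrm{Spf}(A)$ with $A$ noetherian and complete, invoke Theorem A and B on both sides, and observe that restriction carries $\mathcal O_{P^{\mathrm{ad}}} \otimes_A M$ to $\mathcal O_P \otimes_A M$. You merely spell out the comparison on rational opens and the compatibility of tensoring with completion, which the paper leaves implicit.
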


\begin{proof}
We already know that the restriction of $\mathcal O_{P^{\mathrm{ad}}}$ to $P$ is exactly $\mathcal O_{P}$.
Moreover, the question is local and we may therefore assume that $P = \mathrm{Spf}(A)$ with $A$ complete.
In this case, we have at our disposal Theorem A and B on both sides and restriction is simply given by $M \otimes_{A} \mathcal O_{P^{\mathrm{ad}}} \mapsto M \otimes_{A} \mathcal O_{P}$ (for some finite $A$-module $M$).
\end{proof}

We will denote by $\mathcal F^{\mathrm{ad}}$ the coherent extension of a coherent $\mathcal O_{P}$-module $\mathcal F$ to $P^{\mathrm{ad}}$.

We also have the following fake adjunction on the other side (recall that we denote by $V^+$ the topologically locally ringed space $(V, \mathcal O_{V}^+)$):

%%%%%%%%%%%%%%%%%
\begin{prop}[Huber] \label{adplus}
If $V$ is an adic space and $P$ is a formal scheme, then there exists a natural bijection
\[
\mathrm{Hom}(V^+, P) \simeq \mathrm{Hom}(V, P^{\mathrm{ad}})
\]
(morphisms of topologically ringed spaces on one side and morphisms of adic spaces on the other).
\end{prop}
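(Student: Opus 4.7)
The plan is to reduce to the affine/affinoid case and identify both sides of the claimed bijection with the set of continuous ring homomorphisms into $\widehat{B^+}$.

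First I would reduce using localization. Both sides descend under open covers of $P$ (the underlying space of $V$ and of $V^+$ is the same $V$, so the preimages of members of a cover of $P$ coincide on both sides) and under open covers of $V$. This brings us to the case $P = \mathrm{Spf}(A)$ affine with $A$ complete adic and $V = \mathrm{Spa}(B,B^+)$ affinoid; Proposition \ref{adzer} then gives $P^{\mathrm{ad}} = \mathrm{Spa}(A)$.

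Next, I would compute each side. For the right-hand side, full faithfulness of the adic spectrum on complete sheaffy pairs combined with the fact that $A^\circ = A$ (true for any adic $A$) yields
$$
\mathrm{Hom}(V, \mathrm{Spa}(A)) \simeq \mathrm{Hom}\big((A, A), (\widehat B, \widehat{B^+})\big) = \mathrm{Hom}_{\mathrm{cts}}(A, \widehat{B^+}).
$$
For the left-hand side, $V^+$ is a topologically locally ringed space because each stalk $\mathcal O_{V,v}^+$, being the preimage of a valuation ring in $\mathcal O_{V,v}$, is local. The $\mathrm{Spf}$-adjunction $\mathrm{Hom}(X, \mathrm{Spf}(A)) \simeq \mathrm{Hom}_{\mathrm{cts}}(A, \Gamma(X, \mathcal O_X))$ recorded earlier in the paper therefore applies to $X = V^+$, and sheaffyness of $V$ gives $\Gamma(V, \mathcal O_V^+) = \widehat{B^+}$, whence
$$
\mathrm{Hom}(V^+, \mathrm{Spf}(A)) \simeq \mathrm{Hom}_{\mathrm{cts}}(A, \widehat{B^+}).
$$
Both sides are thus canonically identified with $\mathrm{Hom}_{\mathrm{cts}}(A, \widehat{B^+})$, proving the bijection in the affinoid case.

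The main obstacle will be the naturality required to globalize: one must check that this affinoid identification is functorial in $A$ and in $(B, B^+)$ and compatible with passing to rational subsets of $V$ and basic open subsets of $P$, so that the local bijections patch consistently. This compatibility is built into the two adjunctions invoked above and into the sheaffy hypothesis. One also needs to handle carefully the distinction between morphisms of topologically ringed spaces (on the left) and of adic spaces (on the right); but since $\mathrm{Spa}(A) = \mathrm{Spa}(A, A)$ imposes no constraint on $\mathcal O^+$ beyond that $A$ maps into it, the two notions of morphism match through the adjunctions, so no genuine extra data appears.
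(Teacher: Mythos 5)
The paper does not actually prove this statement: it defers entirely to Proposition 4.1 of Huber's \emph{Formal and rigid geometry I}, so your write-up is a genuine addition rather than a rephrasing. Your reduction is the natural one and matches the remark the paper itself makes immediately after the proposition, namely that for $P = \mathrm{Spf}(A)$ with $A$ complete one has $\mathrm{Hom}(V, P^{\mathrm{ad}}) \simeq \mathrm{Hom}_{\mathrm{cts}}(A, \Gamma(V, \mathcal O_V^+))$; identifying the left-hand side with the same set via the $\mathrm{Spf}$-adjunction applied to $V^+$ is exactly Huber's strategy. Three points deserve more care than you give them. First, the $\mathrm{Spf}$-adjunction is recorded in the paper in the context of adic formal schemes, whereas $V^+$ is merely a topologically locally ringed space; to apply it you must check that a continuous homomorphism $A \to \Gamma(V, \mathcal O_V^+)$ induces a map of \emph{spaces} $V \to \mathrm{Spf}(A)$, i.e.\ that the prime it cuts out at each $v \in V$ is \emph{open} in $A$. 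This is where continuity of the valuations enters: a topologically nilpotent section $g$ satisfies $v(g^n) \to +\infty$, hence $v(g) > 0$, for every $v$, so an ideal of definition of $A$ lands in the maximal ideal of every stalk $\mathcal O_{V,v}^+$. Second, the proposition speaks of morphisms of topologically \emph{ringed} spaces while the adjunction you invoke lives in the locally ringed world; one should either observe that the relevant morphisms are automatically local on stalks or match the convention of the source. Third, $\Gamma(V, \mathcal O_V^+)$ is the full ring of integral elements $\overline{\widehat B^+}$ rather than literally $\widehat{B^+}$ unless the pair is complete in the paper's sense, though since $\mathrm{Spa}(B, B^+) = \mathrm{Spa}(B, \overline B^+)$ this costs nothing. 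With these verifications supplied, your argument is a correct self-contained proof of what the paper outsources.
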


\begin{proof}
This is shown by Huber in Proposition 4.1 of \cite{Huber94}.
\end{proof}

When $P = \mathrm{Spf}(A)$ with $A$ complete, we have
\[
\mathrm{Hom}(V, P^{\mathrm{ad}}) \simeq \mathrm{Hom}(A, \Gamma(V, \mathcal O_{V}^+))
\]
(continuous maps on the right hand side).
As a consequence of proposition \ref{adplus}, we have the following:

%%%%%%%%%%
\begin{cor}
The functor $P \mapsto P^{\mathrm{ad}}$ commutes with all finite limits.
\end{cor}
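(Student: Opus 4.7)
The natural plan is to combine Proposition~\ref{adplus} with Yoneda. Let $(P_i)$ be a finite diagram of formal schemes with limit $P = \lim_i P_i$ (which exists, as recalled after Proposition~\ref{adzer}). To show $P^{\mathrm{ad}}$ represents the limit of the $P_i^{\mathrm{ad}}$ in adic spaces, I need to exhibit, naturally in any adic space $V$, a bijection
$$
\mathrm{Hom}(V, P^{\mathrm{ad}}) \simeq \lim_i \mathrm{Hom}(V, P_i^{\mathrm{ad}}).
$$

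First I would apply Proposition~\ref{adplus} to both sides, reducing the problem to showing that
$$
\mathrm{Hom}(V^+, P) \simeq \lim_i \mathrm{Hom}(V^+, P_i)
$$
as sets of morphisms of topologically locally ringed spaces. In other words, it suffices to prove that the limit in the category of formal schemes is still a limit when tested against the (non-formal-scheme) topologically locally ringed space $V^+$. Both sides are sheaves on $V$ (Hom out of a ringed space is a sheaf), so the claim is local on $V$; it is also local on $P$ in the sense that a morphism $V^+ \to P$ is determined by its restrictions to an open cover of $P$ pulled back to $V^+$. Thus I may assume all the $P_i$ and $P$ are affine.

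In the affine case, write $P_i = \mathrm{Spf}(A_i)$ with $A_i$ complete and $P = \mathrm{Spf}(A)$, where $A$ is the appropriate completed colimit (for instance $A = A_1 \widehat\otimes_{A_0} A_2$ for a pullback). The universal property of $\mathrm{Spf}$ recalled in the excerpt gives
$$
\mathrm{Hom}(V^+, \mathrm{Spf}(A_i)) \simeq \mathrm{Hom}_{\mathrm{cont}}(A_i, \Gamma(V, \mathcal O_V^+)),
$$
and similarly for $A$. Then the result reduces to the universal property of the completed tensor product (or, more generally, the completed colimit) among complete adic rings: a continuous ring homomorphism out of $A$ is the same as a compatible family of continuous ring homomorphisms out of the $A_i$. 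This is the defining property invoked after Definition~\ref{formsch}.

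The main obstacle is the gluing step: $V^+$ is only a topologically locally ringed space, not a formal scheme, so one cannot directly invoke that formal schemes form a full subcategory on which finite limits are computed in a fixed super-category. Concretely one must check that a morphism $V^+ \to P$ can be reconstructed from its compositions with the projections $P \to P_i$, and this must be done via local affine patches. Once the affine case is settled by the universal property of $\widehat\otimes$ as above, the gluing is routine because morphisms to $P = \lim P_i$ are determined by their images under the projections, and compatibility in an affine open of $P$ ensures compatibility on overlaps. This yields the bijection, hence $P^{\mathrm{ad}} = \lim_i P_i^{\mathrm{ad}}$ in adic spaces, proving the corollary.
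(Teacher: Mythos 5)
Your proposal is correct and follows essentially the same route as the paper: test against an adic space $V$, transfer via the fake adjunction of Proposition~\ref{adplus} (in its affine form $\mathrm{Hom}(V,\mathrm{Spf}(A)^{\mathrm{ad}})\simeq\mathrm{Hom}(A,\Gamma(V,\mathcal O_V^+))$), localize on $V$ and on the formal schemes, and conclude by the universal property of the (completed) tensor product. The only cosmetic difference is that the paper explicitly splits ``all finite limits'' into the terminal object (handled by $\mathrm{Spf}(\mathbb Z)^{\mathrm{ad}}=\mathrm{Spa}(\mathbb Z)$) plus fibered products, whereas you phrase it for a general finite diagram; the substance is identical.
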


\begin{proof}
This is not completely formal because we do not have a true adjunction.
Since $\mathrm{Spf}(\mathbb Z)^{\mathrm{ad}} = \mathrm{Spa}(\mathbb Z)$, we only have to show that, given two morphisms of formal schemes $P_{i} \to P$ for $i = 1, 2$ and a commutative diagram
\[
\xymatrix{V \ar@/^/[rrd] \ar@/_/[rdd] \ar@{-->}[rd] \\ &(P_{1} \times_{P} P_{2})^{\mathrm{ad}}\ar[r] \ar[d] & P_{1}^{\mathrm{ad}} \ar[d] \\ & P_{2}^{\mathrm{ad}} \ar[r] & P^{\mathrm{ad}},}
\]
there exits a unique dotted arrow that preserves the commutativity of the diagram.
This is a local question and we may therefore assume that $P, P_{1}, P_{2}$ are affine with rings of functions $A, A_{1}$ and $A_{2}$ respectively.
But then, using the remark following the proposition, we can build a canonical map $A_{1} \otimes_{A} A_{2} \to \Gamma(V, \mathcal O_{V}^+)$ and we are done.
\end{proof}

%%%%%%%%%%%%%%%
\begin{xmp}
Let $S$ be a formal scheme.
\begin{enumerate}
\item We have
\[
\mathbb A^{n, \mathrm{ad}}_{S} = \mathbb D^{n}_{S^{\mathrm{ad}}}, \quad\mathbb P^{n, \mathrm{ad}}_{S} = \mathbb P^n_{S^{\mathrm{ad}}}\quad \mathrm{and} \quad
\mathbb A^{n,-, \mathrm{ad}}_{S} = \mathbb D^{n,-}_{S^{\mathrm{ad}}}.
\]
\item When $S = \mathrm{Spf}(A)$ and $V = S^{\mathrm{ad}}$, we will also consider the \emph{relative bounded disk}
\[
\mathbb D^{\mathrm b,n}_{V} = \mathbb A^{\mathrm b, n, \mathrm{ad}}_{S}.
\]
\end{enumerate}
There exists a sequence of (strict) inclusions
\[
\mathbb D^- \subset \mathbb D^{\mathrm{b}} \subset \mathbb D \subset \mathbb A^\mathrm{val} \subset \mathbb P^\mathrm{val}.
\]
\end{xmp}

%%%%%%%%%%%%%%%%%%%%
\subsection{Specialization}

We are talking here about specialization from the adic world to the formal world.

%%%%%%%%%%%
\begin{dfn}
If $P$ is a formal scheme, then the composition of the obvious morphism of locally topologically ringed spaces and the adjunction morphism
\[
\mathrm{sp} \colon P^{\mathrm{ad}} \to P^{\mathrm{ad},+} \to P
\]
is the \emph{specialization} map.
\end{dfn}

Recall that, by construction, if $P = \mathrm{Spf}(A)$, then we have
\[
\mathrm{sp}(v) = \{f \in A \colon v(f) > 0\}.
\]
Alternatively, any $v \in P^{\mathrm{ad}}$ has a unique trivial \emph{horizontal} specialization $v_{0}$ and we have $\mathrm{sp}(v) = \mathrm{supp}\, v_{0}$.
This is a good reason for calling this map the ``specialization'' map.

%%%%%%%%%%%%
\begin{xmp}
If $X$ is a (usual) scheme, then the diagram
\[
\xymatrix{X^{\mathrm{ad}} \ar@{^{(}->}[rr] \ar[dr]^{\mathrm{sp}} && X^{\mathrm{val}} \ar[dl]_{\mathrm{supp}} \\ & X &
}
\]
is \emph{not} commutative in general.
In the case $X = \mathrm{Spec}(\mathbb Z)$, we have $X^{\mathrm{ad}} = X^{\mathrm{val}} = \mathrm{Spv}(\mathbb Z)$ and $\mathrm{Spec}(\mathbb Z)$ embeds canonically into $\mathrm{Spv}(\mathbb Z)$ by sending the ideal $(p)$ to the trivial valuation modulo $p$ (works also for $p = 0$).
The only other valuations on $\mathbb Z$ are the $p$-adic valuations $v_{p}$ for $p$ prime.
Support and specialization provide two \emph{different} natural sections for this embedding because $\mathrm{supp}\,(v_{p}) = (0)$ but $\mathrm{sp}(v_{p}) = (p)$.
\end{xmp}

%%%%%%%%%%%%%%%%
\begin{prop} \label{retrac}
Let $P$ be a formal scheme.
Then the specialization map $P^{\mathrm{ad}} \to P$ (resp.\ the adjunction map $P^{\mathrm{ad},+} \to P$) is a faithfully flat retraction for the canonical embedding $P \hookrightarrow P^{\mathrm{ad}}$ (resp.\ $P \hookrightarrow P^{\mathrm{ad},+}$).
\end{prop}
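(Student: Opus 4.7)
The statement is local on $P$, so I may assume $P = \mathrm{Spf}(A)$ is affine with $A$ complete, noetherian and adic. Then $P^{\mathrm{ad}} = \mathrm{Spa}(A) = \mathrm{Spa}(A,A)$, since any element of an adic ring is power-bounded and hence $A^\circ = A$. Under the embedding $P \hookrightarrow P^{\mathrm{ad}}$, each open prime $\mathfrak{p} \subset A$ corresponds to the trivial valuation $v_{\mathfrak{p}}$ of support $\mathfrak{p}$; at any such trivial point, $\mathcal{O}_{P^{\mathrm{ad}}}$ and $\mathcal{O}^+_{P^{\mathrm{ad}}}$ have the same stalk, canonically isomorphic to $\mathcal{O}_{P,\mathfrak{p}}$, via the identification $V_{0} \simeq \mathrm{Spf}(A^{/})$ built into the construction of $P^{\mathrm{ad}}$.

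The retraction property then comes out directly: a trivial valuation $v_{\mathfrak{p}}$ satisfies $v_{\mathfrak{p}}(f) > 0$ exactly when $v_{\mathfrak{p}}(f) = +\infty$, that is when $f \in \mathfrak{p}$, so $\mathrm{sp}(v_{\mathfrak{p}}) = \mathfrak{p}$ on underlying spaces, while the identification of stalks above renders the sheaf map the identity. The retraction for the adjunction map $P^{\mathrm{ad},+} \to P$ is then formal: by Proposition~\ref{adplus} and the very definition of $\mathrm{sp}$, this adjunction map composed with the canonical morphism $P^{\mathrm{ad}} \to P^{\mathrm{ad},+}$ recovers $\mathrm{sp}$, and the embedding $P \hookrightarrow P^{\mathrm{ad},+}$ factors through $P \hookrightarrow P^{\mathrm{ad}}$.

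For flatness, the key geometric observation is that for any $f \in A$ the preimage $\mathrm{sp}^{-1}(D(f))$ equals the rational subset $R(1/f) = \{v : v(1) \geq v(f) \neq +\infty\} \subset \mathrm{Spa}(A)$: indeed $\mathrm{sp}(v) \in D(f)$ means $v(f) \not> 0$, and since $v(f) \geq 0$ for all $v$ (as $A^+ = A$) this forces $v(f) = 0$. Consequently $\Gamma(D(f), \mathcal{O}_P) = \widehat{A[1/f]} = \Gamma(R(1/f), \mathcal{O}_{P^{\mathrm{ad}}})$, with the ring homomorphism induced by $\mathrm{sp}$ being the identity. For any $v \in P^{\mathrm{ad}}$ with image $\mathfrak{p} = \mathrm{sp}(v)$, the stalk map $\mathcal{O}_{P,\mathfrak{p}} \to \mathcal{O}_{P^{\mathrm{ad}},v}$ is therefore a filtered colimit over $f \notin \mathfrak{p}$ of restriction maps $\widehat{A[1/f]} \to \mathcal{O}_{P^{\mathrm{ad}},v}$ taken within $\mathcal{O}_{P^{\mathrm{ad}}}$. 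Flatness then reduces to the flatness of restriction maps between sections of $\mathcal{O}_{P^{\mathrm{ad}}}$ over nested rational subsets, which is a standard consequence of the noetherian type hypothesis (and of the Theorem A/B statement for Huber spaces recalled above). The same argument applies to $\mathcal{O}^+$, provided one notes that $\Gamma(R(1/f), \mathcal{O}^+)$ is obtained by the integral-closure recipe for rational localizations of a noetherian type Huber pair, which sits in a similarly flat system.

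The main subtlety is precisely this flatness of restriction maps between nested rational opens, which is where the standing noetherian type hypothesis really enters; for $\mathcal{O}^+$ one must additionally track the compatibility of integral closure with rational localizations.
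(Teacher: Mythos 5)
Your proof follows essentially the same route as the paper's: localize to the affine case, check the retraction on trivial points, and reduce flatness of the specialization map to the flatness of rational localizations of noetherian-type Huber pairs. Your computation $\mathrm{sp}^{-1}(D(f)) = R(1/f)$ is exactly what underlies the paper's identification $\mathcal O_{P^{\mathrm{ad}},v_0} = \mathcal O_{P,\mathrm{sp}(v)}$, where $v_0$ is the trivial horizontal specialization of $v$; the paper then simply declares the generization maps on stalks to be flat, citing the noetherian hypothesis, which is the same appeal you make, only phrased stalkwise rather than as a filtered colimit of restriction maps. The one place you genuinely diverge, and where your stated justification is shakier than what you actually need, is the $\mathcal O^+$ case: you reduce it to flatness of the restriction maps $\Gamma(U',\mathcal O^+) \to \Gamma(U,\mathcal O^+)$ between nested rational opens, but these rings are integral closures, are typically non-noetherian, and flatness of rational localization at the $+$-level is not a standard consequence of the noetherian-type hypothesis. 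The paper avoids this by working directly with the single generization map $\mathcal O^+_{P^{\mathrm{ad}},v_0} \to \mathcal O^+_{P^{\mathrm{ad}},v}$ together with the equality $\mathcal O^+_{P^{\mathrm{ad}},v_0} = \mathcal O_{P^{\mathrm{ad}},v_0}$ at the trivial point, which is a strictly weaker input than the one you invoke; for the adjunction map you should either justify the $+$-level flatness of rational restriction or switch to that stalkwise formulation.
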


\begin{proof}
One first checks that the composite morphism
\[
P \hookrightarrow P^{\mathrm{ad}} \to P^{\mathrm{ad},+} \to P
\]
is the identity.
This is a local question which becomes trivial in the affine case.
It only remains to show that both specialization and adjunction maps are flat.
This follows from the fact that, if $v \in P^{\mathrm{ad}}$, then the localization maps
\[
\mathcal O^+_{P^{\mathrm{ad}},v} \to \mathcal O^+_{P^{\mathrm{ad}},v_{0}} \quad \mathrm{and} \quad \mathcal O_{P^{\mathrm{ad}},v} \to \mathcal O_{P^{\mathrm{ad}},v_{0}}
\]
are flat (noetherian hypothesis used here) and we have by definition
\[
\mathcal O^+_{P^{\mathrm{ad}},v_{0}} = \mathcal O_{P^{\mathrm{ad}},v_{0}} = \mathcal O_{P,\mathrm{sp}(v)}. \qedhere
\]

\end{proof}

%%%%%%%%%
\begin{cor}
If $P$ is a formal scheme, then $\mathrm {Rsp}_{*}$ and $\mathrm {Lsp}^{*}$ induce an equivalence of categories between coherent sheaves on $P^{\mathrm{ad}}$ and coherent sheaves on $P$.
\end{cor}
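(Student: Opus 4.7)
The statement is local on $P$, so I would first reduce to the affine case $P = \mathrm{Spf}(A)$ with $A$ a complete noetherian adic ring, where $P^{\mathrm{ad}} = \mathrm{Spa}(A)$. In this setting, both the proposition on formal spectra recalled earlier and Huber's theorem give equivalences between the categories of coherent sheaves on $P$ (resp.\ on $P^{\mathrm{ad}}$) and the category of finite $A$-modules, via $M \mapsto \widetilde M := \mathcal O_P \otimes_A M$ (resp.\ $M \mapsto \widetilde M^{\mathrm{ad}} := \mathcal O_{P^{\mathrm{ad}}} \otimes_A M$). The plan is to show that $\mathrm{Rsp}_*\widetilde M^{\mathrm{ad}} \simeq \widetilde M$ naturally in $M$, thereby identifying $\mathrm{Rsp}_*$ with the composition of these two equivalences.

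The geometric key is to compute preimages of basic opens. For $f \in A$, one has $D(f) = \{\mathfrak p \in P : f \notin \mathfrak p\}$; since $\mathrm{sp}(v) = \{g : v(g) > 0\}$ and every $v \in \mathrm{Spa}(A)$ is non-negative on $A$ (because $A$ is an adic ring, so $A \subset A^\circ$), the condition $\mathrm{sp}(v) \in D(f)$ amounts to $v(f) = 0$. Thus $\mathrm{sp}^{-1}(D(f)) = R(1/f)$, the rational open affinoidly isomorphic to $\mathrm{Spa}(\widehat{A[1/f]})$, and this Huber ring remains of noetherian type since it is finitely generated over the noetherian ring of definition $A_0[1/f]$. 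Applying Huber's Theorem A and B on $R(1/f)$ then yields
\[
\mathbb R\Gamma(R(1/f), \widetilde M^{\mathrm{ad}}) = \widehat{A[1/f]} \otimes_A M,
\]
concentrated in degree zero; and the right-hand side is exactly $\Gamma(D(f), \widetilde M)$ by formula \eqref{top} in the section on formal schemes.

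Sheafifying on the basis $\{D(f)\}$ gives $\mathrm R^i\mathrm{sp}_*\widetilde M^{\mathrm{ad}} = 0$ for $i > 0$ and an isomorphism $\mathrm{sp}_*\widetilde M^{\mathrm{ad}} \simeq \widetilde M$, whence $\mathrm{Rsp}_*\widetilde M^{\mathrm{ad}} \simeq \widetilde M$ as required; naturality in $M$ is automatic from the construction. The only delicate point to verify carefully is the identification $R(1/f) = \mathrm{Spa}(\widehat{A[1/f]})$ together with the stability of the noetherian type condition under completed localization, both of which are needed in order to invoke Huber's Theorem B on $R(1/f)$. Once these technicalities are secured the rest reduces to the module-theoretic equivalences already in hand, and no further obstacle arises.
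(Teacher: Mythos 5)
Your argument is correct, but it is not the route the paper takes for this corollary. The paper's official proof is a one-liner: it invokes Proposition \ref{eqco} (restriction along the canonical inclusion $P \hookrightarrow P^{\mathrm{ad}}$ is an equivalence on coherent sheaves), in combination with the preceding proposition showing that $\mathrm{sp}$ is a flat retraction of that inclusion. What you do instead is the direct computation: identify $\mathrm{sp}^{-1}(D(f))$ with the rational subset $R(1/f) \simeq \mathrm{Spa}(\widehat{A[1/f]})$, apply Theorems A and B on both sides, and conclude $\mathrm{Rsp}_*\widetilde M^{\mathrm{ad}} \simeq \widetilde M$ with vanishing higher direct images. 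Amusingly, this is exactly the alternative the author sketches in the remark immediately following the corollary (``Actually, such an equivalence may be shown directly: \dots the inverse image of an affine open subset is always an affinoid open subset''), so your proof fleshes out that remark rather than the cited one. Your version buys more: it gives the stronger statement that $\mathrm{R}^i\mathrm{sp}_* = 0$ for $i>0$ on coherent sheaves and exhibits the quasi-inverse explicitly, at the cost of having to verify the identification $\mathrm{sp}^{-1}(D(f)) = R(1/f)$ and the stability of the noetherian-type hypothesis under completed localization — both of which you correctly flag and which do hold (every $v \in \mathrm{Spa}(A)$ is non-negative on $A$ since $A \subset A^\circ$, so $\mathrm{sp}(v) \in D(f)$ amounts to $v(f)=0$; and $A[1/f]$ is its own noetherian ring of definition). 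The only cosmetic gap is the opening claim that ``the statement is local on $P$'': an equivalence of categories is not literally a local assertion, but since full faithfulness and the vanishing of $\mathrm{R}^i\mathrm{sp}_*$ are local and essential surjectivity follows by gluing the local models $\widetilde M^{\mathrm{ad}}$, the reduction is harmless and the paper is equally terse on this point.
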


\begin{proof}
Follows from proposition \ref{eqco}.
\end{proof}

Alternatively, such an equivalence may be shown directly: it follows from the construction of the fake adjunction of proposition \ref{adplus} that, if $Q \subset P$ is an formal open subscheme, then $\mathrm{sp}^{-1}(Q) = Q^{\mathrm{ad}}$.
In particular, the inverse image of an affine open subset is always an affinoid open subset and we can use Cartan's theorems A and B directly.

%%%%%%%%%%%%%%%%%%%%%%
\subsection{Persistence of properties}

We will show that most geometric properties of formal schemes correspond perfectly to their analog in the adic world.

We insist on the fact that the functor $P \mapsto P^{\mathrm{ad}}$ is fully faithful and commutes with all finite limits and all colimits.
We can also list the following properties:

%%%%%%%%%%%%%%%%%
\begin{prop} \label{listpr}
\begin{enumerate}
\item \label{opn}
A morphism of formal schemes $Q \to P$ is a locally closed (resp.\ an open, resp.\ a closed) immersion if and only if $Q^{\mathrm{ad}} \to P^{\mathrm{ad}}$ is a locally closed (resp.\ an open, resp.\ a closed) immersion.
Actually, there exists a bijection between closed formal subschemes of $P$ and closed adic subspaces of $P^{\mathrm{ad}}$. 
\item \label{cov} A family $\{P_{i} \to P\}_{i \in I}$ of morphisms of formal schemes is an open covering if and only if $\{P_{i}^{\mathrm{ad}} \to P^{\mathrm{ad}}\}_{i \in I}$ is an open covering.
\item \label{aff}
A morphism of formal schemes $Q \to P$ is affine (resp.\ quasi-compact, resp.\ quasi-separated) if and only if $Q^{\mathrm{ad}} \to P^{\mathrm{ad}}$ is affinoid (resp.\ quasi-compact, resp.\ quasi-separated).
\item \label{Hub} A morphism of formal schemes $Q \to P$ is adic (resp.\ (locally) of finite type, resp.\ (locally) quasi-finite, resp.\ locally of finite type and separated, resp.\ proper, resp.\ finite, resp. flat, resp. faithfully flat quasi-compact) if and only if $Q^{\mathrm{ad}} \to P^{\mathrm{ad}}$ is so.
\item \label{smet} A morphism of formal schemes $Q \to P$ is unramified (resp.\ smooth, resp.\ \'etale) if and only if $Q^{\mathrm{ad}} \to P^{\mathrm{ad}}$ is so.
\end{enumerate}
\end{prop}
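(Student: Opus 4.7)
The plan is to reduce each equivalence to affine-local data and then translate Huber-pair statements about $P^{\mathrm{ad}}$ into adic-ring statements about $P$ using the adjunction of proposition \ref{adzer}, the fake adjunction of proposition \ref{adplus}, and the equivalence of coherent sheaves of proposition \ref{eqco}. The embedding $P \hookrightarrow P^{\mathrm{ad}}$ and the specialization retraction $\mathrm{sp} : P^{\mathrm{ad}} \to P$ supply the point-level dictionary; full faithfulness of $P \mapsto P^{\mathrm{ad}}$ and its commutation with finite limits let us transfer diagrammatic conditions.

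For item (\ref{opn}), the open case is immediate from $\mathrm{sp}^{-1}(Q) = Q^{\mathrm{ad}}$, noted just after the corollary to proposition \ref{adplus}. For the closed case, reduce to $P = \mathrm{Spf}(A)$: a closed immersion $Q \hookrightarrow P$ is by definition an adic surjection $A \twoheadrightarrow B$, which on adic spectra yields precisely a closed immersion $\mathrm{Spa}(B) \hookrightarrow \mathrm{Spa}(A)$, and conversely such a closed immersion is given by an adic surjection sending $A^+$ onto $B^+$. Glueing produces the bijection of closed subobjects, and locally closed immersions follow by composition. Item (\ref{cov}) then reduces to showing $P = \bigcup_i P_i$ iff $P^{\mathrm{ad}} = \bigcup_i P_i^{\mathrm{ad}}$, which holds because every point of $P^{\mathrm{ad}}$ admits a trivial horizontal specialization lying in $P$. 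For item (\ref{aff}), affineness corresponds to affinoidness directly via $\mathrm{Spf}(A) \mapsto \mathrm{Spa}(A)$; quasi-compactness and quasi-separatedness translate through the continuous retraction $\mathrm{sp}$ together with the affine/affinoid dictionary and (\ref{cov}).

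For item (\ref{Hub}), adicness and being (locally) of finite type go through the identification $\mathbb{A}^{n,\mathrm{ad}}_S = \mathbb{D}^n_{S^{\mathrm{ad}}}$ together with (\ref{opn}); (locally) quasi-finite follows by comparing fibres, and separatedness combines finite-type-ness with the diagonal being closed, both handled by what precedes. For proper and finite morphisms, apply the valuative criteria: on the formal scheme side, the criterion established above; on the adic side, Huber's criterion (proposition 3.12.2 of \cite{Huber93b}). The test diagrams match because a valuation ring $R$ with field of fractions $F$ gives rise to a Huber pair $(F, R)$ whose trivial point recovers $\mathrm{Spec}(R)$ under $V \mapsto V_0$, while its non-trivial generizations are exactly what Huber's analytic criterion requires. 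Finiteness is then closedness plus adicness plus a finite ring extension, all already translated.

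For item (\ref{smet}), the lifting criteria defining formal unramifiedness, smoothness and étaleness on the two sides are in canonical bijection: since $R$ is discrete, $\mathrm{Spec}(R)^{\mathrm{ad}} = \mathrm{Spa}(R)$, and proposition \ref{adzer} matches lifts of $\mathrm{Spec}(R/\mathfrak{a}) \to Q$ through $\mathrm{Spec}(R) \to P$ with lifts of $\mathrm{Spa}(R/\mathfrak{a}) \to Q^{\mathrm{ad}}$ through $\mathrm{Spa}(R) \to P^{\mathrm{ad}}$. Local finite presentation, handled by (\ref{Hub}), upgrades the formal versions to the unadorned ones. The main obstacle is the properness equivalence inside (\ref{Hub}): one must verify that Huber's analytic valuative criterion on $P^{\mathrm{ad}}$ really does collapse to the classical one on $P$, so that no spurious lifting obligations involving higher-height valuations arise beyond what the reduction $P_{\mathrm{red}}$ already controls; the dictionary between $(F,R)$-valued points and trivial-plus-generization data is exactly what bridges the two criteria.
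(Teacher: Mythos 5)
Your overall route is the same as the paper's: affine-local reduction, the adjunctions between $P \mapsto P^{\mathrm{ad}}$ and $V \mapsto V_{0}$, the coherent-sheaf equivalence, and matching the infinitesimal lifting diagrams for item (\ref{smet}) exactly as in lemma \ref{leform}. Two points diverge, and in both the paper's choice is the safer one. First, for \emph{finite} morphisms the paper does not go through valuative or closedness considerations at all: it uses the equivalence of categories of coherent modules on $P$ and on $P^{\mathrm{ad}}$ (proposition \ref{eqco}), since a finite morphism on either side is the relative spectrum of a coherent algebra; your phrase ``closedness plus adicness plus a finite ring extension, all already translated'' does not actually say where the finite ring extension on the adic side descends from, and the coherent-sheaf argument is what supplies that. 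Second, and more seriously, for \emph{properness} you propose to match the valuative criterion on $P$ with Huber's criterion on $P^{\mathrm{ad}}$ and you yourself flag this matching as ``the main obstacle'' without carrying it out. This is a genuine soft spot: $P^{\mathrm{ad}}$ is in general non-analytic, so the relevant criterion is Huber's subtler one (proposition 3.12.2 of \cite{Huber93b}), whose test objects $\mathrm{Spa}(F,F^+) \to \mathrm{Spa}(F,R)$ carry higher-height valuation data that does not reduce in any one-line way to diagrams $\mathrm{Spec}(F) \to \mathrm{Spec}(R)$; the paper's lemma \ref{valcri} needs a full page of work with residue fields and composite valuation rings to handle just one direction of a comparison of this kind. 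The paper avoids redoing this by citing Huber's proposition 3.12.5, which proves the properness equivalence directly. So your proof is essentially correct in structure, but to be complete you should either invoke that reference as the paper does or actually write out the valuation-theoretic dictionary you allude to.
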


\begin{proof}
By definition, if $Q \subset P$ is an open subset, then $Q^{\mathrm{ad}}$ is open in $P^{\mathrm{ad}}$.
And since the functor $V \mapsto V_{0}$ has the analogous property, we see that the converse is also true.
The question about closed immersions is local on $P$ and we may therefore assume that $P = \mathrm{Spf}(A)$.
In this case, closed immersions on both sides correspond bijectively to ideals in $A$.
Here again, noetherianity is crucial.
This finishes the proof of assertion \ref{opn}).

Assertion \ref{cov}) results from the fact that both functors $P \mapsto P^{\mathrm{ad}}$ and $V \mapsto V_{0}$ preserve open coverings.

We know that, when $P$ is affine, $P^{\mathrm{ad}}$ is affinoid.
And conversely, if $V$ is affinoid, then $V_{0}$ is affine.
Assertion \ref{aff}) follows easily.

In assertion \ref{Hub}), the adic and (locally of) finite type cases follow easily from the definitions (see also proposition 4.2 of \cite{Huber94}).
For the finite case, we use the fact that there exists a bijection between coherent sheaves on both sides.
Flatness is local upstairs and therefore follows from the affine case which is trivial (since it is then a global condition).
Concerning faithful flatness, the converse implication follows from proposition \ref{retrac}.
For the direct implication, we can assume that $P = \mathrm{Spf}(A)$ is affine with $A$ complete.
Since $Q$ is then quasi-compact, we can replace it with $\coprod_{i=1}^r Q_i$ where $Q = \bigcup _{i=1}^r Q_i$ is a finite affine open covering and therefore also assume that $Q = \mathrm{Spf}(B)$ is affine with $B$ complete.
Our assertion then follows from lemma \ref{flatad} below.
Since a locally quasi-finite morphism of formal schemes is locally the composition of an open immersion and a finite map, the direct implication is clear in the (locally) quasi-finite case.
Conversely, if $Q^{\mathrm{ad}} \to P^{\mathrm{ad}}$ has discrete fibers, then the same si true for the induced map $Q \to P$.
The separated case is an automatic consequence of the fact that the functor $P \mapsto P^{\mathrm{ad}}$ preserves fibered products and closed immersions.
For properness, it seems more reasonable to rely on the (adic) valuative criterion (see \cite{Huber93b}, proposition 3.12.5 for a proof).

Assertion \ref{smet}) will follow from lemma \ref{leform} below (formal case) because we already know that $u$ is locally of finite type if and only if $u^{\mathrm{ad}}$ is.
\end{proof}

There exists also a beautiful theorem of Huber (\cite{Huber93b}, proposition 3.12.7) that states that any proper adic space over $P^{\mathrm{ad}}$ comes from a (unique and proper) formal scheme over $P$.
The finite case should be clear.

The following lemma was required at some point in the proof of proposition \ref{listpr}:

%%%%%%%%%%%%%%%
\begin{lem} \label{flatad}
Let $A \to B$ be a (continuous) morphism of \emph{not necessarily noetherian} adic rings and $v \in \mathrm{Spa}(A)$.
Assume either that
\begin{enumerate}
\item $v$ is trivial and $\mathrm{Spf}(B) \to \mathrm{Spf}(A)$ is surjective, or
\item $v$ is not trivial and $A \to B$ is faithfully flat,
\end{enumerate}
then $v$ extends to some $w \in \mathrm{Spa}(B)$.
\end{lem}

\begin{proof}
The first case follows immediately from the identification of trivial points and open primes.
We can therefore assume that $v$ is not trivial and $A \to B$ is faithfully flat.
We first replace $A$ with the valuation ring $\mathcal V$ of $v$ and $B$ with $\mathcal V \otimes_{A} B$.
Since the map $\mathcal V \to B$ (the new $B$) is still faithfully flat, we can lift the maximal ideal $\mathfrak m$ of $\mathcal V$ to a prime ideal $\mathfrak n \subset B$.
We may then replace $B$ with its localization $B_{\mathfrak n}$ and assume that $\mathcal V \to B$ is a local morphism of local rings.
Now, the support $\mathfrak p$ of $v$ also lifts to a prime $\mathfrak q \subset B$.
We can then replace $B$ (this new-new $B$) with the quotient $B/\mathfrak q$ which is now a local domain.
As a local domain, $B$ (the new one !) is dominated by some valuation ring $\mathcal W$ and we may finally assume that $B=\mathcal W$.
We are done now because we replaced our original morphism $A \to B$ with a local morphism of valuation rings $\mathcal V \hookrightarrow \mathcal W$.
We can then simply use the natural valuation $w$ of $\mathcal W$ which is continuous non-negative (and extends $v$ by construction).
\end{proof}

In this last proof, even if $A$ and $B$ are noetherian, then it usually happens that $\mathcal V$ and $\mathcal W$ are not.
It is also important to rule out the trivial case in the beginning to make sure that $B \to \mathcal W$ is continuous (when $\mathcal W$ has its natural topology) in the end.

\begin{xmp}
A morphism $Q \to P$ might be surjective although $Q^{\mathrm{ad}} \to P^{\mathrm{ad}}$ is not: if we endow $\mathbb Z_{p}$ with the $p$-adic topology, then the closed immersion $\mathrm{Spec}(\mathbb F_{p}) \hookrightarrow \mathrm{Spf}(\mathbb Z_{p})$ is actually a homeomorphism (of one point spaces) but the closed immersion $\mathrm{Spa}(\mathbb F_{p}) \hookrightarrow \mathrm{Spa}(\mathbb Z_{p})$ identifies the unique point of the first space with the unique \emph{closed} point of the second one (which also has an \emph{open} point).
Note that this counter example is a closed immersion and in particular proper and adic.
The same example shows that assertion \ref{cov}) is not true for closed or locally closed coverings in general.
\end{xmp}

In the course of the proof of proposition \ref{listpr}, we also used the following:

%%%%%%%%%%%%%%%%%
\begin{lem} \label{leform}
A locally noetherian morphism of formal schemes $u \colon Q \to P$ is formally unramified (resp.\ formally smooth, resp.\ formally \'etale) if and only if $u^{\mathrm{ad}} \colon Q^{\mathrm{ad}} \to P^{\mathrm{ad}}$ is.
\end{lem}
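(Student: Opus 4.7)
The strategy is to reduce the whole statement to a single observation: the full faithfulness of $P \mapsto P^{\mathrm{ad}}$ from Proposition~\ref{adzer} sets up a bijection between the lifting diagrams appearing in the two definitions, so the ``at most one'', ``at least one'' and ``exactly one'' lifts on either side correspond term by term. All three equivalences then fall out at once.

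First I would unwind what $\mathrm{Spec}(R)^{\mathrm{ad}}$ is. Viewing a ring $R$ as a discrete adic ring, one has $\mathrm{Spec}(R)=\mathrm{Spf}(R)$ and hence, by Proposition~\ref{adzer}, $\mathrm{Spec}(R)^{\mathrm{ad}}=\mathrm{Spa}(R)=\mathrm{Spa}(R,R)$. If $\mathfrak a\subset R$ is nilpotent, then by Proposition~\ref{listpr}(\ref{opn}) the closed immersion $\mathrm{Spec}(R/\mathfrak a)\hookrightarrow \mathrm{Spec}(R)$ is transformed by $(-)^{\mathrm{ad}}$ into the closed immersion $\mathrm{Spa}(R/\mathfrak a)\hookrightarrow \mathrm{Spa}(R)$ that appears in the adic lifting criterion.

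Second, for any locally noetherian formal scheme $T$ the full faithfulness of $(-)^{\mathrm{ad}}$ gives a natural bijection
$$
\mathrm{Hom}(\mathrm{Spec}(R),T)\;\simeq\;\mathrm{Hom}(\mathrm{Spa}(R,R),T^{\mathrm{ad}}),
$$
compatible with composition. Applied with $T=P$ and $T=Q$, it sends commutative test squares for $u$ bijectively to commutative test squares for $u^{\mathrm{ad}}$, and diagonal fillers on one side to diagonal fillers on the other. Consequently the existence (resp.\ uniqueness, resp.\ both) of a lift is equivalent on the two sides, proving formal smoothness (resp.\ formal unramifiedness, resp.\ formal \'etaleness) of $u$ equivalent to the corresponding property of $u^{\mathrm{ad}}$.

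The one point that deserves care, and is the main (mild) obstacle, is that the lifting property is stated against an \emph{arbitrary} ring $R$, whereas the category of ``formal schemes'' in this paper is locally noetherian by convention. I would handle this by working locally on $P$ and $Q$ and reducing to the affine case $P=\mathrm{Spf}(A)$, $Q=\mathrm{Spf}(B)$: there both sides of the displayed bijection identify canonically with the set of continuous ring homomorphisms out of $A$ (resp.\ $B$) into the discrete ring $R$ (using Proposition~\ref{adplus} together with the fact that $\mathcal O^+_{\mathrm{Spa}(R,R)}=\mathcal O_{\mathrm{Spa}(R,R)}$ in the discrete case), so the bijection holds verbatim for all $R$. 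Everything else is purely formal and symmetric in the two worlds.
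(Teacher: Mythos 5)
There is a genuine gap, and it sits exactly where you would expect the content of the lemma to be. Your argument identifies the formal test diagrams $\mathrm{Spec}(R/\mathfrak a)\hookrightarrow\mathrm{Spec}(R)$ ($R$ discrete) with the adic test diagrams of the special form $\mathrm{Spa}(R,R/\mathfrak a)^{\mathrm{ad}}\hookrightarrow\mathrm{Spec}(R)^{\mathrm{ad}}=\mathrm{Spa}(R,R)$, via full faithfulness of $(-)^{\mathrm{ad}}$. That correspondence is fine and it does give one implication: if $u^{\mathrm{ad}}$ has the adic lifting property then $u$ has the formal one, because every formal test square transports to an adic test square. But the adic-side definition (which follows Huber, and which is what the paper's own proof manipulates) requires the lifting property against $\mathrm{Spa}(S/\mathfrak b)\hookrightarrow\mathrm{Spa}(S)$ for an \emph{arbitrary} complete Huber pair $(S,S^+)$ with its own, generally non-discrete, topology --- not only for discrete rings. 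Such test objects are not of the form $\mathrm{Spec}(R)^{\mathrm{ad}}$ (for instance they may have analytic points, which no $\mathrm{Spec}(R)^{\mathrm{ad}}$ has), so your bijection of test diagrams does not see them, and the direction ``$u$ formally smooth/unramified/\'etale $\Rightarrow$ $u^{\mathrm{ad}}$ is'' is not established. If one did restrict the adic definition to discrete test rings, the notion would say nothing on the analytic locus and the lemma would indeed collapse to full faithfulness; that is a sign the reading is too weak.

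The paper's proof instead reduces (locally, using the locally noetherian hypothesis to justify localizing in the formally smooth case, since formal smoothness is not local in general) to affine pieces $P=\mathrm{Spf}(A)$, $Q=\mathrm{Spf}(B)$, translates both lifting properties into ring-theoretic statements --- on the formal side, lifting continuous maps $B\to R/\mathfrak a$ along $R\twoheadrightarrow R/\mathfrak a$; on the adic side, lifting maps of Huber pairs $(B,B)\to(S/\mathfrak b,(S/\mathfrak b)^+)$ along $(S,S^+)\twoheadrightarrow(S/\mathfrak b,(S/\mathfrak b)^+)$, which via Proposition~\ref{adplus} amounts to lifting continuous ring maps $B\to S^+/(\mathfrak b\cap S^+)$ to $B\to S^+$ --- and then compares the two using the identity $(S/\mathfrak b)^+=S^+/(\mathfrak b\cap S^+)$ for $\mathfrak b$ nilpotent. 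The step you would still need to supply is precisely why the lifting property against all discrete rings implies the lifting property against the topological rings $S^+$ occurring here; your writeup never confronts a non-discrete test object. (Your separate worry about $R$ being non-noetherian is legitimate but secondary, and your fix for it is reasonable.)
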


\begin{proof}
The question is local on $P$ and $Q$ (thanks to the locally noetherian hypothesis in the smooth case) that we may assume to be affine, say $P = \mathrm{Spf}(A)$ and $Q = \mathrm{Spf}(B)$.
We have to check that the lifting property on the formal side is equivalent to the corresponding lifting property on the adic side (complete rings and complete pairs) :
\[
\xymatrix{B \ar[d] \ar@{-->}[rd] & \ar[l] \ar[d] A \\ R/\mathfrak a & R \ar@{->>}[l] } \quad \mathrm{and} \quad \xymatrix{(B, B) \ar[d] \ar@{-->}[rd] & \ar[l] \ar[d] (A, A) \\ ( S/\mathfrak b, (S/\mathfrak b)^+) & (S, S^+) \ar@{->>}[l] }.
\]
This follows from the fact that $(S/\mathfrak b)^+ = S^+/(\mathfrak b \cap S^+)$ when $\mathfrak b$ is nilpotent.
\end{proof}

We also want to mention the following.
As we saw above, when $P$ is a formal scheme, there exists an equivalence of categories $\mathcal F \mapsto \mathcal F^{\mathrm{ad}}$ between coherent sheaves on $P$ and $P^{\mathrm{ad}}$.
One easily sees that, if $u \colon Q \to P$ is a locally noetherian morphism, then $(\Omega^1_{Q/P})^{\mathrm{ad}} = \Omega^1_{Q^{\mathrm{ad}}/P^{\mathrm{ad}}}$.
More generally, if $Q \hookrightarrow P$ is any immersion of formal schemes, then $\check{\mathcal N}_{Q/P}^{\mathrm{ad}} = \check{\mathcal N}_{Q^{\mathrm{ad}}/P^{\mathrm{ad}}}$.

For the pleasure, we can also state a beautiful criterion of properness for usual schemes.
Let us first recall that, when $X$ is a usual scheme, then there exists a natural inclusion $X^{\mathrm{ad}} \subset X^{\mathrm{val}}$ which is locally given by $\mathrm{Spa}(A) \subset \mathrm{Spv}(A)$.
Moreover, if $Z$ is a closed subset of $X$, then $Z^{\mathrm{ad}} = Z^{\mathrm{val}} \cap X^{\mathrm{ad}}$.

\begin{prop}
A morphism of schemes $Y \to X$ satisfies the valuative criterion for properness if and only if
\[
Y^{\mathrm{ad}} \simeq Y_{X^{\mathrm{ad}}}.
\]
\end{prop}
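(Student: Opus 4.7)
The plan is to exploit the fake adjunctions of Propositions \ref{adplus} and \ref{adjalg} to rewrite both sides as representable functors on the category of adic spaces, and then to reduce the biconditional to a test against Huber points, where the valuative criterion appears tautologically. First I would construct the canonical comparison morphism $j \colon Y^{\mathrm{ad}} \to Y_{X^{\mathrm{ad}}}$: the specialization $\mathrm{sp} \colon Y^{\mathrm{ad}} \to Y$ together with the functorial map $Y^{\mathrm{ad}} \to X^{\mathrm{ad}}$ are compatible over $X$, so they factor through $Y \times_X X^{\mathrm{ad}} = Y_{X^{\mathrm{ad}}}$ by the universal property of the fibered product. The goal becomes: $j$ is an isomorphism if and only if the valuative criterion holds.

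Working affine-locally, I would take $X = \mathrm{Spec}(A)$, $Y = \mathrm{Spec}(B)$, and test on an affinoid $V = \mathrm{Spa}(C, C^+)$. By Proposition \ref{adplus} applied to the formal scheme $Y$ and by the Huber corollary for $Y \times_X X^{\mathrm{ad}}$, the two functors of points read
$$
\mathrm{Hom}(V, Y^{\mathrm{ad}}) = \{\phi \colon B \to C^+\}, \qquad \mathrm{Hom}(V, Y_{X^{\mathrm{ad}}}) = \{\phi \colon B \to C : \phi(A) \subseteq C^+\},
$$
with $j(V)$ the obvious inclusion of the first set into the second. So $j$ is bijective on $V$-points precisely when integrality of $\phi$ on $A$ forces integrality on all of $B$.

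For the direction $(\Leftarrow)$, assuming the valuative criterion, given $\phi \colon B \to C$ with $\phi(A) \subseteq C^+$, I would use the pointwise characterization $C^+ = \{f \in C : v(f) \geq 0 \text{ for all } v \in V\}$ (available because $V$ is of noetherian type). At each $v \in V$, composing $\phi$ with the induced valuation yields $\phi_v \colon B \to \mathcal H(v)$ with $\phi_v(A) \subseteq \mathcal H(v)^+$, and the valuative criterion applied to the pair $(\mathcal H(v), \mathcal H(v)^+)$ forces $\phi_v(B) \subseteq \mathcal H(v)^+$, i.e.\ $v(\phi(b)) \geq 0$ for every $b \in B$; ranging over all $v$ gives $\phi(B) \subseteq C^+$. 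For the converse $(\Rightarrow)$, I would specialize $V$ to a Huber point $\mathrm{Spa}(F, R)$ with $F$ a field and $R$ a valuation ring: then $C = F$, $C^+ = R$, and the equality of hom-sets says exactly that any ring map $B \to F$ with $A \to R$ inside $F$ lifts to a map $B \to R$; uniqueness of the lift is automatic since $R \hookrightarrow F$ is injective.

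The main obstacle I anticipate is the non-affine, possibly non-separated case, where $Y^{\mathrm{ad}}$ need not embed into $Y^{\mathrm{val}}$ and so $j$ can fail to be an isomorphism through non-injectivity rather than non-surjectivity (duplicated points above a single valuation in $X^{\mathrm{ad}}$, corresponding to distinct liftings through different local charts of $Y$). This corresponds precisely to the uniqueness failure in the valuative criterion, and must be tracked carefully when gluing the affine calculation. The functor-of-points formulation handles both aspects uniformly: surjectivity of $j(V)$ at Huber-point-valued $V$ records existence of lifts, while injectivity records uniqueness, and the two together amount to $j$ being an isomorphism of adic spaces.
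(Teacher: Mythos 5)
Your overall strategy --- rewriting both sides via the fake adjunctions of propositions \ref{adjalg} and \ref{adplus} as functors of points on adic spaces and reducing to Huber points, where the lifting data becomes a diagram $\mathrm{Spec}(F) \to Y$, $\mathrm{Spec}(R) \to X$ --- is the same as the paper's, and your affine computation of the two hom-sets (maps $B \to C^+$ versus maps $B \to C$ that are integral on $A$), together with the pointwise argument at each $v \in V$, is a correct and more explicit version of what happens in that case. The genuine gap is the opening move ``working affine-locally, I would take $X = \mathrm{Spec}(A)$, $Y = \mathrm{Spec}(B)$'': neither side of the equivalence is local on the source $Y$. The valuative criterion holds for $\mathbb P^1_k \to \mathrm{Spec}(k)$ but fails for the chart $\mathbb A^1_k \to \mathrm{Spec}(k)$, and correspondingly $(\mathbb A^1_k)^{\mathrm{ad}} = \mathbb D_k \subsetneq \mathbb A^{1}_{k^{\mathrm{ad}}}$ while $(\mathbb P^1_k)^{\mathrm{ad}} = \mathbb P^1_{k^{\mathrm{ad}}}$; dually, for the line with doubled origin the failure is one of injectivity of $j$ (two points of $Y^{\mathrm{ad}}$ over one valuation of $k(T)$) matching the failure of uniqueness in the criterion. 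So the affine case neither implies nor is implied by the general statement, and you cannot glue it; you flag this as ``the main obstacle'' but the proposed resolution --- that bijectivity of $j$ on Huber-point-valued points ``amounts to $j$ being an isomorphism of adic spaces'' --- is exactly the assertion that requires proof and is left unjustified.

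The paper avoids the issue by never descending to rings: for an arbitrary scheme $Y$ one has $\mathrm{Hom}(V, Y^{\mathrm{ad}}) = \mathrm{Hom}(V^+, Y)$ and $\mathrm{Hom}(V, Y^{\mathrm{val}}) = \mathrm{Hom}(V, Y)$ (morphisms of locally ringed spaces), so by Yoneda the isomorphism $Y^{\mathrm{ad}} \simeq Y^{\mathrm{val}} \times_{X^{\mathrm{val}}} X^{\mathrm{ad}}$ is equivalent to the unique-lifting property for squares $V \to Y$, $V^+ \to X$ through $Y \to X$; this lifting problem for morphisms of locally ringed spaces into the scheme $Y$ is then checked pointwise on the common underlying space of $V$ and $V^+$, where a single point contributes a Huber point $\mathrm{Spa}(K, K^+)$, i.e.\ exactly the valuative-criterion diagram. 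If you keep your hom-set translation but state it in this locally-ringed-space form for general $Y$, and supply the (standard, but not free) argument that a pointwise-unique lift of $V^+ \to X$ glues to a morphism $V^+ \to Y$, your proof closes.
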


\begin{proof}
The condition means that any commutative diagram
\[
\xymatrix{V \ar@/^/[rrd] \ar@/_/[rdd] \ar@{-->}[rd] \\ &Y^{\mathrm{ad}}\ar[r] \ar[d] & X^{\mathrm{ad}} \ar[d] \\ & Y^{\mathrm{val}} \ar[r] & X^{\mathrm{val}},}
\]
may be uniquely completed.
Using the universal properties of our functors, we see that this is equivalent to the same assertion about the commutative diagram
\[
\xymatrix{V \ar[d] \ar[r] & V^+ \ar[d] \ar@{-->}[ld]\\ Y \ar[r] & X.}
\]
Using the very definition of $V^+$, we may actually assume that $V = \mathrm{Spa}(K,K^+)$ is a Huber point.
We are then reduced to the same assertion relative to the commutative diagram
\[
\xymatrix{\mathrm{Spec}(K) \ar[d] \ar[r] & \mathrm{Spec}(K^+) \ar[d] \ar@{-->}[ld]\\ Y \ar[r] & X.}
\]
This is exactly the valuative criterion for properness.
\end{proof}

%%%%%%%%%%%%%%%%%%%%%%%%%%%%%%%
\subsection{Analytic space associated to a formal scheme}

We consider here the analytic locus of the adic space associated to a formal scheme.
Be careful that this is not functorial in general.

We will denote by $P^{\mathrm{an}}$ the analytic part of $P^{\mathrm{ad}}$ (instead of ${P^{\mathrm{ad}}}^{\mathrm{an}}$).

%%%%%%%%%%%%
\begin{prop} \label{bup}
If $u \colon P' \to P$ is the blowing up of a \emph{usual} closed subscheme $Z$ in a formal scheme $P$ and $Z' = u^{-1}(Z) := P' \times_{P} Z$, then $u$ induces an isomorphism $P'^{\mathrm{ad}} \setminus Z'^{\mathrm{ad}}\simeq P^{\mathrm{ad}} \setminus Z^{\mathrm{ad}}$.
In particular, it induces an isomorphism $P'^{\mathrm{an}} \simeq P^{\mathrm{an}}$.
\end{prop}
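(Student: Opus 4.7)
First I would note that $Z'^{\mathrm{ad}}=(u^{\mathrm{ad}})^{-1}(Z^{\mathrm{ad}})$, which follows from $Z'=P'\times_{P}Z$ and the fact that $P\mapsto P^{\mathrm{ad}}$ commutes with finite limits (the corollary following Proposition \ref{adplus}); in particular $u^{\mathrm{ad}}$ automatically carries $P'^{\mathrm{ad}}\setminus Z'^{\mathrm{ad}}$ into $P^{\mathrm{ad}}\setminus Z^{\mathrm{ad}}$. Since the claim is local on $P$, I reduce to $P=\mathrm{Spf}(A)$ with $\mathcal I_{Z}=(f_{0},\dots,f_{r})$ a finitely generated \emph{open} ideal: openness comes from the fact that $Z$ is a usual scheme, which forces some ideal of definition $I_{0}$ of $A$ to be contained in $\mathcal I_{Z}$.

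The formal blow-up $P'\to P$ is then covered by affine charts $P'_{i}=\mathrm{Spf}(A'_{i})$ for $i=0,\dots,r$, where $A'_{i}$ is the $I_{0}A'_{i}$-adic completion of $A[f_{0}/f_{i},\dots,f_{r}/f_{i}]\subset A[1/f_{i}]$ modulo its $f_{i}$-torsion. On $P'_{i}$ the pulled-back ideal $\mathcal I_{Z}\mathcal O_{P'_{i}}$ is generated by $f_{i}$, hence $Z'^{\mathrm{ad}}\cap P'^{\mathrm{ad}}_{i}=\{v:v(f_{i})=+\infty\}$. The heart of the argument is to identify, via $u^{\mathrm{ad}}$, the complement $P'^{\mathrm{ad}}_{i}\setminus Z'^{\mathrm{ad}}$ with the rational subset
\[
R_{i}:=R(f_{0}/f_{i},\dots,f_{r}/f_{i})=\{\,v\in P^{\mathrm{ad}}:v(f_{j})\geq v(f_{i})\neq+\infty\text{ for all }j\,\}\subset P^{\mathrm{ad}}.
\]
This matches the ring-theoretic description of a rational subset: by formula \eqref{shea}, $\Gamma(R_{i},\mathcal O_{P^{\mathrm{ad}}})=\widehat{A[1/f_{i}]}$ with ring of definition $A_{0}[f_{0}/f_{i},\dots,f_{r}/f_{i}]$, and this ring of definition coincides, after killing $f_{i}$-torsion, with the chart ring $A'_{i}$.

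It then remains to check that the rational subsets $R_{i}$ cover $P^{\mathrm{ad}}\setminus Z^{\mathrm{ad}}$. Given $v$ in the latter, not every $v(f_{j})$ equals $+\infty$; choosing any index $i$ for which $v(f_{i})$ is minimal among the finite values yields $v(f_{j})\geq v(f_{i})\neq+\infty$ for all $j$ (vacuously whenever $v(f_{j})=+\infty$), so $v\in R_{i}$. Gluing is automatic, since both sides have been identified with the same rational pieces of $P^{\mathrm{ad}}$. For the ``in particular'' part, it suffices to observe that $P^{\mathrm{an}}\cap Z^{\mathrm{ad}}=\emptyset$ (and similarly for $Z'^{\mathrm{ad}}$): an analytic point $v$ carries a topologically nilpotent unit $\pi$, some power of which lies in $I_{0}\subseteq\mathcal I_{Z}$, so $v\in Z^{\mathrm{ad}}$ would force $v(\pi^{n})=+\infty$, contradicting that $\pi$ is a unit. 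The main technical point is matching the blow-up chart ring $A'_{i}$ with the ring of functions on $R_{i}$; one must keep track of the adic topology on $A'_{i}$ (defined by $I_{0}$, not by $\mathcal I_{Z}$) and verify that killing the $f_{i}$-torsion on the formal side is exactly what enforces the condition $v(f_{i})\neq+\infty$ defining $R_{i}$ on the adic side.
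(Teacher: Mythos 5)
Your proof is correct in substance but follows a genuinely different route from the paper. The paper's proof reduces everything to the case of \emph{usual} schemes, where a blow-up is literally an isomorphism off the center: it transports the isomorphism $X'\setminus Z'\simeq X\setminus Z$ through the functor $(-)^{\mathrm{val}}$, restricts to $(-)^{\mathrm{ad}}$, and then handles a general formal scheme by writing $P$ locally as a completion $X^{/Y}$ of a scheme and restricting once more; the only delicate point is that $(P\setminus Z)^{\mathrm{ad}}\subsetneq P^{\mathrm{ad}}\setminus Z^{\mathrm{ad}}$, which is why one restricts subsets rather than passing to open formal subschemes. You instead carry out the explicit Raynaud-style computation: identify each blow-up chart minus the exceptional locus with the rational subset $R(f_{0}/f_{i},\dots,f_{r}/f_{i})$ and check that these cover $P^{\mathrm{ad}}\setminus Z^{\mathrm{ad}}$. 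This is precisely the mechanism the paper invokes later in the sketch of Raynaud's theorem (``blowing up an open ideal in $A$ provides a rational covering on the other side''), so your argument in effect proves a sharper local statement and is self-contained, at the cost of the chart-versus-rational-subset bookkeeping (torsion, $I_{0}$-adic completion, extension of continuous valuations to $\widehat{A[f_{j}/f_{i}]}$) that you flag but do not fully execute; the paper's route avoids all of that by outsourcing the work to the scheme case. Two small points: your covering argument (choose $i$ minimizing $v(f_{i})$ among the finite values) is exactly right; but in the ``in particular'' step the phrase ``some power of $\pi$ lies in $I_{0}$'' is not literally correct --- $\pi$ lives in the Tate ring of a neighborhood, not in $A$; the clean statement is that a point of $Z^{\mathrm{ad}}$ has $\mathcal I_{Z}\supseteq I_{0}^{n}$ inside its support, hence open support, hence is non-analytic, which is the paper's observation that $Z^{\mathrm{an}}=\emptyset$ for a usual subscheme.
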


\begin{proof}
Assume first that $u \colon X' \to X$ is the blowing up of a closed subscheme $Z$ in a \emph{scheme} $X$.
Then, it induces an isomorphism $X' \setminus Z' \simeq X \setminus Z$ from which we can deduce an isomorphism
\[
X'^{\mathrm{val}} \setminus Z'^{\mathrm{val}} = (X' \setminus Z')^{\mathrm{val}} \simeq (X \setminus Z)^{\mathrm{val}} = X^{\mathrm{val}} \setminus Z^{\mathrm{val}}.
\]
We want to show now that the more restrictive isomorphism $X'^{\mathrm{ad}} \setminus Z'^{\mathrm{ad}}\simeq X^{\mathrm{ad}} \setminus Z^{\mathrm{ad}}$ also holds.
Only surjectivity needs to be checked, and since $X'^{\mathrm{ad}} \to X^{\mathrm{ad}}$ is proper, it is sufficient to consider maximal points.
But the maximal points of $X^{\mathrm{ad}}$ and $X'^{\mathrm{ad}}$ are in bijection with the maximal points of $X$ and $X'$ respectively, and we can use our isomorphism $X' \setminus Z' \simeq X \setminus Z$ in order to lift them.
In general, the question is local and we may assume that $P = X^{/Y}$ is the completion of a scheme $X$ along a closed subscheme $Y$ containing $Z$.
Since the map $u$ is adic, we obtain an isomorphism $P'^{\mathrm{ad}} \setminus Z'^{\mathrm{ad}}\simeq P^{\mathrm{ad}} \setminus Z^{\mathrm{ad}}$.
The last assertion follows from the fact that $Z^{\mathrm{an}} = \emptyset$ and $Z'^{\mathrm{an}} = \emptyset$ (note that $Z'$ is a \emph{usual} subscheme because a blowing up is an \emph{adic} map).
\end{proof}

Alternatively, in the previous proof, one may assume from the beginning that $P$ is the completion of an affine scheme and do the computations locally.
Anyway, it should be noticed that the inclusion $(P \setminus Z)^{\mathrm{ad}} \subset P^{\mathrm{ad}} \setminus Z^{\mathrm{ad}}$ is \emph{strict} in general (take $P = \mathrm{Spec}(\mathbb Z)$ and $Z = \mathrm{Spec}(\mathbb F_{p})$ for example) and this is why the above proof needs some care.

The next step is the theorem of Raynaud that allows a direct study of the analytic space $P^{\mathrm{an}}$ associated to $P$ (see \cite{Abbes10} and \cite{FujiwaraKato18}) (without introducing $P^{\mathrm{ad}}$):

%%%%%%%%%%
\begin{thm}[Raynaud]
If $P$ is a quasi-compact formal scheme, then specialization induces an isomorphism of locally topologically ringed spaces
\[
P^{\mathrm{an},+} \simeq \varprojlim_{P' \to P} P'
\]
where $P' \to P$ runs through all blowing ups of usual subschemes.
\end{thm}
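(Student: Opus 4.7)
The plan is to construct the comparison morphism explicitly and then verify bijectivity on points, agreement of topologies, and agreement of structure sheaves. For the construction, proposition \ref{bup} gives, for each blowing up $u \colon P' \to P$ of a usual subscheme, an isomorphism $P'^{\mathrm{an}} \simeq P^{\mathrm{an}}$, and hence $P'^{\mathrm{an},+} \simeq P^{\mathrm{an},+}$ as topologically locally ringed spaces. Composing with the specialization $P'^{\mathrm{an},+} \to P'$ yields a morphism $\varphi_{P'} \colon P^{\mathrm{an},+} \to P'$, functorial in $P'$, and these assemble into the desired $\varphi \colon P^{\mathrm{an},+} \to \varprojlim_{P'} P'$.

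For bijectivity on points, quasi-compactness of $P$ reduces us to the affine case $P = \mathrm{Spf}(A)$ with $A$ complete noetherian adic. Any $v \in P^{\mathrm{an},+}$ is a continuous analytic valuation on $A$, and gives rise to the compatible system $(\mathrm{sp}_{P'}(v))_{P'}$. Conversely, a compatible system $(x_{P'})$ produces the filtered colimit $R := \varinjlim_{P'} \mathcal{O}_{P', x_{P'}}$; the key Zariski--Riemann input is that $R$ is a \emph{valuation ring} of its fraction field, because for any nonzero $f, g \in R$, the blowup of the ideal $(f, g)$ in a suitable $P'$ is principal near $x_{P'}$, forcing one of $f/g$ or $g/f$ to lie in $R$. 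The resulting valuation is analytic and continuous, giving the inverse.

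Matching the topologies and the structure sheaves is then bookkeeping. A rational subset $R(f_1/f_0, \ldots, f_r/f_0)^{\mathrm{an}} \subset P^{\mathrm{an},+}$ is the preimage under $\varphi_{P'}$ of an affine chart on the blowup of $(f_0, \ldots, f_r)$, and such affine charts form a basis for the inverse-limit topology; together with coherence of both sides this upgrades the bijection to a homeomorphism. The stalk of $\mathcal{O}^+$ at $v$ is the valuation ring of $v$, which by the argument above is exactly $\varinjlim_{P'} \mathcal{O}_{P', \varphi_{P'}(v)}$, the stalk of the structure sheaf of the inverse limit.

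The main obstacle is the valuation-ring claim, i.e.\ the Zariski--Riemann-type identification of the filtered colimit of local rings. Two ingredients are needed: cofinality of the blowups of ideals $(f,g)$ among admissible blowups of usual subschemes, and the noetherian hypothesis to ensure finite generation of the relevant ideals and well-behaved completions. Once this step is secured, the rest of the proof is essentially formal.
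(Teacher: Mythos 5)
Your overall architecture matches the paper's (which itself defers to Fujiwara--Kato for details): construct the comparison map from proposition \ref{bup}, reduce to the affine case, and identify points of the inverse limit with continuous valuations via a Zariski--Riemann argument. But your key step has a genuine gap. The inverse limit runs only over blowups of \emph{usual} subschemes, i.e.\ of \emph{open} ideals (those containing an ideal of definition). For arbitrary nonzero $f,g$ in the colimit $R := \varinjlim_{P'} \mathcal O_{P',x_{P'}}$, the ideal $(f,g)$ is in general not open, so its blowup is simply not an object of the index category and cannot be invoked. What the admissible blowups actually give you is that finitely generated \emph{open} ideals become invertible in $R$; replacing $(f,g)$ by the open ideal $(f,g,\pi^n)$ does not rescue the argument, because the chart in which $\pi^n$ divides both $f$ and $g$ yields no comparison between them. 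Consequently $R$ is in general only an ``$a$-valuative'' ring in the sense of Fujiwara--Kato, not a valuation ring. This is precisely why the paper's proof passes to the \emph{completion} $\widehat{\mathcal O}_{V,v}$ of the local ring, shows that \emph{this} is a valuation ring, and only then obtains the valuation on $A[\frac 1\pi]$ by composition. Your closing paragraph even identifies the wrong worry: the issue with the $(f,g)$-blowups is admissibility, not cofinality.

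A second, smaller omission: before identifying $P^{\mathrm{an}}$ with $\mathrm{Spa}(A[\frac 1\pi])$ in the affine case, one must first blow up an ideal of definition and localize so that the ideal of definition becomes principal, generated by a nonzerodivisor $\pi$; quasi-compactness alone does not put you in that situation. With these two repairs --- arranging a principal ideal of definition, and working with the completed local ring rather than the raw colimit --- the remainder of your argument (rational subsets versus charts of admissible blowups, identification of the $\mathcal O^+$-stalks with the valuation rings) goes through along the same lines as the paper's sketch.
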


\begin{proof}
For the details, we refer to part II, appendix A of \cite{FujiwaraKato18}.
First of all, the existence of the map follows from proposition \ref{bup}. 
In order to show that this is an isomorphism, after blowing up an ideal of definition and localizing, we may assume that $P = \mathrm{Spf}(A)$ for some adic ring $A$ and that there exists a principal ideal of definition $(\pi)$.
In this case, we have $P^{\mathrm{an}} = \mathrm{Spa}(A[\frac 1\pi])$.
Now, if we denote by $V$ the right hand side of our isomorphism, and pick up some $v \in V$, then one first show that the completion $\widehat{\mathcal O}_{V,v}$ of the local ring is a valuation ring (theorem 0.9.11 of \cite{FujiwaraKato18}).
By composition, this provides a valuation on $A[\frac 1\pi] $. 
This way, we obtain an inverse for our map.
It is not hard to finish the proof because blowing up an open ideal in $A$ provides a rational covering on the other side (and conversely).
\end{proof}

Actually, Raynaud's theorem says a lot more.
Recall that we call \emph{rigid} an adic space which is locally of the form $\mathrm{Spa}(A)$ (meaning that $A^+ = A^\circ$) for some Huber ring $A$.
One can show that an adic space is rigid if and only if it is locally isomorphic to an open subset of some $P^{\mathrm{ad}}$ where $P$ is a formal scheme.
In its strong form, Raynaud's theorem states that the functor $P \mapsto P^{\mathrm{an}}$ induces an equivalence between the category of quasi-compact formal schemes and adic maps up to blowing up and the category of quasi-compact quasi-separated rigid analytic spaces (in our sense).

%%%%%%%%%%%%%%%%%%%%%%%%%%%%%%%
\subsection{Analytic properties of formal schemes}

As already mentioned, the ``functor'' $P \mapsto P^{\mathrm{an}}$ is only functorial in \emph{adic} maps.
However, non adic maps will play an important role in our constructions and this is why we cannot simply rely on this more restrictive notion.

Before stating the next lemma, recall that the (resp.\ the analytic) valuative criterion for properness requires the existence of a lifting in the following situation ($R$ a valuation ring)
\[
\xymatrix{Q \ar[r] & P \\ \mathrm{Spec}(F) \ar[r] \ar[u] & \mathrm{Spec}(R) \ar[u] \ar@{-->}[ul]} \quad (\mathrm{resp.}\ \xymatrix{W \ar[r] & V \\ \mathrm{Spa}(F, F^+) \ar[r] \ar[u] & \mathrm{Spa}(F,R) \ar[u] \ar@{-->}[ul]).}
\]

%%%%%%%%%%%%
\begin{lem} \label{valcri}
If $u \colon Q \to P$ is a morphism of formal schemes that satisfies the valuative criterion for properness, then $u^{\mathrm{ad}} \colon Q^{\mathrm{ad}} \to P^{\mathrm{ad}}$ satisfies the analytic valuative criterion for properness.
\end{lem}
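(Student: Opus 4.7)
The plan is to use the fake adjunction of Proposition \ref{adplus} to translate the adic lifting problem into an equivalent problem of topologically ringed spaces, extract a scheme-theoretic valuative lifting problem by restricting to the points defined by $R$ and $F^+$, apply the hypothesis, and lift the resulting scheme morphism back. More precisely, I first apply Proposition \ref{adplus} to convert the given adic square into the equivalent diagram
\[
\xymatrix{Q \ar[r]^u & P \\ \mathrm{Spa}(F, F^+)^+ \ar[u] \ar[r] & \mathrm{Spa}(F, R)^+ \ar[u]}
\]
of topologically ringed spaces; a dashed adic lift $\mathrm{Spa}(F, R) \to Q^{\mathrm{ad}}$ is then the same datum as a dashed filler $\mathrm{Spa}(F, R)^+ \to Q$ of this topological square.

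Next I extract a scheme-theoretic square. The valuation $v_R \in \mathrm{Spa}(F, R)$ defined by $R$ itself has $\mathcal O^+$-stalk equal to $R$, and taking the stalk of the bottom map at $v_R$ gives a local morphism $\mathcal O_{P, \mathrm{sp}(v_R)} \to R$, hence a morphism $\mathrm{Spec}(R) \to P$ of formal schemes. The analogous construction with the point $v_{F^+} \in \mathrm{Spa}(F, F^+)$ whose $\mathcal O^+$-stalk is $F^+$ yields $\mathrm{Spec}(F^+) \to Q$, and restricting to the generic point gives $\mathrm{Spec}(F) \to Q$. The commutativity of the topological square yields commutativity of
\[
\xymatrix{Q \ar[r]^u & P \\ \mathrm{Spec}(F) \ar[u] \ar[r] & \mathrm{Spec}(R) \ar[u],}
\]
and by the hypothesis there is a unique filler $\phi : \mathrm{Spec}(R) \to Q$.

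The final step is to promote $\phi$ to $\widetilde \phi : \mathrm{Spa}(F, R)^+ \to Q$. The question is local on $Q$, so I choose an affine open neighborhood $\mathrm{Spf}(B)$ of the image of $\phi$; then $\phi$ is encoded by a continuous ring morphism $B \to R$, while the sought $\widetilde \phi$ corresponds to a continuous ring morphism $B \to \Gamma(\mathrm{Spa}(F, R)^+, \mathcal O^+)$. Postcomposing $\phi$ with the canonical inclusion $R \hookrightarrow \Gamma(\mathrm{Spa}(F, R)^+, \mathcal O^+)$ produces such a morphism, automatically continuous since $\phi$ kills some ideal of definition of $B$. Uniqueness of $\widetilde \phi$, and compatibility with the other data of the square, both reduce to the uniqueness of $\phi$ via the injectivity of $R \hookrightarrow F^+ \hookrightarrow F$.

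The main obstacle will be verifying in this last step that the constructed $\widetilde \phi$ actually lies over the given morphism $g : \mathrm{Spa}(F, R)^+ \to P$ and restricts on $\mathrm{Spa}(F, F^+)^+$ to the given map to $Q$; both compatibilities boil down to matching stalk morphisms and must be traced carefully using the specialization $v_{F^+} \rightsquigarrow v_R$ in $\mathrm{Spa}(F, R)$ together with its image under $\mathrm{sp} : P^{\mathrm{ad}} \to P$. A related subtlety is the very extraction in Step 2, whose commutativity relies on the same specialization data and on the fact that the localization map $\mathcal O_{P,\mathrm{sp}(v_R)} \to \mathcal O_{P,\mathrm{sp}(v_{F^+})}$ intertwines the two stalk-level morphisms to $R$ and $F^+$.
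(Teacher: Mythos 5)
There is a genuine gap at the step where you feed the extracted square into the hypothesis. The valuative criterion of the paper is a statement about morphisms of \emph{formal} schemes $\mathrm{Spec}(F) \to Q$ and $\mathrm{Spec}(R) \to P$, where $F$ and $R$ carry the \emph{discrete} topology; on an affine piece $Q = \mathrm{Spf}(B)$ such a morphism is a ring map $B \to F$ which is continuous for the discrete topology on $F$, i.e.\ which kills an ideal of definition $J$ of $B$. The map $B \to F^+ \subset F$ that you obtain from the stalk at $v_{F^+}$ of the adjoint of $\mathrm{Spa}(F,F^+) \to Q^{\mathrm{ad}}$ is only continuous for the \emph{valuation} topology of $F$: it sends $J$ into the topologically nilpotent elements of $F^+$, which are nonzero exactly when the image $w$ of the closed point is an analytic point of $Q^{\mathrm{ad}}$ --- and that is the main case (already for $Q = P = \mathrm{Spf}(\mathbb Z_p)$ and $F = \mathbb Q_p$, the inclusion $\mathbb Z_p \hookrightarrow \mathbb Q_p$ does not kill $(p)$). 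So your square with $\mathrm{Spec}(F) \to Q$ and $\mathrm{Spec}(R) \to P$ is not a diagram of formal schemes, and the hypothesis says nothing about it. Your own final step exposes the inconsistency: you observe that $\phi$, hence $\widetilde\phi$, kills an ideal of definition of $B$, whereas the given map $\mathrm{Spa}(F,F^+)^+ \to Q$ does not when $w$ is analytic, so $\widetilde\phi$ can never restrict to it.

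The missing idea --- and the heart of the paper's proof --- is to descend to data on which the discrete-topology criterion \emph{can} be applied. Since $\mathrm{sp}(w)$ is an \emph{open} prime of $B$, the residue field map $B \to \kappa(\mathrm{sp}(w))$ does kill the ideal of definition, so $\mathrm{Spec}(\kappa(\mathrm{sp}(w))) \to Q$ is a legitimate morphism of formal schemes. The paper therefore applies the criterion to the pair consisting of the field $\kappa(\mathrm{sp}(w))$ and the valuation ring $\overline S$, where $S$ is the preimage of $R$ under $\mathcal O_{Q,\mathrm{sp}(w)} \to \mathcal O^+_{w} \twoheadrightarrow \kappa(w)^+$ and $\overline S$ is its image in $\kappa(\mathrm{sp}(w))$; the conclusion (the image of $B$ in $\kappa(\mathrm{sp}(w))$ lies in $\overline S$) is then transported back up through this diagram to show that the image of $B$ in $\kappa(w)^+$ lies in $R$, which is exactly the analytic lifting property. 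Your stalk-theoretic scaffolding (the adjunction, the stalks at the closed points, the specialization $v_{F^+} \rightsquigarrow v_R$) is essentially the same as the paper's chain $\mathcal O_{\mathrm{sp}(w)} \to \mathcal O^+_w \to \kappa(w)^+$, but without the descent to $\kappa(\mathrm{sp}(w))$ and $\overline S$ the reduction to the hypothesis does not go through.
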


\begin{proof}
We give ourselves a non-archimedean field $F$ and valuation rings $R \subset F^+ \subsetneq F$.
We start with a morphism $\mathrm{Spa}(F, F^+) \to Q^{\mathrm{ad}}$ and we denote by $w$ the image of the closed point.
Without loss of generality, we may assume that $F$ is complete.
Then, by definition, there exists a morphism $\mathcal O_{w} \to F$ which is local and compatible with the valuations (when $F$ is endowed with the valuation associated to $F^+$).
It follows that $\kappa(w) \subset F$ and
\[
F^+ \cap \kappa(w) = \kappa(w)^+ = \{\alpha \in \kappa(w) \ /\ w(\alpha) \geq 0\}
\]
is the valuation ring of $\kappa(w)$.
In particular, we may as well assume from now on that $F = \kappa(w)$ and $F^+ = \kappa(w)^+$ (and we replace $R$ with $R \cap \kappa(w) \subset \kappa(w)^+$).

We consider now the composition of the local morphisms of local rings
\[
\mathcal O_{\mathrm{sp}(w)} \to \mathcal O^+_{w} \twoheadrightarrow \kappa(w)^+.
\]
We denote by $S \subset \mathcal O_{\mathrm{sp}(w)}$ the inverse image of $R\subset \kappa(w)^+$ (see diagram below) and by $\overline S \subset \kappa(\mathrm{sp}(w))$ the image of $S$ which is a valuation ring of $\kappa(\mathrm{sp}(w))$.
In order to make sure that $\overline S$ is a valuation ring, one may consider the image $\overline R$ of $R$ in the residue field of $\kappa(w)^+$, which is a valuation ring, and notice that $\overline S = \overline R \cap \kappa(\mathrm{sp}(w))$.

We suppose now that the composite map $\mathrm{Spa}(\kappa(w), \kappa(w)^+) \to Q^{\mathrm{ad}} \to P^{\mathrm{ad}}$ factors through some morphism $\mathrm{Spa}(\kappa(w), R) \to P^{\mathrm{ad}}$ and we denote by $v$ the image of the closed point under this last map.
Then, as before, we have $\kappa(v) \subset \kappa(w)$ and $\kappa(v)^+ = R \cap \kappa(v)$.
Thus, if we consider the commutative diagram of local morphisms of local rings
\[
\xymatrix{\mathcal O_{\mathrm{sp}(v)} \ar[r] \ar[d] & \mathcal O^+_{v} \ar@{->>}[r] \ar[d] & \kappa(v)^+ \ar@{=}[r] \ar[d] & R \cap \kappa(v) \\ \mathcal O_{\mathrm{sp}(w)} \ar[r] & \mathcal O^+_{w} \ar@{->>}[r] & \kappa(w)^+ \\ S \ar[rr] \ar@{^{(}->}[u] && R, \ar@{^{(}->}[u]}
\]
we see that the image of $\mathcal O_{\mathrm{sp}(v)}$ in $\mathcal O_{\mathrm{sp}(w)}$ is contained in $S$.
It follows that its image in $\kappa(\mathrm{sp}(w))$ is contained in $\overline S$.
From this, we deduce the existence of a morphism $\mathrm{Spec}(\overline S) \to P$ that we may uniquely lift as $\mathrm{Spec}(\overline S) \to Q$ because we assume that $u$ satisfies the valuative criterion for properness.
Let us denote by $\mathfrak q$ the image of the closed point (so that, by construction, $\mathrm{sp}(v) = u(\mathfrak q)$).

After replacing $Q$ with an affine neighborhood of $\mathfrak q$, we may assume that $Q = \mathrm{Spf}(B)$.
The lifting property tells us that the image of the natural map $B \to \kappa(\mathrm{sp}(w)$ is contained in $\overline S$.
A quick look at the above diagram and we see that the image of the natural map $B \to \kappa(w)^+$ is contained in $R$ and we are done.
\end{proof}

Recall that the concept of being analytically something was introduced in definition \ref{propan}.
Then we have the following:

%%%%%%%%%%%%%%%%%
\begin{thm} \label{parpr}
If $u \colon Q \to P$ is a morphism of formal schemes which is formally locally of finite type (resp.\ formally locally of finite type and separated, resp.\ partially proper) then $u^{\mathrm{ad}} \colon Q^{\mathrm{ad}} \to P^{\mathrm{ad}}$ is analytically locally of finite type, resp.\ analytically separated, resp.\ analytically partially proper).
\end{thm}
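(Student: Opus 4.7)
The plan is to handle the three claims in sequence, reducing each to local structure on $P$ and $Q$ and translating via the $\mathrm{ad}$ functor.

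For the locally of finite type case, I would invoke the earlier lemma showing that $u$ factors locally as a closed immersion $Q \hookrightarrow \mathbb{A}^{\pm,N}_P$ composed with the projection. Applying $\mathrm{ad}$, and using Proposition \ref{listpr} on closed immersions together with the identifications $\mathbb{A}^{n,\mathrm{ad}}_S \simeq \mathbb{D}^n_{S^{\mathrm{ad}}}$ and $\mathbb{A}^{n,-,\mathrm{ad}}_S \simeq \mathbb{D}^{n,-}_{S^{\mathrm{ad}}}$ recorded in the examples (plus the fact that $P \mapsto P^{\mathrm{ad}}$ commutes with finite limits), one obtains a local factorization of $u^{\mathrm{ad}}$ as $Q^{\mathrm{ad}} \hookrightarrow \mathbb{D}^n_{P^{\mathrm{ad}}} \times_{P^{\mathrm{ad}}} \mathbb{D}^{-,m}_{P^{\mathrm{ad}}} \to P^{\mathrm{ad}}$. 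For any analytic $V' \to P^{\mathrm{ad}}$, the pullback of each closed disc factor is the affinoid closed disc $\mathbb{D}_{V'}$, while the pullback of each open disc factor is, by the example on analytic spaces, the increasing union $\bigcup_k \mathbb{D}_{V'}(0,\pi^{1/k})$ of affinoid closed discs (with $\pi$ a topologically nilpotent unit on $V'$). Hence the pullback of the structure map is locally of finite type, and composing with the base-changed closed immersion yields that the pullback of $u^{\mathrm{ad}}$ to $V'$ is locally of finite type.

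For the separated case, if $u$ is furthermore separated then the diagonal $\Delta_u \colon Q \to Q \times_P Q$ is a closed immersion. Since $P \mapsto P^{\mathrm{ad}}$ commutes with finite limits and sends closed immersions to closed immersions (Proposition \ref{listpr}), the morphism $(\Delta_u)^{\mathrm{ad}}$ identifies with the diagonal of $u^{\mathrm{ad}}$ and is a closed immersion. Closed immersions are stable under pullback, so the diagonal of the base change of $u^{\mathrm{ad}}$ to any analytic $V'$ is again a closed immersion; combined with the first step, this gives that the pullback is locally of finite type with closed diagonal, i.e.\ separated.

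For the partially proper case, the proposition in the formal scheme section gives that $u$ satisfies the valuative criterion for properness, and Lemma \ref{valcri} upgrades this to the analytic valuative criterion for $u^{\mathrm{ad}}$. This criterion is manifestly preserved by pullback along any morphism via the universal property of fibered products. Thus for any analytic $V' \to P^{\mathrm{ad}}$, the pullback to $V'$ is locally of finite type (by the first step), quasi-separated (coming, for instance, from the local factorization through the separated adic spaces $\mathbb{D}^n_{V'} \times_{V'} \mathbb{D}^{-,m}_{V'}$ combined with the uniqueness part of the valuative criterion), and satisfies the analytic valuative criterion. The Huber proposition characterizing partial properness via the analytic valuative criterion then concludes the argument.

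The main obstacle is the partially proper case: while the analytic valuative criterion transfers cleanly through Lemma \ref{valcri} and is stable under base change, promoting it to partial properness via Huber's criterion also requires verifying quasi-separatedness of every analytic pullback. Extracting this quasi-separatedness cleanly from the uniqueness in the valuative criterion together with the local structure through products of discs is the most delicate ingredient of the proof.
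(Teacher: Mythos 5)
Your proposal is correct and follows essentially the same route as the paper: local factorization through a closed immersion into $\mathbb{A}^{\pm,N}_P$ and the identification of $\mathbb{D}^{-}_{V'}$ as an increasing union of affinoid closed discs for the finite-type case, preservation of the diagonal closed immersion under $(-)^{\mathrm{ad}}$ and pullback for separatedness, and Lemma \ref{valcri} plus Huber's analytic valuative criterion for partial properness. The only point where you diverge is the quasi-separatedness needed for Huber's criterion: the paper simply invokes the separatedness already established in the second case (since a partially proper morphism is in particular formally locally of finite type and separated), which is cleaner than extracting it from the local factorization and the uniqueness in the valuative criterion.
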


\begin{proof}
We consider the first assertion.
This is a local question and we may therefore assume that $u$ splits into a closed immersion followed with the projection $\mathbb A^n \times \mathbb A^{-,m} \times P \to P$.
We are therefore reduced to the case where $u$ is either a closed immersion, the projection of the affine line $\mathbb A$ onto $\mathrm{Spec}(\mathbb Z)$, or the projection of the formal affine line $\mathbb A^-$ onto $\mathrm{Spec}(\mathbb Z)$.
In the first two cases, we already know that $u^{\mathrm{ad}}$ is of finite type.
It is therefore sufficient to recall that $\mathbb D^-_{V}$ is locally of finite type over $V$ when $V$ is analytic.
More precisely, we may assume that $V$ is Tate ring with topologically nilpotent unit $\pi$ in which case
\[
\mathbb D^-_{V} = \bigcup_{n \in \mathbb N} \mathbb D_{V}(0, \pi^{\frac 1n})
\]
is a (increasing) union of affinoid open subsets of finite type.

Assume now that $u$ is also separated which means that the diagonal embedding $Q \hookrightarrow P \times_{Q} P$ is a closed immersion.
First of all, since $u$ is locally formally of finite type, this is a locally noetherian morphism, and the fibered product is representable by a locally noetherian formal scheme.
It follows that the map $Q^{\mathrm{ad}} \hookrightarrow P^{\mathrm{ad}} \times_{Q^{\mathrm{ad}}} P^{\mathrm{ad}}$ is also a closed immersion (but we may not call $u^{\mathrm{ad}}$ separated because our definition requires a finiteness condition).
Pulling back preserves products and closed immersions.
It follows that $u^{\mathrm{ad}}$ is analytically separated.

We assume now that $u$ is partially proper and we consider the pullback $(u^{\mathrm{ad}})^{-1}(V) \to V$ of the morphism $u^{\mathrm{ad}} \colon Q^{\mathrm{ad}} \to P^{\mathrm{ad}}$ along some morphism $V \to P^{\mathrm{ad}}$ with $V$ analytic.
We already know that our map is locally of finite type and separated and we apply the \emph{analytic valuative criterion}.
It is actually sufficient to show that the map $u^{\mathrm{ad}}$ itself satisfies the analytic valuative criterion (using the universal property of fibered products) and this was done in lemma \ref{valcri}.
\end{proof}

Let us end this chapter with the following remark.
The above functors $P \mapsto P^{\mathrm{ad}}$, $V \mapsto V_{0}$ and $V \mapsto V^+$ (or even $X \mapsto X^{\mathrm{val}}$) extend naturally to the context of generalized adic spaces of Scholze-Weinstein (without any noetherian hypothesis) but the adjunctions of proposition \ref{adzer} and \ref{adplus} (as well as proposition \ref{adjalg}) are not valid anymore in this full generality.
And there are many other issues as well.

\begin{xmp}
If $\mathcal V$ is a (non discrete) valuation ring with fraction field $K$ and $P$ is a formal scheme which is formally of finite type over $\mathcal V$, then $P^{\mathrm{ad}}$ is only a generalized adic space but $P^{\mathrm{ad}}_{K} := P^{\mathrm{ad}} \times_{\mathrm{Spa}(\mathcal V)} \mathrm{Spa}(K)$ is a (honest) analytic Huber space.
In other words, in the non noetherian case, we would mostly use generalized adic spaces as a bridge between formal schemes and analytic Huber spaces.
Note that one could also use directly Raynaud's generic fiber and completely avoid adic spaces but we would then have to stick to adic morphisms and this is not our philosophy here.
\end{xmp}

%%%%%%%%%%%%%%%%
\section{Overconvergent spaces}

In this section, we first introduce the notion of a formal embedding and then extend the concept of an overconvergent space from \cite{LeStum11} to the absolute setting.
Recall that all formal schemes (resp.\ adic spaces) are supposed to be locally noetherian (resp.\ locally of noetherian type).

%%%%%%%%%%%%%%%%%%%%%
\subsection{Formal embeddings}

In the end, we are interested in usual schemes but we need to embed them into formal schemes in order to obtain and adic space and then an analytic space.
However, the theory works as well if we start directly from a general formal scheme and this provides actually a lot more flexibility.

%%%%%%%%%%%%%%%%
\begin{dfn}
A \emph{formal embedding} is a locally closed embedding of formal schemes $X \hookrightarrow P$.
\end{dfn}

We will often identify $X$ with its image in $P$ and then call it a \emph{formal subscheme}.
Note that we do not require $X$ to be a usual scheme in this definition.
A formal embedding is a locally noetherian morphism.
Formal embeddings form a category with compatible pairs of morphisms
\[
\xymatrix{ Y \ar@{^{(}->}[r] \ar[d]^f & Q \ar[d]^u \\ X \ar@{^{(}->}[r] & P.}
\]
Of course, $f$ is uniquely determined by $u$ when it exists.
The forgetful functor $(X \hookrightarrow P) \mapsto P$ commutes with all limits and all colimits because it has obvious adjoint $P \mapsto (\emptyset \hookrightarrow P)$:
\[
\mathrm{Hom}(\emptyset \hookrightarrow Q, X \hookrightarrow P) \simeq \mathrm{Hom}(Q, P),
\]
and coadjoint $P \mapsto (P = P)$:
\[
\mathrm{Hom}(Q, P) \simeq \mathrm{Hom}(Y \hookrightarrow Q, P = P).
\]
And the forgetful functor $(X \hookrightarrow P) \mapsto X$ commutes with all limits because it has an obvious adjoint $X \mapsto (X = X)$:
\[
\mathrm{Hom}(Y = Y, X \hookrightarrow P) \simeq \mathrm{Hom}(Y, X)
\]
(but no coadjoint).

If we are given a topology on the category of formal schemes, then we will implicitly endow the category of formal embeddings with the inherited topology (coarsest topology making the forgetful functor $(X \hookrightarrow P) \to P$ cocontinuous).
We will usually consider the coarse topology so that the inherited topology is also the coarse topology but we may as well consider any other such as the Zariski or $h$-topology.
For example, the Zariski topology will be generated by the pretopology made of families $\{(X_{i} \hookrightarrow P_{i}) \to (X \hookrightarrow P)\}_{i \in I}$ where $P = \bigcup_{i \in I} P_{i}$ is a Zariski open covering and, for all $i \in I$, $X_{i} = X \cap P_{i}$.
One easily sees that this topology is subcanonical.
Moreover, both functors $(X \hookrightarrow P) \mapsto P$ and $(X \hookrightarrow P) \mapsto X$ are left exact continuous and cocontinuous.
There are similar descriptions and properties for the other topologies.

If $X$ is a locally closed subspace of a formal scheme $P$, then there always exists a structure of a formal subscheme on $X$ and we will often consider $X$ as endowed with such a structure.
Actually, there always exists a unique structure of reduced subscheme on $X$ but it might be more convenient sometimes to use another one.
This applies in particular to the closure $\overline X$ of a formal subscheme $X$ in $P$ that will always be seen as a formal subscheme for some structure (for example, the (formal) scheme theoretic closure).
Note that the closure map is functorial in the sense that any morphism of formal embeddings
\[
\xymatrix{ Y \ar@{^{(}->}[r] \ar[d]^f & Q \ar[d]^u \\ X \ar@{^{(}->}[r] & P.}
\]
will induce a morphism $\overline f : \overline Y \to \overline X$.

We will now prove two technical lemmas.

%%%%%%%%%%%
\begin{lem} \label{locnei}
Assume that we are given a morphism of formal embeddings
\[
\xymatrix{ X' \ar@{^{(}->}[r] \ar[d]^f & P' \ar[d]^u \\ X \ar@{^{(}->}[r] & P}
\]
with $f$ formally \'etale and $u$ differentially smooth in the neighborhood of $X'$.
Then, locally on $P$ and $P'$, $u$ factors through a morphism $v \colon P' \to \mathbb A^n_{P}$ which is formally \'etale in the neighborhood of $X$ and extends both $f$ and $\overline f$ when $X$ and $\overline X$ are embedded into $\mathbb A^n_{P}$ through the zero section:
\[
\xymatrix{ X' \ar@{^{(}->}[r] \ar[d]^f & \overline X' \ar@{^{(}->}[r] \ar[d]^{\overline f} & P' \ar[d]^v \ar@/^1cm/[dd]^u \\
X \ar@{^{(}->}[r] \ar@{=}[d] & \overline X \ar@{^{(}->}[r] \ar@{=}[d] & \mathbb A^n_{P} \ar[d] \\
X \ar@{^{(}->}[r] & \overline X \ar@{^{(}->}[r] & P.}
\]
\end{lem}

\begin{proof}
We follow the proof of theorem 1.3.7 of \cite{Berthelot96c*}.
Since $f$ is formally \'etale, there exists an isomorphism
\[
\check{\mathcal N}_{X'/u^{-1}(X)} \simeq \left(\Omega^1_{u^{-1}(X)/X}\right)_{|X'}
\]
between the conormal sheaf on one side and the restriction of the sheaf of differential forms on the other.
Now, since the question is local and $u$ is differentially smooth in the neighborhood of $X'$, we may assume that there exists a basis of the conormal sheaf which is induced by global sections $f_{1}, \ldots, f_{n}$ of the ideal $\mathcal I_{X'}$ that defines $X'$ in $u^{-1}(X)$.
After multiplication by a common factor, we may assume that the sections $f_{1}, \ldots, f_{n}$ also induce global sections of the ideal $\mathcal I_{\overline X'}$ defining $\overline X'$ in $u^{-1}(\overline X)$.
Lifting these sections to $P'$ provides a morphism $v \colon P' \to \mathbb A^n_{P}$ which extends $f$ and $\overline f$ (when $X$ and $\overline X$ are seen as a formal subschemes of $\mathbb A^n_{P}$ via the zero section).
By construction, the morphism $v^*\Omega^1_{\mathbb A^n_{P}/P} \to \Omega^1_{P'/P}$ is an isomorphism in the neighborhood of $X'$.
This implies that the map $v$ is formally \'etale in the neighborhood of $X'$.
\end{proof}

With a projective version of the same argument, we can prove the following:

%%%%%%%%%%%%%%
\begin{lem} \label{smet2}
Assume that we are given a commutative diagram
\[
\xymatrix{Y \ar@{^{(}->}[r]^j \ar[d]^f & Z \ar[dr]^g \\ X \ar@{^{(}->}[rr] && P}
\]
where $f$ is \'etale, $g$ is locally quasi-projective and $j$ is a dense immersion.
It will extend locally on $Y$ and $P$ to a morphism of formal embeddings
\[
\xymatrix{Y \ar@{^{(}->}[r]^j \ar[d]^f & Z \ar[dr]^g \ar@{^{(}->}[r] & Q \ar[d]^u \\ X \ar@{^{(}->}[rr] && P}
\]
where $u$ is a projective morphism which is \'etale in the neighborhood of $Y$.
\end{lem}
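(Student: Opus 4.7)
The plan is to mimic the argument of lemma \ref{locnei} in a projective setting, combining the étale structure of $f$ with the locally quasi-projective structure of $g$. The statement is local on $Y$ and $P$, so I fix a point $y_{0} \in Y$ with image $x_{0} \in X \subset P$ and work in small Zariski neighborhoods. Using the locally quasi-projective hypothesis on $g$, I factor it (after shrinking) as a locally closed immersion $Z \hookrightarrow \mathbb P^{N}_{P}$ followed by the projection $\pi \colon \mathbb P^{N}_{P} \to P$. Let $\overline Z$ denote the scheme-theoretic closure of $Z$ in $\mathbb P^{N}_{P}$; then $\overline Z \to P$ is projective, and since $j$ is dense, $\overline Z$ coincides with the closure $\overline Y$ of $Y$ in $\mathbb P^{N}_{P}$.

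The key observation is that $\pi$ is smooth (in particular differentially smooth) everywhere, so the hypotheses of lemma \ref{locnei} apply to the morphism of formal embeddings $(Y \hookrightarrow \mathbb P^{N}_{P}) \to (X \hookrightarrow P)$ with $f$ formally étale. I therefore obtain, locally on $\mathbb P^{N}_{P}$, a morphism $v \colon \mathbb P^{N}_{P} \to \mathbb A^{n}_{P}$ that is formally étale in a neighborhood of $Y$ and that extends both $f$ and $\overline f$ when $X$ and $\overline X = X$ are embedded in $\mathbb A^{n}_{P}$ via the zero section. In particular, near $Y$ the subscheme $\overline Z$ is the $v$-preimage of $X$, which locally coincides with $Y$.

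To assemble the projective $Q \to P$ étale near $Y$, I combine $v$ with a standard étale lift of $f$: since $f$ is étale and $X \hookrightarrow P$ is a closed immersion, the standard étale presentation of $Y \to X$ lifts (locally) to a finite morphism $Q_{0} \to P$ — hence projective — such that on an open $U_{0} \subset Q_{0}$ containing $Y$ the morphism $Q_{0} \to P$ is étale and $U_{0} \times_{P} X = Y$. The étale chart $v$ identifies, near $Y$, a Zariski neighborhood of $Y$ in $\mathbb P^{N}_{P}$ with a cover of a neighborhood of $X$ in $\mathbb A^{n}_{P}$, which matches the étale structure of $Q_{0} \to P$. Gluing $Q_{0}$ (near $Y$) with a projective compactification of $\overline Z$ (globally) along $Y$, via the comparison provided by $v$, produces the required $Q$ together with the closed immersion $Z \hookrightarrow Q$.

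The main obstacle is the final gluing step: arranging a single projective $Q \to P$ that simultaneously contains $Z$ as a closed subscheme, is projective globally, and is étale near $Y$. The delicate point is that $\overline Z$ has dimension $\dim X < \dim P$ in general, whereas étaleness near $Y$ forces $\dim Q = \dim P$ there, so $Q$ cannot just be $\overline Z$; it must be enlarged using the étale lift $Q_{0}$. Performing this enlargement while keeping $Z$ closed and $Q$ projective requires, as in the proof of lemma \ref{locnei}, a careful choice of the local sections and a possible further shrinking of $P$ to separate the sheets of the finite étale cover $Q_{0} \cap X$ from the extra components of $\overline Z \setminus Y$.
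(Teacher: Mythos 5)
There is a genuine gap here, and you flag it yourself: everything hinges on ``gluing $Q_{0}$ (near $Y$) with a projective compactification of $\overline Z$ (globally) along $Y$'', and that step is neither carried out nor, as described, can it be. A pushout of two schemes along a subscheme that is only locally closed in one of them and dense in the other is not an available construction; even granting some such object, it would have two branches meeting along $Y$ (one of dimension $\dim P$ from $Q_{0}$, one of dimension $\dim Y$ from $\overline Z$), so it could not be \'etale over $P$ near $Y$ --- unless $\overline Z$ were already contained in $Q_{0}$ near $Y$, which is exactly what has to be constructed --- and there is no reason for it to be projective over $P$. The detour through lemma \ref{locnei} makes matters worse rather than better: that lemma only yields a chart $v$ defined on an \emph{open} subset of $\mathbb P^{N}_{P}$ with values in $\mathbb A^{N}_{P}$, so the moment you invoke it you have discarded projectivity, which is the entire point of the statement. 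Likewise the ``standard \'etale lift'' $Q_{0}$ of $Y \to X$ carries no map from $Z$, so it cannot by itself furnish the immersion $Z \hookrightarrow Q$ required in the conclusion.

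The idea you are missing is that one should never leave $\mathbb P^{N}_{P}$. After reducing to $P$ affine and to a locally closed immersion $Z \hookrightarrow \mathbb P^{N}_{P}$, the \'etaleness of $f$ gives an isomorphism $\check{\mathcal N}_{Y/\mathbb P^{N}_{X}} \simeq (\Omega^1_{\mathbb P^{N}_{X}/X})_{|Y}$, which is free of rank $N$ locally on $Y$. Choose such a local basis and, using the twists $\mathcal O(n)$ (the projective substitute for the ``multiplication by a common factor'' used in the proof of lemma \ref{locnei}), lift it to \emph{global} sections $f_{1}, \ldots, f_{N} \in \Gamma(\mathbb P^{N}_{P}, \mathcal O(n))$. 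Then set $Q := V(f_{1}, \ldots, f_{N}) \subset \mathbb P^{N}_{P}$: projectivity of $Q \to P$ is automatic because $Q$ is closed in $\mathbb P^{N}_{P}$, and the Jacobian criterion shows that $Q \to P$ is \'etale in a neighborhood of $Y$, since the $f_{i}$ induce a basis of $\Omega^1_{\mathbb P^{N}_{X}/X}$ along $Y$. No abstract \'etale cover of $P$ and no gluing are needed.
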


\begin{proof}
This is taken from the proof of theorem 2.3.5 of \cite{Berthelot96c*}.
The question being local on $P$, we may assume that $Z$ is a formal subscheme of the projective space $\mathbb P^N_{P}$.
We may also assume that $P$ is affine.
As in the proof of lemma \ref{locnei}, there exists an isomorphism
\[
\check{\mathcal N}_{Y/\mathbb P^N_{X}} \simeq \left(\Omega^1_{\mathbb P^N_{X}/X}\right)_{|Y}.
\]
Since the question is local on $Y$ (but not on $Z$), we may assume that there exists a basis of the conormal sheaf which is defined on some open subset $U$ of $\mathbb P^N_{X}$.
We may actually assume that $U = V \cap \mathbb P^N_{\overline X}$ where $V := D^+(s)$ for some $s \in \Gamma(\mathbb P^N_{P}, \mathcal O(m))$ and that our basis comes from some sequence $f_{1}, \ldots, f_{N} \in \Gamma(\mathbb P^N_{P}, \mathcal O(n))$.
We set $Q := V(f_{1}, \ldots, f_{N}) \subset \mathbb P^N_{P}$.
The Jacobian criterion shows that $Q$ is \'etale in the neighborhood of $Y$.
\end{proof}

%%%%%%%%%%%%%%%%%%%%%
\subsection{Overconvergent spaces}

Overconvergent spaces encompass in the same object a usual (more generally a formal) scheme, which is what we want to understand and an analytic (more generally an adic) space on which we intend to do the computations.
They are the bricks on which we will build the theory.

%%%%%%%%%%
\begin{dfn} \label{defov}
An \emph{overconvergent space} is a pair
\[
(X \hookrightarrow P, P^{\mathrm{ad}} \stackrel \lambda\leftarrow V)
\]
where $X \hookrightarrow P$ is a formal embedding and $\lambda \colon V \to P^{\mathrm{ad}}$ is a morphism of adic spaces.
We call it a \emph{convergent space} if $X$ is closed in $P$.
We will say \emph{analytic (over-) convergent space} when $V$ is analytic.
\end{dfn}

We may write
\[
\xymatrix{ X \ar@{^{(}->}[r] & P \ar@<2pt>@{^{(}->}[r] & P^{\mathrm{ad}} \ar@<2pt>[l] & V \ar[l]_\lambda}
\]
but we will usually make it shorter as $(X \hookrightarrow P \leftarrow V)$.
Recall from proposition \ref{adplus}, that giving the morphism $\lambda \colon V \to P^{\mathrm{ad}}$ is equivalent to giving a morphism of locally topologically ringed spaces $V^+ \to P$ (as usual, we write $V^+ := (V, \mathcal O_{V}^+)$).
In particular, we could remove the locally noetherian condition in definition \ref{defov} by replacing the morphism $V \to P^{\mathrm{ad}}$ with the equivalent morphism $V^+ \to P$ (which always exists).

We will call \emph{specialization} the composition $\mathrm{sp}_{V}$ of the morphism $\lambda : V \to P^{\mathrm{ad}}$ (seen as a morphism of locally topologically ringed spaces) with usual specialization map $\mathrm{sp} \colon P^{\mathrm{ad}} \to P$ (or equivalently the composite map $V \to V^+ \to P$).

Note that, in the definition of an analytic overconvergent space, we only require that $V$ is analytic and not at all that the map $\lambda$ factors through $P^{\mathrm{an}}$.
It may even happen that the analytic locus of $P$ is empty.

%%%%%%%%%%%%%
\begin{xmp}
Let $K$ be a complete discretely valued field with topologically nilpotent unit $\pi$ (e.g.\ a uniformizer).
We denote by $\mathcal V$ the valuation ring of $K$ and by $k$ its residue field.
\begin{enumerate}
\item
We endow $\mathcal V$ with the $\pi$-adic topology.
We embed $\mathrm{Spec}(k)$ into $\mathrm{Spf}(\mathcal V)$ and let $V = \mathrm{Spa}(K)$ ($\lambda$ is the inclusion map).
Then,
\[
(\mathrm{Spec}(k) \hookrightarrow \mathrm{Spf}(\mathcal V) \leftarrow \mathrm{Spa}(K))
\]
is an analytic convergent space which is the usual basis for rigid cohomology.
\item Now, we endow $\mathcal V[[t]]$ with the \emph{$\pi$-adic topology} (and \emph{not} the $(\pi,t)$-adic topology).
We embed $\eta_{k} := \mathrm{Spec}(k((t)))$ into $\mathbb A^{\mathrm{b}}_{\mathcal V} := \mathrm{Spf}(\mathcal V[[t]])$ and recall that
\[
\mathbb D^{\mathrm{b}}_{K} = \mathrm{Spa}(K \otimes_{\mathcal V} \mathcal V[[t]])
\]
denotes the bounded unit disk.
Then
\[
(\eta_{k} \hookrightarrow \mathbb A^{\mathrm{b}}_{\mathcal V} \leftarrow \mathbb D^{\mathrm{b}}_{K})
\]
is an analytic overconvergent space which is the refined basis for rigid cohomology over a Laurent series field (see \cite{LazdaPal16}).
\end{enumerate}
\end{xmp}

An overconvergent space can be used to transfer a formal scheme into the adic world.
The following will need to be refined later:

%%%%%%%%%%
\begin{dfn}
The \emph{naive tube} of an overconvergent space $(X \hookrightarrow P \leftarrow V)$ is the subset
\[
]X[_V^{\mathrm{naive}} := \mathrm{sp}_V^{-1}(X) \subset V.
\]
\end{dfn}

Since $\mathrm{sp}$ is continuous, $]X[_V^{\mathrm{naive}}$ is a locally closed subset of $V$ that will be endowed with the induced topology.
This is not an adic space in general unless $X$ is open in $V$.
This is an open (resp. a closed) subset when $X$ is open (resp. closed) in $P$.
It is also completely formal to see that
\[
\left]\bigcup_i X_i\right[_V^{\mathrm{naive}} = \bigcup_i ]X_i[_V^{\mathrm{naive}}\quad,  \left]\bigcap_i X_i\right[_V^{\mathrm{naive}} = \bigcap_i ]X_i[_V^{\mathrm{naive}}
\]
and $]Y[_V^{\mathrm{naive}}  \subset ]X[_V^{\mathrm{naive}}$ when $Y \subset X$.
As we shall see later, this naive tube is \emph{not} the right object and will have to be replaced by the genuine tube.

The following notion also will show up:

%%%%%%%%%%
\begin{dfn} \label{fiber}
The \emph{fiber} of a \emph{convergent} space $(X \hookrightarrow P \leftarrow V)$ is the closed subspace
\[
X_V := X^{\mathrm{ad}} \times_{P^{\mathrm{ad}}} V
\]
(which should not be confused with the notion of a relative scheme of corollary \ref{relsch}.).
\end{dfn}

The same construction also works in the case of a general overconvergent space (not necessarily convergent) but it should then be called the \emph{naive fiber}.
For example, the naive fiber of an open formal subscheme is identical to the naive tube.
We will generalize later the notion of a fiber to the case of an \emph{overconvergent} space.

Be careful that, unlike the naive tube (or the genuine tube that we will consider below), the fiber depends on the formal scheme structure of $X$.

%%%%%%%%%%%
\begin{xmp}
 We use the ``$p$-adic'' topology everywhere.
\begin{enumerate}
\item
If $X := \mathrm{Spf}(\mathbb Z_p)$, $P := \mathrm{Spf}(\mathbb Z_p)$ et $V := \mathrm{Spa}(\mathbb Q_p)$, then $X_V =\mathrm{Spa}(\mathbb Q_p)$.
\item
If $X := \mathrm{Spec}(\mathbb F_p)$, $P := \mathrm{Spf}(\mathbb Z_p)$ and $V := \mathrm{Spa}(\mathbb Q_p)$, then $X_V = \emptyset$.
\item
If we embed $X := \mathrm{Spf}(\mathbb Z_p)$ into $P := \mathbb A_{\mathbb Z_p}$ using the zero section and we let $V := \mathbb D_{\mathbb Q_p}$, then $X_V =\mathrm{Spa}(\mathbb Q_p)$.
\item
If $X := \mathbb A_{\mathbb Z_p}$, $P := \mathbb P_{\mathbb Z_p}$ and $V := \mathbb P_{\mathbb Q_p}$, then $X_V =\mathbb D_{\mathbb Q_p}$.
\end{enumerate}
\end{xmp}

%%%%%%%%%%%%%%%%%%%%%
\subsection{Formal morphisms}

We will study here a rough version of the overconvergent site that will be defined later as a localization of the category of overconvergent spaces and formal morphisms.

%%%%%%%%%%%%
\begin{dfn}
A \emph{formal morphism} of overconvergent spaces
\[
(Y \hookrightarrow Q, Q^{\mathrm{ad}} \stackrel \mu\leftarrow W) \to (X \hookrightarrow P, P^{\mathrm{ad}} \stackrel \lambda\leftarrow V)
\]
is a triple of morphisms $f \colon Y \to X, v \colon Q \to P, u \colon W \to V$ making commutative the diagrams
\[
\xymatrix{ Y \ar@{^{(}->}[r] \ar[d]^f & Q \ar[d]^v \\ X \ar@{^{(}->}[r] & P}, \quad \xymatrix{Q^{\mathrm{ad}} \ar[d]^{v^{\mathrm{ad}}}& W \ar[l]_\mu \ar[d]^u \\ P^{\mathrm{ad}} & V \ar[l]_\lambda }
\]
\end{dfn}

We might sometimes draw a full diagram
\[
\xymatrix{ Y \ar@{^{(}->}[r] \ar[d]^f & Q \ar@<2pt>@{^{(}->}[r] \ar[d]^{v}& Q^{\mathrm{ad}} \ar@<2pt>[l] \ar[d]^{v^{\mathrm{ad}}} & W \ar[l]_\mu \ar[d]^{u}\\
X \ar@{^{(}->}[r] & P \ar@<2pt>@{^{(}->}[r] & P^{\mathrm{ad}} \ar@<2pt>[l] & V \ar[l]_\lambda}
\]
or simply write
\[
(f, u, v) : (Y \hookrightarrow Q \leftarrow W) \to (X \hookrightarrow P \leftarrow V).
\]
The condition is actually equivalent to requiring the commutativity of the diagram
\[
\xymatrix{Y \ar@{^{(}->}[r] \ar[d]^f & Q\ar[d]^{v} & W \ar[l]_{\mathrm{sp}_{W}} \ar[d]^-{u} \\ X \ar@{^{(}->}[r] & P & V. \ar[l]_{\mathrm{sp}_{V}} }
\]

%%%%%%%%%%%%%
\begin{xmp}
Let $K$ be a complete discretely valued field of mixed characteristic $p$ with valuation ring $\mathcal V$ and residue field $k$.
Then, the \emph{Amice ring} $\mathcal A := \widehat{\mathcal V[[t]][\frac 1t]}$ is a complete discrete valuation ring with residue field $k((t))$ and its fraction field is the \emph{Amice field} $\mathcal E = \mathcal A[\frac 1p]$. 
We may then consider the formal morphism of overconvergent spaces
\[
\xymatrix{\mathrm{Spec}(k((t))) \ar@{^{(}->}[r] \ar@{=}[d] & \mathrm{Spf}(\mathcal A)\ar[d] & \mathrm{Spa}(\mathcal E) \ar[l] \ar[d] \\ \eta \ar@{^{(}->}[r] & \mathbb A^{\mathrm{b}}_{\mathcal V} & \mathbb D^{\mathrm{b}}_{K} . \ar[l] }
\]
This is the morphism that refines the usual basis for rigid cohomology over $k((t))$.
The map induced on the tubes (see below) will be given by the inclusion of the bounded Robba ring $\mathcal E^\dagger$ (also denoted by $\mathcal R^{\mathrm{b}}$) into the Amice field $\mathcal E$.
\end{xmp}

The following observation will allow us to split some proofs in two separate cases:

%%%%%%%%%%%
\begin{prop} \label{decom}
Any formal morphism of overconvergent spaces
\[
\xymatrix{Y \ar@{^{(}->}[r] \ar[d]^f & Q\ar[d]^{v} & W \ar[l]_{\mu} \ar[d]^-{u} \\ X \ar@{^{(}->}[r] & P & V \ar[l]_\lambda}
\]
is the composition of a formal morphism with $u = \mathrm{Id}_{V}$ and another one with both $f = \mathrm{Id}_{X}$ and $v = \mathrm{Id}_{P}$.
\end{prop}

\begin{proof}
It is sufficient to split our morphism as follows:
\[
\begin{gathered}
\xymatrix{ Y \ar@{^{(}->}[r] \ar[d]^f & Q \ar[d]^{v}& W \ar[l]_-{\mu} \ar@{=}[d]\\
X \ar@{^{(}->}[r] \ar@{=}[d] & P \ar@{=}[d]& W \ar[l]_-{v^{\mathrm{ad}} \circ \mu} \ar[d]^{u}\\
X \ar@{^{(}->}[r] & P & V. \ar[l]_\lambda}
\end{gathered}
 \qedhere
\]
\end{proof}

Endowed with formal morphisms, overconvergent spaces form a category (that we will need to refine later).
The functor $(X \hookrightarrow P \leftarrow V) \mapsto (X \hookrightarrow P)$ has an adjoint $(X \hookrightarrow P) \mapsto (X \hookrightarrow P \leftarrow \emptyset)$:
\[
\mathrm{Hom}(Y \hookrightarrow Q \leftarrow \emptyset, X \hookrightarrow P \leftarrow V) \simeq \mathrm{Hom}(Y \hookrightarrow Q, X \hookrightarrow P),
\]
and a coadjoint $(X \hookrightarrow P) \mapsto (X \hookrightarrow P \leftarrow P^{\mathrm{ad}})$:
\[
\mathrm{Hom}(Y \hookrightarrow Q, X \hookrightarrow P) \simeq \mathrm{Hom}(Y \hookrightarrow Q \leftarrow W, X \hookrightarrow P \leftarrow P^{\mathrm{ad}}).
\]
In particular, it commutes with all limits and all colimits.
By composition, we see that the functor $(X \hookrightarrow P \leftarrow V) \mapsto P$ has an adjoint $P \mapsto (\emptyset \hookrightarrow P \leftarrow \emptyset)$ and a coadjoint $P \mapsto (P = P \leftarrow P^{\mathrm{ad}})$ and that the functor $(X \hookrightarrow P \leftarrow V) \mapsto X$ has an adjoint $X \mapsto (X = X \leftarrow \emptyset)$ (but no coadjoint).
On the other hand, the functor $(X \hookrightarrow P \leftarrow V) \mapsto V$ has a coadjoint $V \mapsto (\mathrm{Spec}(\mathbb Z) = \mathrm{Spec}(\mathbb Z) \leftarrow V)$:
\[
\mathrm{Hom}(W, V) \simeq \mathrm{Hom}(Y \hookrightarrow Q \leftarrow W, \mathrm{Spec}(\mathbb Z) = \mathrm{Spec}(\mathbb Z) \leftarrow V)
\]
and commutes therefore with all colimits.
There exists no coadjoint to this forgetful functor but one can check directly from the definition that it also commutes with all limits when they exist.
More precisely, if we are given a diagram 
\[
\{(X_{i} \hookrightarrow P_{i} \leftarrow V_{i})\}_{i \in I},
\]
and we assume that $\varprojlim X_{i}$, $\varprojlim P_{i}$ and $\varprojlim V_{i}$ exist, then our diagram has a limit which is
\[
(\varprojlim X_{i} \hookrightarrow \varprojlim P_{i} \leftarrow \varprojlim V_{i}).
\]
We should mostly apply this to fibered products (when they exist).

We endow the category of overconvergent spaces and formal morphisms with the \emph{adic} topology: this is the topology inherited from $V$ (the coarsest topology making cocontinuous the forgetful functor $(X \hookrightarrow P \leftarrow V) \mapsto V$).
It is generated by the pretopology made of families
\[
\{(X \hookrightarrow P \leftarrow V_{i}) \to (X \hookrightarrow P \leftarrow V)\}_{i \in I}
\]
where $V = \bigcup_{j \in j} V_{i}$ is an open covering.
This topology is subcanonical.
Moreover, the functor $(X \hookrightarrow P \leftarrow V) \mapsto V$ is left exact continuous and cocontinuous, giving rise to two morphisms of topoi.

Unless otherwise specified, we always consider the category of adic spaces as a site with respect to the adic topology.
If we use another topology such as the \'etale topology for example, then we can consider the corresponding topology on the category of overconvergent spaces and formal morphisms which is defined exactly as before.
On the other hand, we may also endow our category with the image topology of the functor
\[
(X \hookrightarrow P) \mapsto (X \hookrightarrow P \leftarrow P^{\mathrm{ad}})
\]
where the first category is endowed for example with the Zariski topology (or some other topology such that the $h$-topology if we wish).
We obtain the topology generated by the pretopology made of families
\[
\{(X_{i} \hookrightarrow P_{i} \leftarrow V_{i}) \to (X \hookrightarrow P \stackrel \lambda\leftarrow V)\}_{i \in I}
\]
where $P = \bigcup_{j \in j} P_{i}$ is an open covering, and for each $i \in I$, $X_{i} = X \cap P_{i}$ and $V_{i} = \lambda^{-1}(P_{i}^\mathrm{ad})$.
Finally, note that it is also possible to endow the category of overconvergent spaces and formal morphisms with a topology coming both from the adic and the formal side (coarsest topology finer than both).
This would give rise for example to the Zariski-adic topology.

%%%%%%%%%%%%%%%%%%%%%%%%%%%%%
\subsection{Formal completion}

Formal completion is usually only defined for usual schemes.
We extend it to formal schemes as follows:

%%%%%%%%%%%%%%%%
\begin{dfn}
Let $X \hookrightarrow P$ be a closed formal embedding.
If $\mathcal I_{X}$ denotes the ideal of $X$ in $P$ and $\mathcal I$ is some ideal of definition of $P$, we let $X_{n}$ denote the closed subscheme of $P$ defined by $(\mathcal I_{X} + \mathcal I)^{n+1}$.
Then the \emph{completion} $P^{/X}$ of $P$ along $X$ is $P^{/X} := \varinjlim X_{n}$.
\end{dfn}

It is not difficult to check that this definition is independent of the choice of the ideal of definition.
Note that if we did not have noetherian hypothesis, it would be necessary to assume that the closed embedding is locally (radically) finitely presented in order to obtain an adic formal scheme.
Note also that completion is usually written with an extra hat as $\widehat P^{/X}$ but we will rather not do that.
Finally, it is not hard to see that $P^{/X}$ is (representable as) a formal scheme: when $P = \mathrm{Spf}(A)$ and $X = \mathrm{Spf}(A/\mathfrak a)$, we will have $ P^{/X} = \mathrm{Spf}(A^{/\mathfrak a})$ where $A^{/\mathfrak a}$ is the ring $A$ endowed with the $\mathfrak a + I$-adic topology where $I$ is some ideal of definition for $A$.

It is possible to define completion in the more general case of a locally closed embedding but we want to avoid this because this would only create confusion later, once we introduce the notion of a tube.

There exists a natural map $ P^{/X} \to P$ and the inclusion map $X \hookrightarrow P$ factors through a closed embedding $X \hookrightarrow P^{/X}$ which is actually a formal thickening (and in particular a \emph{homeomorphism}).
The map $ P^{/X} \to P$ is formally \'etale and locally noetherian.

The functor $(X \hookrightarrow P) \mapsto P^{/X}$ commutes with all limits because it has an adjoint $P \mapsto (P_{\mathrm{red}} \hookrightarrow P)$.
Since this last functor is fully faithful, the map $ P^{/X} \to P$ is a monomorphism (although not an embedding in general).
Finally, one easily checks that the functor $(X \hookrightarrow P) \mapsto P^{/X}$ is left exact continuous and cocontinuous (with respect to the Zariski topology for example).

Let us state some other basic properties of completion:

%%%%%%%%%%%%
\begin{prop} \label{formp}
Let $X$ be a closed formal subscheme of a formal scheme $P$.
\begin{enumerate}
\item
The formal scheme $ P^{/X}$ only depends on the underlying subspace of $X$ (and not on the structure of formal scheme).
\item
If $P = \bigcup_{i \in I} P_{i}$ is an open covering and, for all $i \in I, X_{i} := P_{i} \cap X$, then we have an open covering
\[
 P^{/X} = \bigcup_{i \in I} P_{i}^{/X_{i}}.
\]
\item If $u \colon Q \to P$ is a morphism of formal schemes, then
\[
u^{-1}( P^{/X}) = Q^{/u^{-1}(X)}.
\]
\end{enumerate}
\end{prop}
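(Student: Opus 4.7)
The strategy for all three parts is to reduce to a computation in an affine chart $P = \mathrm{Spf}(A)$, where $A$ is a noetherian adic ring with ideal of definition $I$ and $X$ is defined by an ideal $\mathfrak a \subset A$; then $P^{/X} = \mathrm{Spf}(A^{/\mathfrak a})$, where $A^{/\mathfrak a}$ denotes $A$ equipped with the $(\mathfrak a + I)$-adic topology. Each statement will then follow from an identification of topological rings.

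For (1), the underlying closed subspace of $X$ determines $\mathfrak a$ only up to its radical modulo $I$: any two ideals $\mathfrak a, \mathfrak a'$ with $V(\mathfrak a) = V(\mathfrak a')$ inside $\mathrm{Spf}(A)$ satisfy $\sqrt{\mathfrak a + I} = \sqrt{\mathfrak a' + I}$ in $A$, because the open primes of $A$ are exactly those containing $I$. By noetherianness, some power of $\mathfrak a + I$ is contained in $\mathfrak a' + I$ and conversely, so the $(\mathfrak a + I)$-adic and $(\mathfrak a' + I)$-adic topologies on $A$ coincide and both give the same completion.

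For (2), since the closed embedding $X \hookrightarrow P^{/X}$ is a formal thickening, the map $P^{/X} \to P$ is a homeomorphism onto $X$ on the level of underlying spaces. The restriction of the structure sheaf of $P^{/X}$ to the preimage of $P_i$ is then computed locally by $(\mathcal I_X + \mathcal I)$-adic completion on affine charts of $P_i$, which agrees with the structure sheaf of $P_i^{/X_i}$. Hence $P_i^{/X_i}$ identifies with the open subspace of $P^{/X}$ above $P_i$, and the open covering $P = \bigcup P_i$ induces the claimed open covering of $P^{/X}$.

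For (3), I would compute locally via affine charts $Q = \mathrm{Spf}(B) \to P = \mathrm{Spf}(A)$, writing $J$ for an ideal of definition of $B$. Since the ring map $A \to B$ is continuous we have $IB \subset J$, and $u^{-1}(P^{/X}) = \mathrm{Spf}(B \widehat \otimes_A A^{/\mathfrak a})$ is $B$ endowed with the $(\mathfrak a B + IB + J) = (\mathfrak a B + J)$-adic topology. On the other side, $u^{-1}(X) = \mathrm{Spf}(B/\mathfrak a B)$ carries the topology inherited from $J$, so $Q^{/u^{-1}(X)}$ is $B$ equipped with the $(\mathfrak a B + J)$-adic topology as well, and the two agree. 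The main delicate point will be (1), where one must carefully relate the notion of underlying subspace to a statement about ideals modulo the ideal of definition; the non-adicness of $u$ in (3) is harmless because the term $IB$ is absorbed into $J$ in any sum.
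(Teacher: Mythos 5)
Your proof is correct and takes the same route as the paper, whose entire proof is the remark that all questions are local and easily checked; you simply supply the affine verifications. One minor inaccuracy in (3): continuity of $A \to B$ only guarantees $I^nB \subset J$ for some $n$ (you get $IB \subset J$ only if you take $J$ to be the largest ideal of definition), so $\mathfrak a B + IB + J$ and $\mathfrak a B + J$ need not be equal as ideals --- but they generate the same adic topology, which is all the argument requires.
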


\begin{proof}
All questions are local and the statements are easily checked.
\end{proof}

As a particular case of the last assertion, we obtain transitivity: if $Y$ is a closed formal subscheme of $P$ that contains $X$ and $Q := P^{/Y}$, then
\[
Q^{/X} = P^{/X}.
\]
This will allow us to shrink the ambient space in the future.

When $X$ is only a closed \emph{subspace} of a formal scheme $P$, we will denote by $P^{/X}$ the completion of $P$ with respect to any structure of formal subscheme on $X$.
For example, if $X$ is a formal subscheme and $\overline X$ denotes its closure in $P$, it has a meaning to consider $P^{/\overline X}$ because it does not depend on the structure of formal subscheme of $\overline X$.
Of course, we may as well use the schematic closure but this is not necessary (and sometimes it is more convenient to use another structure of formal subscheme).
We will now apply lemma \ref{locnei} to the case $X = \overline X$, $X' = \overline X'$ and $f$ is an isomorphism:

%%%%%%%%%%%
\begin{prop}[Formal fibration theorem] \label{forfib}
Assume that we are given two \emph{closed} formal embeddings $X \hookrightarrow P$ and $X' \hookrightarrow P'$.
If a differentially smooth morphism $u \colon P' \to P$ induces an isomorphism $X' \simeq X$, then it induces, \emph{locally on $P$ and $P'$}, an isomorphism
\[
 P'^{/X'} \simeq \mathbb A^{n,-} \times P^{/X}.
\]
\end{prop}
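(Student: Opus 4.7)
The plan is to reduce the statement to Lemma \ref{locnei} applied to the special case $X' = \overline{X'}$, $X = \overline X$ (both embeddings are closed), and $f \colon X' \simeq X$ the given isomorphism (which is trivially formally \'etale). Since $u$ is differentially smooth in a neighborhood of $X'$, the lemma provides, locally on $P$ and $P'$, a morphism
$$v \colon P' \longrightarrow \mathbb A^n_{P}$$
which is formally \'etale in a neighborhood of $X'$ and whose composition with the zero section of $\mathbb A^n_X \hookrightarrow \mathbb A^n_P$ recovers the embedding $X' \simeq X \hookrightarrow \mathbb A^n_{P}$.

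Next I would pass to formal completions along $X'$ on the source and along $X$ on the target. A standard property of formally \'etale maps (together with the locally noetherian hypothesis, which allows one to identify completions with the topological localizations appearing in the definition of $\mathrm{Spf}$) shows that any formally \'etale map inducing an isomorphism between the closed formal subschemes also induces an isomorphism on the completions along these closed subschemes. Applied to $v$, this gives, locally on $P$ and $P'$, an isomorphism
$$
P'^{/X'} \;\xrightarrow{\;\sim\;}\; (\mathbb A^n_{P})^{/X},
$$
where $X$ is embedded in $\mathbb A^n_{P}$ via the zero section.

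Finally I would compute the right-hand side. Because completion commutes with fibered products (by proposition \ref{formp}(iii) applied to the projection $\mathbb A^n_P \to P$, whose inverse image of $X$ is the zero section copy of $X$), and because completing $\mathbb A^n$ along its origin yields $\mathbb A^{n,-}$ by construction, one gets
$$
(\mathbb A^n_{P})^{/X} \;\simeq\; (\mathbb A^n)^{/0} \times P^{/X} \;=\; \mathbb A^{n,-} \times P^{/X}.
$$
Chaining this with the previous isomorphism yields the asserted local isomorphism $P'^{/X'} \simeq \mathbb A^{n,-} \times P^{/X}$.

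The main obstacle is the middle step, namely verifying that a formally \'etale morphism which restricts to an isomorphism between closed formal subschemes induces an isomorphism on the completions along those subschemes. In the affine situation $P = \mathrm{Spf}(A)$, $P' = \mathrm{Spf}(B)$, with $X$ and $X'$ defined by ideals $\mathfrak a \subset A$, $\mathfrak b \subset B$, this comes down to showing that the induced continuous ring map $A^{/\mathfrak a} \to B^{/\mathfrak b}$ is an isomorphism; one lifts inverse data modulo successive powers of the defining ideal using the infinitesimal lifting property of formally \'etale morphisms, and passes to the limit using the noetherian assumption to ensure the limit is still an adic ring with the expected ideal of definition. The rest of the argument is essentially bookkeeping once Lemma \ref{locnei} is in hand.
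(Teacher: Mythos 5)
Your proof is correct and follows essentially the same route as the paper: apply Lemma \ref{locnei} in the closed case with $f$ an isomorphism to produce a formally \'etale $v \colon P' \to \mathbb A^n_P$ extending the zero-section embedding, use that a formally \'etale map inducing an isomorphism on closed subschemes induces an isomorphism on completions, and identify $(\mathbb A^n_P)^{/X}$ with $\mathbb A^{n,-} \times P^{/X}$ (the paper packages the last two steps as Lemma \ref{lemun}). One small quibble: the correct justification for the final identification is the left exactness of $(X \hookrightarrow P) \mapsto P^{/X}$ applied to the product of embeddings $(0 \hookrightarrow \mathbb A^n) \times (X \hookrightarrow P)$, not Proposition \ref{formp}(iii) for the projection $\mathbb A^n_P \to P$, whose inverse image of $X$ is $\mathbb A^n_X$ rather than the zero-section copy of $X$.
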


\begin{proof}
Using lemma \ref{locnei}, we may assume that there exists a morphism $v \colon P' \to \mathbb A^n_{P}$ which is formally \'etale and induces an isomorphism between $X'$ and the image of $X$ via the zero section.
We conclude with the forthcoming lemma \ref{lemun}.
\end{proof}

%%%%%%%%%%%%
\begin{lem} \label{lemun}
In the situation of the proposition,
\begin{enumerate}
\item if $u$ is formally \'etale, then it induces an isomorphism $ P'^{/X'} \simeq P^{/X}$,
\item if $P' = \mathbb A^n_{P}$ and $X'$ is contained in the zero section, then $ P'^{/X'} \simeq \mathbb A^{n,-} \times P^{/X}$.
\end{enumerate}
\end{lem}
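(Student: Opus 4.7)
My plan is to work in affine charts and reduce each part to a computation with ideals. Let me write $P = \mathrm{Spf}(A)$ and $P' = \mathrm{Spf}(A')$, with $X = V(\mathfrak{a})$, $X' = V(\mathfrak{a}')$, and $I, I'$ ideals of definition.

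For part (1), the formally étale map $u \colon P' \to P$ together with the iso $X' \simeq X$ gives an isomorphism $A/\mathfrak{a} \simeq A'/\mathfrak{a}'$ compatible with $u$. The first step is to argue that locally on $P'$ one has $X' = u^{-1}(X)$ (equivalently, $\mathfrak{a}' = \mathfrak{a}A'$): the natural closed immersion $X' \hookrightarrow u^{-1}(X)$ composed with $u^{-1}(X) \to X$ is an isomorphism, hence it is a section of the formally étale morphism $u^{-1}(X) \to X$, and such a section is an open-and-closed immersion. Once this holds, the level-$n$ maps $A/(\mathfrak{a}+I)^{n+1} \to A'/(\mathfrak{a}'+I')^{n+1}$ are base changes of $A \to A'$, hence formally étale, and they are isomorphisms modulo the nilpotent ideal $(\mathfrak{a}+I)/(\mathfrak{a}+I)^{n+1}$ by the base case $n = 0$. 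The standard fact that a formally étale morphism which is an isomorphism modulo a nilpotent ideal is itself an isomorphism then gives each level map is an iso; passing to the inverse limit yields $P^{/X} \simeq P'^{/X'}$. Alternatively, after $X' = u^{-1}(X)$ locally, one may cite Proposition \ref{formp}(3) to obtain $P'^{/X'} = u^{-1}(P^{/X})$ and construct the inverse $P^{/X} \to P'^{/X'}$ as the colimit of compatible lifts $X_n \to P'$ of the iso $X \simeq X'$, built inductively via the unique lifting property of formal étaleness.

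For part (2), set $P' = \mathbb{A}^n_P = \mathrm{Spf}(A[T_1, \ldots, T_n])$ with the $I$-adic topology. The zero section is cut out by $(T_1, \ldots, T_n)$. Since by Proposition \ref{formp}(1) the completion depends only on the underlying subspace, the hypothesis that $X'$ sits in the zero section (with the induced iso $X' \simeq X$) identifies $X'$ with $V(\mathfrak{a} + (T_1, \ldots, T_n))$, so that $P'^{/X'}$ is $\mathrm{Spf}(A[T])$ equipped with the $(\mathfrak{a} + (T) + I)$-adic topology. On the other hand, $P^{/X} = \mathrm{Spf}(A^{/\mathfrak{a}})$ carries the $(\mathfrak{a} + I)$-adic topology, and the formula from the examples yields $\mathbb{A}^{n,-}_{P^{/X}} = \mathrm{Spf}(A[T])$ with the $(\mathfrak{a} + I + (T))$-adic topology. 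These two topologies on $A[T]$ are defined by the same ideal, giving the isomorphism.

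The principal obstacle lies in part (1): establishing that locally $X' = u^{-1}(X)$, which relies on the structural fact that a section of a formally étale morphism (here $u^{-1}(X) \to X$) is an open-and-closed immersion. Once this reduction is made, the rest of part (1) follows from a standard argument about formally étale morphisms over nilpotent thickenings. Part (2), by contrast, is a direct bookkeeping exercise once one unfolds the definitions of $P'^{/X'}$ and $\mathbb{A}^{n,-}_{P^{/X}}$ and observes that the two topologies on $A[T]$ agree.
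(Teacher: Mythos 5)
Your proof is correct, and it is essentially an unpacking of the paper's two-sentence proof: the paper disposes of (1) by saying it is ``an immediate consequence of the definition of formal \'etaleness'' (i.e.\ the unique-lifting construction you sketch as your alternative: lift each thickening $X_n \to P$ uniquely to $P'$ through $X\simeq X'$), and of (2) by invoking left exactness of $(X\hookrightarrow P)\mapsto P^{/X}$, which is exactly what your explicit identification of the two topologies on $A[T_1,\dots,T_n]$ verifies in coordinates. Your primary route for (1) is slightly different in organization: you first force $X'=u^{-1}(X)$ locally and then argue level by level that a formally \'etale map which is an isomorphism modulo a nilpotent ideal is an isomorphism. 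Two small points deserve care there. First, the structural fact you lean on — that the closed immersion $X'\hookrightarrow u^{-1}(X)$, being a formally \'etale morphism between formally \'etale $X$-schemes, is open-and-closed — uses the noetherian hypothesis: formal \'etaleness of a closed immersion gives $\mathfrak b=\mathfrak b^{2}$ for the defining ideal, and one needs $\mathfrak b$ finitely generated to conclude $\mathfrak b=(e)$ for an idempotent. Second, since $u$ is not assumed adic, $IA'$ may be strictly smaller than the ideal of definition $I'$ of $A'$, so the level-$n$ quotient $A'/(\mathfrak a'+I')^{n+1}$ is only a base change of $A/(\mathfrak a+I)^{m+1}$ after a cofinal reindexing; the isomorphism $A/\mathfrak a\simeq A'/\mathfrak a'$ of \emph{topological} rings guarantees that $\mathfrak a'+I'$ and $\mathfrak aA'+IA'$ define the same adic topology, so the completions agree, but this step should be said. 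Neither point is a gap, and your argument for (2) is a clean direct verification of what the paper obtains abstractly.
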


\begin{proof}
The first assertion is an immediate consequence of the definition of formal \'etaleness.
The second one follows from left exactness of completion of formal schemes.
\end{proof}

We want now to make precise the notion of a morphism having some property ``around'' a formal subscheme.
Actually, this splits into two cases: properties that are \emph{open} in nature (such as formally smooth for example) and properties that are \emph{closed} in nature (such as partially proper for example).
Note that some important properties are neither open or closed in nature (such as smooth which is at the same time formally smooth (open) and locally of finite type (closed)).

%%%%%%%%%%%%%%
\begin{dfn} \label{formop}
A morphism of formal schemes $u \colon Q \to P$ is said to be \emph{flat} (resp.\ \emph{formally smooth}, resp.\ \emph{formally unramified}, resp.\ \emph{formally \'etale}) \emph{around} a formal subscheme $Y$ of $Q$ if there exists a neighborhood $Q'$ of $Y$ in $Q$ such that the induced map $Q' \to P$ has this property.
\end{dfn}

If we are given a morphism of formal embeddings
\[
\xymatrix{ Y \ar@{^{(}->}[r] \ar[d]^f & Q \ar[d]^u \\ X \ar@{^{(}->}[r] & P,}
\]
then we may also say that the morphism of formal embeddings itself is \emph{flat} (resp.\ \emph{formally smooth}, resp.\ \emph{formally unramified}, resp.\ \emph{formally \'etale}).

This is a standard notion and we turn now to the other case:

%%%%%%%%%%
\begin{dfn} \label{formcl}
A morphism $u \colon Q \to P$ of formal schemes is said to be \emph{separated} (resp.\ \emph{affine}, resp.\ \emph{(locally) of finite type}, \emph{(locally) quasi-finite}, resp.\ \emph{partially proper}, resp.\ \emph{partially finite}) \emph{around a formal subscheme} $Y$ of $Q$ if there exists a closed formal subscheme $Z$ of $Q$ containing $Y$ such that $Q^{/Z} \to P$ is separated (resp.\ affine, resp.\ formally (locally) of finite type, resp.\ formally (locally) quasi-finite, resp.\ partially proper, resp.\ partially finite).
\end{dfn}

Again, if we are actually given a morphism of formal embeddings
\[
\xymatrix{ Y \ar@{^{(}->}[r] \ar[d]^f & Q \ar[d]^u \\ X \ar@{^{(}->}[r] & P}
\]
then we will say that the morphism of formal embeddings itself is \emph{separated} (resp.\ \emph{affine}, resp.\ \emph{(locally) of finite type}, \emph{(locally) quasi-finite}, resp.\ \emph{partially proper}, resp.\ \emph{partially finite}).
Actually, this is equivalent to $\overline f \colon \overline Y_{\mathrm{red}} \to \overline X_{\mathrm{red}}$ being separated (resp.\ affine, resp.\ (locally) of finite type, resp.\ (locally) quasi-finite, resp.\ locally of finite type and proper on irreducible components, resp.\ locally of finite type and finite on irreducible components).
This follows from the fact that we can replace $P$ and $Q$ by their completions along $\overline X$ and $\overline Y$ respectively without changing the conditions.
Note that we may then also replace $X$ (resp.\ $Y$) with the open formal subscheme of $P$ (resp.\ $Q$) having the same underlying space if we wish.

%%%%%%%%%%%%%%%
\subsection{Tubes}

A tube is meant to be an analytic neighborhood of a scheme (or more generally an adic neighborhood of a formal scheme).
We start with the following observation (recall that convergent means that $X \hookrightarrow P$ is closed):

%%%%%%%%%
\begin{lem}
If $(X \hookrightarrow P \leftarrow V)$ is a \emph{convergent} space, then the canonical map
\[
P^{/X,\mathrm{ad}} \times_{P^\mathrm{ad}} V \to V
\]
is a homeomorphism onto its image.
\end{lem}

\begin{proof}
We can assume that $V = P^{\mathrm{ad}}$.
Moreover, this is a local question and we may therefore assume that $P = \mathrm{Spf}(A)$ is affine and that $X$ is defined by an ideal $\mathfrak a \subset A$.
It is then sufficient to recall that the topology on $\mathrm{Spa}(A)$ and $\mathrm{Spa}(A^{/\mathfrak a})$ are both induced by the topology of $\mathrm{Spv}(A)$.
\end{proof}

%%%%%%%%%%%%%
\begin{xmp}
\begin{enumerate}
\item In the simplest non trivial case $X := \mathrm{Spec}(\mathbb F_{p})$, $P := \mathrm{Spec}(\mathbb Z)$ and $V := \mathrm{Spa}(\mathbb Z)$, 
we have $ P^{/X} = \mathrm{Spf}(\mathbb Z_{p})$ (with the $p$-adic topology) and therefore $P^{/X,\mathrm{ad}} = \mathrm{Spa}(\mathbb Z_{p})$.
In particular, (the homeomorphic image of) $P^{/X,\mathrm{ad}}$ is the closed subset of $V$ consisting in the the point $v_{p}$ and its (horizontal) specialization $p$.
\item
If we use the zero section to embed $X := \mathrm{Spec}(\mathbb Z)$ into $P := \mathbb A$ and let $V := \mathbb D$, then we have $P^{/X} = \mathbb A^-$ and therefore $P^{/X,\mathrm{ad}} = \mathbb D^-$.
Recall however that the inclusion $\mathbb D^- \subset \mathbb D$ is not a locally closed embedding (although it is \emph{analytically} an open embedding).
\end{enumerate}
\end{xmp}

%%%%%%%%%%
\begin{dfn}
\begin{enumerate}
\item
Let $(X \hookrightarrow P \leftarrow V)$ be a convergent space.
The \emph{tube} of $X$ in $V$ is the (homeomorphic) image $\,]X[_V$ of 
\[
P^{/X,\mathrm{ad}} \times_{P^\mathrm{ad}} V \to V.
\]
\item
Let $(X \hookrightarrow P \leftarrow V)$ be an overconvergent space, $\overline X$ the closure of $X$ in $P$ and $\infty_{X} := \overline X \setminus X$ its locus at infinity.
Then, the \emph{tube} of $X$ in $V$ is the subspace
\[
\,]X[_{V} :=\,]\overline X[_{V} \setminus \,]\infty_{X}[_{V} \subset V.
\]
\end{enumerate}
\end{dfn}

Note that this notion is independent of the formal scheme structure on $X$.
Of course, both definitions coincide when the overconvergent space is actually convergent.
Be careful that even if $P$ does not usually appear in the notations, the tube $\,]X[_{V}$ also depends on $P$, and it might sometimes be necessary to write $\,]X[_{P,V}$ in order to remove the ambiguity.
On the contrary, when $V = P^{\mathrm{ad}}$, we will write $\,]X[_{P}$ and call it the \emph{tube of $X$ in $P$}.
In the convergent case, $\,]X[_{P}$ \emph{is} the homeomorphic image of $P^{/X,\mathrm{ad}}$ in $P^{\mathrm{ad}}$.

The tube $\,]X[_{V}$ is endowed with the subspace topology coming from the topology of $V$ (this is not an adic space).
When we consider it as a topologically locally ringed space, we always use the restriction sheaf $i_{X}^{-1}\mathcal O_{V}$ where $i_{X} \colon \,]X[_{V} \hookrightarrow V$ denotes the inclusion map.
Alternatively, we can consider the prepseudo-adic\footnote{This is not a pseudo-adic space in general because the tube need not be pro-constructible.} space $(V, \,]X[_V)$ whose structural sheaf is by definition $i_{X}^{-1}\mathcal O_{V}$.
Note that when $X$ is closed in $P$ (the convergent case), the homeomorphism $P^{/X,\mathrm{ad}} \simeq \,]X[_{P}$ is usually \emph{not} an isomorphism of locally topologically ringed spaces (see proposition \ref{carthat} however).
Be careful also that the \emph{naive tube} $]X[_V^{\mathrm{naive}} :=\mathrm{sp}_{V}^{-1}(X)$ is usually different from the genuine tube just introduced (more about this later).

%%%%%%%%%%%%%
\begin{xmp}
\begin{enumerate}
\item Recall from above that $]\mathrm{Spec}(\mathbb F_{p})[_{\mathrm{Spec}(\mathbb Z)}$ is a \emph{closed} subset of $\mathrm{Spv}(\mathbb Z)$ which is homeomorphic, but \emph{not} isomorphic, to $\mathrm{Spa}(\mathbb Z_{p})$ (where $\mathbb Z_{p}$ has the $p$-adic topology).
More precisely, a function on the tube is an element of the local ring $\mathbb Z_{(p)}$ which is smaller than the completed local ring $\mathbb Z_{p}$.
\item
With the convergent space
\[
(\mathrm{Spec}(\mathbb F_{p}) \hookrightarrow \mathrm{Spec}(\mathbb Z) \leftarrow \mathrm{Spa}(\mathbb Q_{p})),
\]
in which $\mathbb Q_{p}$ has the ``$p$-adic'' topology, we have
\[
]\mathrm{Spec}(\mathbb F_{p})[_{\mathrm{Spa}(\mathbb Q_{p})} = \mathrm{Spa}(\mathbb Q_{p}).
\]
\item
Let us consider more generally an overconvergent space $(X \hookrightarrow P \leftarrow O)$ where $O := \mathrm{Spa}(K, K^+)$ is a \emph{Huber point}.
Then, we have:
\begin{enumerate}
\item If $O$ is non-analytic, then $\,]X[_{O}$ may be any ``interval'' in $O$ (which is totally ordered by specialization).
\item If $O$ is analytic, then $\,]X[_{O} = \emptyset$ or $\,]X[_{O} = O$.
\end{enumerate}
\item
If we consider the convergent space $(\{0\} \hookrightarrow \mathbb A \leftarrow \mathbb D_{\mathbb Q_{p}})$, then we have
\[
]0[_{\mathbb D_{\mathbb Q_{p}}} = \mathbb D^-_{\mathbb Q_{p}} \subset \mathbb D_{\mathbb Q_{p}},
\]
which is an \emph{open} subset.
\item (Lazda and P\`al setting)
Let $K$ be a discretely valued field of mixed characteristic $p$ with valuation ring $\mathcal V$ and residue field $k$.
We embed the open point $X := \{\eta_{k}\}$ into $P :=\mathbb A^{\mathrm{b}}_{\mathcal V}$, and we let $V := \mathbb D^{\mathrm{b}}_{K}$.
The tube of the \emph{closed} point of $P$ is the open unit disk $\mathbb D^{-}_{K}$ and it follows that
\[
\,]X[_{V} = \mathbb D^{\mathrm{b}}_{K} \setminus \mathbb D^{-}_{K} = \overline {\{v\}} = \{v,v^-\},
\]
where $v$ is the \emph{Gauss point}.
A basis of neighborhoods of $\,]X[_{V}$ is given by the subsets $V_{n} := \mathbb D^{\mathrm{b}}_{K} \setminus \mathbb D_{K}(0, p^{\frac 1n})$.
It follows that
\[
\Gamma(\,]X[_{V}, i^{-1}\mathcal O_{V}) = \mathcal E_K^{\dagger}
\]
is the bounded Robba ring of $K$ (see \cite{LazdaPal16}).
\item (Monsky-Washnitzer setting)
Let $R$ be a noetherian ring and $X$ an affine \emph{scheme} of finite type over $R$.
From a presentation of $X$ over $R$, we obtain a sequence of inclusions $X \subset \mathbb A^n_{R} \subset \mathbb P^n_{R}$.
If $(C \hookrightarrow S \leftarrow O)$ is an overconvergent space and $S \to \mathrm{Spec}(R)$ is a morphism of formal schemes, then we may consider the overconvergent space
\[
X_{C} \hookrightarrow \mathbb P^n_{S} \leftarrow X_{O}.
\]
Then we have
\[
\,]X_{C}[_{X_{O}} = \,]\mathbb A^{n}_C[_{ \mathbb A^n_{O}} \cap X_{O} \subset \mathbb A^{n}_{O}.
\]
Classically, we choose $S = \mathrm{Spf}(R^{/I})$ and $C = \mathrm{Spec}(R/I)$ where $I \subset R$ is some ideal (and $R^{/I}$ denotes $R$ endowed with the $I$-adic topology).
We know from the amazing article \cite{Arabia01} of Alberto Arabia that reduction modulo $I$ is essentially surjective and full on smooth algebras.
In particular, any smooth affine scheme over $C$ will then have the form $X_C= \mathrm{Spec}(A/I)$ where $A$ is a smooth $R$-algebra.
Note also that, in this situation, we have $\,]X_{C}[_{X_{O}} = \,]X[_{X_{O}}$.
\item
We have the following decomposition (as disjoint union) of the adic projective line :
\[
\mathbb P = \underbrace{\mathbb D(0,1) \sqcup \{v_{p}^+, p\ \mathrm{prime}\}}_{]\mathbb A[} \sqcup\ \underbrace{\mathbb D^-(\infty, 1)}_{] \infty[}.
\]
\end{enumerate}
\end{xmp}

We can now generalize definition \ref{fiber} to a general overconvergent space:

%%%%%%%%%%
\begin{dfn}
The \emph{fiber} of an overconvergent space $(X \hookrightarrow P \leftarrow V)$ is the subset
\[
X_V := \overline X_V \cap ]X_V[
\]
where $\overline X$ is the (formal) schematic closure of $X$.
\end{dfn}

Note that this is consistent with definition \ref{fiber} because
\[
X_V = X^{\mathrm{ad}} \times_{P^{\mathrm{ad}} } V \subset P^{/X,\mathrm{ad}} \times_{P^{\mathrm{ad}} } V \simeq ]X[_V
\]
when $X$ is closed in $V$.
It is also not difficult to see that $X_V = ]X[_{\overline X_V}$.
In particular, if $X$ is an open formal subscheme of $P$, then $X_V = ]X[_V$.
As it was the case with the tube, the fiber is not an adic space in general and we need to consider the prepseudo-adic space $(\overline X_V, X_V) = \overline X_V \cap (V, ]X[_V)$.

%%%%%%%%%%%%%%%%%
\subsection{Local description}

We will show here that the tube behaves well under boolean operations and give an explicit description in the local situation.

In order to lighten the notations, when we are given a fixed morphism of adic spaces $\lambda : V \to W$, some $v \in V$ and some function $f$ defined in a neighborhood of $\lambda(v)$ in $W$, we will simply write $v(f)$ instead of $v(\lambda^{-1}(f))$.

%%%%%%%%%%%
\begin{lem} \label{loctub}
Let $P := \mathrm{Spf}(A)$ be an affine formal scheme, $X \subset P$ the closed formal subscheme defined by an ideal $\mathfrak a \subset A$ and $\lambda : V \to P^{\mathrm{ad}}$ any morphism of adic spaces.
Then, we have
\[
\,]X[_{V}^{\mathrm{naive}} = \{v \in V \colon \forall f \in \mathfrak a, v(f) > 0\},
\]
\[
X_V = \{v \in V \colon \forall f \in \mathfrak a, v(f) = + \infty\},
\]
and
\[
\,]X[_{V} = \{v \in V \colon \forall f \in \mathfrak a,v(f^n) \to + \infty\}.
\]
\end{lem}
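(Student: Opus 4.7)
The plan is to treat the two identities separately: the first is a direct unwinding of the specialization map, while the second requires identifying the tube $]X[_V$ inside $V$ with the preimage of $\mathrm{Spa}(A^{/\mathfrak a}) \subset \mathrm{Spa}(A)$ and then translating the stronger continuity condition for valuations on $A^{/\mathfrak a}$.

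For the first identity, I would simply compose: by definition $\mathrm{sp}_V = \mathrm{sp} \circ \lambda$, and we recalled that for $w \in P^{\mathrm{ad}} = \mathrm{Spa}(A)$ the specialization is $\mathrm{sp}(w) = \{f \in A : w(f) > 0\}$. On the other hand, writing $X = V(\mathfrak a) \subset P$ means that a point $\mathfrak p \in P$ lies in $X$ iff $\mathfrak a \subset \mathfrak p$. Combining these, $\mathrm{sp}_V(v) \in X$ iff $\mathfrak a \subset \{f : v(f) > 0\}$, which is the announced description.

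For the second identity, recall that $P^{/X} = \mathrm{Spf}(A^{/\mathfrak a})$, so $P^{/X,\mathrm{ad}} = \mathrm{Spa}(A^{/\mathfrak a})$, and we identify this with its homeomorphic image in $P^{\mathrm{ad}} = \mathrm{Spa}(A)$. Since $X$ is closed in $P$, we have $]X[_V = \lambda^{-1}(\mathrm{Spa}(A^{/\mathfrak a}))$, so it suffices to show that a continuous valuation $v$ on $A$ (non-negative on $A^\circ$) extends to a continuous valuation on $A^{/\mathfrak a}$ if and only if $v(f^n) \to +\infty$ for every $f \in \mathfrak a$ (the non-negativity on $(A^{/\mathfrak a})^\circ$ comes for free once continuity is upgraded, since the topology becomes finer and $A^\circ \subset (A^{/\mathfrak a})^\circ$ is automatic). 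This is the key step.

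The continuity condition for $v$ on $A^{/\mathfrak a}$ is that for every $\gamma$ in the value group, $\{x : v(x) > \gamma\}$ contains a neighborhood of $0$ for the $(\mathfrak a + I)$-adic topology, where $I$ is an ideal of definition of $A$; equivalently, $v((\mathfrak a + I)^n) \to +\infty$. Since $v$ is already continuous on $A$ we have $v(I^n) \to +\infty$, so the extra condition is $v(\mathfrak a^n) \to +\infty$. Fixing a finite generating set $g_1,\dots,g_r$ of $\mathfrak a$, a pigeonhole on multi-indices shows this is equivalent to $v(g_i^n) \to +\infty$ for each $i$: indeed any monomial of degree $\geq rn$ in the $g_i$ contains some $g_i$ to a power $\geq n$. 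Finally, given this condition on generators and any $f = \sum h_i g_i \in \mathfrak a$, expanding $f^N$ and applying the same pigeonhole argument (together with $v(h_i) \geq 0$ up to a shift, using the fact that $v$ takes a minimum value on the finite set $\{h_i\}$) yields $v(f^N) \to +\infty$. The main obstacle is this translation of continuity on $A^{/\mathfrak a}$ into the quantitative statement on powers of elements of $\mathfrak a$; once it is in hand, the equivalence with the stated condition for all $f \in \mathfrak a$ is purely formal.
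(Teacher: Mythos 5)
Your proof is correct and follows the same route as the paper: the first identity is read off from the definition of $\mathrm{sp}$, and the second comes from identifying $]X[_V$ with $\lambda^{-1}(\mathrm{Spa}(A^{/\mathfrak a}))$ and translating continuity for the $(\mathfrak a+I)$-adic topology into the condition $v(f^n)\to+\infty$ for $f\in\mathfrak a$; you merely spell out (correctly, via the pigeonhole on generators and non-negativity of $v$ on $A$) the equivalence that the paper asserts in one line. The only slip is terminological: the $(\mathfrak a+I)$-adic topology is \emph{coarser} than that of $A$, not finer, which is precisely why continuity for it is the extra condition while non-negativity on $(A^{/\mathfrak a})^\circ=A$ is unchanged.
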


\begin{proof}
The first two cases immediately follow from the definitions.
Now, we have
\[
P^{/X,\mathrm{ad}} = \mathrm{Spa}(A^{/\mathfrak a}) \subset \mathrm{Spa}(A) = P^{\mathrm{ad}}.
\]
By definition, a point $v \in V$ belongs to $\,]X[_{V}$ if and only if $\lambda(v)$, which is a point in $P^{\mathrm{ad}}$, falls inside $P^{/X,\mathrm{ad}}$.
It means that $\lambda(v)$ is not only continuous for the topology of $A$ but also for the $\mathfrak a$-adic topology.
It exactly means that for any $f \in \mathfrak a$, we will have $v(f^n) \to + \infty$.
\end{proof}

It will sometimes be convenient to switch to multiplicative notation, or better, work directly inside the residue field, and we want to recall that
\[
v(f^n) \to + \infty \Leftrightarrow |f^n(v)| \to 0 \Leftrightarrow f^n(v) \to 0.
\]
In other words, $\{v(f^n)\}_{n \in \mathbb N}$ is unbounded in $G_{v}$ if and only if $f(v)$ is topologically nilpotent in $\kappa(v)$.

Lemma \ref{loctub} shows that $\,]X[_{V}\ \subset \,]X[_{V}^{\mathrm{naive}} $ when $X$ is \emph{closed}.
Unlike in Tate or Berkovich theory, this inclusion is usually \emph{strict} (as we shall see).
As a consequence, there is \emph{no} inclusion in general when $X$ is only assumed to be locally closed.
Some points of $\,]X[_{V}$ might therefore specialize \emph{outside} $X$ in general (but not too far as we shall also see).

%%%%%%%%%%%%%
\begin{xmp}
We let $X = \mathrm{Spf}(R)$ and consider the zero-section $X \hookrightarrow P := \mathbb A_{X}$.
If $v \in X^{\mathrm{ad}}$ is a non trivial valuation and $v^-$ denotes the corresponding valuation on $R[T]$ that specializes inside the unit disk, then we have $v^-(T) = (0, 1) > (0, 0)$ but, if the valuation $v$ is not trivial, we have $v^-(T^n) = (0, n) \not \to +\infty$ because $(0,n) < (g,0)$ whenever $g >0$ (we use the lexicographical order).
In particular, $]X[_{P}$ is strictly smaller than $\,]X[_{V}^{\mathrm{naive}}$.
\end{xmp}

From their local description in the closed case, we can deduce the boolean properties of the tubes:

%%%%%%%%%%%%%%%%
\begin{prop} \label{oper}
Let $(X \hookrightarrow P \stackrel \lambda\leftarrow V)$ be an overconvergent space.
Then, we have the following:
\begin{enumerate}
\item if $Y \subset P$ is a formal subscheme such that $X \subset Y$, then $\,]X[_{V} \subset \,]Y[_{V}$,
\item if $X = X_{1} \cap X_{2}$ where $X_{1}, X_{2}$ are two other formal subschemes, then $\,]X[_{V} = \,]X_{1}[_{V} \cap \,]X_{2}[_{V}$,
\item if $X = X_{1} \cup X_{2}$ where $X_{1}, X_{2}$ are two other formal subschemes, then $\,]X[_{V} = \,]X_{1}[_{V} \cup \,]X_{2}[_{V}$.
\end{enumerate}
\end{prop}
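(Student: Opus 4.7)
The plan is to first establish all three claims when $X_1, X_2, X, Y$ are \emph{closed} subschemes of $P$, using the local description of the tube given by Lemma~\ref{loctub}, and then bootstrap to the general locally closed case by set-algebra manipulations. Working locally with $P = \mathrm{Spf}(A)$, the tube $]X[_V$ consists of those $v$ with $v(f^n) \to +\infty$ for all $f$ in the defining ideal $\mathfrak a_X \subset A$. Claim (1) is then immediate, since $X \subset Y$ gives $\mathfrak a_Y \subset \mathfrak a_X$. For (2), the ideal of $X_1 \cap X_2$ is $\mathfrak a_1 + \mathfrak a_2$, and expanding $(f+g)^{2n}$ via the binomial theorem with $f \in \mathfrak a_1$, $g \in \mathfrak a_2$, and using that $v \geq 0$ on $A$, each term has valuation at least $\min(v(f^n), v(g^n))$, so $v((f+g)^{2n}) \to +\infty$. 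For (3), $\mathfrak a_1 \mathfrak a_2 \subset \mathfrak a_{X_1 \cup X_2}$; if $v \notin ]X_i[_V$ for both $i$, one picks $f_i \in \mathfrak a_i$ with the non-decreasing sequence $v(f_i^n)$ bounded above, and then $v((f_1 f_2)^n) = v(f_1^n) + v(f_2^n)$ is bounded, contradicting $v \in ]X_1 \cup X_2[_V$.

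The bootstrap rests on a change-of-thickening lemma: if $X \subset Z$ with $Z$ closed in $P$ and $X$ open in $Z$, then $]X[_V = ]Z[_V \setminus ]Z \setminus X[_V$. Setting $W := \overline{Z \setminus \overline X}$, one has $Z = \overline X \cup W$ and $Z \setminus X = W \cup \infty_X$ as unions of closed subsets; the closed cases (2) and (3) expand both tubes, and the inclusion $\overline X \cap W \subset \infty_X$ (which follows from $X$ being open in $Z$) combined with closed (1) and (2) gives $]\overline X[_V \cap ]W[_V \subset ]\infty_X[_V$, yielding the equality. Claim (1) then reduces to the set-theoretic inclusion $\infty_Y \cap \overline X \subset \infty_X$ (itself a consequence of $X \subset Y$) combined with closed (1) and (2). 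For claim (2), one applies the change-of-thickening lemma with $Z := \overline{X_1} \cap \overline{X_2}$: computing $Z \setminus (X_1 \cap X_2) = (\overline{X_2} \cap \infty_{X_1}) \cup (\overline{X_1} \cap \infty_{X_2})$ and setting $A_i := ]\overline{X_i}[_V$, $C_i := ]\infty_{X_i}[_V$, the closed cases (2) and (3) reduce both $]X_1 \cap X_2[_V$ and $]X_1[_V \cap ]X_2[_V$ to $(A_1 \cap A_2) \setminus (C_1 \cup C_2)$ by elementary set algebra.

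The main obstacle is claim (3), where $\infty_{X_1 \cup X_2}$ is not generally equal to $\infty_{X_1} \cup \infty_{X_2}$. The inclusion $]X_1[_V \cup ]X_2[_V \subset ]X_1 \cup X_2[_V$ is immediate from (1). For the reverse, I take $v \in ]X[_V$, suppose $v \notin ]X_i[_V$ for both $i$, and derive a contradiction by exhibiting $v \in ]\infty_X[_V$. Since $v \in A_1 \cup A_2$, we may assume $v \in A_1$, which forces $v \in C_1$. Two subcases remain: either $v \notin A_2$, or $v \in C_2$. In the first, the change-of-thickening lemma applied to $\infty_{X_1} \setminus \overline{X_2}$ (open inside the closed subscheme $\infty_{X_1}$) yields $]\infty_{X_1} \setminus \overline{X_2}[_V = ]\infty_{X_1}[_V \setminus ]\overline{X_2}[_V = C_1 \setminus A_2$, so $v$ lies in this tube; since $\infty_{X_1} \setminus \overline{X_2} \subset \infty_X$, claim (1) gives $v \in ]\infty_X[_V$. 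In the second, $v \in C_1 \cap C_2 = ]\infty_{X_1} \cap \infty_{X_2}[_V$ by closed (2), and $\infty_{X_1} \cap \infty_{X_2} \subset \infty_X$ again gives $v \in ]\infty_X[_V$, contradicting $v \in ]X[_V$.
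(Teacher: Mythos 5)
Your proof is correct. The closed case is handled exactly as in the paper: the local description of the tube from Lemma \ref{loctub}, the binomial estimate showing $v((f+g)^n)\to+\infty$ when both $v(f^n)$ and $v(g^n)$ do, and the additivity $v((fg)^n)=v(f^n)+v(g^n)$ together with non-negativity of the valuations on $A$. For the bootstrap to locally closed subschemes the organization differs: the paper fixes a decomposition $X=U\cap Z$ into an open and a closed subset of $P$, first shows that tubes of complementary open and closed subsets are complementary, and then treats assertions 1 and 2 through that decomposition; you instead package the same content into the single identity $\,]X[_{V}=\,]Z[_{V}\setminus\,]Z\setminus X[_{V}$ for $X$ open in a closed $Z$, which you apply with varying $Z$ (namely $\overline X$, $\overline X_{1}\cap\overline X_{2}$, and $\infty_{X_{1}}$). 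The genuine divergence is in assertion 3. The paper reduces the inclusion $\,]\overline X[_{P}\subset\,]X_{1}[_{P}\cup\,]X_{2}[_{P}\cup\,]\infty_{X}[_{P}$ to the closed case by asserting $\infty_{X_{i}}\subset\infty_{X}$, but that inclusion fails in general: take $X_{1}$ open with closed complement $X_{2}$, so that $\infty_{X}=\emptyset$ while $\infty_{X_{1}}$ is the boundary of $X_{1}$. You explicitly identify this obstacle and replace it with a two-case argument resting on the inclusions $\infty_{X_{1}}\setminus\overline X_{2}\subset\infty_{X}$ and $\infty_{X_{1}}\cap\infty_{X_{2}}\subset\infty_{X}$, which are the ones that actually hold; each step there is justified by the closed case or by the already-established general assertion 1, so there is no circularity. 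Your write-up is therefore not only correct but more careful than the paper's own treatment of the union case.
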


\begin{proof}
Clearly, it is sufficient to consider the case $V = P^{\mathrm{ad}}$.
Let us first check that these assertions hold for \emph{closed} subspaces.
This is a local question on $P$ and we may therefore rely on lemma \ref{loctub}.
Assertion 1) should then be clear.
For the other two, we may assume that $P = \mathrm{Spf}(A)$ and $X, X_{1}, X_{2}$ are defined by $\mathfrak a, \mathfrak a_{1}, \mathfrak a_{2}$, respectively. 
In assertion 2) (resp.\ 3)), we suppose that $\mathfrak a = \mathfrak a_{1} + \mathfrak a_{2}$ (resp.\ $\mathfrak a = \mathfrak a_{1}\mathfrak a_{2}$).
It is then sufficient to notice that if $f, g \in A$, then we have
\[
v\left((f+g)^n\right) \to + \infty \Leftrightarrow (v(f^n) \to + \infty \ \mathrm{and}\ v(g^n) \to + \infty)
\]
and
\[
v\left((fg)^n\right) \to + \infty \Leftrightarrow (v(f^n) \to + \infty \ \mathrm{or}\ v(g^n) \to + \infty).
\]
Note that, for the second equivalence, it is necessary to use the fact that our valuations are \emph{non-negative} on $A$.

We derive now from the closed case that if $U$ and $Z$ are open and closed complements in the topology of $P$, then $\,]U[_{P}$ and $\,]Z[_{P}$ are complements in $P^{\mathrm{ad}}$ (this is not part of the definition).
Equivalently, we have to show that $\,]\overline U[_{P} \cap \,]Z[_{P} = \,]\infty_{U}[$.
Since $\overline U \cap Z = \infty_{U}$, this equality follows from the closed case.
As a consequence, we obtain that the proposition also holds in the open case.

As an intermediate step, we now prove that if $Z$ is closed, $U$ open and $X \subset U \cap Z$, then we have $\,]X[_{P} \subset \,]U[_{P} \cap \,]Z[_{P}$.
From the closed case, we have $\,]\overline {X}[_{P} \subset \,]Z[_{P}$ and therefore also $\,]X[_{P} \subset \,]Z[_{P}$.
In order to prove that $\,]X[_{P} \subset \,]U[_{P}$, we introduce a closed complement $F$ for $U$ so that $]F[_P$ and $]U[_P$ are complements in $P^{\mathrm{ad}}$.
Now, we are reduced to show that $\,]\overline X[_{ P} \cap \,]F[_{P} \subset \,]\infty_{X}[_{P}$, and this follows from the closed case again since $\overline X \cap F \subset \infty_{X}$.

Assume that we actually have an equality $X = U \cap Z$ and let us show that we also have an equality $\,]X[_{P} = \,]U[_{P} \cap \,]Z[_{P}$ on the tubes.
We only have to prove the reverse inclusion $\,]U[_{P} \cap \,]Z[_{P} \subset \,]X[_{P}$.
Equivalently, we have to show that $\,]Z[_{P} \subset \,]X[_{P} \cup \,]F[_{P}$ (where $F$ is a closed complement for $U$ as before), and this follows again from the closed case since $Z \subset X \cup F$.

It is now easy to finish the proof of assertions 1) and 2).
For assertion 1), we may write $Y = U \cap Z$ with $U$ open and $Z$ closed.
If we assume that $X \subset Y = U \cap Z$, then we know that $\,]X[_{P} \subset \,]U[_{P} \cap \,]Z[_{P}$ but, from what we just proved, we also have $\,]Y[_{P} = \,]U[_{P} \cap \,]Z[_{P}$.
For assertion 2), we can write $X_{i} = U_{i} \cap Z_{i}$ with $U_{i}$ open and $Z_{i}$ closed, and use the stability by intersection for open or closed that we already know.

Assertion 3) also results from the closed case because closure commutes with union, and therefore, $\overline X = \overline X_{1} \cup \overline X_{2}$.
Let us be more precise. Thanks to assertion 1), only the direct inclusion needs a proof and this is equivalent to $\,]\overline X[_{P} \subset \,]X_{1}[_{P} \cup \,]X_{2}[ _{P}\cup \,]\infty_{X}[_{P}$.
Actually, since, for $i=1, 2$, we have $\infty_{X_{i}} \subset \infty_{X}$, the same is true for the tubes, and we are reduced to check that $\,]\overline X[_{P} \subset \,]\overline X_{1}[_{P} \cup \,]\overline X_{2}[ _{P}\cup \,]\infty_{X}[_{P}$.
Thus, as promised, this again follows from the closed case.
\end{proof}

We may then improve a bit on lemma \ref{loctub}.

%%%%%%%%%%%
\begin{cor}
Let $(X \hookrightarrow P \stackrel \lambda\leftarrow V)$ be an overconvergent space.
If $P := \mathrm{Spf}(A)$ is affine and $X$ is defined modulo an ideal of definition by
\[
\forall i \in \{ 1, \ldots r\}, f_{i}(x) = 0 \quad \mathrm{and} \quad \exists j \in \{1, \ldots, s\}, g_{j}(x) \neq 0,
\]
then $\,]X[_{V}^{\mathrm{naive}}$ is defined in $V$ by
\[
\forall i \in \{ 1, \ldots r\}, v(f_{i}) > 0 \quad \mathrm{and} \quad \exists j \in \{1, \ldots, s\}, v(g_{j}) \leq 0,
\]
and $\,]X[_{V}$ is defined by
\[
\forall i \in \{ 1, \ldots r\}, v(f_{i}^n) \to + \infty \quad \mathrm{and} \quad \exists j \in \{1, \ldots, s\}, v(g_{j}^n) \not \to + \infty.
\]
\end{cor}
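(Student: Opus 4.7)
The plan is to decompose the locally closed subscheme $X$ into a Boolean combination of principal closed and open subschemes of $P$, and then appeal to lemma~\ref{loctub} together with proposition~\ref{oper}. Since both the tube and the specialization preimage depend only on the underlying topological subspace of $X$ (for the tube by proposition~\ref{formp}(1) together with its definition via closures), one may lift the $f_i$ and $g_j$ to elements of $A$ and write $X = Z \cap U$, with $Z := \bigcap_{i=1}^{r} V(f_i)$ closed in $P$ and $U := \bigcup_{j=1}^{s} D(g_j)$ open in $P$.

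For the specialization preimage, $\mathrm{sp}_V^{-1}$ commutes with finite Boolean operations on subsets of $P$, so the computation reduces to the principal pieces. Lemma~\ref{loctub} gives $\mathrm{sp}_V^{-1}(V(f_i)) = \{v : v(f_i) > 0\}$, and passing to complements yields $\mathrm{sp}_V^{-1}(D(g_j)) = \{v : v(g_j) \leq 0\}$. Intersecting over $i$ and taking a union over $j$ produces the stated description.

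For the tube, I would apply the intersection case of proposition~\ref{oper} to obtain $]X[_V = ]Z[_V \cap ]U[_V$ and $]Z[_V = \bigcap_i ]V(f_i)[_V$; lemma~\ref{loctub} identifies each closed piece as $\{v : v(f_i^n) \to +\infty\}$, producing the first condition. For the open part, I would invoke the observation isolated inside the proof of proposition~\ref{oper} that the tubes of open/closed complements in $P$ are complements in $P^{\mathrm{ad}}$; pulling back by $\lambda$ (recall $]\,\cdot\,[_V = \lambda^{-1}(]\,\cdot\,[_P)$), this identifies $]D(g_j)[_V$ with $V \setminus ]V(g_j)[_V = \{v : v(g_j^n) \not\to +\infty\}$, and the union formula in proposition~\ref{oper} then yields $]U[_V = \bigcup_j \{v : v(g_j^n) \not\to +\infty\}$.

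The argument is essentially formal once the closed-case lemma~\ref{loctub} and the Boolean-lattice behaviour of tubes established in proposition~\ref{oper} are in hand, so no step presents a real obstacle. The only subtlety is the open-complement step: unlike in the closed case, membership in $]D(g_j)[_V$ is not captured by a direct positivity condition but only by the negation of the ``$\to +\infty$'' condition for the complement $V(g_j)$, which is exactly why the statement alternates a universal $\forall$ on the $f_i$ with an existential $\exists\ldots\not\to$ on the $g_j$.
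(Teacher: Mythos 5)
Your proposal is correct and follows essentially the same route as the paper: the closed case is handled by lemma \ref{loctub} (with the observation that $v(f^n)\to+\infty$ holds automatically for $f$ in an ideal of definition), and the locally closed case is reduced to it by writing $X$ as an intersection of a closed and an open subset and invoking the Boolean-lattice behaviour of tubes and of $\mathrm{sp}_V^{-1}$ from proposition \ref{oper}. The only difference is cosmetic bookkeeping: you take a union of the individual $]D(g_j)[_V$, whereas the paper works with the single open set $U$ and the complementary closed set $\bigcap_j V(g_j)$, which amounts to the same thing.
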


\begin{proof}
When $X$ is a closed subset defined by an open ideal $\mathfrak a$, our hypothesis means that $\mathfrak a = I + (f_{1}, \ldots, f_{r})$ for some ideal of definition $I$ of $A$.
Since we always have $v(f^n) \to + \infty$ for $f \in I$, the assertion follows from proposition \ref{loctub}.
In general, $X$ is the intersection of a closed subset and an open subset and we may use assertion 2) of proposition \ref{oper}.
\end{proof}

%%%%%%%%%%%%%
\begin{prop}
Let $(X \hookrightarrow P \stackrel \lambda\leftarrow V)$ be an overconvergent space.
If $X = Y \cap U$ as a \emph{formal scheme}, with $Y$ (resp. $U$) closed (resp. open) in $P$, then
\[
X_V = Y_V \cap ]U[_V.
\]
\end{prop}

\begin{proof}
After pulling everything back along the monomorphism $P^{/\overline X,\mathrm{ad}} \to P$, we can assume that the inclusion $\overline X \hookrightarrow P$ is a formal thickening. It follows that the map $\overline X \hookrightarrow Y$ is a birational formal thickening and therefore an isomorphism.
\end{proof}

As a consequence, we see that if $P := \mathrm{Spf}(A)$ is affine and $X$ is defined by
\[
\forall i \in \{ 1, \ldots r\}, f_{i}(x) = 0 \quad \mathrm{and} \quad \exists j \in \{1, \ldots, s\}, g_{j}(x) \neq 0,
\]
then $X_V$ is defined by
\[
\forall i \in \{ 1, \ldots r\}, v(f_{i}) =+\infty \quad \mathrm{and} \quad \exists j \in \{1, \ldots, s\}, v(g_{j}^n) \not \to + \infty.
\]

%%%%%%%%%%%%%%%%%%%%%%%
\subsection{Standard properties}

We translate the properties of completion into the language of tubes and derive some consequences.

%%%%%%%%%
\begin{prop} \label{proptub}
Let $(X \hookrightarrow P \stackrel \lambda\leftarrow V)$ be an overconvergent space.
Then,
\begin{enumerate}
\item
the tube $\,]X[_{V}$ only depends on the underlying subspace of $X$ (and not on the structure of formal scheme).
\item
if $P = \bigcup_{i \in I} P_{i}$ is an open covering, $X_{i} := X \cap P_{i}$ and $V_{i} := \lambda^{-1}(P_{i}^{\mathrm{ad}})$, then we have an open covering
\[
\,]X[_{P,V} = \bigcup_{i \in I} \,]X_{i}[_{P_{i},V_{i}},
\]
\item \label{pultub}
if $u \colon Q \to P$ is a morphism of formal schemes and $\lambda$ factors through $Q^{\mathrm{ad}}$, then
\[
\,]X[_{P,V} =\,]u^{-1}(X)[_{Q,V}.
\]
\end{enumerate}
\end{prop}

\begin{proof}
Thanks to proposition \ref{oper}, we may assume that $X$ is closed in $P$ in which case everything follows from proposition \ref{formp} and the standard properties of the functor $(-)^\mathrm{ad}$.
\end{proof}

Be careful that, when $Q$ is an \emph{open} formal subscheme of $P$ containing $X$, then $\,]X[_{Q} \neq \,]X[_{P}$ in general: we may \emph{not} replace $P$ with a neighborhood of $X$ (this is a striking difference with the notion of a tube in Tate or Berkovich theory).
On the contrary, if $Y$ is a closed formal subscheme of $P$ that contains $X$ and $Q := P^{/Y}$, then we do have
\[
\,]X[_{P} = \,]X[_{Q}.
\]
By choosing $Y = \overline X$, we see that we may always reduce to the case where $X$ is a dense open subset in $P$ (this is again a striking difference with the classic situation).

As a consequence of assertion \eqref{pultub}), we see that the tube is functorial in the sense that any morphism of overconvergent spaces
\begin{equation}
\xymatrix{Y \ar@{^{(}->}[r] \ar[d]^f & Q\ar[d]^{v} & W \ar[l]_{\mu} \ar[d]^-{u} \\ X \ar@{^{(}->}[r] & P & V \ar[l]_\lambda}
\end{equation}
induces a morphism of locally topologically ringed spaces
\[
]f[_{u} \colon \,]Y[_{W} \to \,]X[_{V}
\]
(this is not built into the definition).
We may simply write $]f[$ when $u$ is understood from the context but we may as well write $u$ when this is $f$ that plays a secondary role.
The functor $(X \hookrightarrow P \leftarrow V) \mapsto \,]X[_{V}$ is clearly continuous and cocontinuous but it is not left exact in a naive sense because the underlying space of a product is not the product of the underlying spaces.
Be careful also that the (composite) functor $(X \hookrightarrow P) \mapsto \,]X[_{P}$ is still continuous, but it is not cocontinuous whatever topology we put on the left hand side.

However, the tube is left exact in the following sense:

%%%%%%%%%%%%%
\begin{prop} \label{leftex}
Assume that we are given two morphisms of formal embeddings $(X_{i} \hookrightarrow P_{i}) \to (X \hookrightarrow P)$ for $i =1, 2$ and that $ P_{1} \times_{P} P_{2}$ is representable by a locally noetherian formal scheme.
Then, if we are given $\lambda : V \to P_{1}^{\mathrm{ad}} \times_{P^{\mathrm{ad}}} P_{2}^{\mathrm{ad}}$, we have
\[
\,]X_{1} \times_{X} X_{2}[_{V} = \,]X_{1}[_{V} \cap \,]X_{2}[_{V}.
\]
\end{prop}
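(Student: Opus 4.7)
The plan is to reduce the assertion to a combination of the two already established properties of tubes, namely the stability of tubes under pullback along morphisms of formal schemes (Proposition \ref{proptub}(\ref{pultub})) and the compatibility of tubes with intersections of locally closed subschemes (Proposition \ref{oper}, assertion 2).

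First, I would use that the functor $P \mapsto P^{\mathrm{ad}}$ commutes with finite limits (corollary after Proposition \ref{adplus}), so that the assumption on $P_1 \times_P P_2$ gives a canonical identification
$$
(P_1 \times_P P_2)^{\mathrm{ad}} \simeq P_1^{\mathrm{ad}} \times_{P^{\mathrm{ad}}} P_2^{\mathrm{ad}},
$$
and the given map $\lambda$ factors through $(P_1 \times_P P_2)^{\mathrm{ad}}$. This allows me to consider $V$ as sitting over the fibered product on the formal side, so that Proposition \ref{proptub}(\ref{pultub}) is applicable.

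Next, denote by $p_i \colon P_1 \times_P P_2 \to P_i$ the two projections. Since $X \hookrightarrow P$ is a monomorphism, the two maps $X_i \to P$ factor uniquely through $X$, so $X_1 \times_X X_2 = X_1 \times_P X_2$ as locally closed subspaces of $P_1 \times_P P_2$. Moreover, as subspaces of $P_1 \times_P P_2$,
$$
X_1 \times_P X_2 = (X_1 \times_P P_2) \cap (P_1 \times_P X_2) = p_1^{-1}(X_1) \cap p_2^{-1}(X_2).
$$
Applying Proposition \ref{proptub}(\ref{pultub}) to the morphisms $p_i$ yields $]X_i[_V = ]p_i^{-1}(X_i)[_{V}$ for $i=1,2$, where now we regard the tubes as being formed with respect to the formal scheme $P_1 \times_P P_2$. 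Then Proposition \ref{oper}(2) gives
$$
]p_1^{-1}(X_1) \cap p_2^{-1}(X_2)[_V = ]p_1^{-1}(X_1)[_V \cap ]p_2^{-1}(X_2)[_V,
$$
and combining everything with the identification above finishes the proof.

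The only genuine point requiring care is the compatibility step: one must make sure that replacing the ambient formal scheme $P_i$ by the larger $P_1 \times_P P_2$ (via $p_i$) does not change the tube in $V$, which is exactly the content of Proposition \ref{proptub}(\ref{pultub}) and is the reason we need the factorization of $\lambda$ through $(P_1 \times_P P_2)^{\mathrm{ad}}$. Once this is in hand, the rest is purely formal set-theoretic bookkeeping inside $P_1 \times_P P_2$, with no real difficulty.
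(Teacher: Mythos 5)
Your proof is correct. The identification $(P_1\times_P P_2)^{\mathrm{ad}}\simeq P_1^{\mathrm{ad}}\times_{P^{\mathrm{ad}}}P_2^{\mathrm{ad}}$ (using that $(-)^{\mathrm{ad}}$ commutes with finite limits, which needs the locally noetherian hypothesis on the product), the identification of $X_1\times_X X_2$ with $p_1^{-1}(X_1)\cap p_2^{-1}(X_2)$ inside $P_1\times_P P_2$ (using that $X\hookrightarrow P$ is a monomorphism and that the tube only depends on the underlying subspace), and the combination of Proposition \ref{proptub}(\ref{pultub}) with Proposition \ref{oper}(2) do give the statement. The paper's proof is organized around the same object, namely the formal fibred product $P_1\times_P P_2$, but it executes the last step differently: after reducing to $V=P_1^{\mathrm{ad}}\times_{P^{\mathrm{ad}}}P_2^{\mathrm{ad}}$ it further reduces to the affine case with $X_i$ closed and defined by ideals $\mathfrak a_i$, observes that $X_1\times_X X_2$ is cut out by $\mathfrak a_1\otimes_A A_2+A_1\otimes_A\mathfrak a_2$, and checks the equality of tubes pointwise via the criterion $v(f^n)\to+\infty$ of Lemma \ref{loctub}. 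In effect the paper redoes by hand, in this product situation, the computation that underlies Proposition \ref{oper}(2); your version replaces that computation by a citation of \ref{oper}(2), which also handles the locally closed case at once instead of reducing to the closed case. The trade-off is that you must justify the set-theoretic bookkeeping ($X_1\times_X X_2=p_1^{-1}(X_1)\cap p_2^{-1}(X_2)$ as subspaces), which you do; the paper's explicit ideal-theoretic computation makes this identification transparent at the cost of an extra reduction. Both arguments are complete.
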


\begin{proof}
It follows from assertion \eqref{pultub} in proposition \ref{proptub} that
\begin{align*}
\,]X_{1} \times_{X} X_{2}[_{V} &= ](X_{1} \times_{P} P_{2}) \cap (P_{1} \times_{P} X_{2}) [_{V}
\\& = \,]X_{1} \times_{P} P_{2}[_V \cap ]P_{1} \times_{P} X_{2} [_{V}
\\& = \,]X_{1}[_{V} \cap \,]X_{2}[_{V}. \qedhere
\end{align*}
\end{proof}

We may also consider the prepseudo-adic space $(V,\,]X[_V)$.
Then, proposition \ref{leftex} may be rephrased by saying that the functor $(X \hookrightarrow P \leftarrow V) \mapsto (V,\,]X[_V)$ is left exact.
The left exactness of the tube may also be expressed in the following very convenient form:

%%%%%%%%
\begin{cor}
If
\[
\xymatrix{Y \ar@{^{(}->}[r] \ar[d]^f & Q\ar[d]^{v} & W \ar[l]_{\mu} \ar[d]^-{u} \\ X \ar@{^{(}->}[r] & P & V \ar[l]_\lambda}
\]
is a morphism of overconvergent spaces, then
\[
]v^{-1}(X)[_W = u^{-1}\left(\,]X[_V\right).
\]
\end{cor}

\begin{proof}
Immediate consequence of assertion \eqref{pultub} in proposition \ref{proptub} -- which is then simply the case $W=V$.
\end{proof}

We will need at some point to understand the behaviour of the support of a coherent sheaf (recall that we can identify a formal scheme $P$ with the subset of trivial points in $P^{\mathrm{ad}}$):

%%%%%%%%%%%%%%%%
\begin{prop} \label{supsh}
Let $P$ be a formal scheme.
\begin{enumerate}
\item
If $X$ is a closed formal subscheme of $P$, then $X^{\mathrm{ad}} \cap P = X$ and $X^{\mathrm{ad}} \subset \,]X[_{P}$.
\item
If $\mathcal F$ is a coherent $\mathcal O_{P}$-module and $Z$ (resp.\ $T$) denotes the support of $\mathcal F$ (resp.\ $\mathcal F^{\mathrm{ad}}$), then $T \cap P = Z$ and $T \subset ]Z[_{P}$.
\end{enumerate}
\end{prop}

\begin{proof}
For the second assertion, we may assume that $\mathcal F = \mathcal O_{P}/\mathcal I$ where $\mathcal I$ is a coherent ideal and we are therefore reduced to proving the first one.
Now, the first equality is already known and the second one results from the factorization $X \hookrightarrow P^{/X} \hookrightarrow P$.
\end{proof}

Unfortunately, the following fibration theorem will not be very useful but it is worth mentioning however, as an immediate consequence of the formal fibration theorem:

%%%%%%%%%%%
\begin{prop}[Weak fibration theorem] \label{weakft}
Assume that we are given two formal embeddings $X \hookrightarrow P$ and $X' \hookrightarrow P'$.
If a differentially smooth morphism $u \colon P' \to P$ induces an isomorphism $X' \simeq X$ and an isomorphism $\overline X' \simeq \overline X$, then, it induces locally on $P$, an isomorphism
\[
\,]X'[_{P'} \simeq \mathbb D^{-,n} \times \,]X[_{P}.
\]
\end{prop}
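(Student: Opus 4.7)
The strategy is to reduce the statement to the closed case, where Proposition~\ref{forfib} applies directly, and then carve out the loci at infinity. By hypothesis, $u$ restricts to an isomorphism $\overline X' \simeq \overline X$ between closed formal subschemes, and $u$ is differentially smooth, so the formal fibration theorem gives, locally on $P$ and $P'$, an isomorphism of formal schemes
$$P'^{/\overline X'} \simeq \mathbb A^{n,-} \times P^{/\overline X}.$$
Applying $(-)^{\mathrm{ad}}$, which commutes with finite limits, and identifying $(P^{/X})^{\mathrm{ad}}$ with the tube $\,]X[_{P}$ for $X$ closed in $P$ (via the lemma at the start of Section~6), together with $(\mathbb A^{n,-})^{\mathrm{ad}} = \mathbb D^{-,n}$, I obtain an isomorphism
$$\,]\overline X'[_{P'} \simeq \mathbb D^{-,n} \times \,]\overline X[_{P}.$$

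It then remains to match the infinity loci under this isomorphism. Since $u$ restricts to isomorphisms on both $X$ and $\overline X$, it restricts to an isomorphism $\infty_{X'} \simeq \infty_{X}$. Crucially, by the construction in the proof of Proposition~\ref{forfib} (where formally étale coordinates vanishing on $\overline X'$ are chosen via Lemma~\ref{locnei}), the closed subscheme $\overline X' \subset P'^{/\overline X'}$ corresponds under the product decomposition to the zero section $\{0\} \times \overline X$, and hence $\infty_{X'}$ corresponds to the closed subscheme $\{0\} \times \infty_{X}$ of $\mathbb A^{n,-} \times P^{/\overline X}$. Writing this as the intersection $(\{0\} \times P^{/\overline X}) \cap (\mathbb A^{n,-} \times \infty_{X})$ of closed formal subschemes and invoking Proposition~\ref{oper}, the tube decomposes as
$$\,]\infty_{X'}[_{P'} \;=\; \,]\{0\} \times P^{/\overline X}[ \;\cap\; \,]\mathbb A^{n,-} \times \infty_{X}[$$
inside $\mathbb D^{-,n} \times \,]\overline X[_{P}$.

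By the pullback property of tubes (Proposition~\ref{proptub}(\ref{pultub})) applied to the two projections, the first tube is the preimage of $\,]\{0\}[_{\mathbb D^{-,n}}$ under the first projection, and the second is the preimage of $\,]\infty_{X}[_{P}$ under the second. The key point is that $\,]\{0\}[_{\mathbb D^{-,n}} = \mathbb D^{-,n}$: indeed, the coordinates $T_{1},\ldots,T_{n}$ on $\mathbb A^{n,-}$ are topologically nilpotent by construction of the open affine space, so every valuation $v \in \mathbb D^{-,n}$ satisfies $v(T_{i}^{k}) \to +\infty$. Thus the first tube is the whole of $\mathbb D^{-,n} \times \,]\overline X[_{P}$, the intersection reduces to $\mathbb D^{-,n} \times \,]\infty_{X}[_{P}$, and subtracting from $\,]\overline X'[_{P'}$ yields the desired isomorphism
$$\,]X'[_{P'} \simeq \mathbb D^{-,n} \times \,]X[_{P}.$$

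The main delicate point is the identification of $\overline X' \subset P'^{/\overline X'}$ with the zero section $\{0\} \times \overline X$ under the formal fibration isomorphism, which is what allows $\infty_{X'}$ to be matched with $\{0\} \times \infty_{X}$; this requires unpacking the construction in Proposition~\ref{forfib} rather than using only its statement. Once this matching is in hand, everything else is a formal manipulation built on left exactness of tubes and the triviality of the tube of the origin inside the open polydisc.
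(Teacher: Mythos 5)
Your proof is correct and follows the same route as the paper: reduce to the closed case and apply the formal fibration theorem (Proposition \ref{forfib}) together with the compatibility of $(-)^{\mathrm{ad}}$ with products. The paper's proof is a one-liner that leaves the reduction from the locally closed to the closed case implicit; your write-up simply fills in that reduction (carving out the infinity loci and using that the tube of the zero section in $\mathbb D^{-,n}$ is everything), which is exactly the intended argument.
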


\begin{proof}
We may assume that $X$ and $X'$ are closed in $P$ and $P'$ respectively, in which case our assertion follows directly from the formal fibration theorem \ref{forfib}.
\end{proof}

One would like to relax our hypothesis in proposition \ref{weakft} and only assume that $u$ induces an isomorphism $X' \simeq X$ (and not necessarily $\overline X' \simeq \overline X$).
Unfortunately, the conclusion will not hold anymore as the case of an open embedding $P' \hookrightarrow P$ immediately shows (with $X = X' = P'$).
The tube inside $P$ will be strictly bigger than the tube inside $P'$ in general.

%%%%%%%%%%%%%%%%%%%%
\subsection{Topology of the tubes}

A tube has no reason to be locally closed (even for the constructible topology) in general as the example of the absolute open unit disk inside the absolute closed unit disk shows.
The situation will be much nicer when we stick to analytic spaces (as we soon shall see) or if we consider the naive tube:

%%%%%%%%%%%%%
\begin{prop}
Assume that $X$ is a locally closed (resp.\ a closed, resp.\ an open) formal subscheme of a formal scheme $P$ and let $\lambda : V \to P^{\mathrm{ad}}$ be any morphism of adic spaces.
Then, $\,]X[_{V}^{\mathrm{naive}} $ is a locally closed (resp.\ a closed, resp.\ an open) locally constructible subset of $V$.
\end{prop}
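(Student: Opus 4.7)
The plan is to reduce the problem to an explicit local computation and then use the description of $\mathrm{sp}_V^{-1}(X)$ provided by Lemma \ref{loctub}. Since being open, closed, locally closed, or locally constructible are all properties that are local on $V$ and on $P$, I may assume that $P = \mathrm{Spf}(A)$ is affine and that $V = \mathrm{Spa}(B, B^+)$ is affinoid with $B$ of noetherian type. It suffices to handle the closed case: the open case then follows by passing to complements, and the locally closed case by writing $X = U \cap Z$ locally on $P$ with $U$ open and $Z$ closed, and intersecting the resulting preimages.

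Assume therefore that $X \subset P$ is closed, defined by an ideal $\mathfrak a \subset A$. By noetherianness of $A$, we may write $\mathfrak a = (f_1, \ldots, f_r)$, and since the structural map $A \to B$ forces $v(a) \geq 0$ for every $a \in A$ and every $v \in V$, Lemma \ref{loctub} simplifies to
$$
\mathrm{sp}_V^{-1}(X) \;=\; \bigcap_{i=1}^r \{v \in V : v(f_i) > 0\} \;=\; \bigcap_{i=1}^r \bigl(V \setminus R(1/f_i)\bigr),
$$
where $R(1/f_i) = \{v \in V : v(1) \geq v(f_i) \neq +\infty\}$ is a rational subset (the defining ideal $(1, f_i)$ contains $1$, so is trivially open). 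Each $R(1/f_i)$ is therefore a quasi-compact open subset of the spectral space $V$, hence a (globally) constructible open. Its complement $\{v : v(f_i) > 0\}$ is then closed and constructible, and the finite intersection remains closed and constructible. This settles the closed case.

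The open case is immediate: if $X = P \setminus Z$ with $Z$ closed, then $\mathrm{sp}_V^{-1}(X) = V \setminus \mathrm{sp}_V^{-1}(Z)$ is the complement of a closed locally constructible set, hence an open locally constructible set. Similarly, writing $X = U \cap Z$ locally on $P$ for the general locally closed case, the preimage is the intersection of an open locally constructible set and a closed locally constructible set, which is locally closed and locally constructible.

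The only non-formal step in this argument is the recognition of $\{v : v(f_i) = 0\}$ as a rational subset of $V$, which is the main point where the hypotheses enter: it relies on the triviality that $(1)$ is an open ideal, and on the noetherian assumption that allows the condition ``$\forall f \in \mathfrak a$'' in Lemma \ref{loctub} to be replaced by a finite conjunction (otherwise we would only obtain a pro-constructible subset rather than a locally constructible one). All remaining manipulations are purely formal operations on constructible subsets under complement and finite intersection.
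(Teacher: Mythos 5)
Your proof is correct and follows essentially the same route as the paper: reduce to the affine closed case, choose finitely many generators of $\mathfrak a$ (this is where noetherianness enters), and recognize $\mathrm{sp}_V^{-1}(X)$ as a finite intersection of complements of the quasi-compact rational subsets $R(1/f_i)$, hence closed and constructible; the open and locally closed cases then follow by complements and intersections. The paper states this more tersely (and dismisses the purely topological part of the claim by continuity of $\mathrm{sp}_V$), but your identification of $\{v : v(f_i) > 0\}$ with $V \setminus R(1/f_i)$ via the non-negativity of the valuations on the image of $A$ is exactly the content of its parenthetical remark.
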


\begin{proof}
Only constructibility needs a proof because specialization is continuous.
We may assume that $P = \mathrm{Spf}(A)$ and that $X$ is the closed subset defined by an ideal $\mathfrak a$.
If $f_1, \ldots, f_r$ are generators of $ \mathfrak a$, then
\[
\,]X[_{V}^{\mathrm{naive}}  = \left\{v \in V \colon \min_{i=1}^r v(f_i) > 0 \right\}
\]
is a (closed) constructible subset of $V$ (finite intersection of complements of rational open subsets which are quasi-compact).
\end{proof}

We can now describe how far the genuine tube differs from the naive tube.
In particular, we will see that the points of the tube of $X$ cannot specialize too far from $X$ itself (we will denote respectively by $\mathring W$ and $\overline W$ the interior and the closure of a subset $W$):

%%%%%%%%%%%%%%
\begin{prop} \label{antub}
If $(X\to P \leftarrow V)$ is an overconvergent space, then
\[
\widering {]X[}_{V}^{\mathrm{naive}} \subset \,]X[_{V} \subset \overline {]X[}_{V}^{\mathrm{naive}}.
\]
Moreover, if $V$ is \emph{analytic}, then $\,]X[_{V} = \widering {]X[}_{V}^{\mathrm{naive}}$ (resp.\ $\,]X[_{V} = \overline {]X[}_{V}^{\mathrm{naive}}$) when $X$ is closed (resp.\ open) in the topology of $P$.
\end{prop}

\begin{proof}
Recall first that the naive tube sends closed to closed and preserves standard set operations.
On the other hand, we know from proposition \ref{oper} that the tube also preserves these operations (although it does not send closed to closed).
Assume for a while that we know the conclusion in the closed case.
If we write $X = Z \cap U$ with $Z$ closed and $U$ open in the topology of $P$, we will have
\[
\widering {]Z[}_{V}^{\mathrm{naive}} \subset \,]Z[_{V} \subset ]Z[_{V}^{\mathrm{naive}}
\]
and, by considering a closed complement of $U$, we deduce that
\[
]U[_{V}^{\mathrm{naive}} \subset \,]U[_{V} \subset \overline {]U[}_{V}^{\mathrm{naive}}.
\]
Since $\,]Z\cap U[_{V} = \,]Z[_{V} \cap \,]U[_{V}$, this will imply that
\[
\widering {]Z \cap U[}_{V}^{\mathrm{naive}} = \widering {]Z[}_{V}^{\mathrm{naive}} \cap ]U[_{V}^{\mathrm{naive}} \subset \,]Z \cap U[_{V} \subset ]Z[_{V}^{\mathrm{naive}} \cap \overline {]U[}_{V}^{\mathrm{naive}} = \overline {]Z \cap U[}_{V}^{\mathrm{naive}}
\]
and we will be done (because the last assertion of the proposition also clearly follows from the closed case).

Thus, we may assume from now on that $X$ is closed in $P$.
Since the question is local on $V$, we may also assume that $V = \mathrm{Spa}(B, B^+)$ with $(B, B^+)$ complete, that $P = \mathrm{Spf}(A)$ and that $X$ is defined by some ideal $\mathfrak a$.
Then $\,]X[_{V}$ is defined as a subset of $V$ by the conditions $v(f^n) \to + \infty$ and $]X[_{V}^{\mathrm{naive}}$ is defined by the conditions $v(f) > 0$ for $f \in \mathfrak a$.
We proved in lemma \ref{prlem} that both conditions are stable under specialization.
Now, any $v \in V$ has a maximal generalization $w \in V$ of height $\leq 1$ and it follows that
\[
w(f) > 0 \Leftrightarrow w(f^n) \to + \infty \Rightarrow v(f^n) \to + \infty.
\]
Now, since $]X[_{V}^{\mathrm{naive}}$ is a (closed) constructible subset of $V$, one knows that $v \in \widering {]X[}_{V}^{\mathrm{naive}}$ if and only if any generalization of $v$ belongs to $]X[_{V}^{\mathrm{naive}}$. 
This implies the first assertion and we may now assume that $V$ is analytic.
In this case, $w$ has height exactly $1$ and this is the maximal (vertical) generalization of $v$.
Moreover, there exists an isomorphism $\mathcal H(w) \simeq \mathcal H(v)$ of the completed residue fields, and it follows that
\[
w(f) > 0 \Leftrightarrow w(f^n) \to + \infty \Leftrightarrow v(f^n) \to + \infty
\]
(because this last condition is purely topological).
Thus, we see that $w \in ]X[_{V}^{\mathrm{naive}}$ when $v \in \,]X[_{V}$.
Actually, the same holds for any other generalization of $v$ because it is necessarily a specialization of $w$.
\end{proof}

As a consequence of the proposition, we see that the tube is anticontinuous in the case of an analytic overconvergent space.
In other words:

%%%%%%%%%%%
\begin{cor}
The tube of a locally closed (resp.\ closed, resp.\ open) formal subscheme in an \emph{analytic} space is a locally closed (resp.\ open, resp.\ closed) subset. \qed
\end{cor}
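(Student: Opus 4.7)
The plan is to combine the analytic part of Proposition \ref{antub} with Proposition \ref{loccons} and then use the compatibility of the tube with intersections (Proposition \ref{oper}) to reduce the locally closed case to the open and closed ones.

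First I would handle the closed case. If $X$ is closed in $P$, then by Proposition \ref{loccons}, the naive tube $W := \mathrm{sp}_V^{-1}(X)$ is closed in $V$. Since $V$ is assumed analytic, Proposition \ref{antub} gives $]X[_V = \mathring{W}$, the topological interior of $W$, which is automatically open in $V$.

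Next I would treat the open case. If $X$ is open in $P$, then by Proposition \ref{loccons}, $W := \mathrm{sp}_V^{-1}(X)$ is open in $V$. The analytic assertion of Proposition \ref{antub} then yields $]X[_V = \overline{W}$, the topological closure of $W$, which is automatically closed in $V$.

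Finally, for $X$ locally closed in $P$, I would write $X = Z \cap U$ with $Z$ closed and $U$ open in $P$. Using Proposition \ref{oper}(2), we get $]X[_V = ]Z[_V \cap ]U[_V$. By the two cases already treated, $]Z[_V$ is open in $V$ and $]U[_V$ is closed in $V$, so $]X[_V$ is the intersection of an open and a closed subset of $V$, hence locally closed. There is no serious obstacle here, the entire statement is essentially a formal bookkeeping consequence of the two previous propositions; the only thing to be careful about is to invoke the \emph{analytic} strengthening in Proposition \ref{antub} rather than only the double inclusion $\mathring{W} \subset\,]X[_V \subset \overline{W}$, since the general (non-analytic) inclusions would not suffice to conclude openness or closedness.
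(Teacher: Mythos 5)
Your proof is correct and is exactly the argument the paper intends (it gives no proof, treating the corollary as immediate from the analytic case of Proposition \ref{antub}): the interior and closure appearing there are automatically open and closed, and the locally closed case follows by writing $X = Z \cap U$ and applying Proposition \ref{oper}. The appeal to Proposition \ref{loccons} is harmless but not actually needed, since $\mathring{W}$ and $\overline{W}$ are open and closed regardless of the nature of $W$.
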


In particular, the tube of a \emph{closed} formal subscheme $X$ into an \emph{analytic} overconvergent space $V$ has a natural structure of analytic space (as an open subset of $V$).
In other words, in this case, we can replace the prepseudo-adic space $(V, \,]X[_V)$ with a genuine adic space.

Assume that $X \hookrightarrow P$ is a formal embedding such that $X$ is \emph{open} in the topology of $P$.
Then, there exists a unique \emph{open} formal subscheme $Q$ of $P$ having the same underlying space as $X$ and we have $]Q[_{V}^{\mathrm{naive}} = Q^{\mathrm{ad}}$ which is an open subset of $P^{\mathrm{ad}}$.
It follows that $\,]X[_{P^{\mathrm{an}}}$ is then the same thing as the closure of the open subset $Q^\mathrm{an}$ of $P^{\mathrm{an}}$.
More generally, if we are given $\lambda \colon V \to P^{\mathrm{ad}}$ with $V$ analytic, then $\,]X[_{V}$ will be the closure of the open subset $X_V = \lambda^{-1}(Q^{\mathrm{ad}})$ of $V$.

Recall that we denote by $[V]$ the set of points of height $1$ in an analytic space $V$ and by
\[
\mathrm{sep} : V \to [V], \quad v \mapsto [v]
\]
the canonical retraction sending a point to its maximal generalization (the projection onto the Berkovich quotient).

%%%%%%%%%%%%%%%
\begin{prop}
If $(X \hookrightarrow P \leftarrow V)$ is an \emph{analytic} overconvergent space and $v \in V$, then
\[
v \in \,]X[_{V} \Leftrightarrow \mathrm{sp}_{V}([v]) \in X.
\]
\end{prop}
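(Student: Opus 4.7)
The plan is to reduce to the case where $X$ is closed in $P$, and then extract the equivalence from the analytic part of Proposition \ref{antub} combined with an interior-versus-generization comparison.

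First I would handle the closed case. Setting $W := \mathrm{sp}_V^{-1}(X)$, Proposition \ref{loccons} gives that $W$ is a closed locally constructible subset of $V$, and Proposition \ref{antub} (analytic, closed case) gives $\,]X[_V = \mathring W$. The statement then reduces to: $v \in \mathring W$ iff $\mathrm{sep}(v) \in W$, because the right-hand condition is tautologically $\mathrm{sp}_V(\mathrm{sep}(v)) \in X$. One direction is immediate: $\mathring W$ is open in $V$, hence stable under generization, and $\mathrm{sep}(v)$ is a generization of $v$, so $v \in \mathring W$ forces $\mathrm{sep}(v) \in \mathring W \subset W$. For the other direction, I would use that $\mathrm{sep}(v)$ is the maximum generization of $v$, so every generization of $v$ is a specialization of $\mathrm{sep}(v)$ (from $\mathrm{sep}^{-1}(\mathrm{sep}(v)) = \overline{\{\mathrm{sep}(v)\}}$); since $W$ is stable under specialization, $\mathrm{sep}(v) \in W$ forces every generization of $v$ to lie in $W$. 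The locally constructible open set $V \setminus W$ has closure equal to its specialization closure in the locally spectral space $V$, so having no generization in $V \setminus W$ means $v \notin \overline{V \setminus W} = V \setminus \mathring W$, i.e.\ $v \in \mathring W$.

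For the general locally closed case, I would unwind the definition $\,]X[_V := \,]\overline X[_V \setminus \,]\infty_X[_V$, where $\infty_X := \overline X \setminus X$ is closed in $P$ (since $X$ is open in its closure $\overline X$, being locally closed in $P$). Applying the closed case to the closed formal subschemes $\overline X$ and $\infty_X$ of $P$, one obtains $v \in \,]X[_V$ iff $\mathrm{sp}_V(\mathrm{sep}(v)) \in \overline X$ and $\mathrm{sp}_V(\mathrm{sep}(v)) \notin \infty_X$, i.e.\ iff $\mathrm{sp}_V(\mathrm{sep}(v)) \in X$.

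The main conceptual input is the interior-versus-generization comparison, but this is essentially the same manoeuvre that already appears in the proof of the analytic case of Proposition \ref{antub}, so very little new work is needed. The only mild subtlety I anticipate is verifying that in passing from locally closed to closed the definition partitions the tube correctly and that the specialization map $\mathrm{sp}_V$ commutes with restricting along the closed immersions $\infty_X \hookrightarrow \overline X \hookrightarrow P$, which follows from the functoriality of $\mathrm{sp}_V$ under morphisms of overconvergent spaces.
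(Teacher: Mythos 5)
Your proof is correct and follows essentially the same route as the paper's: both reduce to a special case via the decomposition $\,]X[_{V} = \,]\overline X[_{V} \setminus \,]\infty_{X}[_{V}$, invoke the analytic case of Proposition \ref{antub}, and then translate membership in the tube into a condition on generizations of $v$ using the (local) constructibility of $\mathrm{sp}_{V}^{-1}(\cdot)$ together with the fact that every generization of $v$ is a specialization of $\mathrm{sep}(v)$. The only difference is cosmetic: you reduce to the \emph{closed} case and characterize the \emph{interior} of $\mathrm{sp}_{V}^{-1}(X)$, whereas the paper reduces to the \emph{open} case and characterizes the \emph{closure} — a mirror image of the same argument.
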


\begin{proof}
We may assume that $X$ is open in the topology of $P$ - and use complement and intersection in order to deduce the general case.
Then, we know from proposition \ref{antub} that $\,]X[_{V} = \overline {]X[}_{V}^{\mathrm{naive}}$.
Since $]X[_{V}^{\mathrm{naive}}$ is a constructible subset, we will have $v \in \overline {]X[}_{V}^{\mathrm{naive}}$ if and only if $v$ has a generalization $v'$ in $]X[_{V}^{\mathrm{naive}}$.
Since $]X[_{V}^{\mathrm{naive}}$ is open, the maximum generalization $[v]$ of $v'$ will also be in $]X[_{V}^{\mathrm{naive}}$ and we may therefore assume that $v' = [v]$ is the maximal generalization of $v$.
In other words, we have $v \in \overline {]X[}_{V}^{\mathrm{naive}}= \,]X[_{V}$ if and only if $[v] \in ]X[_{V}^{\mathrm{naive}}$ as asserted.
\end{proof}

As a consequence of this proposition, when $V$ is analytic, we can recover the genuine tube from the naive one through specialization and separation.
More precisely, if we set $]X[_{[V]} := ]X[_{V}^{\mathrm{naive}} \cap [V]$, then we have
\[
\,]X[_{V} = \mathrm{sep}^{-1}(]X[_{[V]})
\]
(our tube is the inverse image of the Berkovich tube).
In particular, we see that, when $V$ is analytic, the tube $\,]X[_{V}$ is stable both under specialization and generalization\footnote{This is sometimes called an \emph{overconvergent} subspace but we want to avoid this terminology here.}.

%%%%%%%%%%%%%%%
\subsection{Tubes of finite radii}

We will consider now the notion of a tube of finite radii.
We do not want to develop a full theory but only describe the situation locally.

%%%%%%%%%%%%%%%%%%%
\begin{prop} \label{loctub1}
Let $(X \hookrightarrow P \stackrel \lambda \leftarrow V)$ be a \emph{convergent} space.
Assume that $P$ is affine and $X$ is defined by $f_{1}= \cdots = f_{r} = 0$ in $P$.
Let $V \to \mathrm{Spa}(B, B^+)$ be an \emph{adic} morphism and $(g_{1}, \ldots, g_{s})$ an ideal of definition for $B$.
We set
\[
[X]_{V,n} = \left\{v \in V \colon \min_{i=1}^r v(f^n_{i}) \geq \min_{j=1}^s v(g_{j}) \neq + \infty \right\}.
\]
Then, for all $n \in \mathbb N$, $[X]_{V,n}$ is a retrocompact open subset of $V$ such that
\[
\overline{[X]}_{V,n} \subset [X]_{V,n+1}.
\]
Moreover, if $V$ is \emph{analytic}, then
\[
\,]X[_{V} = \bigcup_{n \in \mathbb N} [X]_{V,n}.
\]
\end{prop}

\begin{proof}
We may clearly assume that $V = \mathrm{Spa}(B, B^+)$.
Then, $[X]_{V,n}$ is the finite union of the rational subsets
\[
\left\{v \in V \colon \min_{i,j=1}^{r,s} \{v(f^n_{i}), v(g_{j})\} \geq v(g_{k}) \neq + \infty \right\}
\]
for $k = 1, \ldots, s$.
This is therefore a retrocompact open subset of $V$ as asserted.
In particular, $[X]_{V,n}$ is constructible, and in order to prove the second assertion, it is sufficient to check that if $w \in [X]_{V,n}$ specializes to $v \in V$, then $v \in [X]_{V,n+1}$.
At this point, it seems more natural to work in the residue fields and use the multiplicative notation (even if the absolute value might have higher height).
Let us assume that $|f^n(w)| \leq |g(w)| \neq 0$ with $f \in \mathfrak a$ and $g \in J$.
Then necessarily $f(w)$ is topologically nilpotent (because $g(w)$ is) and $\left|\frac {f^n(w)}{g(w)}\right| \leq 1$.
It follows that $\frac {f^{n+1}(w)}{g(w)}$ is also topologically nilpotent.
Since $\mathcal H(v)$ is homeomorphic to $\mathcal H(w)$, we also have $g(v) \neq 0$ and $\frac {f^{n+1}(v)}{g(v)}$ topologically nilpotent.
In particular, we must have $\left|\frac {f^{n+1}(v)}{g(v)}\right| \leq 1$ and therefore $|f^{n+1}(v)| \leq |g(v)| \neq 0$.

For the last assertion, it is sufficient to check that, for $f \in \mathfrak a$ and $v \in V$, we have
\[
\left(v(f^n) \to + \infty \right) \quad \Leftrightarrow \quad \left( \exists n \in \mathbb N, \exists g \in J, v(f^n)\geq v(g) \neq + \infty \right).
\]
First of all, we know that
\[
v \in V \quad \Rightarrow \quad \exists g \in J, v(g) \neq + \infty ,
\]
because $J$ is an ideal of definition.
Moreover, once we know that $v(g) \neq + \infty$, then the equivalence
\[
v(f^n) \to + \infty \quad \Leftrightarrow \quad \exists n \in \mathbb N, v(f^n)\geq v(g)
\]
follows from the fact that $v(g^n) \to + \infty$ (because $g \in J$ and our valuations are continuous).
\end{proof}

The open subset $[X]_{V,n}$ of proposition \ref{loctub1} is the \emph{(closed) tube of radius $n \in \mathbb N$} of $X$ in $V$.
One can show that it is essentially independent of the choices.
It is then possible to define the tubes fo finite radii in the more general case of an overconvergent space.
Moreover, there exists a notion of \emph{open} tube of finite radii.
 We shouldn't need all of this.

%%%%%%%%%%%%%%
\begin{xmp}
\begin{enumerate}
\item
Assume $V$ is Tate affinoid with topologically nilpotent unit $\pi$.
Then
\[
[X]_{V,n} = \left\{v \in V \colon \min_{i=1}^{r} \{v(f^n_{i})\} \geq v(\pi)\right\}
\]
is a rational subset (and in particular an affinoid open subset) of $V$.
\item
Assume $O$ is Tate with topologically nilpotent unit $\pi$.
We can consider the analytic overconvergent space (zero section)
\[
\xymatrix{\mathrm{Spec}(\mathbb Z) \ar@{^{(}->}[r] & \mathbb A & \mathbb D_{O} \ar[l]}.
\]
Then, we have
\[
\,]\mathrm{Spec}(\mathbb Z)[_{\mathbb D_{O}} = \mathbb D^-_{O} 
\]
(even as adic spaces, as we shall see below), and for each $n \in \mathbb N$,
\[
[\mathrm{Spec}(\mathbb Z)]_{\mathbb D_{O},n} = \mathbb D_{O}(0, \pi^{\frac 1n}) = \left\{v \in \mathbb A_{O} \colon v(T^n) \geq v(\pi) \right\}
\]
is a closed disk.
The formula of the proposition reads
\[
\mathbb D^-_{O} = \bigcup_{n \in \mathbb N} \mathbb D_{O}(0, \pi^{\frac 1n}).
\]
Recall that the open unit disk does not contain the $v^-$-points (with $v$ a Gauss valuation).
\end{enumerate}
\end{xmp}

We can also introduce the \emph{standard neighborhoods} of Berthelot:

%%%%%%%%%%%%%%%%%%%
\begin{prop}
Let $(X \hookrightarrow P \leftarrow V)$ be a \emph{convergent} space.
Assume that $V$ is Tate quasi-compact with topologically nilpotent unit $\pi$ and $P$ is affine.
Let $U$ be an open subset of $X$ with closed complement $Z := \mathrm V(h_{1}, \ldots, h_{r})$.
If $m \in \mathbb N$, we set
\[
V_m = \left\{v \in \,]X[_V \colon v(\pi) \geq \min_{i=1}^r v(h^m_{i}) \neq + \infty \right\}.
\]
Next, if $\varphi : \mathbb N \to \mathbb N$ is any map, we set
\[
V_\varphi := \bigcup_{n \in \mathbb N} \left( V_{\varphi(n)} \cap [X]_{V,n}\right).
\]
Then, $\{V_\varphi\}_{\varphi \in \mathbb N^{\mathbb N}}$ is a cofinal system of open neighborhoods of $]U[_V$ in $\,]X[_V$.
\end{prop}

\begin{proof}
Let us first check that for all $m \in \mathbb N$, we have
\[
]U[_V \subset V_m \quad \mathrm{and} \quad [Z]_{V,m} \cap V_{m+1} = \emptyset.
\]
We may assume that $V = \mathrm{Spa}(B, B^+)$.
Assume that $v \in \,]X[_V$ but $v \notin V_m$.
Then, for $i = 1, \ldots, r$, either $v(h_i^m) = + \infty$ or $v(\pi) < v(h_i^m)$.
In any case, $v(h_i^m) \to +\infty$ and $v \in ]Z[_V$.
It follows that $]U[_V \subset V_m$ as asserted.
Let us show now that $[Z]_{V,m} \cap V_{m+1} = \emptyset$.
Assume that there exists some $v \in \,]X[_V$ such that
\[
\min_{i=1}^r v(h^{m}_{i}) \geq v(\pi) \geq \min_{i=1}^r v(h^{m+1}_{i}) \neq + \infty.
\]
Then, there exists $i \in \{1, \ldots, r\}$ such that $v(h_i) \leq 0$.
And therefore, $v(\pi) \leq 0$.
Contradiction.

Given any $\varphi : \mathbb N \to \mathbb N$, since we assume $V$ analytic, proposition \ref{loctub1} implies that
\[
]U[_V = ]U[_V \cap \bigcup_{n \in \mathbb N} [X]_{V,n} = \bigcup_{n \in \mathbb N} \left(]U[_V \cap [X]_{V,n} \right) \subset \bigcup_{n \in \mathbb N} \left( V_{\varphi(n)} \cap [X]_{V,n}\right) = V_{\varphi}.
\]
Conversely, let $\mathcal U$ be an open neighborhood of $]U[$ in $\,]X[$.
If we fix $n \in \mathbb N$, then
\[
[X]_{V,n} \subset \,]X[_V = \mathcal U \cup ]Z[_V = \mathcal U \cup \bigcup_{m \in \mathbb N} [Z]_{V,m}
\]
since $V$ is analytic again.
But we also know that $[X]_{V,n}$ is retrocompact in $V$ which we assumed to be quasi-compact and $[X]_{V,n}$ is therefore also quasi-compact.
It follows that there exists $m \in \mathbb N$ such that $[X]_{V,n} \subset \mathcal U \cup [Z]_{V,m}$.
Since $[Z]_{V,m} \cap V_{m+1} = \emptyset$, if we let $\varphi(n) = m+1$, we see that $V_{\varphi(n)} \cap [X]_{V,n} \subset \mathcal U$ and taking union, we get $V_{\varphi} \subset \mathcal U$.
\end{proof}

It will sometimes be convenient to rely the following weaker version:

%%%%%%%%%%%%%%%%%%%%%%%
\begin{cor} \label{locneigh}
Let $(X \hookrightarrow P \leftarrow V)$ be an overconvergent space.
Assume that $V$ is Tate quasi-compact with topologically nilpotent unit $\pi$, $P$ is affine and $X$ is the open complement in $P$ of the closed subspace $g_1 = \cdots = g_r = 0$.
Then, the subspaces
\[
V_{m} := \{v \in V \colon \quad v(\pi) \geq \max_{i=1}^rv(g_i^m) \}
\]
form a cofinal system of open neighborhoods of $\,]X[_{V}$ in $V$ and $\,]X[_{V} = \bigcap_{m \in \mathbb N} V_{m}$. \qed
\end{cor}

%%%%%%%%%%%%%%
\begin{xmp}
\begin{enumerate}
\item Let $(X \hookrightarrow P \leftarrow V)$ be an overconvergent space where $V$ is Tate affinoid with topologically nilpotent unit $\pi$ and $X$ is the (support of the) complement of a hypersurface $h = 0$ in $P$ affine.
Then the rational open subsets
\[
V_m := \{v \in V, v(\pi) \geq v(h^m)\}
\]
form a cofinal system of rational open neighborhoods of $\,]X[_V$ in $V$.
\item
Assume that $O$ is Tate with topologically nilpotent unit $\pi$.
We consider the analytic overconvergent space
\[
\xymatrix{\mathbb A \ar@{^{(}->}[r] & \mathbb P & \mathbb A_{O} \ar[l]}.
\]
Then,
\[
\,]\mathbb A[_{\mathbb A_{O}} = \bigcap_{n \in \mathbb N} \mathbb D_{O}(0, \pi^{-\frac 1n})
\]
where
\[
\mathbb D_{O}(0, \pi^{-\frac 1n}) := \left\{v \in \mathbb A_{O} \colon v(\pi T^n) \geq 0 \right\}
\]
is an affinoid open subset for each $n \in \mathbb N$.
The tube is slightly bigger than the closed unit disk $\mathbb D_{O}$: it contains the $v^\pm$-points (this is the proper unit disk $\overline {\mathbb D}_{O}$).
\item (Monsky-Washnitzer setting)
We let $R$ be a noetherian ring and $S \to \mathrm{Spec}(R)$ be any morphism of formal schemes.
Then, we consider an $R$-algebra of finite type $A$ and let $X := \mathrm{Spec}(A)$.
Also, we give ourselves a Tate ring $\Lambda$ with topologically nilpotent unit $\pi$, a morphism $O = \mathrm{Spa}(\Lambda, \Lambda^+) \to S^{\mathrm{ad}}$ and a formal \emph{thickening} $C \hookrightarrow S$.
Then (using $\dagger$ to denote the weak completion of a ring), we have
\[
\Gamma(\,]X_{C}[_{X_{O}}, i^{-1}\mathcal O_{X_{O}}) = A_\Lambda^\dagger := (\Lambda_{0} \otimes_{R} A)^\dagger[1/\pi]
\]
if $i : \,]X_C[_{X_O} \hookrightarrow X_O$ denotes the inclusion map.
\end{enumerate}
\end{xmp}

Using corollary \ref{locneigh}, one can show the following:

%%%%%%%%%%%%%%%%%%
\begin{prop}
Let
\[
(X \hookrightarrow P \leftarrow V) \to (C \hookrightarrow S \leftarrow O) \quad \mathrm{and} \quad (Y \hookrightarrow Q \leftarrow W) \to (C \hookrightarrow S \leftarrow O)
\]
be two formal morphisms of analytic overconvergent spaces.
When $V'$ and $W'$ run through all the neighborhoods of $\,]X[_{V}$ in $V$ and $]Y[_{W}$ in $W$ respectively, then the open subsets $V' \times_{O} W'$ of $V \times_O W$ form a cofinal system of neighborhoods of $\,]X\times_C Y[_{V \times_O W}$.
\end{prop}

\begin{proof}
Since the question is local everywhere, we can assume that $V$, $W$ and $C$ are affinoid, and $P$, $Q$ and $S$ are affine.
We can also complete along the closure of $X$ and $Y$ and blow up the locus at infinity (see proposition \ref{bup2} below for example).
We may therefore assume that $X$ (resp.\ $Y$) is the complement in $P$ (resp.\ $Q$) of a hypersurface $g=0$
(resp.\ $h=0$).
Then, $X \times_C Y$ is the complement in $P \times_S Q$ of the hypersurface $g \otimes h = 0$.
Our assertion follows from proposition \ref{locneigh} because, if we set
\[
V_{n} := \{v \in V \colon \quad v(\pi) \geq v(g^n) \} \quad \mathrm{and} \quad W_{n} := \{v \in W \colon \quad v(\pi) \geq v(h^n) \},
\]
then we have
\begin{align*}
V_{2n} \times_V W_{2n} &= \{v \in V \times_O V \colon \quad v(\pi) \geq v(g^{2n} \otimes 1) \ \mathrm{et}\ v(\pi) \geq v(1 \otimes h^{2n} )\}
\\ &\subset \{v \in V \times_O V \colon \quad v(\pi) \geq v((g \otimes h)^n)\}.\qedhere
\end{align*}

\end{proof}

We finish with the following adic variant of the famous proposition 0.2.7 of \cite{Berthelot96c*}:

%%%%%%%%%%%%%%%%%%%
\begin{prop} \label{carthat}
If $(X \hookrightarrow P \stackrel\lambda\leftarrow V)$ is an \emph{analytic convergent} space, then there exists an isomorphism of \emph{adic} spaces:
\[
P^{/X,\mathrm{ad}} \times_{P^{\mathrm{ad}}} V \simeq \,]X[_{V}.
\]
\end{prop}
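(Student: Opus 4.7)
The plan is to exploit the fact—already established in the corollary to Proposition \ref{antub}—that, under the analytic hypothesis with $X$ closed in $P$, the tube $\,]X[_{V}$ is an open subset of $V$ and therefore carries a canonical structure of an open adic subspace of $V$. It then suffices to verify that $\,]X[_V$, equipped with this structure, represents the fibered product $P^{/X,\mathrm{ad}} \times_{P^{\mathrm{ad}}} V$; in particular, this will show simultaneously that the fibered product exists and has the claimed description.

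First I would construct a morphism of adic spaces $\,]X[_V \to P^{/X,\mathrm{ad}}$ lifting the set-theoretic inclusion $\lambda(\,]X[_V) \subset |P^{/X,\mathrm{ad}}|$. The question being local on $P$ and $V$, I reduce to $P = \mathrm{Spf}(A)$ with $X$ cut out by an ideal $\mathfrak{a}$, and $V = \mathrm{Spa}(B,B^+)$ affinoid with ideal of definition $J$. Using the description of the analytic tube as the increasing union $\,]X[_V = \bigcup_{n \in \mathbb{N}} [X]_{V,n}$ of rational opens, the very defining estimates of the tubes of finite radius force $\mathfrak{a}B$ to be topologically nilpotent in $\Gamma([X]_{V,n},\mathcal{O}_V)$. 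Hence the composition $A \to B \to \Gamma([X]_{V,n},\mathcal{O}_V)$ is continuous for the $(\mathfrak{a}+I)$-adic topology on $A$ and extends uniquely to the completion $\widehat{A^{/\mathfrak{a}}}$, producing a morphism $[X]_{V,n} \to P^{/X,\mathrm{ad}} = \mathrm{Spa}(\widehat{A^{/\mathfrak{a}}})$; these assemble into the desired global morphism.

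Next I would verify the universal property. Given an adic space $W$ with morphisms $\alpha\colon W \to V$ and $\beta\colon W \to P^{/X,\mathrm{ad}}$ whose compositions to $P^{\mathrm{ad}}$ agree, every point of $W$ maps under $\lambda\circ\alpha$ into $|P^{/X,\mathrm{ad}}|$, so $\alpha$ factors set-theoretically through $\,]X[_V$; since this is an open subspace of $V$, it factors through $\,]X[_V$ in the category of adic spaces. Uniqueness of the factorization, and the fact that the induced map to $P^{/X,\mathrm{ad}}$ agrees with $\beta$, then follow from the observation that $P^{/X,\mathrm{ad}} \to P^{\mathrm{ad}}$ is a monomorphism (coming from the corresponding property of $P^{/X} \to P$ and functoriality of $(-)^{\mathrm{ad}}$ with respect to finite limits).

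The main obstacle is step two: checking on each $[X]_{V,n}$ that $\mathfrak{a}B$ really is topologically nilpotent in the structure ring. This is precisely where the choice of the finite-radius tube is essential (rather than the mere set-theoretic inverse image of $|P^{/X,\mathrm{ad}}|$), since the uniform bounds $v(f^n) \geq v(g)$—with $f \in \mathfrak{a}$ and $g$ in the ideal of definition—that cut out $[X]_{V,n}$ translate, upon passing to global sections of the rational subdomain, into honest topological nilpotence of $\mathfrak{a}$ in the localized completed ring, without which one could not extend the map $A \to B$ to the $(\mathfrak{a}+I)$-adic completion.
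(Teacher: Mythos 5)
Your proposal is correct and follows essentially the same route as the paper: both reduce to the affine/affinoid case, use the exhaustion $\,]X[_{V} = \bigcup_{n} [X]_{V,n}$ by finite-radius tubes, and observe that the defining inequalities $v(f^n) \geq v(g) \neq +\infty$ make $\mathfrak a$ topologically nilpotent in $\Gamma([X]_{V,n}, \mathcal O_{V})$, so that $A \to B$ extends continuously to the $(\mathfrak a + I)$-adic completion. The only difference is that you spell out the verification of the universal property via the monomorphism $P^{/X,\mathrm{ad}} \to P^{\mathrm{ad}}$ and the openness of the tube, which the paper leaves implicit.
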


Note that we already know that there exists such a homeomorphism.

\begin{proof}
The question is local.
We may therefore assume that $P = \mathrm{Spf}(A)$ is affine with $X$ defined by an ideal $\mathfrak a = (f_{1}, \ldots, f_{r})$ and that $V = \mathrm{Spa}(B, B^+)$ with $B$ a complete Tate ring with topologically nilpotent unit $\pi$.
Recall that, in this situation, $P^{/X} = \mathrm{Spa}(A^{/\mathfrak a})$ where $A^{/\mathfrak a}$ is identical to $A$ as a ring but its topology is defined by $\mathfrak a + I$ if $I$ denotes an ideal of definition for $A$.
Proposition \ref{loctub} tells us that $\,]X[_{V} = \bigcup_{n \in \mathbb N} [X]_{V,n}$ with
\[
[X]_{V,n} = \left\{x \in V \colon \min_{i=1}^{r} \{v(f^n_{i})\} \geq v(\pi) \neq + \infty \right\},
\]
and we need to show that the natural morphism $[X]_{V,n} \to P^{\mathrm{ad}}$ factors canonically through $ P^{/X,\mathrm{ad}}$.
After replacing each $f_i$ by its $n$th power, we may assume that $n=1$.
If we write $[X]_{V,1} = \mathrm{Spf}(C, C^+)$ with $(C, C^+)$ complete, we need to show that the composite map $A \to B \to C$ is continuous for the $\mathfrak a$-adic topology.
Thus, we have to check that $f_{i}^n \to 0$ in $C$ for all $i = 1, \ldots, r$ (we may re-use the letter $n$).
This follows from the fact that, by definition, ${f_{i}} / \pi$ is power-bounded in $C$ and $\pi$ is topologically nilpotent.
\end{proof}

As a consequence of this proposition, we see that, in an \emph{analytic} overconvergent space $(X \hookrightarrow P \leftarrow V)$, we can replace $P$ with its completion along $\overline X$ and then replace $X$ with the open formal subschemes $Q$ of $P$ having the same underlying space as $X$, without modifying the tube.
In practice, we are therefore often reduced to the case when $X = Q$ is an open formal subscheme of $P$.

Note that it is necessary to assume $V$ analytic in proposition \ref{carthat} in order to consider $\,]X[_{V}$ as an adic space (as an open subset of $V$).
Anyway, as already mentioned, the map $P^{/X,\mathrm{ad}} \times_{P^{\mathrm{ad}}} V \to \,]X[_{V}$ is not even an isomorphism of locally topologically ringed spaces in general.

As a consequence of the proposition, one can show that he fiber of an analytic convergent space only depends on the tube:

%%%%%%%%
\begin{cor} \label{fibtub}
If $(X \hookrightarrow P \stackrel\lambda\leftarrow V)$ is an analytic convergent space, then $X_V \simeq X_{]X[_V}$.
\end{cor}

\begin{proof}
Since we have a factorization into a closed embedding $X \hookrightarrow P^{/X}$ followed by a monomorphism $P^{/X} \to P$, we can replace $V$ with $P^{/X,\mathrm{ad}} \times_{P^{\mathrm{ad}}} V = \,]X[_{V}$.
\end{proof}

In particular, we see that, in the analytic convergent case, the fiber $X_V$ is a closed adic subspace of the tube $]X[_V$.
This extends to the analytic overconvergent case if we consider prepseudo-adic spaces.

In order to generalize the constructions of this section to the case where $P$ is \emph{not} locally noetherian, it is necessary to assume that $X$ is \emph{locally radically finitely presented}:
it means that locally, $X$ is defined modulo an ideal of definition by an ideal $\mathfrak a$ such that $\sqrt \mathfrak a = \sqrt{(S)}$ for some \emph{finite} set $S \in A$.
One may then proceed in the same way as above, using generalized adic spaces of Scholze and Weinstein instead of the honest adic spaces of Huber.
We will not work out the details here.

%%%%%%%%%%%%%%%%%
\begin{xmp}
If $\mathcal V$ is a (non discrete) valuation ring with residue field $k$ and $X \hookrightarrow P$ is a locally closed embedding of a $k$-variety into a formal $\mathcal V$-scheme which is locally finitely presented over $\mathcal V$, then $X$ is locally radically finitely presented.
This follows from the fact that, if $\mathfrak m$ denotes the maximal ideal of $\mathcal V$, then we have $\sqrt \mathfrak m = \mathfrak m = \sqrt{(\pi)}$ for any topologically nilpotent unit $\pi$.
\end{xmp}

%%%%%%%%%%%%%%%%%%%%%%%%%%%%%
\section{The overconvergent site}

In this section, we relax our condition on (formal) morphisms of overconvergent spaces in order to make the role of the ambient formal scheme secondary and to allow the replacement of the adic space by some neighborhood of the tube.
As usual, formal schemes are always assumed to be locally noetherian and adic spaces are always assumed to be locally of noetherian type.

%%%%%%%%%%%%%%%%%%%%
\subsection{Strict neighborhoods}

We introduce the notion of a strict neighborhood and list some elementary properties.

%%%%%%%%%%%
\begin{dfn} \label{strneig}
A \emph{strict neighborhood} is a formal morphism
\[
\xymatrix{X' \ar@{^{(}->}[r] \ar[d]^f & P'\ar[d]^{v} & V' \ar[l] \ar[d]^-{u} \\ X \ar@{^{(}->}[r] & P & V. \ar[l]}
\]
where
\begin{enumerate}
\item $f$ is an isomorphism,
\item $v$ is locally noetherian,
\item $u$ is an open embedding,
\item
$\,]f[_{u}$ is surjective.
\end{enumerate}
\end{dfn}

Note that a strict neighborhood induces an isomorphism
\[
\,]f[_{u} \colon \,]X'[_{V'} \simeq \,]X[_{V}
\]
of topologically locally ringed spaces (for the induced structures).
In particular, the last two conditions of definition \ref{strneig} may be rephrased by saying that the morphism of prepseudo-adic spaces $(V', \,]X[_{V'}) \to (V, \,]X[_V)$ is a strict neighborhood (of prepseudo-adic spaces).

It is also very likely that the strict neighborhood of overconvergent spaces induces a strict neighborhood $(V',X'_{V'}) \simeq (V,X_V)$ between the fibers but I have not checked it.

Up to an isomorphism, we may always assume that $X' = X$, that $f = \mathrm{Id}_X$ and that $u$ is the inclusion of an open subset so that $\,]X[_{V'} = \,]X[_{V}$, and we would then write
\[
\xymatrix{ X \ar@{^{(}->}[r] \ar@{=}[d]& P' \ar[d]^{v}& V' \ar[l] \ar@{^{(}->}[d] \\
X \ar@{^{(}->}[r] & P & V. \ar[l]}
\]

%%%%%%%%%%%%
\begin{xmp}
Assume that we are given a formal scheme $S$ and a morphism $S^{\mathrm{ad}} \leftarrow O$ with $O$ \emph{analytic}.
Then there exists a sequence of strict neighborhoods
\[
(S \hookrightarrow \mathbb A^-_{S} \leftarrow \mathbb D^{-}_{O}) \to (S \hookrightarrow \mathbb A_{S} \leftarrow \mathbb D_{O}) \to (S \hookrightarrow \mathbb P_{S} \leftarrow \mathbb P_{O})
\]
in which $S$ is embedded using the zero-section
(this is not true however when $O$ is not analytic).
\end{xmp}

%%%%%%%%%%%
\begin{prop}
Any strict neighborhood
\[
\xymatrix{X' \ar@{^{(}->}[r] \ar[d]^f & P'\ar[d]^{v} & V' \ar[l] \ar[d]^-{u} \\ X \ar@{^{(}->}[r] & P & V. \ar[l]}
\]
is the composition of a strict neighborhood with $u = \mathrm{Id}_{V}$ and another strict neighborhood with both $f = \mathrm{Id}_{X}$ and $v = \mathrm{Id}_{P}$.
\end{prop}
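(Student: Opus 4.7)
The plan is to apply proposition~\ref{decom} to the given strict neighborhood to split it as a formal morphism, and then verify that each of the two pieces is actually a strict neighborhood. Explicitly, the decomposition reads
\[
(X' \hookrightarrow P' \leftarrow V') \longrightarrow (X \hookrightarrow P \leftarrow V') \longrightarrow (X \hookrightarrow P \leftarrow V),
\]
where the first arrow has third component $\mathrm{Id}_{V'}$ and retains the original $f$, $v$, while the second has $f = \mathrm{Id}_X$, $v = \mathrm{Id}_P$ and the original $u$. In each piece three of the four conditions of being a strict neighborhood are immediate ($f$ an isomorphism, $v$ locally noetherian, and the right arrow an open immersion), so only the surjectivity of the tube map needs to be checked in each piece. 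By functoriality of the tube, the two induced tube maps compose to give the original surjective map $\,]f[_u$.

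The surjectivity of the second piece is easy: the induced map $\,]\mathrm{Id}_X[_u \colon\, \,]X[_{V'} \to \,]X[_V$ is simply the restriction of $u$ to the tube, and given any $v \in \,]X[_V$, the hypothesis on $\,]f[_u$ yields some $v' \in \,]X'[_{V'}$ with $u(v') = v$; once we know (as proved in the next paragraph) that $\,]X'[_{V'} \subset \,]X[_{V'}$, the point $v'$ itself serves as a preimage in $\,]X[_{V'}$.

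The real content lies in the first piece. I first identify $\,]f[_{\mathrm{Id}_{V'}} \colon\, \,]X'[_{V'} \to \,]X[_{V'}$ with the tautological inclusion of two subsets of $V'$: proposition~\ref{proptub}(\ref{pultub}) applied to $v \colon P' \to P$ gives $\,]X[_{P,V'} = \,]v^{-1}(X)[_{P',V'}$, and the factorization $X' \hookrightarrow v^{-1}(X)$ coming from the commutativity of the left square yields $\,]X'[_{V'} \subset \,]X[_{V'}$ by proposition~\ref{oper}. To upgrade this inclusion to an equality, pick $w \in \,]X[_{V'}$; since the structure map $V' \to P^{\mathrm{ad}}$ factors through $u$, one has $\,]X[_{V'} = u^{-1}(\,]X[_V)$, so $u(w) \in \,]X[_V$, and surjectivity of $\,]f[_u$ furnishes some $w' \in \,]X'[_{V'}$ with $u(w') = u(w)$. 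The injectivity of the open immersion $u$ then forces $w' = w$, whence $w \in \,]X'[_{V'}$. This interaction between the surjectivity of the composite tube map and the injectivity of $u$ is the only real subtlety in the argument.
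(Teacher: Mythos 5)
Your proof is correct and follows the same route as the paper: the paper's proof consists of the single line ``Follows from proposition \ref{decom}'', i.e.\ it uses exactly the splitting $(X' \hookrightarrow P' \leftarrow V') \to (X \hookrightarrow P \leftarrow V') \to (X \hookrightarrow P \leftarrow V)$ that you write down. The only extra content in your write-up is the verification that condition 4) (surjectivity on tubes) holds for each factor --- in particular the identification $\,]X'[_{V'} = \,]X[_{P,V'}$ via the injectivity of the open immersion $u$ --- which the paper leaves implicit; your verification of this point is accurate.
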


\begin{proof}
Follows from proposition \ref{decom}.
\end{proof}

We may always pull a strict neighborhood back:

%%%%%%%%%%%%%%%%%%
\begin{prop}
If
\[
\xymatrix{X' \ar@{^{(}->}[r] \ar[d] & P'\ar[d] & V' \ar[l] \ar[d] \\ X \ar@{^{(}->}[r] & P & V \ar[l]}
\]
is a strict neighborhood and we pull-back along a morphism of formal schemes $Q \to P$, then we obtain a strict neighborhood
\[
\xymatrix{Y' \ar@{^{(}->}[r] \ar[d] & Q'\ar[d] & W' \ar[l] \ar[d] \\ Y \ar@{^{(}->}[r] & Q & W. \ar[l]}
\]
\end{prop}

\begin{proof}
Thanks to assertion \eqref{pultub} of proposition \ref{proptub}, the morphism of overconvergent spaces
\[
\xymatrix{X' \times_P Q \ar@{^{(}->}[r] \ar[d] & P' \times_P Q \ar[d] & V' \times_{P^{\mathrm{ad}}} Q^{\mathrm{ad}} \ar[l] \ar[d] \\ X\times_P Q \ar@{^{(}->}[r] & Q & V \times_{P^{\mathrm{ad}}} Q^{\mathrm{ad}} . \ar[l]}
\]
clearly satisfies the conditions of definition \ref{strneig}.
\end{proof}

Strict neighborhoods will allow us to enlarge (or shrink) $P$ as the next proposition shows.

%%%%%%%%%%%%
\begin{prop} \label{opst1}
If $(X \hookrightarrow P \leftarrow V)$ is an overconvergent space and $P \hookrightarrow Q$ is a formal embedding, then we have a strict neighborhood:
\[
\xymatrix{ X \ar@{^{(}->}[r] \ar@{=}[d] & P \ar@{^{(}->}[d] & V \ar[l] \ar@{=}[d]\\
X \ar@{^{(}->}[r] & Q & V. \ar[l]}
\]
\end{prop}

\begin{proof}
Since the left hand square is cartesian, we have $\,]X[_{Q} \cap P^{\mathrm{ad}} = \,]X[_{P}$.
Everything follows because a formal embedding is a locally noetherian morphism.
\end{proof}

Be careful however that a morphism of the form
\[
\xymatrix{ X \ar@{^{(}->}[r] \ar@{=}[d] & P \ar@{^{(}->}[d] & P^{\mathrm{ad}} \ar@{=}[l] \ar@{^{(}->}[d] \\
X \ar@{^{(}->}[r] & Q & Q^{\mathrm{ad}}, \ar@{=}[l]}
\]
in which the middle map is an open embedding,
is \emph{not} a strict neighborhood in general.
In the case $X = P = \mathbb A$ and $Q = \mathbb P$, we have $\,]X[_{P} = \mathbb D \neq ]\mathbb A[_{\mathbb P} = \,]X[_{Q}$.
In the statement of proposition \ref{opst1}, it is therefore important to keep the same $V$ on the right.

Using the next property, it will always be possible to assume that the locus at infinity is a Cartier divisor:

%%%%%%%%%%%%
\begin{prop} \label{bup2}
Let $(X \hookrightarrow P \leftarrow V)$ be an overconvergent space.
If $v \colon P' \to P$ is a blowing up of a usual subscheme centered outside $X$, then it extends to a strict neighborhood
\[
\xymatrix{ X \ar@{^{(}->}[r] \ar@{=}[d] & P' \ar[d]^{v}& V' \ar[l]_-{\lambda'} \ar[d]^{u}\\
X \ar@{^{(}->}[r] & P & V. \ar[l]_\lambda}
\]
\end{prop}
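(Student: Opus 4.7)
The plan is to construct the strict neighborhood by removing from $V$ the preimage of the blowing-up center. Let $Z \subset P$ denote the center, a usual closed subscheme with $Z \cap X = \emptyset$, and set $Z' := v^{-1}(Z) \subset P'$. Define $V' := V \setminus \lambda^{-1}(Z^{\mathrm{ad}})$; this is open in $V$ because $Z^{\mathrm{ad}}$ is closed in $P^{\mathrm{ad}}$. Proposition \ref{bup} furnishes a canonical isomorphism $P'^{\mathrm{ad}} \setminus Z'^{\mathrm{ad}} \simeq P^{\mathrm{ad}} \setminus Z^{\mathrm{ad}}$, so composing the restriction of $\lambda$ with its inverse produces $\lambda' \colon V' \to P'^{\mathrm{ad}}$ satisfying $v^{\mathrm{ad}} \circ \lambda' = \lambda \circ u$. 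Since $v$ is an isomorphism above $P \setminus Z$, which contains $X$, I may identify $v^{-1}(X)$ with $X$ as a closed formal subscheme of $P'$ and take $f = \mathrm{Id}_X$.

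Of the four conditions for a strict neighborhood, $f$ is tautologically an isomorphism, $u$ is an open immersion by construction, and $v$ is locally noetherian because it is a blowing up of a coherent ideal, hence locally of finite type (and in particular locally formally of finite type, so locally noetherian by the remark following Definition \ref{univnt}). The only remaining condition, surjectivity of $]f[_u \colon \,]X[_{V'} \to \,]X[_V$, is the substance of the proof. By the functoriality of the tube (Proposition \ref{proptub}(\ref{pultub})) applied to $v$, the tube $]X[_{V'}$ computed via $\lambda'$ coincides with $]X[_V \cap V'$, so surjectivity reduces to the containment $]X[_V \subset V'$, equivalently $]X[_V \cap \lambda^{-1}(Z^{\mathrm{ad}}) = \emptyset$, or by pulling back along $\lambda$, the disjointness $]X[_P \cap Z^{\mathrm{ad}} = \emptyset$ in $P^{\mathrm{ad}}$.

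This last disjointness is the main obstacle: it fails for the naive expectation that tubes only meet inside tubes, because $Z^{\mathrm{ad}}$ is strictly smaller than $]Z[_P$ in general. The argument proceeds in two steps. First, the canonical inclusion $Z \hookrightarrow P^{/Z}$ is a formal thickening, so it induces an immersion $Z^{\mathrm{ad}} \hookrightarrow P^{/Z,\mathrm{ad}}$ whose image in $P^{\mathrm{ad}}$ is by definition contained in the tube $]Z[_P$; hence $Z^{\mathrm{ad}} \subset \,]Z[_P$. Second, Proposition \ref{oper}(2) applied to the disjoint subschemes $X$ and $Z$ gives
$$
]X[_P \cap \,]Z[_P = \,]X \cap Z[_P = \,]\emptyset[_P = \emptyset.
$$
Combining these yields $]X[_P \cap Z^{\mathrm{ad}} \subset \,]X[_P \cap \,]Z[_P = \emptyset$, completing the verification.
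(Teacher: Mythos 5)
Your proof is correct and follows essentially the same route as the paper: both rest on Proposition \ref{bup} and take $V'$ to be the complement of (the preimage of) $Z^{\mathrm{ad}}$. The only difference is one of detail — the paper simply asserts that $P^{\mathrm{ad}} \setminus Z^{\mathrm{ad}}$ is a neighborhood of $\,]X[_{P}$, whereas you make the underlying disjointness $\,]X[_{P} \cap Z^{\mathrm{ad}} \subset \,]X[_{P} \cap \,]Z[_{P} = \emptyset$ explicit, which is a welcome clarification.
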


\begin{proof}
We may assume that $V = P^{\mathrm{ad}}$.
Now, the assertion results from proposition \ref{bup}.
More precisely, if we denote by $Z$ the center of the blowing up and let $Z' = v^{-1}(Z)$ so that $X \cap Z = \emptyset$ and $X \cap Z' = \emptyset$, then $v$ induces an isomorphism $ P'^{\mathrm{ad}} \setminus Z'^{\mathrm{ad}}\simeq P^{\mathrm{ad}} \setminus Z^{\mathrm{ad}}$ between an open neighborhood of $\,]X[_{P'}$ and an open neighborhood of $\,]X[_{P}$.
We may then choose $V' := P'^{\mathrm{ad}} \setminus Z'^{\mathrm{ad}}$.
\end{proof}

As a consequence, we obtain Chow's lemma for overconvergent spaces:

%%%%%%%%%%%%
\begin{cor} \label{cholem}
Assume that we are given a formal morphism
\[
\xymatrix{ Y \ar@{^{(}->}[r] \ar[d]^f & Q \ar[d]^v & W \ar[l] \ar[d]^u\\
X \ar@{^{(}->}[r] & P & V \ar[l]}
\]
in which $v : Q \to P$ is separated of finite type \emph{around} $Y$ (see definition \ref{formcl}).
If $Y$ is reduced, $f$ is quasi-projective and $\overline X$ quasi-compact, then there exists a strict neighborhood
\[
\xymatrix{Y' \ar@{^{(}->}[r] \ar[d] & Q'\ar[d] & W' \ar[l] \ar[d] \\ Y \ar@{^{(}->}[r] & Q & W. \ar[l]}\]
such that the composite map $v' : Q' \to P$ is quasi-projective in the neighborhood of $Y'$.
\end{cor}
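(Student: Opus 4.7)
The plan is to reduce the assertion to the classical Chow's lemma for schemes, applied to the reduced boundary morphism, and then to lift the resulting modification to the formal setting by proposition \ref{bup2}. First, by the remarks following definition \ref{formcl}, the hypothesis that $v$ is separated of finite type around $Y$ translates into the morphism of reduced schemes $\overline{f}_{\mathrm{red}} \colon \overline{Y}_{\mathrm{red}} \to \overline{X}_{\mathrm{red}}$ being separated and of finite type, with $\overline{X}_{\mathrm{red}}$ quasi-compact and noetherian (since $P$ is locally noetherian and $\overline{X}$ is quasi-compact by hypothesis). In particular $\overline{Y}_{\mathrm{red}}$ is itself noetherian.

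Next, I would apply classical Chow's lemma to $\overline{f}_{\mathrm{red}}$: this yields a projective birational morphism $g \colon \widetilde{Y} \to \overline{Y}_{\mathrm{red}}$, realized as the blowing up of a closed subscheme $Z \subset \overline{Y}_{\mathrm{red}}$, such that the composite $\widetilde{Y} \to \overline{X}_{\mathrm{red}}$ is quasi-projective. Since $Y$ is reduced and $f \colon Y \to X$ is already quasi-projective, one may arrange (by a further blowing up inside the boundary $\overline{Y}_{\mathrm{red}} \setminus Y$, or by the refined form of Chow's lemma for morphisms that are quasi-projective on a dense open) the center $Z$ to be disjoint from $Y$, so that $g$ is an isomorphism over $Y$.

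Realize $Z$ as a reduced closed subscheme of $Q$ through the inclusion $\overline{Y}_{\mathrm{red}} \subset Q_{\mathrm{red}} \subset Q$; this is a usual subscheme since $Z$ is already a scheme. Form the blowing up $Q' \to Q$ of $Z$ in $Q$. Because $Z \cap Y = \emptyset$, proposition \ref{bup2} applied to the overconvergent space $(Y \hookrightarrow Q \leftarrow W)$ extends this blowing up to a strict neighborhood of $(Y \hookrightarrow Q \leftarrow W)$, say with top row $(Y \hookrightarrow Q' \leftarrow W')$, where $W'$ is the preimage of $W$ inside the open subset of $Q'^{\mathrm{ad}}$ identified with $Q^{\mathrm{ad}} \setminus Z^{\mathrm{ad}}$ via proposition \ref{bup}. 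The composite $Q' \to P$ is then separated of finite type around $Y' = Y$ (as a composition of such), and its reduction along $\overline{Y}' := \overline{Y} \times_{Q} Q'$ is, by construction, the quasi-projective morphism $\widetilde{Y} \to \overline{X}_{\mathrm{red}}$ produced by Chow's lemma; by the criterion following definition \ref{formcl}, this means that $Q' \to P$ is quasi-projective in the neighborhood of $Y'$.

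The main obstacle is the second step: classical Chow's lemma produces a dense open of $\overline{Y}_{\mathrm{red}}$ over which the modification is an isomorphism, but to force this open to contain $Y$ exactly requires using the quasi-projectivity of $f$ (to control the boundary) together with a supplementary blowing up concentrated in $\overline{Y}_{\mathrm{red}} \setminus Y$. A secondary point, which is routine but has to be checked, is that a blowing up of $Q$ along a reduced \emph{usual} subscheme $Z \subset \overline{Y}_{\mathrm{red}} \setminus Y$ indeed induces on the reduced closure the classical blowing up of $\overline{Y}_{\mathrm{red}}$ along $Z$; this follows from the compatibility of blowing ups with flat base change and from the definition of $\overline{Y}_{\mathrm{red}}$ as a reduced closed subscheme.
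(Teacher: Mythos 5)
Your overall strategy coincides with the paper's: reduce to the reduced compactified morphism $\overline f \colon \overline Y_{\mathrm{red}} \to \overline X_{\mathrm{red}}$, produce a blowing-up of $\overline Y$ centered outside $Y$ whose total space is quasi-projective over $\overline X$, realize it as a blowing-up of a usual subscheme of $Q$, and lift it to a strict neighborhood via proposition \ref{bup2}. The difference lies entirely in the step you yourself flag as ``the main obstacle,'' and there it matters. The paper does not derive the required modification from the classical Chow's lemma; it invokes the \emph{precise} Chow's lemma of Gruson--Raynaud (corollaire 5.7.14 of \cite{GrusonRaynaud71}), which states directly that if $U \subset Z$ is an open subscheme quasi-projective over a quasi-compact base $S$ and $Z$ is of finite type and separated over $S$, then there is a $U$-admissible blowing-up $Z' \to Z$ (i.e.\ centered outside $U$) with $Z'$ quasi-projective over $S$. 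Applied with $S = \overline X$, $Z = \overline Y$, $U = Y$, this is exactly the statement you need, with no further work.

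Your primary route --- classical Chow's lemma followed by ``a further blowing up inside the boundary'' to push the isomorphism locus out to contain $Y$ --- is not an argument: classical Chow's lemma only guarantees an isomorphism over \emph{some} dense open, which need not meet $Y$ in a controllable way, and the claim that a supplementary boundary-supported blowing-up repairs this while preserving quasi-projectivity over $\overline X$ is precisely the content of the Gruson--Raynaud refinement, not a consequence of the classical statement plus the quasi-projectivity of $f$. Since you do mention ``the refined form of Chow's lemma for morphisms that are quasi-projective on a dense open'' as an alternative, the fix is simply to promote that alternative to the main argument and cite it; as written, the proof has a genuine gap at its central step. Your remaining points (realizing the center as a usual reduced subscheme of $Q$, the compatibility of the blowing-up of $Q$ with the induced blowing-up of $\overline Y$, and the stability of ``separated of finite type around $Y$'' under the composition) are routine and match what the paper leaves implicit.
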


\begin{proof}
We may assume that $X$ also is reduced.
We are in the situation to apply the precise Chow's lemma (corollary I.5.7.14 of \cite{GrusonRaynaud71}): there exists a blowing-up $Z \to \overline Y$ centered outside $Y$ such that the composite map $Z \to \overline X$ is projective.
This blowing up is always induced by some blowing up $Q' \to Q$ and we may apply proposition \ref{bup} (we have $Y' = Y$ and $\overline {Y'} = Z$).
\end{proof}

In the analytic case, we also have the following:

%%%%%%%%%%%%
\begin{prop}
If $(X \hookrightarrow P \leftarrow V)$ is an \emph{analytic} overconvergent space, then there exists a strict neighborhood 
\[
\xymatrix{ X \ar@{^{(}->}[r] \ar@{=}[d] & P' \ar[d]^{v}& V' \ar[l]_-{\lambda'} \ar[d]^{u}\\
X \ar@{^{(}->}[r] & P & V. \ar[l]_\lambda}
\]
with $X$ open and dense in the topology of $P'$ and $\,]X[_{V'}$ closed in $V'$.
\end{prop}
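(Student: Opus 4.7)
The plan is to take $P' := P^{/\overline X}$, the completion of $P$ along the closure of $X$ in $P$, and to set $V' := \lambda^{-1}(P'^{\mathrm{ad}})$, with the natural maps $u \colon V' \hookrightarrow V$, $v \colon P' \to P$ and $f := \mathrm{Id}_X$.

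I would first verify the four conditions for a strict neighborhood. Since $\overline X$ is closed in $P$ and $V$ is analytic, proposition \ref{carthat} identifies $V'$ with $\,]\overline X[_V$ as an adic space, which is an open subset of $V$; so $u$ is an open immersion, and $V'$ is itself analytic. The completion map $v$ is formally \'etale and universally noetherian, hence locally noetherian in the sense of definition \ref{univnt}, and $f$ is trivially an isomorphism. To see that $\,]f[_u$ is surjective, I would check that it is actually the identity: since $v$ restricts to the identity on $X$, one has $v^{-1}(X) = X$, and proposition \ref{proptub}(\ref{pultub}) gives $\,]X[_{P', V'} = \,]X[_{P, V'}$; moreover $\,]X[_{P, V} \subset \,]\overline X[_{P, V} = V'$, so $\,]X[_{P, V'} = \,]X[_{P, V}$, which shows that $\,]f[_u$ is the identity on $\,]X[_V$.

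Next, the underlying topological space of $P'$ is $\overline X$ with the topology induced from $P$, so by construction $X$ is open and dense in $P'$. Finally, since $V'$ is analytic and $X$ is open in the topology of $P'$, the open case of proposition \ref{antub} identifies $\,]X[_{V'}$ with the closure $\overline{\mathrm{sp}_{V'}^{-1}(X)}$ taken inside $V'$, which is closed in $V'$ by construction.

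The most delicate point is the compatibility between the two applications of propositions \ref{carthat} and \ref{antub}: once $P$ is replaced by its completion $P^{/\overline X}$, we are entitled to regard $X$ as an open subscheme of $P'$ without altering the tube, but this identification really rests on the analyticity of $V$, which is exactly what allows proposition \ref{carthat} to produce an honest open adic subspace of $V$ rather than merely a subset of the underlying topological space.
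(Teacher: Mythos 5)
Your proposal is correct and follows exactly the paper's own route: the paper also takes $P' = P^{/\overline X}$ and $V' = \,]\overline X[_{V}$ and cites proposition \ref{carthat}, leaving the verification of the four conditions (and the density/closedness claims) as immediate. Your write-up simply makes explicit the details the paper compresses into "an immediate consequence of proposition \ref{carthat}."
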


\begin{proof}
Simply choose $P' = P^{/\overline X}$ and $V' = \,]\overline X[_{V}$.
The result is then an immediate consequence of proposition \ref{carthat}.
\end{proof}

%%%%%%%%%%%%%%%%%%%
\subsection{The overconvergent site}

We will now invert strict neighborhoods in order to obtain the overconvergent site.
The next two propositions will allow us to prove that they form a right multiplicative system.

%%%%%%%%%%%%%
\begin{lem} \label{pulcri}
Strict neighborhoods are stable under pullback.
\end{lem}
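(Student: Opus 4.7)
The plan is to construct the pullback explicitly and verify the four defining conditions of a strict neighborhood one by one. Given the strict neighborhood $(X' \hookrightarrow P' \leftarrow V') \to (X \hookrightarrow P \leftarrow V)$ and an arbitrary formal morphism $(Y \hookrightarrow Q \leftarrow W) \to (X \hookrightarrow P \leftarrow V)$, I would form the pullback in the category of adic overconvergent spaces by setting $Q' := Q \times_P P'$ (representable by a locally noetherian formal scheme because $v \colon P' \to P$ is locally noetherian), $Y' := Y \times_X X'$ (which, since $Y \hookrightarrow Q$ and $X' \hookrightarrow P'$ are both locally closed and $Y \to Q$ factors through $Q \times_P X$, sits inside $Q'$ as a locally closed formal subscheme), and $W' := W \times_V V'$ (which is an open subspace of $W$ because $u$ is an open immersion).

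Three of the four conditions of a strict neighborhood are then immediate from base change: $Y' \to Y$ is an isomorphism because $X' \to X$ is, $Q' \to Q$ is locally noetherian because $v$ is, and $W' \to W$ is an open immersion because $u$ is. The remaining condition, surjectivity of $\,]Y'[_{W'} \to \,]Y[_W$, is the heart of the lemma.

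For this, I would first apply proposition \ref{leftex} to the two morphisms of formal embeddings $(X' \hookrightarrow P') \to (X \hookrightarrow P)$ and $(Y \hookrightarrow Q) \to (X \hookrightarrow P)$: the fiber product $P' \times_P Q = Q'$ is locally noetherian precisely thanks to condition (2), and since the functor $(-)^{\mathrm{ad}}$ commutes with finite limits, one has $Q'^{\mathrm{ad}} = P'^{\mathrm{ad}} \times_{P^{\mathrm{ad}}} Q^{\mathrm{ad}}$ and the canonical map $W' \to Q'^{\mathrm{ad}}$ exists. Proposition \ref{leftex} then yields
$$
\,]Y'[_{W'} \;=\; \,]X'[_{W'} \cap \,]Y[_{W'},
$$
where, by the basic functoriality of the tube (the map to $P'^{\mathrm{ad}}$, resp. $Q^{\mathrm{ad}}$, factoring through $V'$, resp. $W$), $\,]X'[_{W'}$ is the preimage of $\,]X'[_{V'}$ under $W' \to V'$ and $\,]Y[_{W'}$ is the preimage of $\,]Y[_W$ under $W' \to W$.

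Surjectivity then reduces to a direct lifting argument: given $y \in \,]Y[_W$, its image $x \in V$ lies in $\,]X[_V$ by functoriality of the tube along $(Y,Q,W) \to (X,P,V)$; the surjectivity of $\,]X'[_{V'} \to \,]X[_V$ coming with the original strict neighborhood provides some $x' \in \,]X'[_{V'}$ over $x$; since $y$ and $x'$ have the same image $x$ in $V$, the universal property of $W' = W \times_V V'$ produces a point $y' \in W'$ mapping to $y$ in $W$ and to $x'$ in $V'$, and by the decomposition just established $y' \in \,]Y'[_{W'}$, giving the required lift. The main obstacle is bookkeeping, namely tracking which structural map to $Q^{\mathrm{ad}}$ or $P'^{\mathrm{ad}}$ is used to define each of the several tubes in play; the compatibility built into the notion of formal morphism ensures all compositions agree over $P^{\mathrm{ad}}$ and keeps the argument coherent.
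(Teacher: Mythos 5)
Your proof is correct and follows exactly the route the paper intends: the paper's own proof is the single line ``Immediate consequence of proposition \ref{leftex}'', and your argument is a careful unwinding of that, forming the pullback componentwise, checking conditions (1)--(3) by base change, and reducing condition (4) to $\,]Y'[_{W'} = \,]X'[_{W'} \cap \,]Y[_{W'}$ via proposition \ref{leftex} together with the surjectivity of $\,]X'[_{V'} \to \,]X[_{V}$. Nothing to add.
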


\begin{proof}
Immediate consequence of proposition \ref{leftex}.
\end{proof}

%%%%%%%%%%%%
\begin{lem} \label{opst}
If a strict neighborhood 
\[
\xymatrix{X' \ar@{^{(}->}[r] \ar[d]^f & P'\ar[d]^{v} & V' \ar[l] \ar[d]^-{u} \\ X \ar@{^{(}->}[r] & P & V. \ar[l]}
\]
has a section, then this section is also a strict neighborhood.
\end{lem}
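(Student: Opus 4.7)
The plan is to verify the four defining conditions of a strict neighborhood for the section. Denote the given strict neighborhood by $(f,v,u)$ and write the section as $(g,w,t)$, so that $f\circ g=\mathrm{Id}_X$, $v\circ w=\mathrm{Id}_P$, and $u\circ t=\mathrm{Id}_V$.

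Three of the four conditions will be immediate. Since $f$ is an isomorphism by hypothesis, $g=f^{-1}$ is one as well. Next, $u$ is an open immersion and $u\circ t=\mathrm{Id}_V$ forces every point of $V$ to lie in the image of $u$; hence $u$ is a surjective open immersion of adic spaces, which is forced to be an isomorphism, so that $t=u^{-1}$ is an isomorphism too---in particular an open immersion. Finally, because both $g$ and $t$ are isomorphisms, the induced map on tubes $]g[_{t}$ is an isomorphism (it inverts $]f[_{u}$), and is therefore surjective.

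The heart of the proof will be to check that $w\colon P\to P'$ is locally noetherian. I will establish the stronger statement that $w$ is a locally closed immersion; since locally closed immersions are locally of finite type, the remark in Section~1 that ``locally formally of finite type implies locally noetherian'' will then conclude. The idea is the standard diagonal trick. The diagonal $\Delta_v\colon P'\to P'\times_P P'$ is a locally closed immersion, as is true for any morphism of formal schemes and was recalled in Section~1. Moreover $\mathrm{Id}_{P'}$ and $w\circ v$ have the same composition with $v$ (namely $v$ itself, by $v\circ w=\mathrm{Id}_P$), so they assemble into a well-defined morphism $\alpha:=(\mathrm{Id}_{P'},w\circ v)\colon P'\to P'\times_P P'$. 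Pulling $\Delta_v$ back along $\alpha$ produces a locally closed subscheme $E\hookrightarrow P'$ which, set-theoretically, is the equalizer of $\mathrm{Id}_{P'}$ and $w\circ v$. A direct check then identifies $w\colon P\to P'$ with the composition of an isomorphism $P\xrightarrow{\sim} E$ (whose inverse is the restriction of $v$ to $E$) followed by this locally closed immersion $E\hookrightarrow P'$.

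The main obstacle is precisely this last step: showing that a section of an arbitrary morphism of formal schemes is a locally closed immersion. The other three conditions reduce to formal observations about morphisms that admit retractions, whereas the diagonal argument is where the actual content of the statement lives.
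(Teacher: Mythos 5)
Your proof is correct, and for conditions 1, 3 and 4 it is word-for-word the paper's argument: $f$ and $]f[_{u}$ are isomorphisms, so their sections are isomorphisms, and an open immersion with a section is an isomorphism. The only divergence is in condition 2, where the paper simply cites the permanence property that a section of a locally noetherian morphism is locally noetherian, whereas you actually prove it -- and indeed prove the stronger statement that a section $w$ of any morphism $v$ of formal schemes is a locally closed immersion, by pulling back the diagonal $\Delta_v$ (which the paper has recorded is always a locally closed immersion) along $(\mathrm{Id}_{P'}, w\circ v)$, and then invoking the fact that locally closed immersions are locally of finite type, hence locally noetherian. This is a valid and self-contained substitute for the cited fact; the only point worth making explicit is that $P'\times_P P'$ is again locally noetherian (so that the diagonal trick stays within the paper's standing conventions), which holds precisely because $v$ is assumed locally noetherian in condition 2 of the strict neighborhood being sectioned.
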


\begin{proof}
Condition 1) and 4) of definition \ref{strneig} are clearly satisfied because both $f$ and $]f[_{u}$ are actually isomorphisms and a section of an isomorphism is necessarily an isomorphism.
The same holds for condition 3) because an open embedding with a section is also an isomorphism.
Finally, condition 2) follows from the fact that a section of a locally noetherian map is locally noetherian.
\end{proof}

%%%%%%%%%%%%%%%
\begin{prop}
Overconvergent spaces and formal morphisms admit right calculus of fractions with respect to strict neighborhoods.
\end{prop}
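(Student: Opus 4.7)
The plan is to check the three standard Gabriel--Zisman axioms --- closure under composition (with identities), the right Ore property, and right cancellation --- for the class of strict neighborhoods in our category.

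Closure under composition and containment of identities is straightforward: identities are trivially strict neighborhoods, and each of the four defining conditions (isomorphism on $X$, locally noetherian on $P$, open immersion on $V$, surjective tube map) is stable under composition, using for the second the stability of locally noetherian morphisms cited from \cite{Crew17*} and for the fourth the fact that the tube map of a strict neighborhood is actually an isomorphism. For the right Ore property, I would invoke lemma \ref{pulcri} directly: given a strict neighborhood $s \colon A' \to A$ and any formal morphism $g \colon B \to A$, the fiber product $B' := B \times_A A'$ exists in the category of adic overconvergent spaces, since its $X$-component reduces to $Y$ (because $s_1$ is an isomorphism), its formal scheme component $Q \times_P P'$ is a locally noetherian formal scheme (because $s_2$ is locally noetherian), and its adic space component $W \times_V V'$ is the open subset $\lambda^{-1}(V')$ of $W$; lemma \ref{pulcri} then gives that the projection $B' \to B$ is again a strict neighborhood.

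The main obstacle is the right cancellation axiom. Suppose $f, g \colon B \to A$ are parallel formal morphisms and $s \colon A \to A''$ is a strict neighborhood with $s \circ f = s \circ g$. The $X$-component of $s$ being an isomorphism forces $f_1 = g_1$, and the $V$-component being an open immersion --- hence a monomorphism --- forces $f_3 = g_3$. Only the $P$-components $v_f, v_g \colon Q \to P$ may differ, constrained by $s_2 \circ v_f = s_2 \circ v_g$ and by $v_f|_Y = v_g|_Y$ (from compatibility with the formal embedding). I would construct $t \colon B' \to B$ by setting $Q' := Q \times_{P \times_{P''} P} P$ via the relative diagonal, keeping $Y' = Y$ and $W' = W$. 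The embedding $Y \hookrightarrow Q'$ exists because $v_f|_Y = v_g|_Y$, and the adic structure map $\mu_W \colon W \to Q^{\mathrm{ad}}$ factors through $Q'^{\mathrm{ad}}$ because $(-)^{\mathrm{ad}}$ commutes with finite limits together with the identity $v_f^{\mathrm{ad}} \circ \mu_W = v_g^{\mathrm{ad}} \circ \mu_W$ (both equal $\lambda \circ u_f = \lambda \circ u_g$). The critical check is that $t_2 \colon Q' \to Q$ is locally noetherian; this reduces, via stability of locally noetherian morphisms under base change, to the fact that the relative diagonal of a locally noetherian morphism of formal schemes inherits the locally noetherian property, which can be verified locally in the affine setting using the structural results of Crew. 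Once $t$ is confirmed to be a strict neighborhood, the equality $f \circ t = g \circ t$ follows from the equalizer property of $Q'$, completing the verification of all three axioms and hence of right calculus of fractions.
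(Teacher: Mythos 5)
Your proof is correct and follows essentially the same route as the paper: check the Gabriel--Zisman axioms, settle the Ore condition by pulling back strict neighborhoods, and settle cancellation by realizing the equalizer of the two parallel morphisms as the pullback of the relative diagonal of $s$. The only difference is packaging: the paper notes abstractly that this diagonal is a section of the projection $A \times_{A''} A \to A$ (itself a pullback of $s$, hence a strict neighborhood) and invokes the lemma that sections of strict neighborhoods are strict neighborhoods, which makes your affine-local verification that the diagonal is locally noetherian unnecessary --- being a locally closed immersion, it is automatically universally locally noetherian.
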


\begin{proof}
This is a formal consequence of lemmas \ref{pulcri} and \ref{opst} as lemma \ref{right} below shows.
\end{proof}

%%%%%%%%%%%%%%%%%
\begin{lem} \label{right}
Let $\mathcal C$ be a category and $S$ a set of morphisms which contains identities, is stable by composition, is stable by pull-back and is stable by taking sections.
Then, $\mathcal C$ admits right calculus of fractions with respect to $S$.
\end{lem}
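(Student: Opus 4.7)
The plan is to verify the standard Gabriel--Zisman axioms for a right calculus of fractions, namely: (i) $S$ contains identities and is closed under composition; (ii) (right Ore) any diagram $X\stackrel{f}{\to}Y\stackrel{s}{\leftarrow}Z$ with $s\in S$ can be completed to a commutative square whose other leg lies in $S$; and (iii) (cancellation) whenever $f,g\colon X\rightrightarrows Y$ satisfy $sf=sg$ for some $s\colon Y\to Y'$ in $S$, there exists $t\colon W\to X$ in $S$ with $ft=gt$. Axiom (i) is part of the hypothesis, so only (ii) and (iii) need argument.

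For the Ore axiom (ii), I would simply invoke stability under pull-back: given $f\colon X\to Y$ and $s\colon Z\to Y$ in $S$, the fibered product $W:=X\times_{Y}Z$ exists and the projection $W\to X$ is in $S$, whilst the projection $W\to Z$ together with $f$ and $s$ forms the required commutative square. No further work is needed.

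The cancellation axiom (iii) is where the ``stable by sections'' assumption enters. Given $f,g\colon X\to Y$ with $sf=sg$, the universal property of the fibered product yields a morphism $(f,g)\colon X\to Y\times_{Y'}Y$. The two projections $p_{1},p_{2}\colon Y\times_{Y'}Y\to Y$ are both pull-backs of $s$, hence lie in $S$; and the diagonal $\Delta\colon Y\to Y\times_{Y'}Y$ is a common section of $p_{1}$ and $p_{2}$, so by the sections hypothesis $\Delta\in S$. I would then form the pull-back square
$$
\xymatrix{W \ar[r] \ar[d]_{t} & Y \ar[d]^{\Delta} \\ X \ar[r]_-{(f,g)} & Y\times_{Y'}Y,}
$$
whose left vertical arrow $t$ is a pull-back of $\Delta$, hence in $S$. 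Composing with $p_{1}$ and $p_{2}$ in turn shows $ft=gt$, since both equal the top horizontal arrow $W\to Y$.

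The verification is essentially formal once the correct auxiliary diagrams are set up; the only mild subtlety is recognizing that the cancellation property is exactly the statement that the diagonal of an $S$-morphism (viewed as a section of a pulled-back $S$-morphism) lives in $S$, which is precisely what the ``stable by sections'' hypothesis delivers. I expect no genuine obstacle; the argument is entirely bookkeeping with the three stability properties and the universal property of fibered products.
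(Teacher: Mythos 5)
Your proof is correct and follows essentially the same route as the paper: both verify the Gabriel--Zisman axioms by taking the pull-back for the Ore condition, and for cancellation both observe that the diagonal $Y \to Y\times_{Y'}Y$ is a section of a pulled-back $S$-morphism, hence lies in $S$, and then pull it back along $(f,g)$ to produce the required equalizing morphism $t\in S$. No substantive difference from the paper's argument.
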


\begin{proof}
According to definition 2.3 of \cite{GabrielZisman67}, there are four (sometimes called after Ore) conditions to check, the first two of which coincide with our first two.
For the third condition, we have to show that any diagram
\[
\xymatrix{Y' \ar@{-->}[r]^{f'} \ar@{-->}[d]^{t} & X' \ar[d]^s \\ Y \ar[r]^f & X
}
\]
with $s \in S$ may be completed into a commutative diagram with $t \in S$.
We may simply choose for $t$ the pullback of $s$ along $f$.
For the fourth condition, we must show that any commutative diagram
\[
\xymatrix{
Y' \ar@{-->}[r]^{t} & Y \ar@<2pt>[r]^f \ar@<-2pt>[r]_{g}& X \ar[r]^s & X'
}
\]
with $s \in S$ may be completed into a commutative diagram with $t \in S$.
First of all, if we pull $s$ back along itself, we see that $X \times_{X'} X$ is representable and that the projection $p : X \times_{X'} X \to X$ belongs to $S$.
It follows that the diagonal map $\delta : X \hookrightarrow X \times_{X'} X$ also belongs to $S$ because this is a section of $p$.
We may then pull $\delta$ back along the map $(f, g) : Y \to X \times_{X'} X$ in order to obtain $t : Y' \to Y$ (which is actually the kernel of $f$ and $g$).
\end{proof}

%%%%%%%%%%%%
\begin{dfn}
The \emph{(absolute) overconvergent site} is the category of overconvergent spaces localized with respect to strict neighborhoods and endowed with the image topology.
\end{dfn}

In other words, an object of the overconvergent site is an overconvergent space and a morphism is, up to equivalence, a diagram
\[
\xymatrix{
(Y' \hookrightarrow Q' \leftarrow W') \ar[d] \ar[dr] \\ (Y \hookrightarrow Q \leftarrow W) & (X \hookrightarrow P \leftarrow V)
}
\]
where the vertical map is a strict neighborhood.
It means that we may always replace $W$ with some neighborhood of the tube and modify $Q$ almost as we wish.

Our category is endowed with the \emph{image topology} (the coarsest topology making continuous the localization map).
This is the topology generated by the pretopology made of families of formal morphisms
\[
\{(X \hookrightarrow P \leftarrow V_{i}) \to (X \hookrightarrow P \leftarrow V)\}_{i \in I}
\]
where, for each $i \in I$, $V_{i}$ is open in $V$, and $\,]X[_{V} = \bigcup_{i \in I}\,]X[_{V_{i}}$.
Again, it is subcanonical.
Giving a sheaf (or a presheaf) on the overconvergent site is equivalent to giving a sheaf (or a presheaf) with respect to \emph{formal} morphisms which induces an isomorphism on strict neighborhoods.

If necessary, we will call this topology the \emph{adic} topology.
We may also endow our category with the \emph{Zariski} topology (image of the Zariski topology) or even with the \emph{Zariski-adic} topology which is the coarsest topology finer than the two others, for example.
Many other choices are possible.

Since the formal scheme $P$ plays a very loose role in the theory, we will usually denote by $(X,V)$ an object of the overconvergent site.
Also, since we will often have to replace $V$ with some neighborhood $V'$ of $\,]X[_{V}$ in $V$, it will be convenient to simply call such a $V'$ \emph{a neighborhood of $X$ in $V$} instead of a neighborhood \emph{of the tube} of $X$ in $V$.

The localization functor
\[
(X \hookrightarrow P \leftarrow V) \mapsto (X,V)
\]
from the category of overconvergent adic spaces and formal morphism to the overconvergent adic site commutes with finite limits (\cite{GabrielZisman67} again) and is continuous (by definition).
Let us remark that, as a consequence, the functor $(X \hookrightarrow P) \mapsto (X, P^{\mathrm{ad}})$ from the category of formal embeddings to the category of overconvergent sites also commutes with finite limits.
It is continuous for the Zariski or Zariski-adic topology but not for the adic-topology (see corollary \ref{loczar} however).
Also, the functor $(X, V) \mapsto X$ commutes with all limits because it has an adjoint $X \mapsto (X, \emptyset)$:
\[
\mathrm{Hom}((Y,\emptyset), (X,V)) \simeq \mathrm{Hom}(Y, X).
\]
And it is also continuous.
It also follows from (the comments after) proposition \ref{leftex} that the functor $(X \hookrightarrow P \leftarrow V) \mapsto (V, \,]X[_V)$ induces a functor $(X,V) \mapsto \,]X[^\dagger_V$ from the overconvergent site to the category of \emph{germs} of adic spaces, which is continuous, cocontinuous and left exact.
Finally, there exists a fiber functor $(X,V) \mapsto X_V$ and a natural map $X_V \hookrightarrow ]X[_V^\dagger$.

%%%%%%%%%%%%%%%%%%%%
\subsection{Local isomorphisms of overconvergent spaces}

The topology of the adic site is very coarse in the sense that it is not local with respect to any topology on the formal schemes side.
We will show however that, in practice, one can often localize with respect to the Zariski topology.

We insist on the fact that we usually write $(X \hookrightarrow P \leftarrow V)$ for an overconvergent space seen as an object before localization (with formal morphisms) and $(X, V)$ when we see it as an object of the overconvergent site (up to strict neighborhoods).

%%%%%%%%%%%%%%%%%
\begin{prop} \label{isostrict}
A formal morphism
\[
(Y \hookrightarrow Q \leftarrow W) \to(X \hookrightarrow P \leftarrow V)
\]
induces an isomorphism $(Y, W) \simeq (X, V)$ if and only if there exists a commutative diagram
\[
\xymatrix{
&(Y' \hookrightarrow Q' \leftarrow W') \ar[dl] \ar[dr] \\ (Y \hookrightarrow Q \leftarrow W) \ar[rr] && (X \hookrightarrow P \leftarrow V)
}
\]
where both diagonal arrows are strict neighborhoods.
\end{prop}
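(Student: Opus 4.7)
My plan is to derive this equivalence as a formal consequence of the right calculus of fractions for the class $S$ of strict neighborhoods, just established in the preceding proposition. The ``if'' direction is immediate: since strict neighborhoods become isomorphisms in the localized overconvergent site, if both diagonal arrows are strict neighborhoods then $f$ is of the form $t\circ s^{-1}$ with $s,t$ isomorphisms in the localization, hence itself an isomorphism there.

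For the ``only if'' direction, I would start by using the calculus of fractions to represent the inverse $\phi:(X,V)\to(Y,W)$ of $f$ in the localized category by a roof
$$(X\hookrightarrow P\leftarrow V)\xleftarrow{\sigma}(X'\hookrightarrow P'\leftarrow V')\xrightarrow{\gamma}(Y\hookrightarrow Q\leftarrow W)$$
whose left leg $\sigma$ is a strict neighborhood. Translating the identity $f\circ\phi=\mathrm{id}_{(X,V)}$ via the standard equivalence relation on roofs, I would then extract a further strict neighborhood $\tau:(Y'\hookrightarrow Q'\leftarrow W')\to(X'\hookrightarrow P'\leftarrow V')$ for which the equality $f\circ\gamma\circ\tau=\sigma\circ\tau$ already holds in the category of formal morphisms (not merely in the localization). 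Setting $s:=\gamma\circ\tau:Y'\to Y$ and $t:=\sigma\circ\tau:Y'\to X$, I obtain a commuting triangle $f\circ s=t$ in which $t$, being the composite of two strict neighborhoods, is automatically a strict neighborhood.

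The main obstacle is to promote the remaining leg $s$ to a strict neighborhood. I plan to handle this by running the symmetric roof-equivalence argument on the identity $\phi\circ f=\mathrm{id}_{(Y,W)}$ to produce a companion refinement on the $Y$-side, and then to combine both refinements via a fibered product in the category of overconvergent adic spaces, which exists because the forgetful functors $(X\hookrightarrow P\leftarrow V)\mapsto X$ and $(X\hookrightarrow P\leftarrow V)\mapsto V$ both preserve limits. Lemma~\ref{pulcri} transfers the strict-neighborhood status onto a projection of this fiber product, and Lemma~\ref{opst} then forces a canonical section of that projection to be a strict neighborhood as well; composing these I would obtain the desired strict-neighborhood map to $Y$ after replacing $Y'$ by a final refinement $Y''$. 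The technical crux of the proof is the careful bookkeeping of how the two roof-equivalence arguments — on the $X$-side from $f\circ\phi=\mathrm{id}$ and on the $Y$-side from $\phi\circ f=\mathrm{id}$ — interact through this fiber product so that both legs of the resulting triangle land in $S$.
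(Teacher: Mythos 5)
Your ``if'' direction and the first half of the ``only if'' direction are fine and essentially reproduce what the paper does by citing Gabriel--Zisman / Kashiwara--Schapira: calculus of fractions yields a refinement $(Y'\hookrightarrow Q'\leftarrow W')$ together with a leg $s$ to $(Y\hookrightarrow Q\leftarrow W)$ and a leg $t=f\circ s$ to $(X\hookrightarrow P\leftarrow V)$ with $t$ a strict neighborhood. (One small slip: the roof-equivalence relation only guarantees $\sigma\circ\tau\in S$, not $\tau\in S$, so your reason for $t\in S$ is misstated even though the conclusion is correct.) The genuine gap is exactly the step you defer to ``careful bookkeeping'': showing that the remaining leg $s$ is a strict neighborhood. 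This is the entire content of the proposition, and your proposed route --- a symmetric roof argument combined through a fibered product, plus pullback- and section-stability of $S$ --- cannot close it. First, the fibered products you invoke are not known to exist: the forgetful functors preserve limits \emph{when they exist}, but fibered products of adic spaces (and of locally noetherian formal schemes) do not exist in general, which is why the paper only ever pulls back along strict neighborhoods. Second, and more fundamentally, the class of morphisms inverted by the localization is the \emph{saturation} of $S$, which is in general strictly larger than $S$; no manipulation using only the Ore axioms (identities, composition, pullback, sections) can force a leg to land in $S$ itself. One has to use what strict neighborhoods actually are.

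The paper does this by verifying the four defining conditions for the ``upper map'' $(Y'\hookrightarrow Q'\leftarrow W')\to(X'\hookrightarrow P'\leftarrow V')$ in the Gabriel--Zisman square, each by a different 2-out-of-3-type argument: the isomorphism on the scheme side and the surjectivity on tubes follow from a split-mono-plus-split-epi argument using the diagonal map of the square; the open-immersion condition follows from the section-and-retraction trick applied to $W'\to V''\to W'$; and the locally noetherian condition follows from the \emph{cancellation property} for universally locally noetherian morphisms (if $v\circ u$ is locally noetherian then so is $u$), which is a geometric fact imported from Crew's work, not a formal consequence of the multiplicative-system axioms. That last verification in particular has no counterpart in your outline, so as written the proof does not go through.
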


Note that we may always assume that $Y' = Y$, that the corresponding map is the identity and that $W'$ is an open subset of $W$.

\begin{proof}
Only the direct implication needs a proof.
According to proposition 7.1.20. (i) of \cite{KashiwaraSchapira06} (be careful that they call left what we call right) or section 3.5 of chapter I in \cite{GabrielZisman67}, there exists a commutative diagram of formal morphisms:
\[
\xymatrix{
(Y' \hookrightarrow Q' \leftarrow W') \ar[r] \ar[d] & (X' \hookrightarrow P' \leftarrow V') \ar[d] \ar[dl] \\
(Y \hookrightarrow Q \leftarrow W) \ar[r] & (X \hookrightarrow P \leftarrow V)
}
\]
with strict neighborhoods as vertical maps.
It is sufficient to prove that the upper map is a strict neighborhood.
Since the composite map $Q' \to P' \to Q$ is locally noetherian, the first one $Q' \to P'$ is also necessarily locally noetherian (see \cite{Crew17*}) and condition 2) holds. 
The proofs that the other three conditions hold are very much the same and we will only do condition 3) which is the one that requires some care.
Let us denote by $V''$ the inverse image of $W'$ in $V'$ through the diagonal map, and identify $W'$ and $V''$ with their images in $W$ and $V$ respectively.
Then, there exists a commutative diagram
\[
\xymatrix{
W' \ar[r] \ar@{=}[d] & V'' \ar@{=}[d] \ar[dl] \\
W' \ar[r] & V''
}
\]
The diagonal map has both a section and a retraction and must be an isomorphism.
It follows that the upper map is also an isomorphism and we are done.
\end{proof}

Be careful that the map $Q \to P$ in proposition \ref{isostrict} is not necessarily locally noetherian as the following example shows:
\[
(\emptyset \subset \mathbb A^{\mathrm{b}} \leftarrow \emptyset) \to (\emptyset \subset \mathrm{Spec}(\mathbb Z) \leftarrow \emptyset).
\]

We will mostly be interested in the following consequence of proposition \ref{isostrict}:

%%%%%%%%%%%%%%%%%
\begin{cor} \label{isom}
Assume that we are given two overconvergent spaces
\[
(X_{i} \hookrightarrow P_{i} \leftarrow V_{i})
\]
for $i = 1, 2$.
Then, $(X_{1}, V_{1}) \simeq (X_{2}, V_{2})$ if and only if there exists a common strict neighborhood
\[
\xymatrix{
&(X \hookrightarrow P \leftarrow V) \ar[dl] \ar[dr] \\ (X_{1} \hookrightarrow P_{1} \leftarrow V_{1}) && (X_{2} \hookrightarrow P_{2} \leftarrow V_{2}). \qed
}
\]
\end{cor}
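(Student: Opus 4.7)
The ``if'' direction is immediate from the definition of the adic overconvergent site: each leg of a common strict neighborhood becomes an isomorphism in the localization, so the composition produces the desired isomorphism $(X_1, V_1) \simeq (X, V) \simeq (X_2, V_2)$.

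For the converse, suppose $(X_1, V_1) \simeq (X_2, V_2)$. Since we have just shown that adic overconvergent spaces and formal morphisms admit a right calculus of fractions with respect to strict neighborhoods, any such isomorphism is represented by a roof of formal morphisms
$$
(X_1 \hookrightarrow P_1 \leftarrow V_1) \stackrel{s}{\leftarrow} (X' \hookrightarrow P' \leftarrow V') \stackrel{g}{\to} (X_2 \hookrightarrow P_2 \leftarrow V_2)
$$
in which $s$ is a strict neighborhood. The left leg $s$ already induces an isomorphism $(X_1, V_1) \simeq (X', V')$ in the site, so since the composite roof is an isomorphism by hypothesis, the formal morphism $g$ itself must induce an isomorphism $(X', V') \simeq (X_2, V_2)$ in the site.

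Now I apply the preceding proposition to the formal morphism $g$: it yields an adic overconvergent space $(X \hookrightarrow P \leftarrow V)$ together with strict neighborhoods
$$
\xymatrix{
&(X \hookrightarrow P \leftarrow V) \ar[dl]_{t} \ar[dr]^{t'} \\ (X' \hookrightarrow P' \leftarrow V') && (X_2 \hookrightarrow P_2 \leftarrow V_2).
}
$$
Composing $t$ with $s$ produces a morphism $(X \hookrightarrow P \leftarrow V) \to (X_1 \hookrightarrow P_1 \leftarrow V_1)$, and this composition is again a strict neighborhood since each of the four defining conditions (being an isomorphism, being locally noetherian, being an open immersion, inducing a surjection on tubes) is stable under composition. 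Together with $t'$, this is the required common strict neighborhood.

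The only genuinely non-formal point is the appeal to the previous proposition; everything else is pure diagram chasing in a category with right calculus of fractions. In particular, the main subtlety is the observation that in the roof representing the given isomorphism, because the strict neighborhood $s$ already becomes invertible in the site, the other leg $g$ must itself induce an isomorphism — which is precisely the hypothesis needed to invoke the previous proposition.
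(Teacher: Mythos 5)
Your argument is correct and is exactly the intended deduction: the paper leaves this corollary as an immediate consequence of the preceding proposition, and your roof-plus-composition argument (using that strict neighborhoods are closed under composition, which the calculus-of-fractions setup already requires) fills in precisely those details.
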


Note that when $X_{2} = X_{1}$, we may also assume that $X = X_{1} = X_{2}$ and that the maps at this level are the identity maps.

Most of the time, we will work over a given overconvergent space $(C \hookrightarrow S \leftarrow O)$.
It is important to notice that, by construction, the diagonal maps in corollary \ref{isom} will be defined over $(C \hookrightarrow S \leftarrow O)$ as well.

It will be convenient to call a formal morphism $(X \hookrightarrow P \leftarrow V) \to (C \hookrightarrow S \leftarrow O)$ \emph{locally noetherian} when the morphism $P \to S$ is locally noetherian.
 
%%%%%%%%%%%%%%%%%
\begin{prop} \label{prodcr}
Assume that we are given two locally noetherian overconvergent spaces $(X_{i} \hookrightarrow P_{i} \leftarrow V_{i})$ over $(C \hookrightarrow S \leftarrow O)$ for $i = 1, 2$.
Then, $(X_{1}, V_{1}) \simeq (X_{2}, V_{2})$ if and only if the projections extend to strict neighborhoods
\[
\xymatrix{
&(X \hookrightarrow P_{1} \times_{S} P_{2} \leftarrow V) \ar[dl] \ar[dr] \\ (X_{1} \hookrightarrow P_{1} \leftarrow V_{1}) && (X_{2} \hookrightarrow P_{2} \leftarrow V_{2}).
}
\]
\end{prop}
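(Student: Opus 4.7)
The ``if'' direction is immediate: if both $(X\hookrightarrow P_1\times_S P_2\leftarrow V)\to(X_i\hookrightarrow P_i\leftarrow V_i)$ are strict neighborhoods, they become isomorphisms after localization, so $(X_1,V_1)\simeq (X,V)\simeq (X_2,V_2)$.

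For the ``only if'' direction, I would start by applying Corollary~\ref{isom} to obtain a common strict neighborhood
$$
\xymatrix{&(X \hookrightarrow P \leftarrow V) \ar[dl] \ar[dr] \\ (X_{1} \hookrightarrow P_{1} \leftarrow V_{1}) && (X_{2} \hookrightarrow P_{2} \leftarrow V_{2}).}
$$
By the remarks following Corollary~\ref{isom}, I may assume $X=X_1=X_2$ with both maps on the $X$-component equal to the identity, and by the remark that the diagonal arrows are defined over $(C\hookrightarrow S\leftarrow O)$, the compositions $P\to P_i\to S$ coincide. This yields a canonical morphism $P\to P_1\times_S P_2$. Since $P_i\to S$ is locally noetherian for $i=1,2$, the result of Crew cited in the text ensures that $P_1\times_S P_2$ is locally noetherian. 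Moreover the composition $X\to P_1\times_S P_2$ factors as the diagonal $X\hookrightarrow X\times_C X$ (a locally closed immersion for any morphism $X\to C$) followed by the locally closed embedding $X\times_C X\hookrightarrow P_1\times_S P_2$ obtained from the two embeddings $X=X_i\hookrightarrow P_i$, and is therefore a locally closed embedding. Hence $(X\hookrightarrow P_1\times_S P_2\leftarrow V)$ is a genuine adic overconvergent space.

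It remains to verify the four conditions defining a strict neighborhood
$$
(X\hookrightarrow P_1\times_S P_2\leftarrow V)\to (X_i\hookrightarrow P_i\leftarrow V_i).
$$
Conditions (1) and (3) are inherited from the original strict neighborhood $(X\hookrightarrow P\leftarrow V)\to (X_i\hookrightarrow P_i\leftarrow V_i)$: the map on $X$-parts is the identity and $V\hookrightarrow V_i$ is an open immersion. Condition (2) is the observation that $P_1\times_S P_2\to P_i$ is the base change of the locally noetherian morphism $P_{3-i}\to S$, hence locally noetherian.

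The main point, and the step requiring genuine care, is condition (4): the induced map $]X[_{P_1\times_S P_2,V}\to ]X[_{P_i,V_i}$ must be surjective. Here I would apply Proposition~\ref{proptub}(\ref{pultub}) to the morphism $u\colon P\to P_1\times_S P_2$, obtaining
$$
]X[_{P_1\times_S P_2,V}\;=\;]u^{-1}(X)[_{P,V}.
$$
Since $X\hookrightarrow P$ maps to $X\hookrightarrow P_1\times_S P_2$ compatibly, $X$ is a closed subscheme of $u^{-1}(X)=P\times_{P_1\times_S P_2}X$, and Proposition~\ref{oper}(1) therefore gives
$$
]X[_{P,V}\;\subseteq\;]u^{-1}(X)[_{P,V}\;=\;]X[_{P_1\times_S P_2,V}.
$$
Conversely, the composition $(P_1\times_S P_2)^{/X}\to P_i^{/X}$ (valid because the projection $P_1\times_S P_2\to P_i$ sends $X$ to $X$) ensures that the image of $]X[_{P_1\times_S P_2,V}$ in $V_i$ lands inside $]X[_{P_i,V_i}$. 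Combined with the surjectivity of the original $]X[_{P,V}\to ]X[_{P_i,V_i}$ (which now factors through $]X[_{P_1\times_S P_2,V}$), this forces the map $]X[_{P_1\times_S P_2,V}\to ]X[_{P_i,V_i}$ to be surjective, completing the verification.
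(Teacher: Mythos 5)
Your proposal is correct and follows essentially the same route as the paper: obtain a common strict neighborhood $(X\hookrightarrow P\leftarrow V)$ from Corollary~\ref{isom}, observe that it factors through $(X\hookrightarrow P_{1}\times_{S}P_{2}\leftarrow V)$, and deduce condition (4) for the diagonal arrows from the surjectivity of the composite map on tubes. Your write-up just makes explicit the verifications (that $X\hookrightarrow P_{1}\times_{S}P_{2}$ is a locally closed embedding, that the product is locally noetherian, and that $\,]X[_{P,V}\subseteq\,]X[_{P_{1}\times_{S}P_{2},V}$) which the paper leaves as ``it should be clear''.
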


Again, when $X_{2} = X_{1}$, we may also assume that $X = X_{1} = X_{2}$ and that the corresponding maps are the identity maps.

\begin{proof}
In the situation of corollary \ref{isom}, we may always assume (as we indicate after the statement of the corollary) that the morphisms are compatible with the structural maps.
Then the common strict neighborhood factors as
\[
\xymatrix{&(X \hookrightarrow P \leftarrow V) \ar[d] \\
&(X \hookrightarrow P_{1} \times_{S} P_{2} \leftarrow V) \ar[dl] \ar[dr] \\ (X_{1} \hookrightarrow P_{1} \leftarrow V_{1}) && (X_{2} \hookrightarrow P_{2} \leftarrow V_{2}).
}
\]
It should be clear that all maps in this diagram are strict neighborhoods.
For example, the composite map
\[
\,]X[_{P,V} \to \,]X[_{P_{1} \times_{S} P_{2},V} \to \,]X_{1}[_{P_{1},V_{1}}
\]
being surjective, the second map must also be surjective and so on.
\end{proof}

The following obvious consequence of proposition \ref{prodcr} will often make it possible to assume that $X$ is a usual scheme when considering an overconvergent space $(X, V)$.

%%%%%%%%%%%%%%%%
\begin{cor} \label{redred}
Assume that $(X, V)$ and $(X, V')$ are two locally noetherian overconvergent spaces over $(C, O)$.
Then we have
\[
(X, V) \simeq (X, V') \Leftrightarrow (X_{\mathrm{red}}, V) \simeq (X_{\mathrm{red}}, V'). \qed
\]
\end{cor}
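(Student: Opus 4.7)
The plan is to reduce both equivalences to a comparison of strict neighborhoods via Proposition \ref{prodcr}, and then observe that the defining conditions of a strict neighborhood are insensitive to the passage from $X$ to $X_{\mathrm{red}}$.

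More precisely, I would pick representatives $(X \hookrightarrow P \leftarrow V)$ and $(X \hookrightarrow P' \leftarrow V')$ over $(C \hookrightarrow S \leftarrow O)$. Since both are locally noetherian over $S$, the product $P \times_S P'$ is again a locally noetherian formal scheme, and Proposition \ref{prodcr} (together with its post-statement remark, which lets us take the maps on $X$ to be identities) applies: the isomorphism $(X,V) \simeq (X,V')$ is equivalent to the existence of a formal morphism
$$
\xymatrix{X \ar@{^{(}->}[r] \ar@{=}[d] & P \times_S P' \ar[d] & W \ar[l] \ar[d] \\ X \ar@{^{(}->}[r] & P^{(i)} & V^{(i)} \ar[l]}
$$
which is a strict neighborhood for each projection $i = 1, 2$. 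Applied instead to the pair $(X_{\mathrm{red}}, V), (X_{\mathrm{red}}, V')$, where one uses the composite embeddings $X_{\mathrm{red}} \hookrightarrow X \hookrightarrow P^{(i)}$, the same proposition gives an entirely analogous criterion with $X_{\mathrm{red}}$ in place of $X$ in the middle row.

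The final step is to check that these two criteria coincide, for the very same choice of $W$. Of the four conditions defining a strict neighborhood, conditions (1)–(3) concern respectively the map $f$ (the identity in both cases), the morphism of formal schemes $v \colon P \times_S P' \to P^{(i)}$, and the open immersion $u \colon W \hookrightarrow V^{(i)}$; none of these depend on whether we view the locally closed embedding as $X$ or $X_{\mathrm{red}}$. Condition (4) requires surjectivity of the induced map $]f[_u$ on tubes, and this is equally insensitive: by Proposition \ref{proptub}(1), the tube depends only on the underlying topological subspace, so $]X[_W = ]X_{\mathrm{red}}[_W$ and $]X[_{V^{(i)}} = ]X_{\mathrm{red}}[_{V^{(i)}}$, whence the surjectivity is the same condition literally. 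The only point that requires a moment's care is invoking Proposition \ref{prodcr} in the reduced form with identities on $X$ (resp.\ $X_{\mathrm{red}}$), as noted immediately after that proposition; once that is granted, the equivalence is immediate and the corollary follows.
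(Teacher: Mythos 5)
Your proof is correct and follows exactly the route the paper intends: the paper states this corollary as an ``obvious consequence of Proposition \ref{prodcr}'' without further detail, and your argument simply makes explicit why it is obvious, namely that the criterion of Proposition \ref{prodcr} is unchanged under $X \mapsto X_{\mathrm{red}}$ because conditions (1)--(3) of a strict neighborhood do not see the scheme structure on $X$ and condition (4) is invariant by Proposition \ref{proptub}(1).
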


Note that this is the same $X$ on both sides and we implicitly assume that the map induced at this level is the identity.

There is no chance for the forgetful functor $(X, V) \to X$ to be continuous (for the Zariski topology of $X$) because we choose to use the coarse topology on the algebraic side when we defined the topology on the overconvergent site.
However, we have the following consequence of proposition \ref{prodcr}:

%%%%%%%%%%%%%%%%
\begin{cor} \label{loczar}
Let $(X \hookrightarrow P \stackrel \lambda\leftarrow V)$ and $(X \hookrightarrow P' \stackrel {\lambda'}\leftarrow V')$ be two locally noetherian overconvergent spaces over some $(C \hookrightarrow S \leftarrow O)$.
Assume that there exists two open coverings $P = \bigcup_{i \in I} P_{i}$ and $P' = \bigcup_{i \in I} P'_{i}$ such that, for each $i \in I$, $X \cap P_{i} = X \cap P'_{i}$ and, locally,
\[
(X \cap P_{i}, \lambda^{-1}(P_{i}^{\mathrm{ad}}) \simeq (X \cap P'_{i}, \lambda^{'-1}(P_{i}^{'\mathrm{ad}})
\]
Then, locally, $(X, V) \simeq (X, V')$.
\end{cor}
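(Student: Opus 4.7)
The plan is to derive the conclusion directly from the definitions of the topologies on the overconvergent site, the statement being essentially a bookkeeping exercise once the definitions are set up. Set $V_{i} := \lambda^{-1}(P_{i}^{\mathrm{ad}}) \subset V$ and $V'_{i} := \lambda'^{-1}(P_{i}'^{\mathrm{ad}}) \subset V'$; these are open subsets because each $P_{i}$ (resp.\ $P'_{i}$) is open in $P$ (resp.\ $P'$). The first step is to observe that, because $P = \bigcup_{i} P_{i}$ is a Zariski cover of $P$, the family
\begin{equation*}
\{(X \cap P_{i} \hookrightarrow P_{i} \leftarrow V_{i}) \to (X \hookrightarrow P \leftarrow V)\}_{i \in I}
\end{equation*}
is a covering of $(X, V)$ in the Zariski (hence Zariski-adic) topology on the overconvergent site, by the very definition of that topology as the image topology of the Zariski topology on formal embeddings. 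An analogous family covers $(X, V')$.

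Next, the local hypothesis says precisely that, for each $i \in I$, after passing to some further covering family of $(X \cap P_{i}, V_{i})$ in the overconvergent site and to a matching covering family of $(X \cap P'_{i}, V'_{i})$, corresponding pieces become isomorphic. Crucially, the equality $X \cap P_{i} = X \cap P'_{i}$ ensures that the two refined covers agree on the $X$-side, so their index structures match. Composing these refined coverings with the Zariski covers of the first step yields matching covering families of $(X, V)$ and of $(X, V')$ in the overconvergent site with piecewise isomorphisms on corresponding pieces — which is by definition what it means for $(X, V) \simeq (X, V')$ locally.

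There is no real geometric obstacle: the argument is a formal consequence of the definitions once one recognises that the overconvergent site is engineered to make the ambient formal schemes $P$ and $P'$ \emph{secondary}. The locally noetherian hypothesis and the structural morphism to $(C, S, O)$ enter only implicitly, via proposition \ref{prodcr} — which underlies the very notion of isomorphism in the site for two overconvergent spaces sharing the same $X$ — and which was established precisely in this setting. The main point to pay attention to in writing the proof is verifying that restriction along the inclusions $P_{i} \hookrightarrow P$ and $P'_{i} \hookrightarrow P'$ is compatible with the refinement provided by the local hypothesis; this is immediate from the fact that open restrictions of coverings are coverings.
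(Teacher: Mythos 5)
Your argument has a genuine gap at its central step. You reduce to the local hypothesis on the pieces, which gives you isomorphisms $(X \cap P_{i}, V_{ij}) \simeq (X \cap P'_{i}, V'_{ij})$ for suitable open subsets $V_{ij} \subset V_{i}$ and $V'_{ij} \subset V'_{i}$, and you then declare that assembling these is ``by definition what it means for $(X,V) \simeq (X,V')$ locally.'' But the topology on the adic overconvergent site (the one the paper uses by default, and the one needed when this corollary is invoked in the proof of the strong fibration theorem) is the \emph{adic} topology: its coverings are families $\{(X \hookrightarrow P \leftarrow V_{ij}) \to (X \hookrightarrow P \leftarrow V)\}$ in which the scheme $X$ and the ambient $P$ do \emph{not} change. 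The paper states explicitly, just before this corollary, that the forgetful functor $(X,V) \mapsto X$ is not continuous for the Zariski topology of $X$ --- so your first step, treating the Zariski cover $\{(X_{i} \hookrightarrow P_{i} \leftarrow V_{i})\}$ as a covering of $(X,V)$ in the relevant topology, is precisely what is not available. The entire content of the corollary is to convert Zariski-local data into adic-local data, and that conversion is the step you skip: you must upgrade each isomorphism $(X_{i}, V_{ij}) \simeq (X_{i}, V'_{ij})$ (small $X_{i}$, small ambients $P_{i}$, $P'_{i}$) to an isomorphism $(X, V_{ij}) \simeq (X, V'_{ij})$ (full $X$, full ambients $P$, $P'$), after which $\{(X, V_{ij})\}_{i,j}$ really is an adic covering of $(X,V)$ because $\,]X[_{V} = \bigcup_{i,j}\,]X[_{V_{ij}}$.

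This upgrade is not formal bookkeeping. The paper performs it by applying Proposition \ref{prodcr}: the isomorphism $(X_{i}, V_{ij}) \simeq (X_{i}, V'_{ij})$ is realized by a common strict neighborhood $(X_{i} \hookrightarrow P_{i} \times_{S} P'_{i} \leftarrow W_{ij})$ of both sides, and one then checks that the \emph{same} data, read over $P \times_{S} P'$ with the full $X$, still satisfies the four conditions defining a strict neighborhood of $(X \hookrightarrow P \leftarrow V_{ij})$ and of $(X \hookrightarrow P' \leftarrow V'_{ij})$. The nontrivial point is the surjectivity on tubes (condition 4), which survives only because $V_{ij}$ lies over $P_{i}^{\mathrm{ad}}$, so that $\,]X[_{P, V_{ij}} = \,]X_{i}[_{P_{i}, V_{ij}}$ by the locality of tubes (Proposition \ref{proptub}); this is also where the locally noetherian hypothesis and the common base $S$ are genuinely used, to form the fibered products $P_{i} \times_{S} P'_{i}$ and $P \times_{S} P'$ as locally noetherian formal schemes. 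Your proposal mentions Proposition \ref{prodcr} only as background and never carries out this transfer, so as written the proof does not establish the statement.
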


\begin{proof}
Let us write $X_{i} := X \cap P_{i} = X \cap P'_{i}$, $V_{i} := \lambda^{-1}(P_{i}^{\mathrm{ad}})$ and $V'_{i} := \lambda'^{-1}(P_{i}'^{\mathrm{ad}})$.
We assume that there exists, for each $i \in I$, a family of open subsets $\{V_{ij}\}_{j \in J_{i}}$ of $V_{i}$ such that $\,]X_{i}[_{V_{i}} = \bigcup_{j \in J_{i}}\,]X_{i}[_{V_{ij}}$, a family of open subsets $\{V'_{ij}\}_{j \in J_{i}}$ of $V'_{i}$ such that $\,]X'_{i}[_{V'_{i}} = \bigcup_{j \in J_{i}}\,]X'_{i}[_{V'_{ij}}$, and an isomorphism $(X_{i}, V_{ij}) \simeq (X_{i}, V'_{ij})$.
Then, proposition \ref{prodcr} tells us that for each $i \in I, j \in I_{j}$, the projections extend to a common strict neighborhood
\[
\xymatrix{
&(X_{i} \hookrightarrow P_{i} \times_{S} P'_{i} \leftarrow W_{ij}) \ar[dl] \ar[dr] \\ (X_{i} \hookrightarrow P_{i} \leftarrow V_{ij}) && (X_{i} \hookrightarrow P'_{i} \leftarrow V'_{ij}).
}
\]
It follows that the projections
\[
\xymatrix{
&(X \hookrightarrow P \times_{S} P' \leftarrow W_{ij}) \ar[dl] \ar[dr] \\ (X \hookrightarrow P \leftarrow V_{ij}) && (X \hookrightarrow P' \leftarrow V'_{ij})
}
\]
also provide a common strict neighborhood and $(X, V)$ is locally isomorphic to $(X, V')$.
\end{proof}

In other words, this statement says that if $(X,V)$ and $(X,V')$ are locally isomorphic for the Zariski-adic topology, then they are already locally isomorphic for the adic topology.

%%%%%%%%%%%%%%%%%%%%%%%%%%%%%%%
\subsection{Analytic persistence of formal properties}

Recall that overconvergent spaces were first made into a category using formal morphisms and that we turned strict neighborhoods into isomorphisms in order to obtain the overconvergent site. In the process, the formal scheme which is used as a link between the algebraic and the analytic world almost disappears.
We will see that, however, many properties of the formal morphism are somehow transferred to the (genuine) morphism.

%%%%%%%%%%%%%%%
\begin{dfn}
\begin{enumerate}
\item
A formal morphism
\[
\xymatrix{Y \ar@{^{(}->}[r] \ar[d]^f & Q\ar[d]^{v} & W \ar[l] \ar[d]^-{u} \\ X \ar@{^{(}->}[r] & P & V. \ar[l]}
\]
of overconvergent spaces is said to
\begin{enumerate}
\item be \emph{right cartesian} if $W$ is a neighborhood (of the tube) of $Y$ in $(v^{\mathrm{ad}})^{-1}(V)$,
\item \emph{satisfy a property $\mathcal P$} (of formal schemes) if it is right cartesian and $v$ satisfies the property $\mathcal P$ around $Y$.

\end{enumerate}
\item
A morphism of overconvergent spaces $(Y, W) \to (X, V)$ is said to \emph{satisfy a property $\mathcal P$} (of adic spaces) if there exists a neighborhood $V'$ of $X$ in $V$ and a neighborhood $W'$ of $Y$ in $W$ such that the induced map $W' \to V'$ satisfies the property $\mathcal P$.
\end{enumerate}
\end{dfn}

Recall from definitions \ref{formop} and \ref{formcl} that the expression ``around $Y$'' has two different meanings depending on the ``open'' or ``closed'' nature of the property.
%Also, we will sometimes say that a morphism $(Y,W) \to (X,V)$ of overconvergent varieties satisfies a property $\mathcal P$ of formal schemes if it can be represented by a formal morphism satisfying these properties.

%%%%%%%%%%%%%%%%%
\begin{prop} \label{etet}
If a formal morphism
\[
\xymatrix{Y \ar@{^{(}->}[r] \ar[d]^f & Q\ar[d]^{v} & W \ar[l]_\mu \ar[d]^-{u} \\ X \ar@{^{(}->}[r] & P & V. \ar[l]_{\lambda}}
\]
of \emph{analytic} overconvergent spaces is formally locally of finite type and formally unramified (resp.\ and formally smooth, resp.\ and formally \'etale), then it induces an unramified (resp.\ a smooth, resp.\ an \'etale) morphism
\[
(Y, W) \to (X, V).
\]
\end{prop}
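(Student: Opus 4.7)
The plan is to combine the two ``around $Y$'' hypotheses into a single formal morphism that is simultaneously formally smooth (resp.\ étale, unramified) and formally locally of finite type, transfer this to the adic side using lemma~\ref{leform} and theorem~\ref{parpr}, and exploit the analytic hypothesis on $V$ and $W$ to convert ``analytically locally of finite type'' into genuine local finite typeness.

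First I would perform the formal reduction. By hypothesis one has an open $Q' \subset Q$ with $Y \subset Q'$ such that $v|_{Q'}: Q' \to P$ is formally smooth (resp.\ étale, unramified), and a closed $Z \subset Q$ with $Y \subset Z$ such that $Q^{/Z} \to P$ is formally locally of finite type. Setting $Q'' := (Q')^{/Z \cap Q'}$, the canonical map $Q'' \to Q'$ is formally étale, so the composite $Q'' \to P$ inherits formal smoothness (resp.\ étaleness, unramifiedness); moreover $(Q'')_{\mathrm{red}} = (Z \cap Q')_{\mathrm{red}}$ is an open subscheme of $Z_{\mathrm{red}}$ and hence locally of finite type over $P_{\mathrm{red}}$, so the composite is also formally locally of finite type.

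Next I apply $(-)^{\mathrm{ad}}$. By lemma~\ref{leform} the induced morphism $(Q'')^{\mathrm{ad}} \to P^{\mathrm{ad}}$ is formally smooth (resp.\ étale, unramified), and by theorem~\ref{parpr} it is analytically locally of finite type. Base-changing along the analytic $V \to P^{\mathrm{ad}}$ produces a morphism $(Q'')^{\mathrm{ad}} \times_{P^{\mathrm{ad}}} V \to V$ which, by analyticity of $V$, is locally of finite type as a morphism of adic spaces, and which is formally smooth since formal smoothness is stable under arbitrary pullback. Hence this base change is smooth (resp.\ étale, unramified). Then, by the right cartesian hypothesis, $W$ is identified with an open of $Q^{\mathrm{ad}} \times_{P^{\mathrm{ad}}} V$ containing $Y$; using proposition~\ref{carthat} on the closed embedding $Z \hookrightarrow Q$ and the analyticity of $W$ (together with the corollary to proposition~\ref{antub}), $W \times_{Q^{\mathrm{ad}}} (Q^{/Z})^{\mathrm{ad}}$ is the open tube $]Z[_W \subset W$, and intersecting further with the open $\mu^{-1}((Q')^{\mathrm{ad}}) \subset W$ yields an open subset $W' \subset W$ whose structural map to $V$ coincides with the smooth pullback just constructed.

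The main obstacle is the verification that this $W'$ contains $]Y[_W$ (taking $V$ itself as the neighborhood of $]X[_V$ on the target side is trivial). The inclusion $]Y[_W \subset ]Z[_W$ is immediate from $Y \subset Z$ via proposition~\ref{oper}. The trickier inclusion $]Y[_W \subset \mu^{-1}((Q')^{\mathrm{ad}})$ rests on proposition~\ref{gensp}: for each $w \in ]Y[_W$ one has $\mathrm{sp}(\mathrm{sep}(w)) \in Y \subset Q'$, so $\mathrm{sep}(w) \in \mu^{-1}((Q')^{\mathrm{ad}})$; combining this with the constraint, imposed by $w \in ]Z[_W$ and the fact that $W$ is analytic (so specialization on $W$ maps into $Z$ on the formal side), one obtains $\mathrm{sp}(w) \in Z \cap Q'$ as well, and hence $w \in \mu^{-1}((Q')^{\mathrm{ad}})$. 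Reconciling the tube, which is built from the completion along the closed $Z$, with the open Zariski neighborhood $Q'$ of $Y$ is the delicate step; the analytic hypothesis enters precisely here, by ensuring that the specialization/generization structure of $W$ interacts well with both operations.
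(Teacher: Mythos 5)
Your reduction to the single formal neighbourhood $Q'' = (Q')^{/Z\cap Q'}$, and the transfer of formal smoothness and finite typeness to the adic side via lemma \ref{leform} and theorem \ref{parpr}, are fine. The gap is in the last step, where you must check that $W' := \,]Z[_W \,\cap\, \mu^{-1}((Q')^{\mathrm{ad}})$ contains $\,]Y[_W$. The inclusion $\,]Y[_W \subset\, ]Z[_W$ is correct, but the inclusion $\,]Y[_W \subset \mu^{-1}((Q')^{\mathrm{ad}}) = \mathrm{sp}_W^{-1}(Q')$ is false in general, and your argument for it is a non sequitur: from $\mathrm{sp}_W(\mathrm{sep}(w)) \in Y \subset Q'$ and $\mathrm{sp}_W(w) \in Z$ you conclude $\mathrm{sp}_W(w) \in Z \cap Q'$, but all you actually know is that $\mathrm{sp}_W(w)$ is a \emph{specialization} of $\mathrm{sp}_W(\mathrm{sep}(w))$, hence lies in $\overline Y \cap Z$; since $Q'$ is open it is stable under generization, not specialization, so membership of the generization in $Q'$ says nothing about $\mathrm{sp}_W(w)$. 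Concretely, take $P = \mathrm{Spf}(\mathcal V)$, $Q = \widehat{\mathbb P}^1_{\mathcal V}$, $Y = \mathbb G_{m,k}$ inside the special fibre, $W = \mathbb P^{1,\mathrm{ad}}_K$ and $Q' = Q \setminus \{0_k, \infty_k\}$: the height-two point $\eta_0^-$ (the specialization of the Gauss point into the residue disc of $0$) lies in $\,]Y[_W$ by proposition \ref{gensp}, because its maximal generization is the Gauss point, which specializes to the generic point of the special fibre, a point of $Y$; yet $\mathrm{sp}_W(\eta_0^-) = 0_k \notin Q'$. So your $W'$ misses points of the tube and is not a neighbourhood of $Y$ in $W$ in the sense of definition \ref{extprop}.

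This is precisely the difficulty the paper's proof is built to avoid. Instead of trying to land the tube inside the preimage of the \emph{open} locus $Q'$ where $v$ is nice (a region the tube can leak out of by specialization), the paper encodes the failure of the property as a \emph{closed} bad locus --- the support of $\Omega^1_{Q/P}$ for unramifiedness, the vanishing locus of a Jacobian minor for smoothness --- contained in a closed subset $Z_{\mathrm{bad}}$ of $Q$ disjoint from $Y$. Then $\,]Y[_W \cap\, ]Z_{\mathrm{bad}}[_W = \,]Y \cap Z_{\mathrm{bad}}[_W = \emptyset$ by proposition \ref{oper}, the corresponding bad locus in $W$ is closed and contained in $\mu^{-1}(Z_{\mathrm{bad}}^{\mathrm{ad}}) \subset\, ]Z_{\mathrm{bad}}[_W$, and its open complement is the required neighbourhood of the tube. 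To salvage your argument you would have to replace the open $\mu^{-1}((Q')^{\mathrm{ad}})$ by the complement of such a closed bad locus (note that the non-vanishing locus of the Jacobian minor is stable under both specialization and generization among analytic points, since $\mathcal H(w) \simeq \mathcal H(\mathrm{sep}(w))$); as written, the proof does not go through.
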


\begin{proof}
After completing along $\overline X$ (resp.\ $\overline Y$), we may assume that $X$ (resp.\ $Y$) is open in the topology of $P$ (resp.\ $Q$) so that $v$ is formally locally of finite type (not only around $Y$).

If $v$ is formally unramified around $Y$, then the support $Z$ of the coherent sheaf $\Omega^1_{Q/P}$ is a (closed) subspace of $Q$ that does not meet $Y$.
We showed in proposition \ref{supsh} that the support $T$ of $\Omega^1_{Q^{\mathrm{ad}}/P^{\mathrm{ad}}} \simeq \Omega^{1,\mathrm{ad}}_{Q/P}$ is a closed subset of $Q^{\mathrm{ad}}$ which is contained in $]Z[_{P}$.
In particular, $T$ does not meet $]Y[_{P}$, and it follows that the support $\mu^{-1}(T)$ of $\Omega^1_{W/V} \simeq \mu^*\Omega^1_{Q^{\mathrm{ad}}/P^{\mathrm{ad}}}$ is a closed subset of $W$ which does not meet $\,]Y[_{V}$.
The open complement $W'$ of $\mu^{-1}(T)$ is a neighborhood of (the tube of) $Y$ in $W$ such that $\Omega^1_{W'/V} = 0$.
Moreover, since $v$ is locally of finite type, we may assume thanks to lemma \ref{parpro} below that $W' \to V$ is locally of finite type.
It follows that $u$ is unramified in the neighborhood of $Y$.

We proceed in an analogous way for smoothness using the jacobian criterion.
Since the question is local and $v$ is formally locally of finite type, we may assume that there exists a closed embedding $Q \hookrightarrow \mathbb A^{\pm,N}_{P}$ defined by an ideal $(f_{1}, \ldots, f_{r})$ and that the minor $\det[\partial f_{i}/\partial T_{j}]_{i,j =1}^{r}$ is invertible outside a closed subset $Z$ not meeting $Y$.
It follows that this minor stays invertible outside some closed subset of $W$ not meeting $]Y[_{W}$ and we may consider its open complement $W'$.
By construction, the induced map $W' \to V$ is smooth.
\end{proof}

In the previous proof, we used the following consequence of theorem \ref{parpr}:

%%%%%%%%%%%%%%%%%
\begin{lem} \label{parpro}
If a formal morphism
\[
(Y \hookrightarrow Q \leftarrow W) \to (X \hookrightarrow P \leftarrow V)
\]
of \emph{analytic} overconvergent spaces is formally locally of finite type (resp.\ formally locally of finite type and separated, resp.\ partially proper), then the associated morphism
\[
(Y, W) \to (X, V)
\]
is locally of finite type (resp.\ separated, resp.\ partially proper).
\end{lem}

\begin{proof}
Simply replace $V$ and $W$ with $V' := \,]\overline X[_{V}$ and $W' := \,]\overline Y[_{W}$ and apply theorem   \ref{parpr}.
\end{proof}

%%%%%%%%%%%%%
\subsection{The strong fibration theorem}

We will now mix open (such as formally smooth) and closed (such as partially proper) conditions.
This will lead us to the strong fibration theorem.

%%%%%%%%%%%%%%%%%
\begin{thm} \label{etis}
If a formal morphism of \emph{analytic} overconvergent spaces
\[
\xymatrix{ X' \ar@{^{(}->}[r] \ar[d]^{f} & P' \ar[d]^{v}& V' \ar[l]_-{\lambda'} \ar[d]^{u} \\
X \ar@{^{(}->}[r] & P & V \ar[l]_\lambda}
\]
is partially proper and formally \'etale, and induces an isomorphism $f \colon X' \simeq X$, then it induces an isomorphism of overconvergent spaces
\[
(X', V') \simeq (X, V).
\]
\end{thm}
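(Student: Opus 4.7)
The approach is to invoke corollary~\ref{isom}: to show $(X', V') \simeq (X, V)$ in the adic overconvergent site, it suffices to produce a common strict neighborhood, and I plan to argue that, after appropriate local reductions, the given formal morphism itself can be refined into such a strict neighborhood.

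I would first perform a sequence of reductions. The question is local on $V$ and on $V'$, so by corollary~\ref{loczar} and proposition~\ref{prodcr} I may assume $P$ and $P'$ are affine and shrink freely. By proposition~\ref{decom} the formal morphism decomposes into one with $u = \mathrm{id}_V$ and one with $f = \mathrm{id}_X$ and $v = \mathrm{id}_P$; the right-cartesian hypothesis reduces the second factor to the inclusion of an open neighborhood of the tube, which becomes a strict neighborhood once the tube map is shown to be surjective. I therefore focus on the case $u = \mathrm{id}_V$, and using corollary~\ref{redred} I assume $X$ reduced and identify $X' = X$ via the isomorphism $f$.

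The heart of the argument is a completion computation built on the formally \'etale hypothesis. Since $v$ is formally \'etale around $X$, after further shrinking $P'$ I may assume $v$ is formally \'etale on all of $P'$, and in fact in a neighborhood of $\overline X$ (the partial-properness hypothesis, which forces $\overline{X'}_{\mathrm{red}} \to \overline{X}_{\mathrm{red}}$ to be proper on irreducible components, legitimizes this extension from $X$ to its closure). Lemma~\ref{lemun}(1) then yields a canonical isomorphism of formal completions $P'^{/\overline X} \simeq P^{/\overline X}$. Applying the functor $(-)^{\mathrm{ad}}$ and invoking proposition~\ref{carthat} -- which applies because $V$ is analytic -- I obtain an isomorphism of tubes $\,]\overline X[_{V'} \simeq \,]\overline X[_V$, and taking complements of $\,]\infty_X[_V$ via proposition~\ref{oper} delivers the required isomorphism $\,]X[_{V'} \simeq \,]X[_V$ compatible with $v^{\mathrm{ad}}$.

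Finally, I verify the four conditions of a strict neighborhood: $f$ is an isomorphism by hypothesis, $v$ is locally noetherian because it is formally \'etale over a locally noetherian target, $u$ is an open immersion after the reductions above (using the right-cartesian hypothesis to identify $V'$ with an open neighborhood of the tube inside the pullback of $V$), and the tube map is the isomorphism produced in the previous paragraph, hence surjective. The main obstacle I anticipate is the careful bookkeeping needed to pass from the isomorphism of tubes $\,]\overline X[_{V'} \simeq \,]\overline X[_V$ to that of the overconvergent tubes $\,]X[_{V'} \simeq \,]X[_V$, and to ensure surjectivity across the boundary $\overline X \setminus X$; this is exactly where the analytic valuative criterion of theorem~\ref{parpr} (via lemma~\ref{valcri}) intervenes, guaranteeing that $v^{\mathrm{ad}}$ is analytically partially proper near the tube so that no points are lost.
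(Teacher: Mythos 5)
Your strategy has a gap that cannot be repaired as stated: the passage from ``$v$ formally \'etale \emph{around} $X'$'' to ``$v$ formally \'etale in a neighborhood of $\overline{X'}$''. By definition \ref{formop} the hypothesis only provides an open neighborhood of $X'$ (not of its closure) on which $v$ is formally \'etale, and partial properness gives no control on the differential behaviour of $v$ along the boundary $\overline{X'}\setminus X'$: it only says that $\overline{X'}_{\mathrm{red}}\to\overline X_{\mathrm{red}}$ is proper on irreducible components. Nor may you shrink $P'$ to that open neighborhood, because the tube is not local on $P'$ around $X'$ (the paper warns explicitly that $\,]X[_{Q}\neq\,]X[_{P}$ when $Q$ is an open neighborhood of $X$ that does not contain $\overline X$): the tube $\,]X'[_{V'}$ genuinely contains points specializing into $\overline{X'}\setminus X'$ (for instance the $v^+$-points of $\,]\mathbb A[_{\mathbb P_{O}}$, which specialize to $\infty$), and it is precisely at those points that no \'etaleness is available. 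In effect your completion argument proves the \emph{weak} fibration theorem, whose hypotheses (differential smoothness of $v$ globally, together with $\overline{X'}\simeq\overline X$) are strictly stronger; the paper remarks right after stating it that the conclusion fails if one only assumes $X'\simeq X$.

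There is a second, independent shortfall: even granting the isomorphism of completions, proposition \ref{carthat} only identifies $\,]\overline{X'}[_{V'}$ with $u^{-1}(\,]\overline X[_{V})$, i.e.\ with the \emph{preimage} of the tube, not with the tube itself; the substance of the theorem is that $u$ carries a neighborhood of $\,]X'[_{V'}$ isomorphically onto a neighborhood of $\,]X[_{V}$, and your sketch assumes rather than proves this. The paper's route is different: after completing along the closures it reduces to the case where $X$ and $X'$ are open formal subschemes with $\mathcal U'\to\mathcal U$ an isomorphism, so that the naive tubes $W'=\mathrm{sp}^{-1}(X')$ and $W=\mathrm{sp}^{-1}(X)$ are identified by the right-cartesian hypothesis; partial properness (the one ingredient you correctly identified, via lemma \ref{valcri} and corollary \ref{parpro}) makes $u$ specializing near the tubes and extends this to a homeomorphism of the closures, which by proposition \ref{antub} are the true tubes; a completed-residue-field computation then permits an appeal to Huber's proposition 2.3.7 to obtain an isomorphism of \'etale topoi, and only the combination of this with the \'etaleness of $u$ near the tube (proposition \ref{etet}) upgrades it to an isomorphism of honest neighborhoods. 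These last steps, which carry the weight of the proof, are absent from your proposal.
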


\begin{proof}
Before doing anything else, let us remark that, thanks to proposition \ref{etet} and lemma \ref{parpro}, we can assume that the map $u \colon V' \to V$ is partially proper and \'etale (and in particular is locally quasi-finite).
Next, after completing along $\overline X$ (resp.\ $\overline X'$), we may assume as usual that $X$ (resp.\ $X'$) is open in the topology of $P$ (resp.\ $P'$).
If we denote for the moment by $\mathcal U$ (resp.\ $\mathcal U'$) the formal open subscheme of $P$ (resp.\ $P'$) having the same underlying space as $X$ (resp.\ $X'$), then the induced morphism $\mathcal U' \to \mathcal U$ is formally \'etale and induces an isomorphism on the maximal reduced subschemes.
This is necessarily an isomorphism.
We may therefore replace $X$ (resp.\ $X'$) by $\mathcal U$ (resp.\ $\mathcal U'$) and assume from now on that $X$ (resp.\ $X'$) is a formal open subscheme of $P$ (resp.\ $P'$).
Since the right hand square of our diagram is by definition cartesian in the neighborhood of the tubes, the morphism $u$ induces an isomorphism of analytic spaces $]X'[_{V'}^{\mathrm{naive}} \simeq ]X[_{V}^{\mathrm{naive}}$.
It follows that the map $]f[_u : \,]X'[_{V'} \to \,]X[_V$ induced by $u$ on the tubes is ``birational'': it induces an isomorphism on dense open subsets.
This map is actually bijective.
In order to prove that, we may assume that $V = \mathrm{Spa}(K, K^+)$ is an analytic Huber point and $\,]X[_V \neq \emptyset$.
But then, the morphism $u : V' \to \mathrm{Spa}(K, K^+)$ being partially proper locally quasi-finite and birational is necessarily an isomorphism (use proposition 1.5.6 of \cite{Huber96}).
Thus, we see that the canonical map is a homeomorphism $]f[_u : \,]X'[_{V'} \simeq \,]X[_V$.
Now, any $v' \in \,]X'[_{V'}$ generalizes to some $w' \in ]X'[_{V'}^{\mathrm{naive}}$ and since $V'$ is analytic, the canonical map $\mathcal O_{v'} \to \mathcal O_{w'}$ (is local and) induces an isomorphism $\mathcal H(v') \simeq \mathcal H(w')$ on the completed residue fields.
For the same reason, we have $\mathcal H(u(v')) \simeq \mathcal H(u(w'))$.
On the other hand, the isomorphism $]X'[_{V'}^{\mathrm{naive}} \simeq ]X[_{V}^{\mathrm{naive}}$ provides us with an isomorphism $\mathcal H(u(w')) \simeq \mathcal H(w')$ and it follows that $\mathcal H(u(v')) \simeq \mathcal H(v)$.
We may therefore apply proposition 2.3.7 of \cite{Huber96} (analytic variant) which tells us that $u$ induces an isomorphism between the \'etale topos of the pseudo-adic space $(V', \,]X'[_{V'})$ and the \'etale topos of the pseudo-adic space $(V, \,]X[_{V})$.
Since $u \colon V' \to V$ is \'etale, then, necessarily, $u$ induces an isomorphism between a neighborhood of $\,]X'[_{V}$ in $V'$ and a neighborhood of $\,]X[_{V}$ in $V$.
\end{proof}

Note that the \'etale site of a pseudo-adic space $(V, T)$ is not subcanonical because any open embedding $V' \hookrightarrow V$ with $T \subset V'$ will induce an isomorphism between the \'etale topos of $(V', T)$ and $(V, T)$.
This is why we need to shrink $V$ and $V'$ once more at the end of the proof.
Alternatively, one could introduce the \'etale site of germs of adic spaces (which is then subcanonical).

%%%%%%%%%%%%%%
\begin{thm}[Strong fibration theorem]
If a formal morphism of \emph{analytic} overconvergent spaces
\[
(X' \hookrightarrow P' \leftarrow V') \to (X \hookrightarrow P \leftarrow V)
\]
is partially proper and formally smooth, and induces an isomorphism $X' \simeq X$, then there exists, \emph{locally} in the overconvergent site, an isomorphism
\[
(X',V') \simeq (X, \mathbb D^{n-}_{V}).
\]
\end{thm}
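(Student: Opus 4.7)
The strategy is to factor $v$ locally through a morphism $w' \colon P' \to \mathbb{P}^n_P$ that is both formally \'etale and partially proper near $X'$, after which Theorem \ref{etis} delivers the isomorphism. The question being local in the adic overconvergent site, I may apply Proposition \ref{opst} to complete $P$ and $P'$ along $\overline{X}$ and $\overline{X'}$ and replace $X$ and $X'$ by the open formal subschemes with the same underlying spaces, reducing to the situation where $X' = X$ is \emph{open} in both $P$ and $P'$, and $v$ is still partially proper and formally smooth around $X'$. Since partial properness around $X'$ forces $v$ to be formally locally of finite type and hence locally noetherian, Crew's theorem recalled after Definition \ref{univnt} ensures that $v$ is in fact differentially smooth near $X'$.

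I then apply Lemma \ref{locnei} with $f = \mathrm{Id}_X$ to obtain, locally on $P$ and $P'$, a morphism $w \colon P' \to \mathbb{A}^n_P$ which is formally \'etale in a neighborhood of $X'$ and extends the embedding of $X$ into $\mathbb{A}^n_P$ through the zero section. Composing with the open immersion $\mathbb{A}^n_P \hookrightarrow \mathbb{P}^n_P$ gives $w' \colon P' \to \mathbb{P}^n_P$, which is still formally \'etale near $X'$, carries $X$ onto the zero section of $\mathbb{A}^n_P \subset \mathbb{P}^n_P$, and factors $v$ as $v = \pi \circ w'$ where $\pi \colon \mathbb{P}^n_P \to P$ denotes the structural projection.

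The next step is to verify that $w'$ is partially proper around $X'$. Let $Z'$ be a closed subspace of $P'$ containing $X'$ such that $P'^{/Z'} \to P$ is partially proper. The restriction $w'|_{P'^{/Z'}}$ factors through the graph $\Gamma_{w'} \colon P'^{/Z'} \hookrightarrow P'^{/Z'} \times_{P} \mathbb{P}^n_P$, which is a closed immersion because $\pi$ is separated, followed by the second projection $P'^{/Z'} \times_{P} \mathbb{P}^n_P \to \mathbb{P}^n_P$, which is the base change of the partially proper map $P'^{/Z'} \to P$ and is therefore itself partially proper. Stability of partial properness under composition and closed immersions then shows that $w'|_{P'^{/Z'}}$ is partially proper, so that $w'$ is partially proper around $X'$.

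Finally, I lift $w'$ to the following formal morphism of analytic overconvergent spaces:
$$\xymatrix{X' \ar@{^{(}->}[r] \ar@{=}[d] & P' \ar[d]^{w'} & V' \ar[l] \ar[d] \\ X \ar@{^{(}->}[r] & \mathbb{P}^n_P & \mathbb{P}^n_V \ar[l]}$$
where $\mathbb{P}^n_V := (\mathbb{P}^n_P)^{\mathrm{ad}} \times_{P^{\mathrm{ad}}} V$ and the right-hand vertical arrow is induced by the given map $V' \to V$ and the composite $V' \to P'^{\mathrm{ad}} \to \mathbb{P}^{n,\mathrm{ad}}_P$ (which agree on $P^{\mathrm{ad}}$ because $v = \pi \circ w'$). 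Right cartesianity follows from $(w'^{\mathrm{ad}})^{-1}(\mathbb{P}^n_V) = P'^{\mathrm{ad}} \times_{P^{\mathrm{ad}}} V = (v^{\mathrm{ad}})^{-1}(V)$ inside $P'^{\mathrm{ad}}$; the morphism is partially proper and formally \'etale around $X'$ by the preceding steps and induces the identity on $X$. Theorem \ref{etis} then directly supplies the required isomorphism $(X', V') \simeq (X, \mathbb{P}^n_V)$ in the adic overconvergent site. The delicate point is the cancellation argument of the third paragraph: one must check carefully that the paper's definition of partial properness, through proper morphisms on reduced irreducible components, really behaves well under closed immersion and base change in the ``around $X'$'' sense.
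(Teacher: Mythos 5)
There is a genuine gap, and it sits exactly where the paper's proof expends most of its effort. Your plan is to apply Lemma \ref{locnei} directly to produce $w \colon P' \to \mathbb A^n_P$ formally \'etale near $X'$, but that lemma only yields such a $w$ \emph{locally on $P$ and on $P'$}. Localizing on the target side is legitimate (Corollary \ref{loczar} lets you work locally on $P$ and $X$, and the adic topology lets you shrink $V$), but localizing on the source formal scheme $P'$ is not an available move: it is not a covering for the topology of the adic overconvergent site; since $X'$ is only locally closed in $P'$, replacing $P'$ by an open neighborhood $P''$ of $X'$ changes the tube $\,]X'[_{V'}$ (the paper warns explicitly after Proposition \ref{proptub} that one may not replace the ambient formal scheme by a neighborhood of a locally closed $X'$); and it destroys partial properness around $X'$, because $\overline{X'}\cap P''$ is merely open in $\overline{X'}$ and hence no longer proper over $\overline X$. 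In particular your third paragraph, which checks partial properness of $w'$ by the graph/cancellation argument, presupposes that $w'$ is defined on all of $P'^{/Z'}$ — precisely what the local-on-$P'$ output of Lemma \ref{locnei} does not give you. (The cancellation argument itself, for a globally defined $w'$, is reasonable, modulo the unproved stability of ``partially proper'' under base change and composition with closed immersions that you flag.)

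This obstruction is the reason for the paper's detour, which you have skipped entirely: Proposition \ref{bup2} (blowing up to make $\infty_{X'}$ a divisor), Chow's lemma in the form of Corollary \ref{cholem} (making $\overline{X'}\to\overline X$ projective), and then the projective lifting Lemma \ref{smet2}, whose entire raison d'\^etre is that, by twisting by $\mathcal O(m)$ and $\mathcal O(n)$ on a projective ambient space, it produces the \'etale factorization after localizing only on $X'$ and $P$ — never on the source ambient scheme. A first application of Theorem \ref{etis} then reduces to the case $\overline{X'}\simeq\overline X$ with $P$ affine, and only at that point is Lemma \ref{locnei} safe to invoke. Without some substitute for this reduction, your shorter route does not go through.
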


Here, we denote by $(X, \mathbb D^{n-}_{V})$ the overconvergent space $(X \hookrightarrow \mathbb A^{n-}_{P} \leftarrow \mathbb D^{n-}_{V})$ obtained by using the zero-section.
Recall that this is the same thing as $(X \hookrightarrow \mathbb A^n_{P} \leftarrow \mathbb D_V^{n})$ or $(X \hookrightarrow \mathbb P^n_{P} \leftarrow \mathbb P_V^{n})$ (up to a strict neighborhood).

\begin{proof}
The strategy is due to Berthelot and we follow essentially the proof of theorem 4.1.3 in \cite{LeStum11}.
First of all, thanks to corollary \ref{redred}, we may assume that both $X$ and $X'$ are reduced.
Also, using proposition \ref{bup2}, we may assume that $X'$ is the complement of a divisor in $\overline X'$.
Now, since the question is local for the adic topology of the overconvergent site, we may assume that $V$ is affinoid.
Moreover, thanks to corollary \ref{loczar}, the question is local on $X$ and therefore also on $P$ that we may both assume to be affine.
Also thanks to proposition \ref{opst1}, we may assume that $P'$ is quasi-compact.
We are then in the situation of applying our version of Chow's lemma (corollary \ref{cholem}) and we may therefore assume that the map $\overline X' \to \overline X$ induced on the compactifications is projective.
Next, proposition \ref{smet2} grants us the existence of a projective morphism of formal embeddings $(X' \hookrightarrow Q) \to (X \hookrightarrow P)$ which is \'etale around $X'$ and such that the closure of $X'$ in $Q$ is identical to the closure of $X'$ in $P'$.
The formal morphism $(X' \hookrightarrow Q \leftarrow Q^{\mathrm{ad}} \times_{P^\mathrm{ad}} V) \to (X \hookrightarrow P \leftarrow V)$ is right cartesian, partially proper and \'etale around $X'$ and induces therefore, thanks to theorem \ref{etis}, an isomorphism in the overconvergent site.
After pulling back along this formal morphism, we may therefore assume that $v$ actually induces an isomorphism $\overline f : \overline X' \simeq \overline X$.
We may now use lemma \ref{locnei} and apply theorem \ref{etis} once again.
\end{proof}

The theorem has an immediate consequence in terms of germs: there exists locally on the germ $\,]X[_V^\dagger$ an isomorphism of germs
\[
\,]X[_{V'}^\dagger \simeq \,]X[_V^\dagger \times \mathbb D^{n-}.
\]

%%%%%%%%%%%%%%%%%%%%%%%%%%%
\addcontentsline{toc}{section}{References}
\printbibliography

\Addresses

\end{document}